\documentclass[11pt]{amsart}

\usepackage{amsmath}
\usepackage[T2A,T1]{fontenc}
\usepackage[russian,english]{babel}

\usepackage{hyperref}
\usepackage{mathtools}
\usepackage{amsthm}
\usepackage{amssymb}
\usepackage{amsfonts}
\usepackage[utf8]{inputenc}
\usepackage{tikz}
\usepackage{tikz-3dplot}
\usepackage{tikz-cd}
\usepackage{float}
\usepackage{verbatim}
\usepackage{enumitem}
\usepackage{booktabs}
\usepackage{comment}
\usetikzlibrary{shapes.geometric, calc}
\usetikzlibrary{arrows.meta}
\usetikzlibrary{hobby}
\usetikzlibrary{patterns}
\usetikzlibrary{patterns.meta}
\usepgflibrary{patterns}
\usetikzlibrary{decorations.markings}
\usetikzlibrary{patterns,patterns.meta}

\numberwithin{equation}{section}
\newtheorem{theorem}{Theorem}[section]

\newtheorem{proposition}[theorem]{Proposition}
\newtheorem{prop}[theorem]{Proposition}
\newtheorem{claim}[theorem]{Claim}
\newtheorem{corollary}[theorem]{Corollary}
\newtheorem{cor}[equation]{Corollary}

\newtheorem{lemma}[theorem]{Lemma}

\theoremstyle{definition}
\newtheorem{definition}[theorem]{Definition}
\theoremstyle{remark}
\newtheorem{remark}[theorem]{Remark}
\newtheorem{example}[theorem]{Example}

\newcommand{\bd}{\partial}

\newcommand{\la}{\langle}
\newcommand{\ra}{\rangle}

\renewcommand{\C}{\mathbb{C}}

\newcommand{\Ind}{\mathrm{Ind}}

\newcommand{\Mtilde}{\tilde{M}}

\DeclareMathOperator{\area}{\mathrm{Area}}

\DeclareMathOperator{\mult}{\mathrm{mult}}

\newcommand{\Id}{\mathrm{Id}}

\newcommand{\Sph}{\mathbb{S}}

\newcommand{\N}{\mathbb{N}}
\newcommand{\Z}{\mathbb{Z}}

\newcommand{\del}{\partial}
\newcommand{\RR}{\mathbb{R}}

\newcommand{\fC}{\mathfrak{C}}
\newcommand{\fU}{\mathfrak{U}}

\DeclareMathOperator{\sff}{\mathrm{I\!I}} 

\newcommand{\Mtetk}{\mathord{\textrm{\foreignlanguage{russian}{Д}}}_k}
\newcommand{\Mtetinfty}{\mathord{\textrm{\foreignlanguage{russian}{Д}}}_\infty}
\newcommand{\MtetR}{\mathord{\textrm{\foreignlanguage{russian}{Д}}}}

\newcommand{\Rcapunder}{\underline{\mathsf{R}}}
\newcommand{\Ecal}{\mathcal{E}}

\newcommand{\R}{\mathbb{R}}

\newcommand{\K}{\mathbb{K}}

\newcommand{\sspan}{\text{span}}

\newcommand{\Ncal}{\mathcal{N}}

\newcommand{\mass}{\bold{M}}

\newcommand{\Area}{\mathrm{Area}}

\newcommand{\Diff}{\mathrm{Diff}}
\newcommand{\Met}{\mathrm{Met}}
\newcommand{\Wcal}{\mathcal{W}}

\newcommand{\RP}{\mathbb{R}P}

\newcommand{\Fix}{\mathrm{Fix}}

\newcommand{\spt}{\mathrm{spt}\,}
\newcommand{\sing}{\mathrm{sing}\,}

\definecolor{light-gray}{gray}{.95}
\definecolor{dark-gray}{gray}{.7}

\begin{document}
\title[Minimal Surfaces in $\mathbb{S}^4$]{Nonorientable minimal surfaces embedded in the round $4$-sphere}
\author[M.~Karpukhin]{Mikhail~Karpukhin}
\author[R.~Kusner]{Robert~Kusner}
\author[P.~McGrath]{Peter~McGrath}
\author[D.~Stern]{Daniel~Stern}
\date{}
\address{Department of Mathematics, University College London, 25 Gordon Street, London, WC1H 0AY, UK} \email{m.karpukhin@ucl.ac.uk}
\address{Department of Mathematics, University of Massachusetts,
Amherst, MA, 01003} \email{profkusner@gmail.com, kusner@umass.edu}
\address{Department of Mathematics, North Carolina State University, Raleigh NC 27695} 
\email{pjmcgrat@ncsu.edu}
\address{Department of Mathematics, Cornell University, Ithaca NY, 14853} \email{daniel.stern@cornell.edu}

\begin{abstract}

We show that every closed, nonorientable surface can be minimally embedded in $\Sph^4$, providing in particular the first known examples of embedded, nonorientable minimal surfaces in $\mathbb{S}^4$ with negative Euler characteristic. All of the surfaces we obtain have area below $8\pi$, which has applications to the existence of nonorientable surfaces minimizing the Willmore functional with prescribed topology in $\R^n$ for $n\geq 4$. Moreover, the number of geometrically distinct embeddings of each nonorientable genus is shown to grow at least exponentially with respect to the genus.  
 
Among these surfaces, we identify a distinguished, highly symmetric family which converges in the large-genus limit to a union of four half-spheres, meeting along a great circle, whose poles are vertices of a regular tetrahedron. 
Rescalings of this family converge to a new singly-periodic nonorientable minimal surface in $\R^4$, which seems to provide the first example of a complete, embedded, nonorientable minimal surface in $\R^4$ without continuous symmetry group.

The proofs further develop the equivariant eigenvalue optimization methodology from our earlier work, applied to carefully chosen families of symmetry groups acting on nonorientable surfaces.  Interestingly, we also find natural pairs of surfaces and group actions for which there is no metric maximizing the first normalized eigenvalue of the Laplacian. 

\end{abstract}

\maketitle
\bibliographystyle{alpha}

\section{Introduction} 

\subsection{Main results} In 1970, Lawson proved \cite{Lawson}  each closed orientable surface can be minimally embedded in the round $3$-sphere $\Sph^3$.  Prior to that work, the equatorial $2$-sphere $\Sph^2$ and the Clifford torus, induced by the map
\begin{align*}
\psi: \R^2 \rightarrow \Sph^3 \subset \C^2,
\quad
\psi(x,y) = \frac{1}{\sqrt{2}}( e^{i x}, e^{iy}),
\end{align*}
were the only known embedded minimal surfaces in $\Sph^3$. 
Lawson's examples are highly symmetric and include an extensively studied family $\xi_{g,1}$ of surfaces of genus $g$ which are conjectured to minimize area and Morse index among minimal surfaces of fixed genus, and Willmore energy among arbitrary surfaces with prescribed topology in $\mathbb{S}^3$ \cite{KusnerWillmore, KapZouIndex}.

Among \emph{nonorientable} 
surfaces,  none can be minimally embedded in $\mathbb{S}^3$, since Alexander duality precludes the existence of any embedded nonorientable surface in $\mathbb{S}^3$. To find a nonorientable analog of Lawson's existence result, one might then relax the embeddedness requirement, and seek minimal immersions of nonorientable surfaces in $\mathbb{S}^3$, or increase the codimension to the lowest where embeddedness is possible, and look for minimal embeddings of nonorientable minimal surfaces in $\Sph^4$. Lawson already addressed the first of these in ~\cite{Lawson}, proving that each non-orientable surface other than $\mathbb{RP}^2$ can be minimally immersed in $\Sph^3$. In the present paper, we resolve the latter problem, proving the following. 
\begin{theorem}
\label{Tmain}
Each closed, nonorientable surface can be minimally embedded in the round $4$-sphere $\Sph^4$, with area $<8\pi$, such that the coordinate functions for the embedding are first eigenfunctions of the Laplacian for the induced metric. 
\end{theorem}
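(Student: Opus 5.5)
Following the abstract, the plan is \emph{equivariant eigenvalue optimization}: maximize the first normalized Laplace eigenvalue $\bar\lambda_1(g)=\lambda_1(g)\,\mathrm{Area}(g)$ over metrics invariant under a finite group $\Gamma$ acting on the nonorientable surface $N_k=\#^k\RP^2$. The group is chosen so that the maximizer is forced to be an \emph{embedding} into $\Sph^4$. We first treat the two small cases directly. For $k=1$, the Veronese embedding $\RP^2\to\Sph^4$ is minimal by first eigenfunctions and has area $6\pi<8\pi$. For $k=2$, we use a known minimal Klein bottle by first eigenfunctions, for instance a suitable $\tau_{3,1}$-type example, or the surface coming from our own family. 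For general $k$, we construct a $\Gamma_k$-action on $N_k$. The model is a tetrahedral-type symmetry: four half-spheres in $\Sph^4$ meet along a great circle, with poles at the vertices of a regular tetrahedron in the orthogonal $\R^3$, and they are desingularized by $k$-fold periodic gluing along that circle. $\Gamma_k$ is then the lift to $N_k$ of the corresponding subgroup of $O(5)$.

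\textbf{Step 1 (existence of an equivariant maximizer).} By the equivariant existence and regularity theory from our earlier work, the supremum $\Lambda_1^{\Gamma}(N_k)$ of $\bar\lambda_1$ over $\Gamma$-invariant metrics is attained by a smooth metric, away from finitely many conical points. This holds provided a strict gap inequality is satisfied: $\Lambda_1^\Gamma(N_k)$ must exceed the value obtainable from every degeneration. Degenerations include bubbling of spheres or projective planes, taken in orbits of $\Gamma$, and collapse to quotient surfaces of lower complexity. We verify the gap by building explicit $\Gamma$-invariant test metrics. One option is to glue equivariant handles or crosscaps onto a lower-complexity maximizer and apply the standard $\bar\lambda_1$ increase under handle or crosscap attachment in the equivariant setting. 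Another is to compute on the explicit model configuration. The resulting maximizer is realized by a $\Gamma$-equivariant harmonic, hence minimal, branched immersion $\Phi: N_k\to\Sph^{n}$ by first eigenfunctions. For $k\ge2$ the area is $\Lambda_1^\Gamma<8\pi$, because $\Lambda_1(N_k)\le 8\pi$ is not attained on the relevant symmetric classes.

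\textbf{Step 2 (dimension and embeddedness).} This is the main obstacle. The group $\Gamma$ acts on the first eigenspace through an orthogonal representation. We choose $\Gamma$ so that its only low-dimensional representations compatible with nonorientability and with the fixed-point structure are a specific $5$-dimensional one, which forces the target to be $\Sph^4$. Branch points are excluded by the equivariant analysis of isotropy points. Embeddedness follows from the bound $\mathrm{Area}<8\pi$ through the Li--Yau inequality: any point of multiplicity $\ge2$ forces area $\ge 8\pi$. Nonorientability is built into the domain. Two issues remain to be ruled out: the maximizer might sit in $\Sph^2$ or $\Sph^3$, which is impossible for a nonorientable embedded surface in $\Sph^3$ and is excluded in $\Sph^2$ by topology; and the eigenfunction representation might fail to be the intended one. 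For both, we use the nodal and fixed-point constraints imposed by reflections in $\Gamma$ to identify the coordinate representation.

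Finally, for each $k$ we get a minimal embedding $N_k\to\Sph^4$ by first eigenfunctions with area $<8\pi$, which proves Theorem~\ref{Tmain}. The step expected to be hardest is the gap inequality in Step 1. Without it, maximizing sequences can degenerate, and the abstract warns that for some natural pairs of surfaces and group actions no maximizer exists. So the choice of $\Gamma_k$ and the explicit test metric comparisons are the crux.
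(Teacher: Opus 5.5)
There are genuine gaps, and the first is topological: the tetrahedral family cannot produce every nonorientable surface. In that construction the $D_2$-chamber is an $m$-gon whose edges carry the labels $\rho_1,\rho_2,\rho_3$, and the surface has Euler characteristic $4-m$. For the group to permute the four half-spheres tetrahedrally, some symmetry of the polygon must induce a $3$-cycle on the labels. A reflection of the polygon is an involution and cannot do this, so a rotation must, and then the three labels occur equally often. Hence $m=3k$, and your $k$-fold periodic desingularization is $\Mtetk$, of nonorientable genus $3k-2$ rather than $N_k$. Genera $3,5,6,8,9,\dots$ are never reached; you patch only genus $2$ by hand.

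The paper instead deduces Theorem~\ref{Tmain} from families with unconstrained topology. One is $D_2$-actions built from an arbitrary $(\gamma+2)$-gon using all three labels (Theorem~\ref{TD2}); another is spherical $\Z_2$-basic reflection surfaces (Proposition~\ref{PZ2Sph}). In the $D_2$ case existence is an induction on genus. Every elementary degeneration collapses a twisted oval. Reattaching the crosscap at a $D_2$-fixed point, which is a smooth point of the lower-genus maximizer by the inductive hypothesis, strictly increases $\Lambda_1^T$ (Lemma~\ref{twist.bd}). A chain of degenerations down to $\RP^2$ gives $\Lambda_1^T>12\pi$. You have also misplaced the crux. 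For the tetrahedral family every elementary degeneration disconnects the surface and there are no invariant simple closed curves, so the gap inequality there is trivial.

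The second gap is in Step 2, which never uses the ingredient that drives the paper's argument: a \emph{basic} reflection $\tau\in\Gamma$, i.e.\ one with $M/\langle\tau\rangle$ of genus zero. Your area bound confuses $\bar\lambda_1$ with area. For an immersion by first eigenfunctions into the unit sphere one has $\lambda_1=2$, so $\area=\Lambda_1^\Gamma/2$ and you need $\Lambda_1^\Gamma<16\pi$. The inequality $\Lambda_1(N_k)\le 8\pi$ is false already for $\RP^2$, where $\Lambda_1=12\pi$ and the Veronese surface has area $6\pi$.

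The correct source of $\area<8\pi$ is Lemma~\ref{Lgraphsph}. A basic reflection surface immersed by first eigenfunctions is a symmetric doubling of an equatorial $\Sph^2$ with area below $8\pi$. Your tetrahedral group does contain such reflections, namely $\rho_1,\rho_2,\rho_3$, but your argument never uses them. Given that bound, your Li--Yau/monotonicity argument for embeddedness is fine.

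Likewise, representation theory cannot pin the target to $\Sph^4$, since it gives no upper bound on how many irreducible summands the first eigenspace contains. The paper bounds the multiplicity analytically (Proposition~\ref{mult.prop}). Split $\Ecal$ by the characters of $D_2$. Each nontrivial character corresponds to a mixed Dirichlet--Neumann problem on the chamber whose first eigenvalue is simple. A Courant nodal-domain argument gives $\dim\Ecal^+_+\le 2$. Hence $\dim\Ecal\le 5$, and since a closed nonorientable surface cannot embed in $\Sph^3$, the target is $\Sph^4$.
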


Previously, the state of art on this problem was reminiscent of the status of the orientable problem prior to Lawson's work: until now, the projective plane $\RP^2$ and the Klein bottle $\K$ were the only surfaces known to admit such embeddings, with $\RP^2$ associated to the Veronese map $\psi : \Sph^2 \rightarrow \Sph^4$ given by 
\begin{align*}
\psi(x,y,z) = \sqrt{3}\left(xy, xz, yz, \frac{1}{2} (x^2-y^2), \frac{1}{2\sqrt{3}}(x^2+y^2-2z^2)\right),
\end{align*}
and $\K$ induced by the map $\psi : \R^2 \rightarrow \Sph^4 \subset \C^2 \times \R$ given by
\begin{align*}
\psi(x,y) = \bigg( \frac{1}{\sqrt{2}} e^{i x} \sin y, 
\sqrt{\frac{3}{8}} e^{i2x} \sqrt{ 1+ \frac{1}{3} \sin^2 y},
\sqrt{\frac{5}{8}} \cos y
\bigg)
\end{align*}
(see \cite[Proposition 3]{Penskoi}). The latter surface is often denoted by $\tilde\tau_{3,1}$ and was first constructed by Lawson in~\cite{Lawson}

The upper bound of $8\pi$ for the areas of the surfaces in Theorem \ref{Tmain} has some significance. First, it is the lowest possible area threshold below which one could find minimal surfaces of arbitrary topology: that is, there are no sequences of minimal surfaces in $\mathbb{S}^n$ with unbounded topology and area below $A$ for any $A<8\pi$; see Proposition~\ref{Pvarifold} for a more precise statement. 

Second, we note that minimal surfaces with area below $8\pi$  
have Willmore (bending) energy below $8\pi$, and the existence of such surfaces guarantees the existence of embedded Willmore minimizers in $\mathbb{S}^4$ or $\mathbb{R}^4$ with any prescribed topology, see \cite{West} for a detailed explanation (see also \cite{KusnerWillmore2, BauerKuwert,SimonWillmore}).  This addresses a question asked by Marques-Neves (see \cite[Section 4]{WillmoreSurvey}). Previously, to our knowledge, the only known non-orientable surfaces with Willmore energy below $8\pi$ were the Veronese embedding and $\tilde\tau_{3,1}$ discusssed above. Furthermore, some of the more symmetric examples we construct are plausible candidates for the global Willmore minimizers with prescribed topology in $\mathbb{R}^4$, as we discuss in more detail below.

Next, we note that our construction gives rise to many non-isometric examples.
\begin{theorem}
\label{Tcount}
There is a constant $c>0$ such that for each $\gamma \geq 1$, there exist at least $e^{c\gamma}$ pairwise noncongruent minimal embeddings of the closed surface of nonorientable genus $\gamma$ in $\mathbb{S}^4$. Moreover, these surfaces are embedded by first eigenfunctions of the Laplacian, with area $<8\pi$.
\end{theorem}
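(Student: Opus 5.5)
The plan is to run the equivariant eigenvalue optimization from our earlier work over a large family of pairwise inequivalent group actions on a fixed nonorientable surface $M_\gamma$ of genus $\gamma$. Each action should yield a $\lambda_1$-maximizing metric, and we then show that the resulting minimal embeddings cannot all be congruent.

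\textbf{Step 1: the actions.} Start from the constructions behind Theorem \ref{Tmain}. There $M_\gamma$ is realized as a quotient or gluing built from a finite group $G$ acting on a model surface: for instance, a nonorientable surface assembled from a chain of about $\gamma$ cross-caps or handles arranged around a great circle, with $G$ generated by reflections and rotations in $O(5)$. We refine this by assigning to each of roughly $\gamma/C$ ``slots'' one of two local combinatorial types, say a cross-cap versus a handle of Euler characteristic contribution chosen so that $\chi$ stays fixed, or two inequivalent local orientation-reversing patterns. A word $w\in\{0,1\}^{\lfloor \gamma/C\rfloor}$ then determines a topological surface $M_w\cong M_\gamma$ with a subgroup $G_w$ of symmetries, for instance a reflection group fixing the slot structure. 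Distinct words modulo the finite dihedral symmetry of the chain give at least $2^{\gamma/C}/(2\gamma)\ge e^{c\gamma}$ classes.

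\textbf{Step 2: existence of maximizers.} For each $w$, we invoke the equivariant existence theory from the earlier part of the paper. A $G_w$-invariant metric maximizing the normalized first eigenvalue exists provided the equivariant supremum strictly exceeds the values attainable by degenerations. These degenerations are bubbling off spheres or $\RP^2$'s, or collapse to lower-complexity $G_w$-surfaces. The maximizer then gives a minimal immersion into some sphere by first eigenfunctions, and the symmetry constraints force the target to be $\Sph^4$. Embeddedness and the bound $\area<8\pi$ should follow as in Theorem \ref{Tmain}: we have $\area = \Lambda_1^{G_w}/2 < 8\pi$ as long as the equivariant supremum is below $16\pi$, which holds because $\Lambda_1^{G}(M)\le \Lambda_1(M) < 16\pi$ for nonorientable surfaces whose maximizer is not a double cover. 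Embeddedness then follows from the Li–Yau-type fact that area $<8\pi$ precludes double points. \textbf{The main obstacle} is the strict gap inequality, which must hold uniformly for every $w$ simultaneously. I would try to handle it by a gluing comparison, concatenating the local models already used for Theorem \ref{Tmain} and showing that the inequality is local in each slot.

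\textbf{Step 3: noncongruence.} Suppose $\psi_w$ and $\psi_{w'}$ were congruent. Then the ambient isometry would carry one surface to the other, and conjugate the full isometry group of $\psi_w(M)$ to that of $\psi_{w'}(M)$. To conclude that $w\sim w'$ we need a congruence invariant that remembers the word. A natural choice is the topological type of the pair $(M,\Fix(\sigma))$ for the reflections $\sigma\in G_w$ preserving the surface, since this records which slots are cross-caps. Alternatively, we can use the nodal and topological data of the restriction to each slot. If the full symmetry group might be larger than $G_w$, we only need that it is finite of order $O(\gamma)$. Then each congruence class contains at most $O(\gamma)$ words, which still leaves $e^{c\gamma}$ classes after shrinking $c$.
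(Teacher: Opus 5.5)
\textbf{Step 2 is a genuine gap.} It is where all the analysis lives, and you leave it open. You call the strict inequality in Theorem \ref{thm:existence} the main obstacle and hope to make it ``local in each slot''. No such locality is available. The paper gets strictness by choosing a family whose degenerations can be classified exactly.

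It uses $D_2$ acting by reflections with a disk chamber: a $k$-gon whose edges are labeled by an admissible word in $\rho_1,\rho_2,\rho_3$. Then $\chi=4-k$ for every word, unlike your cross-cap/handle slots, which change $\chi$ by different amounts; your Step 1 never fixes a concrete family with this control. Lemma \ref{deg.char} shows every elementary degeneration collapses the orbit of a single twisted-oval edge, i.e.\ it undoes a M\"obius-band attachment at a $D_2$-fixed point. Theorem \ref{TD2} then inducts on genus. The degenerate surface carries a smooth maximizer, by induction from $\RP^2$ or by \cite[Theorem 8.7]{KKMS} when it is orientable. Lemma \ref{twist.bd}, an equivariant form of the cross-cap argument of \cite[Section 3]{KSP}, shows that reattaching the M\"obius band at that smooth point strictly raises $\Lambda_1^T$.

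Your justification of $\area<8\pi$ is also wrong. The bound $\Lambda_1(M)<16\pi$ fails for nonorientable surfaces: $\Lambda_1$ is unbounded in the genus, and Theorem \ref{Tmainhigh} even gives $\Lambda_1^T(M)=16\pi$ for hyperelliptic $\Z_2$-actions. In the paper, area $<8\pi$ and embeddedness come from Lemma \ref{Lgraphsph}. That lemma requires the group to contain a basic reflection, so that the surface is a doubling of an equatorial $\Sph^2$. The target $\Sph^4$ comes from Proposition \ref{mult.prop}, a nodal-domain count on the chamber under the four mixed Dirichlet--Neumann conditions giving multiplicity at most $5$, together with nonorientability. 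You assert both facts without any mechanism.

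\textbf{Step 3.} Your primary invariant is not a congruence invariant. A congruence conjugates $G_{w'}$ into the symmetry group of $\psi_w(M)$, but need not conjugate it onto $G_w$, so the reflections of $G_w$ cannot be read off from the surface. The paper handles this with Natanzon's bound (Theorem \ref{thm:Nat2}) and conditions (W1)--(W5). The reflections induced on the orientation cover have too many ovals for two of them to generate a finite non-abelian group, so all of them commute. Then (W4) forces $\Gamma_w=\Gamma_{w'}$, and the species together with (W5) force $w=w'$.

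Your fallback, however, is sound once made precise, and it is a genuinely simpler route:
\begin{enumerate}
\item Isometries of $(M,g)$ lift injectively to conformal automorphisms of the orientation double cover. So the Hurwitz bound gives $|\mathrm{Isom}(M,g)|\le 84(\gamma-2)$ for $\gamma\ge 3$.
\item A congruence makes $G_{w'}$ a $D_2$-subgroup of this group. The labeled chamber of that subgroup's action recovers $w'$ up to dihedral symmetry and relabeling.
\item Hence each congruence class absorbs at most $O(\gamma^2)$ word classes, not $O(\gamma)$ as you claim.
\end{enumerate}
Against the roughly $2^k/k$ classes of admissible words, this loses only a polynomial factor and avoids Natanzon's theorem and (W1)--(W5) entirely. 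But it presupposes the existence, embeddedness, area and dimension statements that your Step 2 does not supply.
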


\begin{remark}
    While it is widely believed that there exist only finitely many noncongruent minimal embeddings in $\Sph^3$ of each topological type, it seems quite possible that the number of embedded minimal surfaces of a given topological type in $\mathbb{S}^4$ could be infinite. However, we expect minimal embeddings by first eigenfunctions, or with area $<8\pi$, to exhibit more rigidity independently of codimension, and it is possible that the exponential growth rate in Theorem \ref{Tcount} is qualitatively sharp under these constraints.
\end{remark}

We also note that our embeddings realize each given surface as a nonorientable minimal \emph{doubling} of an equatorial $\Sph^2 \subset \Sph^4$ in the sense that the image $M$ meets $\Sph^2$ in a collection of curves and isolated points, and the nearest-point projection $\pi$ is well-defined on $M$ and restricts to a $2$-sheeted covering map on $M \setminus \Sph^2$. 

Among our examples,
we highlight one highly-symmetric family $\Mtetk$. Let $T\subset \R^3$ be a collection of $4$ rays 
emanating from the origin and pointing towards the vertices of a regular tetrahedron, and define $\Mtetinfty:=(\R^2\times T)\cap\Sph^4$ to be a collection of $4$ hemispheres. In Section \ref{Stet}, we find a sequence $\Mtetk$ of embedded non-orientable minimal surfaces with $\chi(\Mtetk) = 4-3k$ converging to $\Mtetinfty$ as $k\to\infty$. The symmetry group of $\Mtetk$ has order $24k$ and combines the action of the tetrahedral group on the $4$ hemispheres with the dihedral action along the common circle; see \ref{Ltetsym} for a precise description.  For these reasons, the family $\Mtetk$ might be considered a non-orientable analog in $\Sph^4$ of the Lawson surfaces $\xi_{k-1,1}$ in $\Sph^3$. 

By taking an appropriate blow-up, we can describe the local model for the surfaces $\Mtetk$ along the singular circle: the limit of this blow-up sequence leads to a new complete minimal surface $\MtetR\subset \R^4$, whose properties we summarize below.  

\begin{theorem}
\label{TR4}
There is a properly embedded minimal surface $\MtetR$ in $\R^4$ which is complete, non-orientable, and singly-periodic.  Furthermore, $\MtetR$ has density two at infinity, and its unique tangent cone at infinity is $\R \times T$.
\end{theorem}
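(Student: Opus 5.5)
We obtain $\MtetR$ as a blow-up limit of the family $\Mtetk$ along the singular great circle $C = \Sph^4\cap(\R^2\times\{0\})$ of $\Mtetinfty$.

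\textbf{Blow-up sequence.} Fix a point $p\in C$ lying on one of the dihedral symmetry axes. Set
\[
\Sigma_k := k\,(\Mtetk - p),
\]
viewed in $T_p\Sph^4\cong\R^4$ after composing with the exponential map. Each $\Sigma_k$ is minimal for metrics converging smoothly to the Euclidean metric on compact sets. Since $\Mtetk$ has area $<8\pi$, monotonicity gives uniform local area bounds: the density of $\Sigma_k$ in any ball is at most $2$, up to $o(1)$. The dihedral symmetry of order $2k$ along $C$ rescales to a translation of period $\approx 2\pi$ in the $C$-direction, together with the accompanying tetrahedral rotations. Hence any subsequential limit is invariant under a discrete translation group $\Z$.

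\textbf{Smooth convergence.} We need uniform curvature estimates for $\Sigma_k$ on compact sets. This is the step I expect to be the main obstacle.
\begin{itemize}[leftmargin=*]
\item First try to read the estimates off the construction in Section~\ref{Stet}. There the surfaces come from equivariant eigenvalue optimization, so one has quantitative control of the conformal structure near $C$.
\item Failing that, argue by contradiction. If curvature blew up at a faster scale, a further rescaling would produce a complete embedded minimal surface in $\R^4$ with density $\le 2$ at infinity. Such a surface would be either a plane, or one with quadratic area growth and total curvature controlled by the density.
\item Exclude both outcomes using the symmetry group, which includes rotations of the tetrahedral group acting on the normal $\R^3$, together with the $2$-sheeted projection structure (the doubling description in the introduction). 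A plane is not invariant under these rotations combined with the translation. Alternatively, the finitely many fixed-point constraints force the blow-up surface to be a multiplicity-one plane, contradicting the concentration of curvature.
\end{itemize}
Granting the estimates, Allard/White regularity together with embeddedness yields smooth subsequential convergence to a properly embedded, complete minimal surface $\MtetR$.

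\textbf{Properties of the limit.}
\begin{itemize}[leftmargin=*]
\item \emph{Nonflatness.} $\MtetR$ is not a union of planes. The tetrahedral symmetry exchanges the four rays, so the limit cannot be a single plane. Transversal convergence to $\R\times T$ near the axis would also contradict the fixed-point structure at the chosen symmetry point.
\item \emph{Nonorientability.} This passes to the limit: an orientation-reversing loop in $\Mtetk$ can be chosen inside a fundamental period near $C$, of length $O(1/k)$, and the smooth convergence preserves it. Concretely, one exhibits a symmetry of $\Mtetk$ that fixes a curve and reverses the normal orientation, and which descends to the limit.
\item \emph{Single periodicity.} The limit is invariant under the translation of period $2\pi$, and the period is nonzero because the orbit of $p$ under the dihedral group is $\approx 2\pi/k$-separated.
\end{itemize}

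\textbf{Behaviour at infinity.} Far from the $C$-direction axis, on scales $R\ll k$, the surfaces $\Sigma_k$ converge to $\R\times T$, rescaled from $\Mtetinfty$ near $C$. A diagonal argument gives that every tangent cone at infinity of $\MtetR$ is supported in $\R\times T$. The density of $\R\times T$ at infinity is $4\cdot\tfrac12=2$, and the density upper bound $2$ is inherited from monotonicity. Therefore every tangent cone at infinity is $\R\times T$ with multiplicity one, which gives uniqueness and density two. Uniqueness could also be obtained from periodicity: the quotient by $\Z$ is a surface in $\R^3\times S^1$ whose ends are asymptotic to four half-cylinders, by Allard--Almgren/Simon-type uniqueness for cylindrical multiplicity-one cones.
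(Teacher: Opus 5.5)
\textbf{The main gap is the smooth-convergence step.} You correctly flag it as the main obstacle, but you do not resolve it, and neither of your fallbacks works.

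Eigenvalue optimization gives no quantitative control of the conformal structure near $F$, so the first fallback has nothing to draw on.

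Your contradiction argument, via a further rescaling at a faster curvature scale, targets the wrong scenario. Isolated bubbles are harmless: wherever the varifold limit is smooth with multiplicity one, Allard's theorem already gives smooth convergence. Your symmetry considerations also would not exclude them. After rescaling faster than $k$ the translational period diverges, and nonflat complete embedded minimal surfaces of density two do exist (e.g.\ holomorphic curves like $\{w=z^2\}$).

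The real danger is that the varifold limit $V$ of the rescaled $M_k$ is singular along the entire axis $S$, e.g.\ $V=T\times\R$ or a doubled plane. This is exactly what would occur if $M_k$ desingularized the four half-spheres at a scale much smaller than the spacing $2\pi/(3k)$ of the points of $F\cap M_k$. Nothing in your proposal rules it out. The paper's missing ingredients are:
\begin{itemize}
\item[(a)] $V$ is a limit of embedded surfaces with density at most $2$ and is a mod-$2$ cycle. Hence every singular point of $V$ has density exactly $2=\Theta_V(\infty)$, and $V$ is a cone about each such point. The singular set is therefore a linear subspace invariant under the limiting group, so it is either empty or all of $S$.
\item[(b)] The bound $\int_{M_k\cap B_{R/k}(q)}|A|^2\le CR$. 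It comes from $|A|^2=4-2K$, Gauss--Bonnet with $\chi(M_k)=4-3k$, and area below $8\pi$, which together give $\int_{M_k}|A|^2\le 28\pi k$. The $\Gamma_k$-symmetry then forces this total to be shared equally among $\gtrsim k/R$ disjoint balls centered on $F\cap M_k$.
\item[(c)] $\varepsilon$-regularity/quantization. With $\int_{B_R}|A|^2$ bounded, curvature can blow up at only finitely many points of $B_R$, which contradicts blow-up along the infinite set $S\cap B_R$.
\end{itemize}
It is the \emph{linear} growth of the topology in $k$, not symmetry alone, that excludes the singular limit.

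\textbf{Two further steps are unjustified as written.}

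\emph{Tangent cone at infinity.} Your diagonal argument fails. Blowing down the limit corresponds to viewing $M_k$ at scales $r$ with $kr\to\infty$ but possibly $r\to0$. At those scales the scale-one convergence to the four half-spheres says nothing, and intermediate regimes could look different. The paper argues by symmetry instead:
\begin{itemize}
\item Any blow-down is invariant under all translations along $S$, since the periods shrink to zero.
\item It is also invariant under the tetrahedral group $G\le O(3)$ acting on the normal $\R^3$.
\item Hence it is $W\times\R$, with $W$ a $G$-invariant stationary one-dimensional cone of density at most $2$.
\item The $G$-orbits in $\Sph^2$ have $4$, $6$, $12$ or $24$ points, and the $4$-point orbits are regular tetrahedra. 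This pins down both the cone and the density two.
\end{itemize}

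\emph{Nonorientability.} The twisted oval through adjacent points of $F\cap M_k$ need not be short, so an orientation-reversing loop of length $O(1/k)$ near $q$ must be proved, not asserted. The paper does this in three steps:
\begin{itemize}
\item It shows the intrinsic distance between adjacent points of $F\cap M_k$ is $O(1/k)$. Otherwise the limit would have infinitely many translated components, each with density at least $1$ at infinity, violating the bound $2$.
\item It reflects a short connecting path into a single chamber.
\item It takes the $D_2$-orbit of that path, giving a short loop homotopic to the twisted oval.
\end{itemize}
A symmetry that fixes a curve and reverses the normal orientation is not, by itself, evidence of nonorientability.
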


The surface $\MtetR$ appears to be the first example of a complete nonorientable minimal surface embedded in $\R^4$ without continuous symmetry (there are embedded minimal M{\"o}bius bands \cite{Mira, Oliveira, FraserSchoen, FraserSargent, Alarcon} with $\Sph^1$-symmetry). The full symmetries of $\MtetR$ are described in Section \ref{SR4}.  Based on its symmetries and asymptotics, the complete surface $\MtetR$ might be considered a non-orientable analog in $\mathbb{R}^4$ of the most symmetric member of the family of singly-periodic embedded minimal surfaces discovered by Scherk in 1834 \cite{Scherk}.

\subsection{Equivariant eigenvalue optimization.}

Theorems \ref{Tmain} and \ref{Tcount} are proved using a connection between isoperimetric problems in spectral geometry and minimal surfaces in $\Sph^n$ which has emerged in the last thirty years. Following work of Hersch \cite{Hersch}, Berger \cite{Berger}, Yang-Yau \cite{YangYau}, Li-Yau \cite{LiYau}, and others (for example, \cite{MR, ElSoufi}) concerning metrics maximizing the first eigenvalue of the Laplacian, in 1996, Nadirashvili discovered \cite{Nadirashvili} that metrics $g$ on a closed surface which are critical points for the normalized Laplace eigenvalues
\begin{align*}
\bar{\lambda}_1(M, g) = \area(M,g) \lambda_1(M, g)
\end{align*}
are (up to scaling) precisely those induced by branched minimal immersions of $M$ into $\Sph^n$ by first Laplace eigenfunctions.  In particular, \cite{Nadirashvili} shows that $\bar{\lambda}_1$-maximizing metrics must be induced by branched minimal immersions $M \rightarrow \Sph^n$ whose coordinates are first Laplace eigenfunctions. 

Explicit $\bar{\lambda}_1$-maximizing metrics have been found on the projective plane, torus, Klein bottle, and the genus-two orientable surface \cite{LiYau, Nadirashvili,EGJ, Jakobson, NaySh}, with the metrics on the projective plane and Klein bottle induced by the embeddings to $\Sph^4$ mentioned above. The general existence problem for maximizing metrics was recently settled in work of Petrides \cite{PetridesMax} and Karpukhin-Petrides-Stern \cite{KSP}, who showed that each closed surface admits a maximizing metric.  This followed developments in \cite{KKMS}, which also introduced a method to construct minimal surfaces in low-dimensional spheres by maximizing normalized Laplace eigenvalues in the presence of a discrete symmetry group; see ~\cite{Franz} for a recent survey. In \cite{KKMS}, we used this method to find new oriented minimal surfaces embedded in $\Sph^3$, and developments in \cite{KSP} now allow us to extend this method to the nonorientable case. 

As in \cite{KKMS}, the choice of symmetries with respect to which $\bar{\lambda}_1$ is to be maximized plays a key role at every step of the argument, from the existence of maximizing metrics to the structure of the associated minimal surfaces. In \cite{KKMS}, we defined a \emph{basic reflection surface} (BRS) to be any pair $(M,\Gamma)$, where $M$ is a closed surface, and $\Gamma\leq \Diff(M)$ is a finite group of diffeomorphisms containing an involution $\tau\in\Gamma$, whose fixed-point set $M^\tau$ contains a curve, such that the quotient $M/ \langle \tau \rangle$ has genus zero. This gives rise to a large class of pairs $(M,\Gamma)$, and in \cite{KKMS} (see Lemma 5.35 and Corollary 5.9) we showed that any metric maximizing $\bar{\lambda}_1$ among $\Gamma$-invariant metrics on any BRS must be induced by a minimal embedding of $M$ into $\Sph^n$ for some $n$, realizing $M$ as a minimal doubling of an equatorial $\Sph^2 \subset \Sph^n$, with area below $8\pi$. When $M$ is \emph{orientable}, it was also shown moreover in \cite{KKMS} that $n=3$, and $\Gamma$-invariant metrics maximizing $\bar{\lambda}_1$ exist on all simple families of oriented basic reflection surfaces $(M,\Gamma)$.

For nonorientable surfaces $M$, the collection of topologically distinct basic reflection surfaces $(M,\Gamma)$ becomes larger, and neither existence of $\Gamma$-invariant $\bar{\lambda}_1$-maximizing metrics nor the multiplicity bound $n=4$ holds in general. For instance, among the order-two basic reflection surfaces $(M,\langle \tau\rangle)$ generated by a single reflection $\tau$, there is essentially a unique such structure on each orientable $M$, while in the nonorientable setting, these are naturally partitioned into three subcollections, which we call \emph{spherical}, \emph{elliptic}, and \emph{hyperelliptic} depending on the topology of the fixed point set of $\tau$. The spherical class includes surfaces realizing every nonorientable topology, and within this class we can indeed prove both existence of $\bar{\lambda}_1$-maximizing metrics and the multiplicity bound $n=4$, providing one family sufficient for the proof of Theorem~\ref{Tmain}. In the elliptic case, we can show that $\Z_2$-invariant maximizers exist, but are induced by minimal embeddings into $\Sph^6$. Strikingly, in the hyperelliptic case, which includes the majority of order-two BRS on nonorientable surfaces with large Euler characteristic, we find that $\Z_2$-invariant maximizers \emph{never exist}, with the conformal classes of any maximizing sequence diverging to the boundary of the moduli space. This seems to be the first example of an optimization problem for the first eigenvalue $\bar{\lambda}_1$ for which maximizing metrics are shown not to exist, and demonstrates that the conditional existence theorem~\cite[Theorem 8.1]{KKMS} is in a sense sharp; see Remark~\ref{rmk:no_max} for further discussion.

To prove Theorem~\ref{Tcount}, we work with a large class of \emph{order-four} BRS, consisting of nonorientable surfaces endowed with an action of $D_2 = \Z_2 \times \Z_2$ in such a way that each non-trivial element of $D_2$ is a basic reflection. Within this family, we prove that invariant $\bar{\lambda}_1$-maximizers always exist, and multiplicity bounds force the corresponding minimal surface to lie in $\Sph^4$. The pairs $(M,\Gamma)$ in this family have a relatively simple combinatorial description, with each of these $D_2$ actions on the nonorientable surface $M$ with $\chi(M)=4-k$ obtained by gluing 4 copies of a $k$-gon, where $k\geq 3$. More precisely, we start with a $k$-gon whose edges are labeled by the non-trivial elements of $D_2$, with each label appearing at least once, and apply Vinberg's construction (see Section \ref{SD2main}) so that the $k$-gon becomes the fundamental domain of a $D_2$ action such that the edges labeled by $a\in D_2$ are fixed by $a$. Different sets of labels (up to permutation of the elements of $D_2$ and automorphisms of the $k$-gon) then give rise to topologically distinct actions of $D_2$, and it is easy to see that the number of distinct labellings grows exponentially in $k$. To complete the proof of Theorem~\ref{Tcount}, one needs to show that the same metric cannot be invariant under too many different actions of $D_2$; this is achieved in Section \ref{ssCount}, where we identify a set of labels of size $e^{ck}$ giving rise to distinct minimal surfaces.

Furthermore, some of these examples can be endowed with additional symmetries coming from the automorphisms of the $k$-gon. In particular, we identify four highly symmetric families
 called $\eta^{v}_k, \eta^{c}_k, \eta_k^e,$ and $ \Mtetk$. The surfaces $\eta^{v}_k$, $\eta^{c}_k$, and $\eta_k^e$ have the same isometry group, $(D_k\times D_2)\rtimes \Z_2$, as the Lawson surfaces $\xi_{k-1, 1}$ \cite{Lawson}. The family $\Mtetk$ described above is obtained from a regular $3k$-gon labeled by the three elements in the cyclic order $abcabc\ldots abc$ and imposing the maximal symmetry consistent with this labeling. The existence and structure of $\bar{\lambda}_1$-maximal metrics in this family and their behavior in the large topology limit, including the extraction of the entire surface $\MtetR$, is the content of Sections \ref{Stet} and \ref{SR4}. 
 
With the exception of some highly symmetric families like $\Mtetk$, we expect that for most of the families of nonorientable minimal surfaces in $\mathbb{S}^4$ constructed here, the associated varifolds converge in large topology limits to the equatorial $2$-spheres (with multiplicity two) corresponding to the fixed point set of a basic reflection $\tau$. In this case, we identify three plausible models for the geometry of such a minimal surface near the fixed-point set.  Before proceeding further, we note that the fixed-point set of any BRS is a disjoint union of isolated points and embedded loops, called \emph{ovals}, each of which is \emph{twisted} or \emph{untwisted} according to whether a tubular neighborhood is nonorientable or orientable.    

First, the graph $\{ (z, z^2)  \in \C^2 : z \in \C\}$ is a minimal surface in $\C^2$ which is a $\Z_2$-symmetric minimal doubling (in the sense of \ref{ddoub}) of a $2$-plane, with one isolated fixed-point, providing one possible model for the behavior of a basic reflection surface in the vicinity of an isolated fixed-point.  Second, the conformal minimal immersion
\begin{align*}
X: \Sph^1 \times \R \rightarrow \C^2,
\quad
X(t, \theta) = (2 \sinh t \, e^{i \theta}, \cosh 2 t \, e^{i2\theta}) \in \C^2
\end{align*}
of the cylinder $\Sph^1 \times \R$ descends to an embedding of the M{\"o}bius band in $\C^2$; this M{\"o}bius band is a $\Z_2$-symmetric doubling of a $2$-plane in $\C^2$ with a single twisted oval, and provides a plausible model for the geometry in the vicinity of a twisted oval.  Finally, the catenoid in $\R^3$ is a natural model for the geometry in the vicinity of an untwisted oval.  

\subsection{Outline of the paper}  In Section \ref{S:BRS} we collect some definitions and facts we need about basic reflection surfaces.  Section \ref{Smodels}  introduces all of the group actions on nonorientable surfaces that we consider later in the article. In Section \ref{S:Prelim}, we review key results from the existence theory for $\bar{\lambda}_1$-maximizing metrics with symmetry

Sections \ref{SD2main} and \ref{SZ2main} are logically independent of each other and contain the main results with  $D_2$-symmetry and $\Z_2$-symmetry, respectively. Existence and limiting behavior of the minimal surfaces $\Mtetk$ is treated in Section \ref{Stet}, while the extraction of the entire minimal surface $\MtetR$ in $\mathbb{R}^4$ from a blow-up procedure is carried out in Section \ref{SR4}. The remaining highly symmetric families are treated in Sections~\ref{SOther} and~\ref{ssSph2main}. We also include an appendix reviewing several important auxiliary results.

\subsection*{Acknowledgments} The authors would like to thank Alexander West for pointing out the implications of our results for the Willmore existence problem.

M.K. and D.S. would like to thank the Isaac Newton Institute for Mathematical Sciences, Cambridge, for support and hospitality during the programme Geometric spectral theory and applications, where work on this paper was undertaken. This work was supported by EPSRC grant EP/Z000580/1.

M.K. acknowledges support of the European Research Council. Funded by the European Union (ERC Starting Grant EOaMS 101219977). Views and opinions expressed are however those of the authors only and do not necessarily reflect those of the European Union or the European Research Council. Neither the European Union nor the granting authority can be held responsible for them.

P.M.  was partially supported by Simons Foundation Collaboration Grant 838990.  D.S. was partially supported by NSF grant DMS 2404992 and the Simons Foundation.  R.K. was also supported by National Science Foundation grant DMS-1928930 at the Simons Laufer Mathematical Sciences Institute (formerly MSRI) in Berkeley.  R.K. is grateful to NC State University for its hospitality during the Spring
2023 semester, where part of this work was carried out.

\subsection*{AI usage disclosure} No AI was used in the development of the proofs or writing of this article.  
LLM tools, such as Perplexity AI and ChatGPT Free, were used solely for the purposes of literature review. 

\section{Basic Reflection Surfaces}
\label{S:BRS} 

Here we recall facts from \cite{KKMS} regarding closed Basic Reflection Surfaces. The exposition below is condensed, and we refer to \cite{KKMS} for details. Although we are primarily interested in non-orientable surfaces, we need to discuss orientable surfaces as well as they naturally arise as topological degenerations of the latter.

Let $(M,g)$ be a closed Riemannian manifold. For an isometry $\tau$ of $M$, we denote by $M^\tau$ the set of its fixed points. We say that $\tau$ is a {\em reflection} if it is an involution and for some $p\in M^\tau$ the differential $d_p\tau$ is a reflection across a hyperplane.   

\begin{definition}
\label{dref}
If $M$ is a closed, connected Riemannian surface and $\tau$ is a reflection on $M$, the pair $(M, \tau)$ is called a \emph{reflection surface}.  
For a reflection surface $(M,\tau)$, denote by $\pi : M \rightarrow M / \langle \tau \rangle$ the canonical projection. 

A reflection surface $(M,\tau)$ is called {\em basic} (or {\em BRS}\,) if $\pi(M)$ is an orientable surface of genus $0$.
\end{definition}

We now describe the classification of reflection surfaces up to equivariant homeomorphisms (i.e. homeomorphisms preserving the reflections). Recall that by~\cite{Kobayashi}, $M^\tau$ is a disjoint union of totally geodesic submanifolds of $M$, hence, it consists of isolated fixed points and embedded circles. We call the latter an {\em oval} of $\tau$. An oval can be {\em twisted} or {\em untwisted} depending on whether its tubular neighbourhood is non-orientable or orientable respectively. 

\begin{definition}
\label{dspecies}
The \emph{species} associated to a reflection surface $(M, \tau)$ is
\begin{align*}
[\gamma, \epsilon : F, C_+, C_-]
\end{align*} 
where $\epsilon=+$ if $M$ is orientable, $\epsilon = -$ if $M$ is non-orientable; $\gamma$ is the genus of $M$, i.e. $\chi(M) = 2-2\gamma$ if $\varepsilon = +$ and $\chi(M) = 2-\gamma$ if $\varepsilon = -$; $F, C_+, C_-$ are the number of isolated fixed-points, untwisted ovals, and twisted ovals, respectively.  For later use, let $C=C_+ + C_-$ denote the total number of ovals.
\end{definition}

\begin{theorem}[Classification of basic reflection surfaces]
\label{Tclass}

The species
\begin{align*}
[\gamma, \epsilon : F, C_+, C_-]
\end{align*}
of a basic reflection surface $(M, \tau)$ satisfies the following:
\begin{enumerate}[label=\emph{(\roman*)}]
\item $C\geq 1$.
\item If $\epsilon = +$, then $F=C_-=0$ and $C_+ = \gamma+1$. 
\item If $\epsilon = -$, then $F + 2C = \gamma+ 2$ and $F+C_-$ is even and positive.
\end{enumerate}
Moreover, each species
 consistent with (i)-(iii) is realized by a basic reflection surface which is unique up to equivariant homeomorphism.  
\end{theorem}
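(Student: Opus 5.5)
We analyze the quotient orbifold $\Sigma = M/\langle\tau\rangle$, which for a BRS is a sphere with boundary. Near a point of an oval, $\tau$ is locally a reflection across a line, so the oval maps to a boundary circle of $\Sigma$. Near an isolated fixed point, $d_p\tau$ is an involution without fixed lines; this forces $-\Id$, so the point is a cone point of order $2$ in the interior. Thus $\Sigma$ is a sphere with $C$ holes and $F$ cone points. Since $\tau$ is a reflection, some oval exists, giving (i).

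For the Euler characteristic, we pass to the interior: $M\setminus M^\tau \to \Sigma^\circ\setminus\{\text{cone points}\}$ is a double cover. Removing a twisted oval (whose tubular neighbourhood is a Möbius band) does not change $\chi$, and an untwisted oval has $\chi=0$ as well. Counting, $\chi(M) = 2\chi(\Sigma) - F$, where $\chi(\Sigma)=2-C$ counts the cone points as ordinary points. Indeed, $\chi(M) = 2(2-C-F)+F$, i.e. $\chi(M)=4-2C-F$. In the nonorientable case, $2-\gamma=4-2C-F$, which is $F+2C=\gamma+2$. In the orientable case, $2-2\gamma=4-2C-F$.

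The remaining constraints come from the monodromy of the double cover $M\setminus M^\tau\to\Sigma^*$, where $\Sigma^*$ is $\Sigma$ minus its cone points. This cover is a class $w\in H^1(\Sigma^*;\Z_2)$. The class is nontrivial on a loop around each cone point, since locally the cover is $z\mapsto z^2$. For a loop parallel to a boundary oval, $w$ is nontrivial exactly when the oval is twisted: a twisted oval has a connected Möbius band collar minus the core, while an untwisted oval has two annuli that are swapped. The loops around all $F$ cone points and $C$ boundary components sum to zero in $H_1(\Sigma^*)$, because $\Sigma^*$ is planar. Hence $F+C_-$ is even.

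Orientability is governed by whether $\tau$ preserves a local orientation along the cover. If $M$ is orientable, $\tau$ reverses orientation, since it is a reflection near an oval point. An orientation-reversing involution has no isolated fixed points, because $-\Id$ preserves orientation in dimension $2$; so $F=0$. Its ovals are untwisted, so $C_-=0$. Then $F+C_-=0$ is even, and $2-2\gamma=4-2C$ gives $C_+=\gamma+1$, which is (ii).

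Conversely, suppose $F+C_-=0$. Then $w$ vanishes on every boundary loop, hence on all of $H_1$ of the planar $\Sigma^*$. So the cover is trivial, and $M$ is the double of $\Sigma$ along untwisted ovals, which is orientable. Therefore in the nonorientable case $F+C_->0$, completing (iii).

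For existence and uniqueness, we take any data $(F,C_+,C_-)$ with $C\ge1$ and $F+C_-$ even. Let $\Sigma$ be a sphere with $C$ holes and $F$ marked points, and choose $w$ with the prescribed values on each boundary loop and cone-point loop. Such a $w$ exists because the only relation among these loops is that their sum vanishes, and the parity condition makes the values compatible. We then take the double cover, fill in the cone points, and glue along the ovals. This yields $(M,\tau)$ with the given species, and the Euler characteristic formula recovers $\gamma$.

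Uniqueness follows because $w$ is determined by its values on a basis of $H_1(\Sigma^*;\Z_2)$, and these loops form such a basis up to the single relation. Homeomorphisms of the planar orbifold permuting boundary components and cone points then identify any two realizations with the same data.

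We expect the main obstacle to be justifying carefully that $\tau$-equivariant homeomorphism type is determined by $(\Sigma,w)$. This amounts to lifting homeomorphisms of $\Sigma$ through the branched double cover and across the boundary identifications.
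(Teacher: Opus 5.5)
Your argument is correct, but it is not the paper's route. The paper gives no argument of its own here: it defers to Theorem 5.9 of \cite{KKMS}, a more general classification of reflection surfaces, and Remark \ref{Rbas} indicates surgery models realizing each species.

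You instead work with the quotient $\Sigma=M/\langle\tau\rangle$ and the class $w\in H^1(\Sigma^*;\Z_2)$ of the free double cover $M\setminus M^\tau\to\Sigma^*$. The BRS hypothesis makes $\Sigma^*$ planar, so $w$ is pinned down by its values on the $C+F$ peripheral loops, subject only to the single relation among them. That one observation gives the parity of $F+C_-$, the triviality of the cover when $F+C_-=0$ (so $M$ is the orientable double of $\Sigma$), existence, and uniqueness. The Riemann--Hurwitz count $\chi(M)=4-2C-F$ then supplies the genus relations.

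What your approach buys is a short, self-contained proof, in the same quotient/Riemann--Hurwitz spirit as the paper's own proof of Proposition \ref{Pelem0}. What the cited classification buys is generality: for non-planar quotients $w$ also takes values on handle classes, and one must classify mapping-class-group orbits. It also ties directly to the equivariant-surgery presentation used later in the degeneration analysis.

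In a write-up, state explicitly three routine points you passed over:
\begin{enumerate}
\item The constructed surface is connected: the cover is connected when $w\neq 0$, and when $w=0$ the two sheets are joined along $C\geq 1$ ovals.
\item It has the prescribed $\epsilon$: when $F+C_->0$, this is the contrapositive of your proof of (ii).
\item Equivariant homeomorphisms of the free parts extend over neighbourhoods of $M^\tau$, by coning over $S^1_a$ at isolated fixed points and via the mapping-cylinder description of the annular and M\"obius collars of the ovals.
\end{enumerate}
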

\begin{proof}
This is a special case of Theorem 5.9 in \cite{KKMS}. 
\end{proof}

\begin{remark}
\label{Rbas}
Basic reflection surfaces realizing each species from Theorem \ref{Tclass} may be constructed by the following surgery procedure:
\begin{enumerate}
\item For $(\gamma, \epsilon) = (\gamma, +)$, double an orientable genus $0$ surface with $\gamma+1$ boundary components along its boundary.  See Figure \ref{Fdoub}.
\begin{figure}[ht]
\centering
\def\ra{4}
\begin{tikzpicture}[scale=.7,transform shape]
\draw [draw=black] (0, 0) ellipse ({\ra-\ra/6} and {\ra/3.5});
\foreach \j in {1, 2}
{
\foreach \i in {-1, 1}
{
\draw [draw=black] ({\i*(\j*\ra/3)-\i*\ra/6}, 0 ) ellipse ({\ra/12 and \ra/20}); 
\draw [draw=black, densely dashed] ({(5*\ra/6}, 0) arc (0:180:{\ra/8} and {\ra/40});
\draw [draw=black] ({(5*\ra/6}, 0) arc (0:-180:{\ra/8} and {\ra/40});
\draw [draw=black, densely dashed] ({\i*(\ra/2)-\i*\ra/6+\ra/12}, 0) arc (0:180:{\ra/12} and {\ra/40});
\draw [draw=black] ({\i*(\ra/2)-\i*\ra/6+\ra/12}, 0) arc (0:-180:{\ra/12} and {\ra/40});
\draw [draw=black, densely dashed] ({(-5*\ra/6+\ra/4}, 0) arc (0:180:{\ra/8} and {\ra/40});
\draw [draw=black] ({(-5*\ra/6+\ra/4}, 0) arc (0:-180:{\ra/8} and {\ra/40});
}
}
\node at (\ra/90, 0) {$\boldsymbol{\cdots}$}; 
\end{tikzpicture}  
        \caption{A basic reflection surface with topological type $(\gamma, +)$.}
        \label{Fdoub}
\end{figure}
\item For $(\gamma, \epsilon) = (\gamma, -)$, start with a genus $n: = (\gamma-C_- -2C_+)/2$ surface with an involution $\tau$ fixing $2n+2$ isolated points. Replace neighborhoods of $C_-$ of these points with crosscaps and extend $\tau$ in a natural way.  Finally, remove $C_+$ pairs of $\tau$-invariant disks, and identify the boundaries of these disks via $\tau$.  See Figure \ref{Fhandlebody}.
\begin{figure}[ht]
\begin{tikzpicture}[scale=.7]
                \begin{scope}[xscale=.6]
         \draw[thick, dashed] (-4, 0)--(-2, 0);
         \draw[thick, dashed] (-1, 0)--(2, 0);
         \draw[thick, dashed] (3, 0)--(6, 0);
         \draw[thick, dashed] (7.5, 0)--(10, 0);
         \draw[thick, dashed] (11.5, 0)--(14, 0);
         \draw[thick, dashed] (15, 0)--(17, 0);
          \draw (0,0) ellipse (2.5cm and 1cm);
          \draw (-1.5,.07) arc (180:360:1.5cm and .3cm);
          \draw (1.2,-.1) arc (0:180:1.2cm and .3cm);
          \draw (4,0) ellipse (2.5cm and 1cm);
          \draw (2.5,.07) arc (180:360:1.5cm and .3cm);
          \draw (5.2,-.1) arc (0:180:1.2cm and .3cm);
          \fill[color=white](2,.6) to [bend right=50] (1.45,0) to [bend right = 50] (2,-.6) to [bend right=50] (2.55,.0) to [bend right=50] (2,.6);
          \begin{scope}[xshift=8.5cm]
          \draw (0,0) ellipse (2.5cm and 1cm);
          \draw (-1.5,.07) arc (180:360:1.5cm and .3cm);
          \draw (1.2,-.1) arc (0:180:1.2cm and .3cm);
          \draw (4,0) ellipse (2.5cm and 1cm);
          \draw (2.5,.07) arc (180:360:1.5cm and .3cm);
          \draw (5.2,-.1) arc (0:180:1.2cm and .3cm);
          \fill[color=white](2,.6) to [bend right=50] (1.45,0) to [bend right = 50] (2,-.6) to [bend right=50] (2.55,.0) to [bend right=50] (2,.6);
          \fill[color=white](-2.5,0) ellipse (.53cm and .5cm);
          \end{scope}
          \end{scope}
          \node at (3.8, 0) {$\boldsymbol{\cdots}$}; 
          \end{tikzpicture}
          \caption{An involution $\tau$ on a genus $n$ surface fixing $2n+2$ points.  As depicted, $\tau$ is the half-turn about the dashed line.}
                   \label{Fhandlebody}
          \end{figure}
\end{enumerate}
\end{remark}

\begin{definition}
\label{Nelliptic}
Let $(M,\tau)$ be a nonorientable basic reflection surface, and let $g$ be such that $2g+2 = F+C_-$.  The pair $(M, \tau)$ is called
\begin{itemize}
\item \emph{spherical} if $g = 0$;
\item \emph{elliptic} if $g = 1$; and
\item \emph{hyperelliptic} if $g \geq 2$.
\end{itemize}
\end{definition}
Notice that $g$ in Definition \ref{Nelliptic} is the genus of the orientable surface associated to $M$ via the surgery description of $M$ in Remark \ref{Rbas}(2).

\section{Models for nonorientable minimal surfaces}
\label{Smodels}
In this section, we describe some natural group actions $T: \Gamma \times M \rightarrow M$ on nonorientable closed surfaces $M$ with the property that $(M, \rho)$ is a basic reflection surface for at least one element $\rho \in \Gamma$. Throughout, let $D_k$ denote the dihedral group with $2k$ elements. 

\subsection{$\Z_2$-symmetry} 
\label{ssZ2}
Let $M$ be a nonorientable surface and $T: \Gamma \times M \rightarrow M$ be an action of the group $\Gamma = \Z_2 = \langle \tau \rangle $ such that $\tau$ is a basic reflection.  By Remark \ref{Rbas}(2), any such pair $(M, T)$ is constructed by applying an equivariant surgery procedure to a genus-$g$ orientable surface ($g \geq 0$) equipped with an involution fixing $2g+2$ points.  Recalling Definition \ref{Nelliptic}, the pair $(M, T)$ is called spherical, elliptic, or hyperelliptic corresponding to the cases where $g$ is zero, one, or bigger than one.

Theorems \ref{Tmain}, \ref{Telliptic}, and \ref{Tmainhigh} address the problem of whether the supremum $\Lambda_1^T(M)$ is realized by a smooth, $T$-invariant $\bar{\lambda}_1$-maximizing metric, and the following trichotomy holds: 
\begin{itemize}
\item If $(M,T)$ is spherical, $\Lambda^T_1(M)$ is realized by a smooth metric, induced by a minimal, $T$-invariant embedding of $M$ into $\Sph^4$.
\item If $(M, T)$ is elliptic, $\Lambda^T_1(M)$ is realized by a smooth metric, induced by a minimal, $T$-invariant embedding of $M$ into $\Sph^6$.
\item If $(M, T)$ is hyperelliptic, $\Lambda^T_1(M)$ is not realized by a smooth metric.
\end{itemize}

\subsection{$D_k\times \Z_2$-symmetry}
\label{ssSph2}
Let $(M, \tau)$ be a spherical basic reflection surface as in \ref{ssZ2}, so that $M$ is constructed from applying equivariant surgeries to a sphere $S^2$ equipped with an involution $\tau$ fixing two poles. Observe that $S^2$ admits an action of $D_k$ preserving the same two poles. If the set of pairs of centers $(p_i,\tau(p_i))$ of identified disks is $D_k$-invariant actions, then $\Z_2$-action in \ref{ssZ2} extends to a $D_k \times \Z_2$-action $T$ on $M$. Furthermore, a similar construction works with a slightly larger group $D_k\times \Z_2$, where $\Z_2$ acts on $S^2$ by the reflection interchanging the poles.

\begin{example}[The families $\zeta_k^i$]
\label{ex:zetaki}
The simplest families of surfaces with a $D_{2k} \times \Z_2$ action are obtained by placing $2k$ centers of disks along the equator of $S^2$ preserved by the $D_k$ action, and gluing each pair of boundary circles which are exchanged by the involution. We call this surface $\zeta_k^0$; it has $k$ untwisted ovals, zero twisted ovals, and two isolated fixed-points. Furthermore, one can attach a cross-cap at either of the $2$ fixed points of $S^2$, we call these surfaces $\zeta_k^i$, where $i$ is the number of attached cross-caps. If $i$ is even, then we endow $\zeta_k^i$ with an additional reflection interchanging the poles leading to the $D_{2k}\times\Z_2\times\Z_2$ symmetry group. The Euler characteristic of $\zeta_k^i$ is $2-2k-i$, which covers all possible non-orientable topologies.
\end{example}

Theorem \ref{TsphDk} shows each $(M, T)$ arising this way has a smooth $T$-invariant metric realizing the supremum $\Lambda^T_1(M)$, induced by a minimal $T$-invariant embedding of $M$ into $\Sph^4$.

\subsection{$D_2$-symmetry}
\label{ssD2}

Let us first briefly recall Vinberg's construction---a convenient way to generate manifolds with actions of groups by reflections, see also Section~\ref{ssD2chamber}. Let $\Omega$ be a surface with boundary, $\Gamma$ be a finite group and $\gamma_i$ be a set of order $2$ generators of $\Gamma$. For each $i$, let $\Omega^{\gamma_i}\subset \bd \Omega$ be a collection of closed segments in $\bd\Omega$ such that $\bd\Omega = \cup_i\Omega^{\gamma_i}$ and interiors of $\Omega^{\gamma_i}$ in $\bd\Omega$ are disjoint. It is convenient to depict the boundary $\bd\Omega$ as a piecewise smooth curve with edges corresponding to connected components of $\Omega^{\gamma_i}$, in which case we refer to $\gamma_i$ as a ``label'' of the corresponding edge. A surface $\Omega$ with those labels is referred to as {\em chamber}, while the edges are called {\em walls} or {\em panels}.
For each $x\in\bd\Omega$ define $\Gamma_x$ to be the subgroup of $\Gamma$ generated by $\gamma_i$ such that $x\in\Omega^{\gamma_i}$. One can define $M(\Gamma,\Omega)$ as a quotient space of $\Gamma\times \Omega$ by the equivalence  relation $\sim$ defined by $(g, x) \sim (h, y) \iff x = y$ and $g^{-1} h \in \Gamma_x$. Then $M$ is a closed surface with the action of $\Gamma$ such that $\Omega$ can be identified with its fundamental domain in such a way that the stabilizer of $x\in\bd\Omega$ is exactly $\Gamma_x$. 

\begin{example}
Let $\Gamma = D_2=\Z_2 \times \Z_2$, and let $\{\rho_1, \rho_2\}$ be a set of generators. Let $\Omega$ be a disk with boundary seprated into $2g+2$ edges labeled by $\rho_1$ and $\rho_2$ in an alternating fashion, see Figure \ref{fig:chamber_or} on the left. Then $N_g=M(D_2,\Omega)$ is an orientable genus $g$ surface, where $\rho_1,\rho_2$ are basic reflections and $\rho_3 = \rho_1\rho_2$ is an involution with $2g+2$ fixed points, as is $\tau$ on Figure~\ref{Fhandlebody}.
\end{example}

\begin{figure}[h]
\begin{minipage}{.48\textwidth}
\begin{tikzpicture}[scale=1.2]
  \def\R{2}
  \foreach \k in {0,...,9} {
    \coordinate (P\k) at ({\R*cos(90+36*\k)},{\R*sin(90+36*\k)});
  }
  \draw[thick]
    (P0) to[out=180,in=36] node[midway, above] {$\rho_1$} (P1)
    to[out=306, in=-18] node[midway, above left] {$\rho_2$} (P2)
    to[out=252, in=108] node[midway, left] {$\rho_1$} (P3)
    to[out=378, in=54] node[midway, below left] {$\rho_2$} (P4)
    to[out=324, in=180] node[midway, above right] {$\rho_1$} (P5)
    to[out=90, in=126] node[midway, below] {$\rho_2$} (P6)
    to[out=36, in=252] node[midway, above left] {$\rho_1$} (P7)
    to[out=162, in=198] node[midway, right] {$\rho_2$} (P8)
    to[out=108, in=324] node[midway, right] {$\rho_1$} (P9)
    to[out=234, in=270] node[midway, above] {$\rho_2$} (P0);
\end{tikzpicture}
\end{minipage}
\hfill
\begin{minipage}{.48\textwidth}
\begin{tikzpicture}[scale=1.2]
  \def\R{2}
    \def\a{10}
    \def\r{1.6}
  \foreach \k in {0,...,9} {
    \coordinate (P\k) at ({\R*cos(90+36*\k)},{\R*sin(90+36*\k)});
  }
  \foreach \k in {1,...,2} {
    \coordinate (Q\k) at ({\R*cos(90+12*\k)},{\R*sin(90+12*\k)});
  }

  \coordinate (R1) at ({\R*cos(-90 -\a)},{\R*sin(-90 - \a)});
  \coordinate (R2) at ({\r*cos(-90 +\a)},{\r*sin(-90 + \a)});

  \coordinate (S1) at ({\r*cos(90+36*7+10)},{\r*sin(90+36*7+10)});
  \coordinate (S2) at ({\r*cos(90+36*8-10)},{\r*sin(90+36*8-10)});

  \draw[thick]
    (P0) to[out=180,in=12] node[midway, above] {$\rho_1$} (Q1)
    (Q2) to[out=204, in=36] node[midway, above] {$\rho_1$} (P1)
    to[out=306, in=-18] node[midway, above left] {$\rho_2$} (P2)
    to[out=252, in=108] node[midway, left] {$\rho_1$} (P3)
    to[out=378, in=54] node[midway, below left] {$\rho_2$} (P4)
    to[out=324, in = 170] node[midway, above] {$\rho_1$} (R1) 
    (R2) to[out = 40,in=126] node[midway, below] {$\rho_2$} (P6)    
    to[out=36, in=252] node[midway, above left] {$\rho_1$} (P7)
    to[out = 162,in=-74] node[midway, left] {$\rho_2$}(S1)   
    (S2) to[out = 74, in = 198] node[midway, left] {$\rho_2$} (P8)
     to[out=108, in=324] node[midway, right] {$\rho_1$} (P9)
    to[out=234, in=270] node[midway, above] {$\rho_2$} (P0);
    \draw[dashed]
    (Q1) to[out=282, in=-42] node[midway, below] {$\rho_3$} (Q2)
    (R1)  to[out=80, in= 170] node[midway, above] {$\rho_3$} (R2)
    (S1) to[out = 170, in =-170] node[midway, right] {$\rho_3$} (S2);
\end{tikzpicture}
\end{minipage}
    \caption{Depictions of (left) a chamber $\Omega$ for an orientable surface $N$ with a pair of basic reflections $\rho_1$ and $\rho_2$, and  (right) a chamber for a non-orientable surface with a $D_2$-action resulting from $\Omega$ by applying surgeries as in \ref{ssD2}.}
    \label{fig:chamber_or}
\end{figure}

\begin{figure}
    \centering
\begin{tikzpicture}
    \def\h{1}
    
  \draw[draw opacity = 0, pattern={Lines[angle=45,distance=6pt,line width=0.4pt]},
  pattern color=black]

    (0,0) 
    arc (180:360:1cm and 0.3cm) 
    -- (2,\h)
    arc (0:-180:1cm and 0.3cm) 
    -- (0,0);

    \draw[very thick]
    (0,0) -- node[midway,left]{$\rho_1$} (0,{2*\h})
    (2,0) -- node[midway,right]{$\rho_1$} (2,{2*\h})
    (0,\h) arc (180:360:1cm and 0.3cm) node[midway,above]{$\rho_3$};

    \draw
    (0,0) arc (180:360:1cm and 0.3cm)
    (0,2*\h) arc (180:360:1cm and 0.3cm)
    (2,2*\h) arc (0:180:1cm and 0.3cm);

    \draw[dashed]
    (2,0) arc (0:180:1cm and 0.3cm);

    \draw[very thick,dashed]
    (2,\h) arc (0:180:1cm and 0.3cm);

\end{tikzpicture}
    \begin{tikzpicture}[scale=.8]
        \def\w{3.5}

    \draw[draw opacity = 0, pattern={Lines[angle=45,distance=6pt,line width=0.4pt]}, pattern color=black]

    (0,0) -- (\w,0) -- (\w,1) -- (0,1)
    -- (0,0);

    \draw[very thick, postaction={decorate},
    decoration={markings, mark=at position 0.7 with {\arrow{Stealth}}}]
    (0,0) -- node[midway,left]{$\rho_1$} (0,2);
    \draw[very thick, postaction={decorate},
    decoration={markings, mark=at position 0.7 with {\arrow{Stealth}}}]
    ({2*\w},2) -- node[midway,right]{$\rho_1$} ({2*\w},0);
    \draw[very thick]
    (0,1) --node[midway,above]{$\rho_3$} (\w,1) --node[midway,above]{$\rho_3$} ({2*\w},1)
    (\w,0) --node[midway,right]{$\rho_2$} (\w,1) --node[midway,right]{$\rho_2$} (\w,2);
    \draw
    (0,0) -- ({2*\w},0)
    (0,2) -- ({2*\w},2);
    \draw[white](0, -.5)--({2*\w}, 0); 
    \end{tikzpicture}
    
    \caption{The group action on the cylinder (left) attached on the edge labeled $\rho_1$ and the group action on the M\"obius band (right) attached at the vertex. In each case, the thick lines are fixed point sets and the dashed region is the new region in the chamber.}
    \label{fig:surg1}
\end{figure}

A nonorientable surface $M$ with a $D_2$-action $T$  such that $\rho_3$ is a basic reflection can then be constructed by applying the following $D_2$-equivariant surgeries to $N_g$. One surgery is to let $p\in \mathrm{int}(N_g^{\rho_1})$, cut out small disks around $p$ and $\rho_2(p) = \rho_3(p)$ and identify the boundary points using $\rho_3$. Equivalently, one can attach a cylinder at those circles, see Figure~\ref{fig:surg1} on the left. The other surgery is to let $p\in N^{\rho_1}\cap N^{\rho_2}$ be a point fixed by the $D_2$-action, cut out a small disk around $p$ and glue together the boundary points using $\rho_3$. Equivalently, one can attach a M\"obius band at the boundary circle, see Figure~\ref{fig:surg1} on the right. The corresponding chamber for the action on any such pair $(M, T)$ can be obtained by applying the following operation to the chamber $\Omega$ for $N_g$
\begin{itemize}
\item Replace an isolated fixed-point for $\rho_3$ with a new edge labeled $\rho_3$.
\item Replace a segment of a $\rho_1$- or $\rho_2$-edge with a new edge with label $\rho_3$.
\end{itemize}  
Note that each isolated fixed-point for $\rho_3$ is a vertex of $\Omega$, so the first operation amounts to replacing such a vertex with a $\rho_3$-labeled edge, and also that one could add several $\rho_3$-edges along any edge. Figure~\ref{fig:chamber_or} depicts an example with one of each type of surgery.

The same set of non-orientable surfaces with $D_2$-action can be obtained via Vinberg's construction starting from a polygon labeled by any string  of three letters $\rho_1$, $\rho_2$, $\rho_3$ as long as all labels are present.  

Theorem \ref{TD2} shows that any such pair $(M, T)$ admits a smooth $T$-invariant metric realizing $\Lambda^T_1(M)$, induced by a minimal $T$-invariant embedding of $M$ into $\Sph^4$.  

\subsection{$D_k\times D_2$-symmetry}
\label{ssDkD2}

Let $g \in \N$ and $N = N_g$ be an orientable genus-$g$ surface with a $D_2$-action as in \ref{ssD2}.  Let $k = g+1$.  Then $N$ admits an action of $\Gamma = D_k \times D_2$, arising from the label-preserving action of $D_k$ on the $2k$-gon $\Omega$. The chamber for the $D_k\times D_2$ action is a quadrilateral bounded by two edges fixed by reflections $\gamma_1, \gamma_2$ generating $D_k$, followed by two edges of $\Omega$ fixed by $\rho_1$ and $\rho_2$. 
Making surgeries as in \ref{ssD2} in a way that preserves this action leads to actions $T$ of $\Gamma$ on 
the resulting nonorientable surfaces $M$.

\begin{figure}[ht]
\begin{minipage}{.3\textwidth}
\begin{tikzpicture}[scale=1.5]
  \def\R{2}
    \def\a{10}
    \def\r{1.6}
    \def\s{2.3}
    \coordinate (O) at (0,0);
    \coordinate (P) at ({\r*cos(0)},{\r*sin(0)});
    \coordinate (C) at ({\s*cos(36)}, {\s*sin(36)});
    \coordinate (S) at ({\r*cos(72)},{\r*sin(72)});
    \draw[thick]
    (O) -- node[midway, below] {$\gamma_1$} (P)
    to node[midway, right] {$\rho_2$}  (C)to  node[midway, above] {$\rho_1$} (S) 
    -- node[midway,above left] {$\gamma_2$} (O);
     \draw[dashed]
    (O) -- node[midway, above] {$\gamma$} (C);
\end{tikzpicture}
\end{minipage}
\begin{minipage}{.3\textwidth}
    \centering
\begin{tikzpicture}[scale=1.5]
  \def\R{2}
    \def\a{10}
    \def\r{1.6}
    \coordinate (O) at (0,0);
    \coordinate (P) at ({\r*cos(0)},{\r*sin(0)});
    \coordinate (Q) at ({\R*cos(18)},{\R*sin(18)});
    \coordinate (R) at ({\R*cos(54)},{\R*sin(54)});
    \coordinate (S) at ({\r*cos(72)},{\r*sin(72)});
    \coordinate (CC) at ({\R*cos(36)}, {\R*sin(36)});
    \draw[thick]
    (O) -- node[midway, below] {$\gamma_1$} (P)
    to[out=90,in = -162]  node[midway, left] {$\rho_2$} (Q)
    to[out = 108, in = -36] node[midway,right] {$\rho_3$} (R)
    to[out=-126, in = -18] node[midway,above] {$\rho_1$} (S)
    -- node[midway,above left] {$\gamma_2$} (O);
        \draw[dashed]
    (O) -- node[midway, above] {$\gamma$} (CC);
\end{tikzpicture}
\end{minipage}
\begin{minipage}{.3\textwidth}
\centering
\begin{tikzpicture}[scale=1.5]
  \def\R{2}
    \def\a{10}
    \def\r{1.6}
    \def\s{2.3}
    \def\t{.4}
    \coordinate (O) at (0,0);
    \coordinate (P) at ({\r*cos(0)},{\r*sin(0)});
    \coordinate (C) at ({\s*cos(36)}, {\s*sin(36)});
    \coordinate (S) at ({\r*cos(72)},{\r*sin(72)});
    \draw[thick, domain = 0: 72] plot ( {\t*cos(\x)}, {\t*sin(\x)});
    \draw node[]{$\rho_3$} (0, 0); 
    \draw[thick] ({\t}, {0}) -- node[midway, below] {$\gamma_1$} (P)
    to node[midway, right] {$\rho_2$}  (C)to  node[midway, above] {$\rho_1$} (S) 
    -- node[midway,above left] {$\gamma_2$} ({\t*cos(72)}, {\t*sin(72)});
     \draw[dashed]
   ({\t*cos(36)}, {\t*sin(36)}) -- node[midway, above] {$\gamma$} (C);
\end{tikzpicture}
\end{minipage}
    \caption{Depictions of chambers for $D_k \times D_2$ actions on (left) the orientable surface $N_g$, (middle) a non-orientable surface obtained by replacing the $\rho_1\rho_2$-vertex with a $\rho_3$-labeled edge, and (right) a nonorientable surface obtained by replacing the $\gamma_1\gamma_2$-vertex with a $\rho_3$-labelled edge.  In each case, either side of the bisector $\gamma$ is a chamber for the action of $\Gamma_k = (D_k \times D_2) \rtimes \Z_2$.} 
    \label{fig:chamber_Dk_nor}
\end{figure}

Note that the (unlabeled) chamber on the left of Figure~\ref{fig:chamber_Dk_nor} is symmetric with respect to the reflection $\gamma$ about the bisector of the angle between the $\gamma_1$ and $\gamma_2$ edges. However, the labels are not preserved by $\gamma$, which means that this action cannot be extended to the action of $\Z_2\times D_k\times D_2$. However, since $x\in N_g^s$ iff $\gamma(x)\in N_g^{\gamma\tau\gamma^{-1}}$, the action can be extended to the semidirect product $(D_k\times D_2)\rtimes\Z_2$, where the action of $\gamma$ by conjugations on generators of $D_k\times D_2$ is given by 
\begin{align}
\label{EconjDkD2}
\gamma\gamma_2\gamma = \gamma_1
\quad
\text{and}
\quad
\gamma\rho_1\gamma = \rho_2,
\end{align}
and then necessarily $\gamma\rho_3 = \rho_3\gamma$. For each $k > 2$, this is exactly the description of the action of the full isometry group of the Lawson surface $\xi_{k-1, 1}$. To obtain non-orientable surfaces with such an action, one performs equivariant surgeries on the triangular chamber, which is a half of the picture on the left of  Figure~\ref{fig:chamber_Dk_nor}.

\begin{example}[The families $\eta^{v}_k$, $\eta^{c}_k$, and $\eta^e_k$]
\label{ExDkD2}
We call the simplest families of surfaces arising in this way $\eta^{v}_k$, $\eta^{c}_k$, and $\eta^e_k$.  Fundamental chambers for $\eta^v_k$ and $\eta^c_k$ are depicted  in the second and third images in Figure~\ref{fig:chamber_Dk_nor}, and for each of these surfaces, it is straightforward to check that $\rho_3$ provides a basic reflection. The surface $\eta^{v}_k$ has $2k$ twisted ovals for $\rho_3$, arising from replacing all of the isolated fixed-points of $N_g$ with twisted ovals, and no isolated fixed-points or untwisted ovals.  The surface $\eta^{c}_k$ has $2k$ isolated fixed-points, two untwisted ovals and no twisted ovals for $\rho_3$, resulting from replacing the $\gamma_1\gamma_2$ vertex in the chamber with an edge labelled $\rho_3$.  Finally, $\eta^e_k$ has $2k$ untwisted ovals, arising from adding a $\rho_3$ edge to the middle of each $\rho_1$ and $\rho_2$ edge, $2k$ isolated fixed-points, and no twisted ovals.  (In the context of the first image in Figure \ref{fig:chamber_Dk_nor} this amounts replacing the $\gamma_2\rho_1$ vertex with an edge labeled $\rho_3$.)

The surface $\eta^{v}_k$ has Euler characteristic $4-4k$ and nonorientable genus $4k - 2$, while $\eta^{c}_k$ has Euler characteristic $-2k$ and nonorientable genus $2k+2$, and $\eta^e_k$ has Euler characteristic $4-6k$ and nonorientable genus $6k-2$.
\end{example}

\begin{remark}
If $M$ is one of $\eta^v_k, \eta^e_k$, or any of the examples discussed in \ref{ssD2}, then each of the reflections $\rho_1,\rho_2, \rho_3$ is a basic reflections (see Lemma \ref{LD2quo}(iv)) on $M$.  On the other hand, for $\eta^c_k$, the reflection $\rho_3$ is the only one among $\rho_1,\rho_2,\rho_3$ which is basic.  In general, one could consider other families of $D_2$-symmetric (or $D_k\times D_2$-symmetric) BRS for which $\rho_1,\rho_2$ are \emph{not} basic, obtained by surgeries which remove disks in the interior of the chamber $\Omega$ and identify the boundary points using $\rho_3$.  For simplicity, $\eta^c_k$ is the only surface we discuss arising in this way.
\end{remark}

\subsection{The family $\Mtetk$} 
\label{ssD3}

Here is another highly-symmetric family of examples.  For any $k\geq 1$, let $\Mtetk$ be the nonorientable surface with a $D_2$-action arising from surgeries as in \ref{ssD2} which replace every other vertex on the $2k$-gon fundamental chamber with $\rho_3$-labelled edges.  Then $\Mtetk$ has $k$ isolated fixed-points,  $k$ twisted ovals, and no untwisted ovals.  In particular, $\Mtetk$ has Euler characteristic $4-3k$ and nonorientable genus $3k-2$.

The surface $\Mtetk$ can be naturally endowed with a larger group action as follows.  The fundamental chamber for $\Mtetk$ is a $3k$-gon; consider the dihedral group $D_{3k}$, and note that the subgroup $\Z_k$ generated by rotations by $2\pi/k$ is normal. 
Then one has $D_{3k}/\Z_k\cong D_3\cong S_3\cong \mathrm{Aut}(D_2)$, where the last isomorphism follows from the fact that any permutation of the nonidentity elements of $D_2$ is an automorphism of $D_2$. Therefore, one can form $\Gamma_k = D_2 \rtimes_\phi D_{3k}$, where $\phi\colon D_{3k}\to \mathrm{Aut}(D_2)$ is the map described above. Let us now explain how to endow $\Mtetk$ with an action of $\Gamma_k$. 

Notice that the edges of the fundamental chamber for the $D_2$ action have labels $\rho_1, \rho_2, \rho_3$ repeating $k$ times in counterclockwise order, so that the labels are invariant under the action of $\Z_k$, which is consistent with $\ker\phi = \Z_k$. Let $\gamma_1$ and $\gamma_2$ be reflections generating $D_{3k}$, whose images under $\phi$ are also reflections generating the factor $D_3$; to see that $\Gamma_k$ acts on $\Mtetk$ in the desired way, it is sufficient to check that $\gamma_1$ and $\gamma_2$ act on labels of the chamber via the automorphism $\phi$. Let $w_1$ be the word $\rho_1\rho_2\rho_3$ and $w_1^{-1} = \rho_3\rho_2\rho_1$ be the same word read from right to left. Observe that $\gamma_1$ transforms $w_1$ to $w_1^{-1}$, which agrees with the action of an automorphism of $D_2$ that exchanges $\rho_3$ and $\rho_1$ and preserves everything else. A similar argument for $\gamma_2$ with $w_2 = \rho_2\rho_3\rho_1$ shows that the action of $\gamma_2$ is consistent with the action by the element of $\mathrm{Aut}(D_2)$ that exchanges $\rho_3$ and $\rho_2$. 

For the reader's convenience, properties of some of the surfaces discussed in this section are summarized in the following table.

\begin{table}[ht]
\caption{Families of nonorientable minimal surfaces in $\Sph^4$}
\label{table1}
\begin{tabular}{@{}|c|c|c|c|c|c|@{}}
\toprule
Surface                   & Genus  & Symmetry Group                & $C_+$ & $C_-$ & $F$  \\ \midrule
$\Mtetk$ & $3k-2$ & $D_2 \rtimes D_{3k} $          & $0$   & $k$   & $k$  \\ \midrule
$\eta^{v}_k$         & $4k-2$ & $(D_k\times D_2)\rtimes \Z_2$ & $0$   & $2k$  & $0$  \\ \midrule
$\eta^{c}_k$        & $2k+2$ & $(D_k\times D_2)\rtimes \Z_2$ & $2$   & $0$   & $2k$ \\ \midrule
$\eta^{e}_k$        & $6k+2$ & $(D_k\times D_2)\rtimes \Z_2$ & $0$   & $2k$   & $2k$ \\ \midrule
$\zeta_k^i, i=0,2$          & $2k+i$   & $D_{2k} \times \Z_2 \times \Z_2$ & $k$   & $i$   & $2-i$  \\
\midrule
$\zeta_k^1$          & $2k+1$   & $D_{2k} \times \Z_2$ & $k$   & $1$   & $1$  \\
\bottomrule
\end{tabular}
\end{table}

\section{Preliminaries on Eigenvalue Optimization}
\label{S:Prelim}

Let $M$ be a closed surface and $T\colon \Gamma\times M\to M$ be a faithful action of a finite group $\Gamma$ whose elements act by diffeomorphisms.  Denote by $\Met_T(M)$ the space of Riemannian metrics on $M$ invariant under this action.  By a standard averaging argument, $\Met_T(M)$ is nonempty, so the supremum
\begin{align*}
\Lambda_1^T(M):=\sup\{\bar{\lambda}_1(M,g)\mid g\in \Met_T(M)\}.
\end{align*}
is well-defined. 

We now restate \cite[Theorem 2.12]{KKMS}, which asserts that a metric realizing $\Lambda^T_1(M)$ is induced by a minimal immersion into $\Sph^n$.

\begin{theorem}
\label{thm:symm_crit_L}
Suppose that $g$ is a $T$-invariant metric, critical for $\bar\lambda_k$ in the class of $T$-invariant metrics on $M$. Then there exists an orthogonal representation $\rho\colon \Gamma\to O(n+1)$ and an equivariant branched minimal immersion $\Phi\colon (M,g)\to \Sph^{n}$ to the unit sphere such that the components of $\Phi$ are $\lambda_k(M,g)$-eigenfunctions and $\Phi^*g_{\Sph^n} = \alpha g$ for some $0<\alpha\in\R$.
\end{theorem}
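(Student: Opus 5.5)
The statement is the equivariant version of Nadirashvili's criticality result, and I would prove it by repeating his argument inside the space of $T$-invariant metrics and then symmetrizing the resulting map. Fix a $T$-invariant metric $g$ that is critical for $\bar\lambda_k$ among $T$-invariant metrics, and let $E = E_k(g)$ be the $\lambda_k(M,g)$-eigenspace. Because $g$ is $\Gamma$-invariant, $\Gamma$ acts on $E$ by pullback, $f\mapsto f\circ T_\gamma^{-1}$, and this action is orthogonal for the $L^2(g)$ inner product. This gives the orthogonal representation $\Gamma\to O(E)$.

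\textbf{Step 1: first-order condition.} Consider an analytic family of $T$-invariant variations $g_t = g + t h$, with $h$ a $\Gamma$-invariant symmetric $2$-tensor. Standard perturbation theory (Berger, El Soufi--Ilias) says the branches of $\lambda_k(g_t)$ near $t=0$ have one-sided derivatives equal to eigenvalues of the quadratic form
\begin{align*}
Q_h(u) = -\int_M \langle \tau(u), h\rangle \, dv_g, \qquad \tau(u) = du\otimes du - \tfrac14\big(|du|^2 + \lambda_k u^2\big)g ,
\end{align*}
restricted to $E$, once we also normalize by area. In dimension two the area-derivative term combines with this into a trace-free expression.

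Criticality of $\bar\lambda_k$ within the invariant class means the following: for every $\Gamma$-invariant $h$, the form $Q_h$ on $E$ is neither positive definite nor negative definite. Nadirashvili's convexity (Hahn--Banach) argument, run in the space of $\Gamma$-invariant $L^2$ tensors, then produces finitely many $u_1,\dots,u_m\in E$ such that $\sum_i \tau(u_i)$ annihilates every $\Gamma$-invariant $h$. The separation works because the invariant tensors form a closed subspace and the convex hull of $\{\tau(u):\|u\|=1\}$ is compact.

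\textbf{Step 2: symmetrize.} Annihilating all invariant $h$ says only that the $\Gamma$-average of $\sum_i\tau(u_i)$ vanishes. I will average over the group:
\begin{align*}
\sum_{\gamma\in\Gamma}\sum_i \tau(u_i\circ T_\gamma) = 0 ,
\end{align*}
which holds because $\tau$ is natural under isometries. This new family of eigenfunctions satisfies $\sum \tau = 0$ pointwise. Concretely, $\sum du_j\otimes du_j = \tfrac12\big(\sum |du_j|^2\big) g$ (the trace part) and $\sum |du_j|^2 = \lambda_k \sum u_j^2$ (the trace-free part together with the trace computation).

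Let $\Phi = (u_j)$, collected over all $\gamma$ and $i$. Then $\Delta\sum u_j^2 = 2\lambda_k\sum u_j^2 - 2\sum|du_j|^2 = 0$, so $\sum u_j^2$ is constant, and we normalize it to $1$. The map $\Phi$ is then conformal, with $\Phi^* g_{\Sph^n} = \tfrac{\lambda_k}{2} g$, and its components are eigenfunctions with eigenvalue $\lambda_k$. Hence it is a (branched) minimal immersion by Takahashi's theorem.

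\textbf{Step 3: equivariance.} The collection $\{u_i\circ T_\gamma\}$ is permuted by $\Gamma$, so $\Phi\circ T_\gamma = P_\gamma\Phi$ with $P_\gamma$ a permutation matrix. Replace the components by an orthonormal basis of their span $V\subset E$, which is a $\Gamma$-invariant subspace. The $\Gamma$-action on $V$ then gives $\rho\colon\Gamma\to O(n+1)$, and the property $\sum u_j^2\equiv 1$ is preserved after a linear change making the associated Gram form the identity.

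The main obstacle is Step 1. One has to justify that criticality restricted to invariant variations still yields the Hahn--Banach alternative, and in particular handle the nondifferentiability of $\lambda_k$ by using one-sided derivatives. After that, the averaging in Step 2 is formal.
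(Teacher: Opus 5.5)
You follow the standard route to this statement: first variation along $\Gamma$-invariant $h$, Hahn--Banach separation inside the closed subspace of invariant tensors, $\Gamma$-averaging via naturality, then Takahashi. The paper does not reprove the statement; it quotes it from \cite[Theorem 2.12]{KKMS}. Your averaging and equivariance steps are sound.

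The flaw is the formula Step 1 rests on. Your $\tau(u)$ is not the tensor whose pairing with $h$ gives the branch derivatives of $\bar\lambda_k(g+th)$. So the claim that criticality makes your $Q_h$ indefinite for every invariant $h$ is not something perturbation theory provides. Test it on a conformal direction $h=fg$. The Dirichlet energy is conformally invariant in dimension two, so the branch derivatives of $\bar\lambda_k$ are the eigenvalues on $E$ of $u\mapsto \lambda_k\int_M f\,(\|u\|_{L^2}^2-Au^2)\,dv_g$, where $A=\area(M,g)$. Your form instead gives $-\tfrac12\int_M f\,(|du|^2-\lambda_k u^2)\,dv_g$. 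On the round $\Sph^2$ with $u=x_3$ and $f=x_3^2-\tfrac13$, these equal $-8\pi\int f^2$ and $+\tfrac32\int f^2$ respectively.

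The correct first variation is as follows. For $u\in E$, the branch derivatives of $\lambda_k$ are the eigenvalues of $u\mapsto-\int_M\langle du\otimes du-\tfrac12(|du|^2-\lambda_k u^2)g,h\rangle$, and $\dot A=\tfrac12\int_M \mathrm{tr}_g h$. For $\bar\lambda_k$ the relevant form is therefore
\[
Q_h(u)=-\int_M\langle\sigma(u),h\rangle\,dv_g,\qquad \sigma(u)=A\bigl(du\otimes du-\tfrac12|du|^2g\bigr)+\tfrac{\lambda_k}{2}\bigl(Au^2-\|u\|_{L^2}^2\bigr)g .
\]
Its nonlocal term is $\Gamma$-invariant, so $\sigma(u\circ T_\gamma)=T_\gamma^*\sigma(u)$ still holds. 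Your separation and averaging arguments then go through verbatim and give $\sum_j\sigma(u_j)=0$ pointwise. The trace-free part of this is conformality. The trace part is $A\sum_j u_j^2=\sum_j\|u_j\|_{L^2}^2$, so $|\Phi|^2$ is constant directly, with no harmonicity argument needed. For eigenfunctions this pointwise system is equivalent to your $\sum_j\tau(u_j)=0$. That is why your endpoint is right even though your derivation of it is not.

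In Step 3 no reduction is needed. The $P_\gamma$ are permutation matrices, hence already orthogonal, so the averaged map is equivariant as it stands. If you want a full map, restrict to the $P_\gamma$-invariant linear span of $\Phi(M)$ with its Euclidean inner product. Using an $L^2$-orthonormal basis of $V$ instead requires multiplying by the square root of the Gram matrix. That is allowed, because this matrix commutes with the $L^2$-orthogonal action of $\Gamma$ on $V$.
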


In \cite[Theorem 7.7]{KKMS}, we gave criteria for the existence of a $\Lambda^T_1(M)$-maximizing metric.  We recall these criteria in Theorem \ref{thm:existence} below, after introducing some additional definitions from \cite{KKMS}.

\subsection{Topological degenerations}\label{tdeg}

Let $T:\Gamma\times M\to M$ be an action as above, and let $\fC = \cup c_i\subset M$ be a $\Gamma$-invariant union of disjoint, non-contractible, simple closed curves in $M$. The action of $\Gamma$ can be extended to the boundary of $M\setminus \fC$, and we denote by $M'_\fC$ the surface obtained by contracting each boundary component of  $M\setminus \fC$ to a point, and by $T'_\fC:\Gamma\times M'_\fC\to M'_\fC $ the corresponding action of $\Gamma$ on $M'_\fC$. We say that $(M'_\fC, T'_\fC)$ is obtained from $(M,T)$ by \emph{collapsing} $\fC$, and we denote by $P_\fC\subset M'_\fC$ the $T'_{\fC}$-invariant set of points corresponding to the contracted boundary components. 

Equivalently, $(M'_\fC,T'_{\fC})$ can be obtained by an equivariant surgery along a neighborhood of $\fC$. Indeed, let $\fU = \cup_i \fU_i$ be a $T$-invariant neighborhood of $\fC$, where the sets $\fU_i$ are disjoint tubular neighborhoods of the curves $c_i$. If the normal bundle to $c_i$ is orientable, then $\fU_i\approx\mathbb{S}^1\times \mathbb{B}^1$, so that $\del \fU_i$ is homeomorphic to two copies of $\mathbb{S}^1$. The surgery along $\fU_i$ then amounts to gluing two copies of a disk to $M\setminus \fU_i$ along the common boundary.

If the normal bundle to $c_i$ is not orientable, then $\fU_i$ is homeomorphic to the M\"obius strip, so that $\del \fU_i$ is a single circle. Surgery along $\fU_i$ in this case amounts to gluing a single copy of a disk to $M\setminus \fU_i$.

Since $\fU$ is $T$-invariant, one can arrange all the gluings to be equivariant, so that the restriction of $T$ to $M\setminus \fU$ and $\fU$ respectively extends to the glued disks as well. In this interpretation, the role of $P_\fC$ is played by the centers of disks glued to $M\setminus \fU$.

\begin{definition}
\label{def:degeneration}
Let $M$, $M'$ be closed, connected surfaces on which the group $\Gamma$ acts by $T,T'$ respectively. We say that $(M',T')$ is a \emph{topological degeneration} of $(M,T)$, and write
$(M',T')\prec (M,T)$, if there exists a non-empty $\Gamma$-invariant union of non-contractible disjoint simple closed curves $\fC\subset M$ such that $(M',T')$ is topologically equivalent to a $\Gamma$-invariant connected component $M'_{\fC,0}$ of $(M'_\fC,T'_\fC)$.
\end{definition}

\begin{definition}
\label{dedeg}
We say that $(M',T')$ is an {\em elementary degeneration} of $(M,T)$ if $(M',T')\prec (M,T)$ and there does not exist $(M'',T'')$ such that $(M',T')\prec (M'',T'')\prec (M,T)$.
\end{definition}

\begin{theorem}[Theorem 7.7, \cite{KKMS}]
\label{thm:existence}
For a surface with a finite group action $(M,T)$, set $\delta_{or} = 1$ if it has a $T$-invariant simple closed $2$-sided curve, $\delta_{or} = 0$ otherwise, and set $\delta_{nor} = 1$ if it has a $T$-invariant simple closed $1$-sided curve, $\delta_{nor} = 0$ otherwise. Then one has
\begin{equation}
\label{ineq:MM'}
\Lambda_1^T(M)\geq\max_{(M',T')\prec (M,T)} \left\{\Lambda_1^{T'}(M'), 8\pi\delta_{or}, 12 \pi \delta_{nor}\right\}.
\end{equation}

 Furthermore, if ~\eqref{ineq:MM'} is strict, then
 there exists a metric achieving $\Lambda_1^T(M)$, smooth up to possibly finitely many conical singularities.
\end{theorem}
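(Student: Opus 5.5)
The statement to be proved is Theorem~\ref{thm:existence} (Theorem~7.7 of \cite{KKMS}). It consists of a lower bound \eqref{ineq:MM'} followed by a conditional existence claim, and I would prove the two parts separately.

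\textbf{The lower bound.} This is obtained by comparison constructions, each given by a $T$-invariant family of metrics on $M$.

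First, for a degeneration $(M',T')\prec(M,T)$, take a $T'$-invariant metric on $M'$ with $\bar\lambda_1$ close to $\Lambda_1^{T'}(M')$. Its restriction to the complement of small invariant disks around the points $P_\fC$ extends equivariantly over the glued-in region $\fU$, by attaching thin necks or small Möbius bands, with arbitrarily small area. The other components of $M'_\fC$ are given tiny area and joined by thin necks. A standard capacity argument (Rayleigh quotients with log cut-offs near points of capacity zero) then shows that $\bar\lambda_1$ of the glued metric converges to $\bar\lambda_1(M')$.

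Second, an invariant two-sided curve allows us to bubble a round sphere equivariantly. Concretely, we glue an invariant pair of spheres, or a sphere attached along the curve, with vanishing neck. This yields $\bar\lambda_1\to 8\pi$ by the same capacity argument, using the fact that $\Lambda_1(\Sph^2)=8\pi$ and that the rest of the surface can be shrunk.

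Third, an invariant one-sided curve allows gluing in an $\RP^2$ carrying the round metric, which gives $12\pi$.

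\textbf{Existence when the inequality is strict.} Take a maximizing sequence and, following Petrides and Karpukhin--Petrides--Stern \cite{PetridesMax,KSP}, restrict to $T$-invariant metrics within each conformal class. Within a fixed invariant conformal class, the equivariant version of the conformal maximization theory produces a maximizer, smooth away from finitely many conical points, induced by a harmonic map to a sphere. The maximizer can be taken $T$-invariant by averaging over $\Gamma$ and using the equivariant criticality statement, Theorem~\ref{thm:symm_crit_L}.

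It remains to show that the conformal classes do not degenerate. This requires two ingredients:

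\begin{enumerate}
\item Inside a fixed conformal class, bubbling of spheres or projective planes would cost the thresholds $8\pi$ and $12\pi$. Any such bubble must sit at an invariant point or curve configuration, which is why only the invariant curves $\delta_{or},\delta_{nor}$ appear.
\item If the conformal classes leave every compact set of the $\Gamma$-invariant moduli space, then some invariant family of collars pinches. In that case the limiting $\bar\lambda_1$ is bounded by $\Lambda_1^{T'}(M')$ of the collapsed surface, possibly with added bubble contributions. This is a spectral convergence result for degenerating hyperbolic metrics, adapted to the invariant setting.
\end{enumerate}

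\textbf{Main obstacle.} The hardest step is the degeneration analysis: showing that the eigenvalue mass of a pinching sequence is accounted for by a single component $M'_{\fC,0}$ together with bubbles, and that splitting among several components cannot beat the maximum. For a first eigenvalue, splitting cannot increase $\bar\lambda_1$. The delicate point is ensuring that the collar regions carry no first-eigenfunction energy. I would first handle this using the harmonic-map description of near-maximizers together with quantitative energy decay along long collars.
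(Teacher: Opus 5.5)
This statement is not proved in the paper. It is quoted from \cite[Theorem 7.7]{KKMS}, where it is obtained by running the existence theory of \cite{PetridesMax,KSP} for $\Gamma$-invariant metrics. Your overall architecture follows that same route: comparison metrics for the lower bound, then invariant conformal maximizers plus control of degenerating invariant conformal classes. However, the part of your sketch that explains the thresholds $8\pi\delta_{or}$ and $12\pi\delta_{nor}$ is wrong as written, and this is exactly where the symmetry enters.

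\emph{Lower bound.} An ``invariant pair of spheres'' gives $\bar\lambda_1\to 0$, not $8\pi$. Two bubbles of equal area joined through a region of small capacity support a test function equal to $+1$ on one and $-1$ on the other. For the same reason you cannot bubble at a point $p$ unless $p$ is fixed by all of $\Gamma$, since invariance forces a bubble at every point of $\Gamma p$; this is why a curve is needed at all.

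The curve must instead be the \emph{equator} of a single bubble. With an invariant metric, an invariant tubular neighbourhood $A\cong c\times(-1,1)$ of a two-sided invariant curve $c$ carries a $\Gamma$-action through a finite subgroup of $O(2)\times\mathbb{Z}_2$: isometries of $c$, and $\pm1$ in the normal direction. This group acts isometrically on the round $\mathbb{S}^2$ preserving the equator. So you can choose invariant metrics in which $A$ is the round sphere minus two small polar caps. The rest of $M$ is placed at tiny scale at the poles, as two small caps or as a thin handle joining the poles if $c$ is nonseparating; these regions have vanishing capacity, and $\bar\lambda_1\to 8\pi$. For a one-sided $c$, the invariant M\"obius neighbourhood is equivariantly $\mathbb{RP}^2$ minus a small disc, with $c$ a projective line, which gives $12\pi$. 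Gluing an $\mathbb{RP}^2$ in at a point, by contrast, would change the topology of $M$.

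\emph{Existence.} Projective planes cannot bubble inside a fixed conformal class, because point blow-ups are conformally $\mathbb{S}^2$. The $12\pi$ threshold, and part of the $8\pi$ one, arise only when the classes degenerate. This happens when the mass concentrates in the collar of a pinching $\Gamma$-invariant one-sided (resp.\ two-sided) geodesic, whose long collar converges conformally to $\mathbb{RP}^2$ minus a point (resp.\ $\mathbb{S}^2$ minus two points). In a fixed class the only loss is point concentration, which must occur at a $\Gamma$-fixed point, and that case is covered by $\delta_{or}$ via a small invariant circle around the point. What reduces every threshold to the $\delta$-terms is the orbit argument above: concentration on a non-trivial orbit of points or collars forces $\bar\lambda_1\to 0$.

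Also, invariance of the maximizer cannot come from averaging over $\Gamma$. The functional $\bar\lambda_1$ is not concave, and averaging a concentrating measure spreads it over an orbit and can send $\bar\lambda_1$ to $0$. Moreover, the invariant supremum in a conformal class can lie strictly below the unrestricted one. You must maximize directly over $\Gamma$-invariant measures, and then Theorem~\ref{thm:symm_crit_L} supplies the equivariant harmonic map.

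With these corrections your plan matches the cited proof. The collar analysis you flag as the main obstacle is handled there by adapting the degeneration analysis of \cite{PetridesMax,KSP} to invariant sequences.
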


\section{Main results with $D_2$-symmetry}
\label{SD2main}

\subsection{$D_2$-actions by basic reflections}
\label{ssD2chamber}
 Before proceeding further, we make precise the description in \ref{ssD2},  first recalling some standard notation \cite{Davis} (see also \cite[Section 5.7]{KKMS}) concerning groups generated by reflections on surfaces.

Let $M$ be a connected surface.  If $\Gamma$ is a finite group acting properly, smoothly, and effectively on $M$ and $\Gamma$ is generated by reflections, $\Gamma$ is called a \emph{reflection group}.  Given $x \in M$, let $R(x)$ denote the set of all reflections in $\Gamma$ fixing $x$.  A point $x \in M$ is called \emph{nonsingular} if $R(x) = \varnothing$.  A \emph{chamber} of $\Gamma$ on $M$ is the closure of a connected component of nonsingular points. 

Let $\Omega$ be a chamber, and denote by $V$ the set of reflections $v$ such that $\{v\} = R(x)$ for some $x \in \Omega$.  If $v \in V$, the set $\Omega^v : = M^v \cap \Omega$ is called a \emph{panel} of $\Omega$, and $V$ is the set of \emph{reflections through the panels of} $\Omega$.  The pair $(\Gamma, V)$ is called a \emph{reflection system}. 

Whenever $\Omega$ is a surface with corners, and $S$ the set of corners, an \emph{edge} of $\partial \Omega$ is the closure of a connected component of $\partial \Omega \setminus S$.

\begin{lemma}
\label{LD2quo}
Given data $(\Gamma,\Omega)$ consisting of
\begin{enumerate}[label=\emph{(\alph*)}]
\item The group $\Gamma = D_2$ with its set $V$ of non-identity elements; and 
\item A smooth disk with corners $\Omega$ equipped with a collection $(\Omega^v)_{v \in V}$ of \emph{panels}, such that each panel is a disjoint union of edges, and the interior of each edge belongs to exactly one panel,
\end{enumerate}
the quotient space $M$ of $\Gamma\times \Omega$ by the equivalence  relation $\sim$ defined by \\ $(g, x) \sim (h, y) \iff x = y$ and $g^{-1} h \in \langle v \in V : x \in \Omega^v\rangle$ satisfies: 
\begin{enumerate}[label=\emph{(\roman*)}]
\item $M$ is a connected, smooth surface, which admits an action $T$ of $\Gamma$ induced by the action on $\Gamma\times \Omega$ given by $g\cdot (h, p) = (gh , p)$.
\item The image in $M$ of each $\{\gamma \} \times \Omega$, $\gamma \in \Gamma$ is homeomorphic to $\Omega$.
\item $M$ is orientable if and only if $\Omega^v = \varnothing$ for some $v \in V$. 
\item $v \in V$ is a basic reflection if and only if $\Omega^v  \neq \varnothing$. 
\end{enumerate}
\end{lemma}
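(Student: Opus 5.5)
Parts (i) and (ii) are the standard properties of Vinberg's construction, so I would check them locally. At an interior point of $\Omega$, or at an interior point of an edge, a neighbourhood in $M$ is obtained by gluing copies of a half-disk across the edge. At a corner $x$ lying on edges with distinct labels $u\neq v$, the stabilizer $\Gamma_x=\langle u,v\rangle=D_2$ is generated by the two reflections. The four copies of the corner sector then glue around $x$ to a disk, in the manner of the standard $D_2$ action on $\R^2$ by coordinate reflections. At a corner between two edges carrying the same label, only two copies meet, and they form a half-disk that is then glued along a straight line, so again $x$ has a disk neighbourhood. Smoothness comes from choosing the corner angles (the smooth structure on the disk with corners) compatibly; I would use the same convention as \cite[Section 5.7]{KKMS} and \cite{Davis}. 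Connectedness holds because every $v\in V$ labels some edge: if some $v$ were missing, the elements in $\Gamma$ labelling edges would generate a proper subgroup and $M$ would be disconnected. I therefore expect the statement to implicitly require that the panels together generate $D_2$. Part (ii) holds because $\sim$ never identifies two distinct points of a single copy $\{\gamma\}\times\Omega$.

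For (iii) I would compute $\chi(M)$ and analyse orientations. Let $\Omega$ have $k$ edges and $k$ corners, with $k_{=}$ corners between equally labelled edges and $k_{\neq}$ corners between distinct labels. The $D_2$-orbit sizes are $4$ for interior points, $2$ for edge points, $2$ for same-label corners, and $1$ for distinct-label corners. This gives $\chi(M)=4-2k+2k_{=}+k_{\neq}=4-k_{\neq}$.

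For orientability, I would orient $\{e\}\times\Omega$ and try to propagate the orientation: crossing a $v$-panel must reverse it, so one is forced to give $\{g\}\times\Omega$ orientation $\epsilon(g)$, where $\epsilon$ is a homomorphism $D_2\to\{\pm1\}$ sending every label that actually occurs to $-1$. Such a homomorphism exists if and only if the occurring labels omit some $v\in V$, since $\epsilon(v_1v_2)=\epsilon(v_3)$ and this is incompatible with all three labels mapping to $-1$. When all three labels occur, a loop crossing a $v_1$-, a $v_2$- and a $v_3$-panel in turn returns to the starting copy, because $v_1v_2v_3=e$, and it reverses orientation an odd number of times, so $M$ is nonorientable. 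This proves (iii).

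For (iv), note first that $v$ is a reflection exactly when it fixes a curve, i.e. when $M^v$ contains an edge, which happens if and only if $\Omega^v\neq\varnothing$; otherwise $v$ fixes only isolated points. It remains to show that $M/\langle v\rangle$ is a sphere when $\Omega^v\neq\varnothing$. The quotient $M/\langle v\rangle$ is $\{e,u\}\times\Omega$ glued along the $w$- and $u$-panels, where $\{u,w\}=V\setminus\{v\}$, with the $v$-panels left as boundary. Concretely, this is two disks glued to each other along the arcs labelled by $u$ and $w$; the $u$-arcs are glued copy-to-copy, while each $w$-arc is folded onto itself. The step I expect to be the main obstacle is verifying that the result is a planar, orientable surface: one must track how the self-folded $w$-arcs and the shared $u$-arcs interact. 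I would handle this with an Euler characteristic count together with an orientability check. Orientability follows from the orientation homomorphism $\epsilon(u)=-1$, $\epsilon(w)=+1$, which is possible because the relevant group is just $\langle u\rangle$ acting on two copies. Genus zero would then follow from $\chi(M/\langle v\rangle)=2-\#\{\text{boundary circles}\}$, computed by an orbit count analogous to the one above.
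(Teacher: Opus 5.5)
Your arguments for (i)--(iii) are essentially sound. Your orientation-character proof of (iii) is more direct than the paper's route. The paper cuts $M$ along the $\tau$-panels to get an orientable $M'$, then checks that each $\tau$-edge is reglued either as a pair $c,\tau(c)$ via the orientation-preserving $\tau$, or as a single loop via an antipodal map. Two small corrections are needed there. First, hypothesis (b) already forbids two edges of the same panel from sharing a corner. So $k_{=}=0$, $\chi(M)=4-k$, and every corner has stabilizer all of $D_2$. Second, your connectedness justification is wrong as stated. If one label is missing, the other two still generate $D_2$; this is exactly the orientable case of (iii). So $M$ is disconnected only when a single label occurs. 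Your remark that the lemma implicitly needs the labels to generate $D_2$ is right, but "every $v\in V$ labels some edge" is not what connectedness requires.

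The genuine problem is your model of $M/\langle v\rangle$ in (iv). Like you, the paper realizes this quotient as two copies of $\Omega$ glued together; the issue is what is glued to what. Points $(g,x)$ and $(h,x)$ are identified in the quotient exactly when $g^{-1}h\in\langle v\rangle\Gamma_x$. This subgroup is all of $D_2$ on $u$-edges, on $w$-edges and at corners, and only $\langle v\rangle$ on $v$-edges. In particular, for $x$ on a $w$-edge, $(e,x)\sim(w,x)=v\cdot(u,x)$. So the $w$-edge of $\{e\}\times\Omega$ is glued by the identity to the $w$-edge of $\{u\}\times\Omega$, exactly like the $u$-edges. No $w$-arc is folded onto itself. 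Your character with $\epsilon(w)=+1$ is therefore not an orientation of the actual quotient, since crossing a $w$-edge switches copies. An Euler-characteristic count on your folded model computes a different space. For a square labelled $v,u,v,w$ (a Klein bottle), the folded model is a disk, whereas the true quotient is an annulus.

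With the correct model, the step you flagged as the main obstacle is immediate. $M/\langle v\rangle$ is the double $\Omega\cup_{\partial\Omega}\Omega\cong S^2$ slit open along the $k_v\geq 1$ pairwise disjoint $v$-edges, i.e. a sphere with $k_v$ holes. Equivalently, orienting the two copies oppositely gives an orientation, $\chi = k-(k+k_v)+2=2-k_v$, and there are exactly $k_v$ boundary circles, so the genus is zero. Note also that the quotient is a planar surface with boundary, not a sphere as you first wrote; that is what the definition of a basic reflection surface requires.
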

\begin{proof}
Notice that the construction of $M$ amounts to gluing four copies of $\Omega$, each indexed by an element of $\Gamma$, along $\Gamma \times \partial \Omega$; since the relation identifies the interior of each boundary edge with precisely one other boundary edge in another copy, items (i) and (ii) follow easily. 

Next, suppose $\Omega^\tau = \varnothing$ for some $\tau \in V$, and let $\rho_1, \rho_2, \tau$ denote the elements of $V$.  Then condition (b) implies $\partial \Omega$ is a $2k$-gon for some $k \geq 1$, with edges contained in $\Omega^{\rho_1}$ and $\Omega^{\rho_2}$ in alternating fashion.  Notice that the quotient $M$ can be realized by first identifying the boundary edges in $\Omega^{\rho_1}$ according to the rule above, and then making the $\rho_2$ identifications.  The first identification results in a pair of genus zero surfaces with $k$ boundary components, corresponding to the panel $\Omega^{\rho_2}$, and the second identifies these surfaces along the $\Omega^{\rho_2}$ boundary.
Consequently, $M$ is a closed, orientable surface of genus $k-1$, and $\rho_1, \rho_2$ are each basic reflections. 

Now, suppose $\Omega^\tau \neq \varnothing$ for some $\tau \in V$, and let $\rho_1, \rho_2$ be the other elements of $V$.   Modifying the steps in the previous paragraph shows that $M$ can be realized as the quotient of a connected orientable surface $M'$ with boundary components corresponding to the edges of $\Omega^\tau$; furthermore $\rho_1, \rho_2$ act on $M'$ as basic reflections, and $\tau = \rho_1\rho_2$ is orientation-preserving on $M'$.  

Let $e$ be an edge in $\Omega^\tau$.  If the neighboring edges of $e$ lie in the same panel, then $e$ corresponds in $M'$ to a $\tau$-invariant pair of loops $c, \tau(c)$.  Since $\tau$ is orientation-preserving, it follows that $M$ is nonorientable.  If the neighboring edges of $e$ lie in different panels, then $e$ corresponds in $M'$ to a single $\tau$-invariant loop $c$, on which $\rho_1, \rho_2$ act by reflections, whereas $\tau$ is an antipodal map.  Therefore, a neighborhood of the image of $e$ in $M$ is nonorientable.  Finally, it is easy to see that $M/\langle \tau \rangle$ is homeomorphic to the orientable, genus zero surface $M''$ obtained by gluing $\{1\}\times \Omega$ and $\{ \rho_1 \} \times \Omega$ by identifying points corresponding to $\Omega^{\rho_1}$, so $\tau$ is a basic reflection. 

The preceding paragraphs shows that (iii) and (iv) hold.  This completes the proof.
\end{proof}

\begin{remark}
If $M$ is a nonorientable surface constructed from $(\Gamma, \Omega)$ as in Lemma \ref{LD2quo}, it is not difficult to see that $M$ has Euler characteristic $4-k$, hence genus $k-2$, where $k$ is the number of edges of $\Omega$.
\end{remark}

\subsection{Existence of maximizing metrics}
Let $M$ be a nonorientable surface with an action $T$ of the group $\Gamma = D_2$ as in \ref{LD2quo}. 
In what follows, we classify the possible elementary degenerations $(M',T')\prec (M,T)$, 
then prove by induction that strict inequality holds in \eqref{ineq:MM'} for all such degenerations, establishing the existence of $\bar{\lambda}_1$-maximizing metrics. 

\begin{lemma}\label{deg.char}
    Let $(M,T)$ be a nonorientable surface with $D_2$-action as in Lemma \ref{LD2quo}, and let $(M',T')\prec (M,T)$ be an elementary degeneration. Then $(M',T')$ can be obtained from $(M,T)$ by collapsing a twisted oval for one of the reflections in $D_2$.
\end{lemma}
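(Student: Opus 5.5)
Let $\fC\subset M$ be a nonempty $D_2$-invariant union of disjoint non-contractible simple closed curves realizing an elementary degeneration. I will pass to the orbifold quotient $M/D_2$, which is the chamber $\Omega$: a disk whose boundary is a polygon with edges labeled by $\rho_1,\rho_2,\rho_3$. The aim is to show that, up to isotopy and replacing $\fC$ by something that yields an intermediate degeneration, $\fC$ is the $D_2$-orbit of a single twisted oval of some $\rho_i$. Elementarity then forces $\fC$ to be exactly that orbit. I plan three steps.

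\textbf{Step 1: reduce to one orbit.} If $\fC$ has several $D_2$-orbits $\fC_1\cup\cdots\cup\fC_m$, collapsing only $\fC_1$ gives $(M'',T'')$ with $(M',T')\prec(M'',T'')\prec(M,T)$. The point to check is that the relevant component is still $\Gamma$-invariant and the remaining curves stay non-contractible. So I may assume $\fC=\Gamma\cdot c$ for a single curve $c$.

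\textbf{Step 2: classify the orbit via the quotient.} Each $\gamma\in D_2$ maps $c$ to itself or to a disjoint curve. Using equivariant general position, either $c$ lies in the fixed set of some $\rho_i$, or $c$ meets the walls transversally. In the second case the image of $c$ in $\Omega$ is a union of arcs with endpoints on $\partial\Omega$, or a closed curve in the interior of $\Omega$. A closed interior curve bounds a disk in $\Omega$, so its lift is contractible, a contradiction. So the image is a collection of arcs cutting the disk $\Omega$. Each such arc $\alpha$ separates $\partial\Omega$ into two sub-polygons. Collapsing the corresponding curves amounts, on chambers, to replacing $\alpha$ by a vertex or a new edge. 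Non-contractibility of $c$ means each side must carry enough labels, otherwise the lift bounds a disk or a cross-cap-free disk.

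I would then exhibit an intermediate degeneration. Pick an edge $e$ of $\Omega$ labeled $\rho_i$ lying on one side of $\alpha$. Its orbit in $M$ is a fixed oval of $\rho_i$, and the lift of $e$ is an oval of $\rho_i$. Using Lemma~\ref{LD2quo} and its proof, a $\rho_3$-edge between edges of the same label lifts to untwisted ovals, while a $\rho_3$-edge between different labels lifts to a twisted oval. I would show that collapsing $\fC$ factors through collapsing such an oval, so $(M',T')$ is not elementary unless $\fC$ is itself an oval orbit.

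\textbf{Step 3: exclude untwisted ovals.} If $\fC$ is the orbit of an untwisted oval of $\rho_i$, I will show that collapsing it factors through collapsing a twisted oval. Cutting $M$ along the untwisted oval corresponds to splitting $\Omega$ at the edge $e$. Since $M$ is nonorientable, some edge adjacent to a label change lifts to a twisted oval, and collapsing it first gives an intermediate surface. Otherwise I would check directly that collapsing an untwisted oval disconnects or changes the action in a way that passes through a twisted collapse.

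The main obstacle is Step 2. It requires making rigorous that every invariant curve system can be equivariantly isotoped to be transverse to the walls or contained in them. It also requires showing that the arc description in $\Omega$ always yields a strictly intermediate degeneration. I would handle this with the orbifold structure of $M/D_2$, which is a disk with reflector boundary and corner points. In that setting the essential 1-suborbifolds are well understood: they are arcs between reflector edges, or reflector edges themselves.
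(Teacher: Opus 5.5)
Your plan follows the same outline as the paper's proof. You pass to the labeled chamber $\Omega$, discard interior loops as contractible, show that an arc not lying in $\partial\Omega$ never gives an elementary degeneration, reduce to a single edge, and then exclude untwisted ovals.

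\textbf{The gap is in Step 3.} To show that collapsing an untwisted oval $e$ is not elementary, you must produce $(M'',T'')$ with $(M',T')\prec(M'',T'')\prec(M,T)$. Collapsing \emph{some} twisted oval ``adjacent to a label change'' does not do this in general.

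For example, take the cyclic word $\rho_1\rho_3\rho_1\rho_2\rho_3\rho_2$, and let $e$ be the first $\rho_3$-edge, which is untwisted and flanked by $\rho_1$-edges. Collapsing $e$ deletes it and merges its two neighbors, giving the chamber $\rho_1\rho_2\rho_3\rho_2$. Collapsing instead the twisted last $\rho_2$-edge gives $\rho_1\rho_3\rho_1\rho_2\rho_3$, which has only one $\rho_2$-edge. Collapses only delete edges, merge equally labeled edges, or create vertices, so the number of $\rho_2$-edges cannot go back up to two. Hence $(M',T')$ is not a degeneration of that surface.

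\textbf{Step 2 needs the same care.} The edge you collapse first must lie in the component of $\Omega\setminus\alpha$ that is discarded, not in the one producing $M'$. With that choice you get either $(M'',T'')\cong(M',T')$ or a strict chain, which is the paper's argument. This shows that $(M',T')$ is \emph{equivalent} to an oval collapse, not that $\mathfrak{C}$ literally is an oval orbit.

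\textbf{The missing construction} is the following.
\begin{enumerate}
\item Let $x$ be the label of $e$, $y$ the common label of its two neighbors, and $z$ the third label. Let $S$ be the maximal run of consecutive edges of $\partial\Omega$ with labels in $\{x,y\}$ containing $e$.
\item This is exactly where nonorientability enters. By Lemma \ref{LD2quo}(iii) some edge is labeled $z$, so $S$ is a proper arc of $\partial\Omega$ with at least three edges, and its labels alternate between $x$ and $y$.
\item A terminal edge of $S$ has a $z$-labeled neighbor, so its orbit is a twisted oval. Collapsing it deletes that edge and gives $(M'',T'')$.
\item In $M''$, the next edge of $S$ is now adjacent to the $z$-edge, so it is again twisted. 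Collapsing it deletes a second consecutive edge of $S$.
\item Because the labels along $S$ alternate, deleting any two consecutive edges of $S$ gives the same labeled polygon. That polygon is exactly the chamber obtained by collapsing $e$.
\end{enumerate}
So the second collapse of $(M'',T'')$ is equivalent to $(M',T')$, and $(M',T')\prec(M'',T'')\prec(M,T)$. This is consistent with the Euler characteristic count: a twisted collapse raises $\chi$ by one and an untwisted one raises it by two.
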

\begin{proof}
  By Definitions \ref{def:degeneration} and \ref{dedeg}, $(M',T')$ is a connected component of the space obtained by collapsing the $D_2$-orbit $\mathfrak{C}$ of some simple closed noncontractible curve $c\subset M$, and there is no intermediate $(M'',T'')$ for which $(M',T')\prec (M'',T'')\prec (M,T)$.

Let $\Omega \subset M$ be a chamber for the $D_2$-action $T$.  For such a degeneration, we can write $\mathfrak{C}$ as the $D_2$-orbit of a connected curve $\sigma$ in $\Omega$.  \emph{A priori}, $\sigma$ is an embedded loop in the interior of $\Omega$, or an arc with endpoints in $\partial \Omega$. 
However, $\Omega$ is a disk, so each loop in $\Omega$ is contractible, and the same holds for the $D_2$ orbit of such a loop.  Thus, $\sigma$ is an arc with endpoints in $\partial \Omega$.

If $\sigma$ is not contained in $\partial \Omega$, then $\Omega \setminus \sigma$ has connected components $\Omega_1$ and $\Omega_2$, and  collapsing $\mathfrak{C}$ leaves two components corresponding to the $D_2$-orbits of $\Omega_1$ and $\Omega_2$. Without loss of generality, let $\Omega_1$ be the orbit corresponding to $M'$. Next, note that $\Omega_2$ must contain at least one edge $\gamma$ of $\partial \Omega$, since $c$ 
is assumed to be noncontractible. Letting $(M'',T'')\prec (M,T)$ be the degeneration given by collapsing the $D_2$-orbit of $\gamma$, we deduce that either $(M'',T'')$ is equivalent to $(M',T')$, or there is a nontrivial chain $(M',T')\prec (M'',T'')\prec (M,T)$. Since $(M',T')$ is an \emph{elementary} degeneration, the first alternative must hold, so that $(M',T')\cong (M'',T'')$, and the conclusion of the lemma is satisfied.

It follows that, without loss of generality, we can assume that $\sigma\subset \partial \Omega$ is given by some union of edges, and for an elementary degeneration $(M',T')\prec (M,T)$, $\sigma$ must be a single edge, whose $D_2$-orbit $c$ is a fixed-point oval for one of the reflections $\tau$ of $D_2$. It remains to show that this oval must be twisted. 

As in the proof of Lemma \ref{LD2quo}, the orbit $c$ of $\sigma$ is untwisted if and only if the neighboring edges to $\sigma$ in $\partial \Omega$ lie in the same panel, say in $\Omega^{\rho_1}$.  It remains to show that, in this case, the degeneration $(M',T')\prec (M,T)$ given by collapsing $c$ is not elementary.

To this end, let $S$ be the connected component of $\Omega^{\tau} \cup \Omega^{\rho_1}$ containing $\sigma$, and $\alpha \subset S$ be an edge  meeting the boundary of $S$.  Then there are edges $\beta, \gamma$ neighboring $\alpha$ such that $\alpha \cup \beta  \subset \Omega^{\tau} \cup \Omega^{\rho_1}$, but  $\gamma \subset \Omega^{\rho_2}$. In this case, the orbit of $\alpha$ is a twisted oval, and collapsing this gives a degeneration $(M'',T'')\prec (M,T)$ whose marked fundamental chamber $\Omega''$ is obtained from $\Omega$ by deleting the edge $\alpha$. In $M''$, the $D_2$-orbit of the neighboring edge $\beta$ becomes a twisted oval, and collapsing this oval gives a further degeneration $(M''',T''')\prec (M'',T'')$ whose marked fundamental chamber $\Omega'''$ is obtained from $\Omega$ by removing any pair of neighboring edges in $S$. In particular, it follows that $(M''',T''')$ and $(M',T')$ have identical marked fundamental chambers, so we conclude that $(M''',T''')\cong (M',T')$, and therefore the degeneration $(M',T')\prec (M,T)$ is not elementary.
\end{proof}

Armed with this characterization of elementary degenerations, we can establish existence of maximizing metrics in this family by a simple induction on the nonorientable genus. The base case of the induction argument is straightforward.

\begin{claim}\label{rp2.clm}
Let $M = \mathbb{RP}^2$ and $T$ be a $D_2$-action on $M$ as in \ref{LD2quo}.  Then $\Lambda^T_1(M)=12\pi$ and is realized by the round metric.
\end{claim}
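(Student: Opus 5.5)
The plan is to squeeze $\Lambda^T_1(\RP^2)$ between two known quantities: Li--Yau's theorem gives the upper bound $12\pi$, and the round metric, which is invariant under every action of this type, gives the matching lower bound.

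\emph{Upper bound.} By Li--Yau \cite{LiYau}, every metric $g$ on $\RP^2$ satisfies $\bar\lambda_1(\RP^2,g)\le 12\pi$, with equality only for the round metric. Restricting to $T$-invariant metrics, this gives $\Lambda^T_1(M)\le 12\pi$.

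\emph{Lower bound.} It suffices to exhibit a round metric on $\RP^2$ that is invariant under $T$. By the remark following Lemma \ref{LD2quo}, $\chi(M)=4-k=1$, so the chamber $\Omega$ is a triangle. Since every $v\in V$ must label at least one edge, the three edges carry the three distinct labels $\rho_1,\rho_2,\rho_3$. By the uniqueness part of Lemma \ref{LD2quo}, which says the quotient is determined by the labelled chamber, this $D_2$-action is unique up to equivariant homeomorphism. It therefore suffices to realize it isometrically. On the round $\Sph^2\subset\R^3$, take the three coordinate reflections $r_1,r_2,r_3$; they commute with the antipodal map, so they descend to isometries of $\RP^2=\Sph^2/\{\pm1\}$. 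In the quotient we have $r_1r_2r_3=-\Id=\Id$, so $r_1,r_2,r_3$ generate a group $D_2$ whose three nonidentity elements are reflections of $\RP^2$. Each $r_i$ fixes a projective line together with an isolated point. A fundamental domain is the image of one octant, an equilateral right spherical triangle with each edge fixed by exactly one $r_i$. This matches the labelled triangle chamber, so by uniqueness $T$ is conjugate to this isometric action. Pulling back the round metric gives an element of $\Met_T(M)$ with $\bar\lambda_1=2\cdot 2\pi=12\pi$: here $\lambda_1=6$ and the area is $2\pi$. Hence $\Lambda^T_1(M)\ge 12\pi$.

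Combining the two bounds gives $\Lambda^T_1(M)=12\pi$, attained by the round metric. The main point requiring care is the identification of $T$ with the isometric coordinate-reflection action, that is, the claim that the labelled triangle determines the action up to conjugacy. I expect this to follow directly from the construction in Lemma \ref{LD2quo}. Concretely, a homeomorphism between the two labelled triangle chambers that respects labels extends equivariantly to the glued quotients.
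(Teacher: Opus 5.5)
Your proof is correct and takes the same route as the paper: you realize the $D_2$-action on $\RP^2$, which is unique up to equivariant homeomorphism, by isometries of the round metric, and then invoke Li--Yau's theorem that the round metric globally maximizes $\bar\lambda_1$ on $\RP^2$. The only difference is in how the action is identified. The paper uses the surgery picture of a cross-cap attached at a $D_2$-fixed point of $\Sph^2$, while you use the triangular chamber with three distinct labels together with the explicit coordinate-reflection model. In your version, the uniqueness you need comes directly from the quotient construction, not from a stated uniqueness clause in Lemma \ref{LD2quo}, which has none.
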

\begin{proof}
    In this case, $(M,T)$ must be obtained from the initial $D_2$-action on $\mathbb{S}^2$ by attaching a cross-cap at a fixed point with compatible $D_2$-action, and it is straightforward to check that the resulting $D_2$-action $T$ on $\mathbb{RP}^2$ can be realized by isometries for the round metric. Since the round metric $\mathbb{RP}^2$ is a global maximizer for $\bar{\lambda}_1$ on $\mathbb{RP}^2$ \cite{LiYau}, the claim follows immediately. 
\end{proof}

The induction step is based on a variant of the arguments of \cite[Section 3]{KSP}, adapted to the $D_2$-symmetric setting.

\begin{lemma}\label{twist.bd}
    If $(M',T')\prec (M,T)$ is an elementary degeneration as in Lemma \ref{deg.char}, and $\Lambda^{T'}_1(M')$ is realized by a smooth $T'$-invariant metric, then 
    \[\Lambda^T_1(M)>\Lambda^{T'}_1(M').\]
\end{lemma}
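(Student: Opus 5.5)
We will follow the gluing strategy of \cite[Section 3]{KSP}, which inserts a small cross-cap into a maximizing metric and shows that $\bar\lambda_1$ strictly increases. Let $g'$ be a smooth $T'$-invariant metric realizing $\Lambda^{T'}_1(M')$. By Theorem \ref{thm:symm_crit_L}, $g'$ is induced (up to scaling) by a $\Gamma$-equivariant branched minimal immersion $\Phi\colon M'\to\Sph^n$ by first eigenfunctions.

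By Lemma \ref{deg.char}, $M$ is recovered from $M'$ as follows. Take the $D_2$-orbit $P$ of the collapsed point $p$, which lies on the fixed-point set of the reflection $\tau$ whose twisted oval was collapsed. Remove small invariant disks around the points of $P$ and glue in a $D_2$-invariant Möbius band along each boundary circle, as in the right side of Figure \ref{fig:surg1}. The new band, or bands, carry a twisted $\tau$-oval.

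For small $\varepsilon>0$ we build a $T$-invariant metric $g_\varepsilon$ on $M$. It agrees with $g'$ outside $B_{\varepsilon}(P)$. Inside, it is the metric of a thin, nearly flat cross-cap of scale $\varepsilon$: a punctured disk with the antipodal identification on an inner circle of radius $\varepsilon$. We choose this model so that it is invariant under the stabilizer of $p$ in $D_2$, acting by reflections in the model. In the conformal picture this is the standard KSP model $\{|z|\ge\varepsilon\}/(z\sim -\varepsilon^2/\bar z)$ with the Euclidean conformal factor, which is equivariant for $z\mapsto\bar z$ and $z\mapsto -\bar z$.

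The key step is the asymptotic expansion of $\bar\lambda_1(M,g_\varepsilon)$. We want to show
\[
\bar\lambda_1(M,g_\varepsilon)=\bar\lambda_1(M',g')+c\,\varepsilon^2\,\bigl(|d\Phi|^2_{g'}(p)\,\text{or a positive quadratic expression in }\Phi,\nabla\Phi \text{ at }p\bigr)+o(\varepsilon^2)
\]
with $c>0$. We do this as in \cite{KSP}, by testing the Rayleigh quotient on extensions of first eigenfunctions, together with a matching upper/lower bound via a variational characterization on the $\lambda_1$-eigenspace.

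The gain comes from the fact that the cross-cap forces the natural extension of an eigenfunction $u$ to be even under the antipodal identification. The energy defect of the linear part $\nabla u(p)\cdot z$ then gives a negative correction in the Dirichlet energy relative to area. In \cite{KSP} this is exactly the mechanism producing $\Lambda_1(M)>\Lambda_1(M')$ for adding a cross-cap.

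The main point to check, and what I expect to be the main obstacle, is that equivariance does not kill the leading term. Near $p$ the stabilizer acts on the eigenspace, and only $\Gamma$-invariant test configurations are allowed. Since $g_\varepsilon$ is $T$-invariant, however, the perturbation of the full first eigenspace is computed by a $\Gamma$-equivariant quadratic form. The leading $\varepsilon^2$ correction is a sum over the orbit $P$ of quadratic forms $Q_q(u)=a|\nabla u(q)|^2-b\,u(q)^2\lambda$-type terms, so every eigenvalue in the cluster rises at the rate given by the KSP computation. We need this correction to be nondegenerate, meaning that not all first eigenfunctions have vanishing relevant jet at $p$. This follows because the coordinates of $\Phi$ span the eigenspace, $\sum\Phi_i^2=1$, and $\sum|\nabla\Phi_i|^2=2\lambda_1>0$ at every point. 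So the relevant positive-definite combination is nonzero at $p$, even at branch points, where $\Phi$ itself is nonzero.

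If the perturbation expansion is delicate because of branch points, we fall back on the cruder argument of \cite{KSP}. There one compares with $\max\{\Lambda^{T'}_1(M'),12\pi\}$ and uses that the maximizing metric is smooth and non-round, which suffices to obtain strict inequality. Combining the two cases gives $\Lambda^T_1(M)\ge\bar\lambda_1(M,g_\varepsilon)>\Lambda^{T'}_1(M')$ for small $\varepsilon$.
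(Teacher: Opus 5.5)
There is a genuine gap. Your central claim is a positive order-$\varepsilon^2$ expansion of $\bar\lambda_1(M,g_\varepsilon)$ for a fixed glued-in flat cross-cap, and it is false; for your model the sign actually goes the other way.

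A smooth maximizer on $M'\neq\Sph^2$ is induced by an unbranched minimal immersion by first eigenfunctions into some $\Sph^n$ with $n\geq 3$. Hence $\dim\Ecal(M',g')\geq 4$, and there is a first eigenfunction $u$ with $u(p)=0$ and $du_p=0$. In a conformal coordinate at $p$, $u=q(z)+O(|z|^3)$ with $q$ quadratic, so $q$ is even under $z\mapsto -z$. Thus $u$ violates the cross-cap matching condition on $|z|=\varepsilon$ only by $O(\varepsilon^3)$. Correcting this, discarding $B_\varepsilon(p)$, and re-orthogonalizing to constants changes its Rayleigh quotient by $o(\varepsilon^2)$, so $\lambda_1(M,g_\varepsilon)\le\lambda_1(M',g')+o(\varepsilon^2)$. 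Meanwhile $\area(M,g_\varepsilon)=\area(M',g')-c_p\varepsilon^2+o(\varepsilon^2)$ with $c_p>0$. Hence $\bar\lambda_1(M,g_\varepsilon)<\bar\lambda_1(M',g')$ for small $\varepsilon$.

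Your nondegeneracy step confuses trace with definiteness. The order-$\varepsilon^2$ correction on $\Ecal$ depends only on the $1$-jet $(u(p),du_p)$, so it has rank at most $3$. But $\lambda_1(M,g_\varepsilon)$ is the bottom of the perturbed cluster, so it is governed by the \emph{minimum} of this form over $\Ecal$, which is $0$. The identity $\sum_i|\nabla\Phi_i|^2=\lambda_1>0$ only shows the trace is positive, and the claim that every eigenvalue in the cluster rises is exactly what fails. The ``fallback'' to \cite{KSP} is not an argument as stated. Comparing with $12\pi$ is the role of Theorem \ref{thm:existence}, not of this lemma, and roundness plays no part.

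What is missing is the freedom to optimize the metric within the new conformal classes, rather than fixing one glued metric. The paper argues by contradiction.
\begin{enumerate}
\item It assumes $\Lambda^T_1(M)=\Lambda^{T'}_1(M')$.
\item It notes that $M$ arises by gluing a M\"obius band at a single point $p$ fixed by all of $D_2$, which is the collapse of the one $D_2$-invariant twisted oval. It is not an orbit of several points.
\item Since $D_2$ acts isometrically on a flat M\"obius band, it uses the $T$-invariant conformal classes $[g'_{p,\epsilon,L}]$ of \cite[Section 3]{KSP}.
\item It replaces \cite[Proposition 2.2]{KSP} by its equivariant counterpart \cite[Lemma 8.10]{KKMS}.
\item It runs the proof of \cite[Proposition 3.2]{KSP} to conclude that equality forces $p$ to be a conical singularity of the maximal metric on $M'$.
\end{enumerate}
This contradicts the smoothness hypothesis, which is precisely where that hypothesis enters.
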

\begin{proof}
Suppose, to obtain a contradiction, that $\Lambda^{T'}_1(M')=\Lambda^{T}_1(M)$. 

By Lemma \ref{deg.char}, we see that $(M,T)$ can be obtained from $(M',T')$ by removing a $D_2$-invariant disk about a fixed point $p\in M'$ for the $D_2$-action on $M'$, and attaching a M\"obius band equipped with a compatible $D_2$-action. 

Moreover, it is clear that this $D_2$-action can be realized by isometries on any flat M\"obius band, so that the construction of \cite[Section 3]{KSP} can be applied to produce a family of $T$-invariant conformal classes $[g'_{p,\epsilon,L}]$ on $M$.

After replacing \cite[Proposition 2.2]{KSP} with its equivariant counterpart (cf. \cite[Lemma 8.10]{KKMS}), the proof of \cite[Proposition 3.2]{KSP} can then be applied to show that $\Lambda^{T'}_1(M')=\Lambda^T_1(M)$ implies the fixed-point $p$ must be a conical singularity for the metric $g_{max}$ on $M'$. But we have assumed that the metric $g_{max}$ is smooth, which is a contradiction.
\end{proof}

Combining these ingredients, we can now prove the following by an induction on the nonorientable genus.

\begin{theorem}
\label{TD2}
Let $M$ be a nonorientable surface and $T$ be an action of the group $D_2$ as in Lemma \ref{LD2quo}.  Then the following hold.
\begin{enumerate}[label=\emph{(\roman*)}]
\item There is a $T$-invariant metric $g$ on $M$ realizing $\Lambda^T_1(M)$. 
\item The metric $g$ is smooth and is induced by a $T$-invariant minimal embedding $\iota: M \rightarrow \Sph^4$ by $\lambda_1(M, g)$-eigenfunctions. 
\item The nonidentity elements $\rho_1,\rho_2, \rho_3 \in D_2$ are basic reflections on $M$.
\item The conclusions of Lemma \ref{Lgraphsph} hold, including $\area(M,g) < 8\pi$.
\item There is an orthogonal decomposition $\R^5 = E_1 \oplus E_2 \oplus E_3 \oplus F$ such that $E_i$ is $1$-dimensional for $i=1,2,3$, $F$ is $2$-dimensional, and
\[  \rho_i= I
\text{ on  } E_i \oplus F
\quad
\text{and}
\quad
\rho_i = - I
\text{ on } 
(E_i \oplus F)^\perp.
\]
\end{enumerate}
\end{theorem}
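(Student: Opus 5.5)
We argue by induction on the nonorientable genus $\gamma = k-2$, where $k$ is the number of edges of the chamber $\Omega$. The base case $\gamma=1$ is Claim \ref{rp2.clm}. There the round $\RP^2$ realizes $\Lambda_1^T$, and the Veronese embedding gives (ii)--(v) directly. For the induction step we aim to show that inequality \eqref{ineq:MM'} in Theorem \ref{thm:existence} is strict. Three terms must be beaten.

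\emph{The degeneration term.} By Lemma \ref{deg.char}, every topological degeneration factors through an elementary one, and an elementary one collapses a twisted oval. So $M'$ has chamber $\Omega$ with one edge deleted. Either $M'$ is again nonorientable with a $D_2$-action as in Lemma \ref{LD2quo}, of smaller genus, or one label disappears and $M'$ is orientable. In the nonorientable case the induction hypothesis makes $\Lambda_1^{T'}(M')$ realized by a smooth metric, and Lemma \ref{twist.bd} gives $\Lambda^T_1(M)>\Lambda^{T'}_1(M')$. Since $\Lambda$ is monotone along chains (Theorem \ref{thm:existence} applied to $M'$), it suffices to compare with elementary degenerations.

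In the orientable case $M'=N_g$ carries a $D_2$-action with $\rho_1,\rho_2$ basic. Here I would invoke the orientable existence theorem from \cite{KKMS}, which gives a smooth maximizer (a Lawson-type surface in $\Sph^3$, or the round $\Sph^2$). Then I would rerun the argument of Lemma \ref{twist.bd}: gluing a M\"obius band at a $D_2$-fixed point of a smooth maximizer strictly increases $\Lambda$.

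\emph{The $8\pi$ and $12\pi$ terms.} These I would handle with a test metric, or by noting that some degeneration chain ends at $\RP^2$ with $\Lambda=12\pi$; strictness along the chain then gives $\Lambda^T_1(M)>12\pi>8\pi$. This yields a maximizer smooth away from finitely many conical points. The conical points are then ruled out by the BRS regularity results of \cite{KKMS} (Lemma 5.35) applied to the basic reflection $\rho_3$.

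\emph{Parts (ii)--(v).} Theorem \ref{thm:symm_crit_L} gives an equivariant branched minimal immersion $\Phi\colon M\to\Sph^n$ by first eigenfunctions. Part (iii) is Lemma \ref{LD2quo}(iv). Since each $\rho_i$ is a basic reflection, \cite[Corollary 5.9]{KKMS} shows that $\Phi$ is an embedding realizing a doubling with area $<8\pi$; this gives (iv) via Lemma \ref{Lgraphsph}.

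For (v), decompose $\R^{n+1}$ into the four character spaces of $D_2$. First, a coordinate $u$ odd under every $\rho_i$ cannot exist, because $\rho_1\rho_2=\rho_3$ forces consistency; precisely, the characters are $V_{\mathrm{triv}}$ and $V_{\chi_i}$, where $\chi_i(\rho_i)=1$ and $\chi_i(\rho_j)=-1$ for $j\neq i$. Next, a first eigenfunction odd under a basic reflection $\rho$ has nodal set exactly $M^\rho$ and exactly two nodal domains. Hence the eigenspace odd under $\rho$ is at most one-dimensional. Taking $\rho=\rho_j$, the odd space is $V_{\chi_i}\oplus V_{\chi_k}$ for $\{i,j,k\}=\{1,2,3\}$, so each $\dim V_{\chi_i}\le 1$.

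Each $\dim V_{\chi_i}=1$, since otherwise some $\rho_i$ would act trivially on $\Phi(M)$, contradicting faithfulness. For the invariant part, $\dim V_{\mathrm{triv}}\le 2$: the invariant first eigenfunctions descend to the genus-zero quotient, and a multiplicity bound there (the equivariant version of the Cheng-type bound used for BRS in \cite{KKMS}) gives the estimate. The dimension is also at least $2$, since a $1$-dimensional invariant part would force $\Phi(M)$ into too small a sphere. So $n=4$, and we set $E_i=V_{\chi_i}$ and $F=V_{\mathrm{triv}}$.

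I expect the main obstacle to be the orientable-degeneration strictness and the bound $\dim F=2$.
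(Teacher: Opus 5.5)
Your treatment of (i)--(iv) matches the paper's. You use induction from Claim \ref{rp2.clm}, reduce to collapsing a twisted oval via Lemma \ref{deg.char}, get strictness from Lemma \ref{twist.bd} (with \cite[Theorem 8.7]{KKMS} when $M'$ is orientable), and get smoothness from Lemma \ref{Lgraphsph}. The gap is in (v), and with it the claim $n=4$ in (ii).

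Your key step says that a first eigenfunction odd under a basic reflection $\rho$ has nodal set exactly $M^\rho$ and two nodal domains. This holds for orientable basic reflection surfaces, but it is false when $M$ is nonorientable. There $F+C_->0$ by Theorem \ref{Tclass}. A small $\rho$-invariant circle around an isolated fixed point, or the $S^1_a$ boundary of the M\"obius neighbourhood of a twisted oval, contains an arc from $x$ to $\rho(x)$ that avoids $M^\rho$. Hence $M\setminus M^\rho$ is connected, and every nonzero $\rho$-odd function also vanishes off $M^\rho$.

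Concretely, on the Veronese $\RP^2$ with $\rho$ induced by $z\mapsto -z$, both $xz$ and $yz$ are $\rho$-odd first eigenfunctions. Statement (v) itself says the $\rho_j$-odd part $(E_j\oplus F)^\perp=E_i\oplus E_k$ is two-dimensional. Your argument is in fact self-defeating: the bound would give $\dim V_{\chi_i}+\dim V_{\chi_k}\le 1$ for every pair, so $n+1\le 3$, contradicting $n\ge 4$. The faithfulness argument for $\dim V_{\chi_i}=1$ also fails. If only $V_{\chi_1}=0$, no nonidentity element acts trivially, since $\rho_1$ still acts by $-I$ on $V_{\chi_2}\oplus V_{\chi_3}$.

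The fix, which is how Proposition \ref{mult.prop} proceeds, is to use two reflections at once. A function in $V_{\chi_i}$ is odd under both $\rho_j$ and $\rho_k$, so it vanishes on $M^{\rho_j}\cup M^{\rho_k}$. This set does separate $M$ into exactly two pieces, the images of $\{1,\rho_i\}\times\Omega$ and $\{\rho_j,\rho_k\}\times\Omega$. Each piece is connected through the nonempty panel $\Omega^{\rho_i}$, and $\rho_j$ interchanges them. Courant's theorem then forces $u|_\Omega$ to be a first eigenfunction of the mixed problem with Dirichlet data on the $\rho_j$- and $\rho_k$-edges and Neumann data on the $\rho_i$-edges. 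That eigenvalue is simple, so $\dim V_{\chi_i}\le 1$.

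Your bound $\dim V_{\mathrm{triv}}\le 2$ is only gestured at. The single-reflection estimate from \cite{KKMS} gives just $\dim\Ecal^+\le 3$, and the paper proves the sharper bound by a separate nodal-graph argument on the disk $\Omega$. Once you have $\dim V_{\mathrm{triv}}\le 2$ and $\dim V_{\chi_i}\le 1$, fullness gives $n+1\le 5$ and Lemma \ref{Lgraphsph}(i) gives $n\ge 4$. All inequalities then become equalities, which is (v).

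You could also bypass the $V_{\mathrm{triv}}$ bound. Lemma \ref{Lgraphsph}(iii) applied to each $\rho_i$ gives $\dim V_{\mathrm{triv}}+\dim V_{\chi_i}=3$, so $n+1=9-2\dim V_{\mathrm{triv}}$. Then $\dim V_{\chi_i}\le 1$ together with $n\ge 4$ forces $\dim V_{\mathrm{triv}}=2$.
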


\begin{remark}
\label{RD2act}
By Theorem \ref{TD2}(v), one has
\begin{equation}
\label{ED2act}
\begin{aligned}
\rho_1(x_1, x_2,x_3, x_4, x_5) &= (x_1, -x_2, -x_3, x_4, x_5), 
\\
\rho_2(x_1, x_2,x_3, x_4, x_5) &= (-x_1, x_2, -x_3, x_4, x_5), 
\\
\rho_3(x_1, x_2,x_3, x_4, x_5) &= (-x_1, -x_2, x_3, x_4, x_5)
\end{aligned}
\end{equation}

in a suitable system of cartesian coordinates for $\R^5$.
\end{remark}

\begin{proof}
We make some preliminary observations which reduce the theorem to proving (i).  Theorem \ref{thm:symm_crit_L} and (i) imply the existence of a $T$-invariant full branched minimal immersion $\iota: M \rightarrow \Sph^n$ by $\lambda_1$-eigenfunctions, for some $n$, and Proposition \ref{mult.prop} and its proof show that $n =4$ and that (v) holds.  Next, the nonorientability of $M$ and Lemma \ref{LD2quo} imply (iii); combined with the preceding, the hypotheses of Lemma \ref{Lgraphsph} hold with $(M, \rho_i)$ for $i=1,2,3$ and $\iota$ as above, so the conclusions of \ref{Lgraphsph} hold as in (iv).  In particular, $\iota$ is a smooth embedding and free of branch points, implying (ii).  It remains to prove (i). 

We proceed by induction on the nonorientable genus. When $M$ has nonorientable genus one, this follows from Claim \ref{rp2.clm}. Suppose now that the result holds whenever $M$ has nonorientable genus $k$, and $\Lambda^T_1(M)\geq 12\pi$.

If $(M,T)$ is a pair as above with nonorientable genus $k+1$ and $(M',T')\prec (M,T)$, then either $M'$ has nonorientable genus $k$, or $M$ is orientable.  In the first case, it follows from the induction hypothesis that $\Lambda^{T'}_1(M')$ is realized by a smooth extremal metric. By Lemma \ref{twist.bd}, it then follows that
$$\Lambda^T_1(M)>\max_{(M', T') \prec (M, T)}\{\Lambda^{T'}_1(M'),12\pi\},$$
and therefore $\Lambda^T_1(M)$ is realized by a maximal metric $g_{max}$, by Theorem \ref{thm:existence}. 
In the second case, $\Lambda^{T'}_1(M')$ is also realized by a smooth maximal metric, by \cite[Theorem 8.7]{KKMS}, and again Lemma \ref{twist.bd} implies $\Lambda^T_1(M)$ is realized.
\end{proof}

\subsection{On the number of $D_2$-actions}
\label{ssCount}
We call a word $w$ in the letters $\rho_1, \rho_2, \rho_3$ \emph{admissible} if each letter appears at least once and no two adjacent letters are identical.  Recall from subsection \ref{ssD2chamber} that given an admissible word $w$ of length $k$ in the letters $\rho_1, \rho_2, \rho_3$, one can first construct a $k$-gon with sides labeled according to $w$, which in turn gives rise via Lemma \ref{LD2quo} to an action $T_w$ of $D_2$ on the surface $M = M_{k-3}$ with nonorientable genus $k-3$.  Whenever this is the case, let $\Gamma_w: = T_w(D_2)$ be the image of $D_2$ in $\Diff(M)$.

Here we define a set $\Wcal_k$ of admissible length $k$ words which grows exponentially in $k$ and has the property that for distinct $w, w' \in \Wcal_k$, the surface $M$ has no metric invariant with respect to both $T_w$ and $T_{w'}$.
Note that for $k>4$ the latter is equivalent to the fact that $\Gamma_w $ and $\Gamma_{w'} $ do not generate a finite subgroup of $\Diff(M)$.

Let $\Wcal_k$ be the maximal set of admissible words $w$ of length $k$ in the letters $\rho_1,\rho_2, \rho_3$ satisfying the following conditions:

\begin{enumerate}
    \item[(W1)] \label{L1}every even letter of $w$ is $\rho_1$;  
    \item[(W2)] \label{item2} $w$ starts with a segment $(\rho_2\rho_1)^{\alpha(k)}\rho_2$, where $\alpha(k)=\lceil k/6\rceil $; 
    \item[(W3)] $w$ contains the letter $\rho_2$ at least $\frac{1}{2}\left(\lceil k/2\rceil+\lceil k/6\rceil+1\right)>k/3$ times;
    \item[(W4)] the polygon corresponding to $w$ does not have order-$2$ symmetries;
    \item[(W5)] no two words $w,w'\in \Wcal_k$ differ by a cyclic permutation.
\end{enumerate}

We first estimate the cardinality $|\Wcal_k|$ of $\Wcal_k$.
\begin{lemma}
There exists $c>0$ such that $|\Wcal_k| \geq c2^{k/3}$  for each $k\geq 5$.
\end{lemma}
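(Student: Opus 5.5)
We count the admissible words by direct enumeration and then quotient by the two equivalences built into the definition. By (W1), every even position is $\rho_1$. The word is admissible and cyclic, since it labels the sides of a polygon, so no odd letter can be $\rho_1$ (except that position $k$ is adjacent to position $1$ when $k$ is odd). Hence the odd positions carry letters in $\{\rho_2,\rho_3\}$. When $k$ is odd, positions $k$ and $1$ are cyclically adjacent; position $1$ is $\rho_2$ by (W2), so we fix position $k$ to be $\rho_3$. Condition (W2) fixes the first $\alpha(k)+1$ odd positions to be $\rho_2$. The remaining free odd positions then number roughly $\lceil k/2\rceil-\lceil k/6\rceil-1\approx k/3$, and each is chosen freely from $\{\rho_2,\rho_3\}$.

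On these free choices we impose (W3) and the requirement that $\rho_3$ appears at least once. Condition (W3) requires at least $\frac12(\lceil k/2\rceil+\lceil k/6\rceil+1)-(\alpha(k)+1)$ of the $m\approx k/3$ free slots to be $\rho_2$, which is about $m/2$. By symmetry of binomial coefficients, at least half of the $2^m$ choices satisfy this, up to a bounded correction. Excluding the single all-$\rho_2$ string still leaves at least $c_0 2^{k/3}$ words satisfying (W1)--(W3) and admissibility.

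Next we account for (W4) and (W5). We take $\Wcal_k$ to be a maximal set of representatives, so it suffices to bound how many words are discarded or identified. For (W4), an order-$2$ symmetry of the polygon is either a reflection or, for even $k$, the half-turn rotation. A word invariant under such a symmetry is determined by about half of its free positions, so there are at most $O(k\,2^{k/6})$ such words. These are negligible compared with $2^{k/3}$.

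For (W5), the main point is that (W2) nearly pins down the cyclic starting point. The word contains the block $(\rho_2\rho_1)^{\alpha}\rho_2$, a run of $\alpha+1$ consecutive $\rho_2$'s in odd positions. Each cyclic class meets our set in at most $k$ words, which would cost only a polynomial factor, but to get exactly $c2^{k/3}$ we want a bounded number. If the counting is done with the factor $k$, we instead obtain $2^{k/3}/k$. This is where I expect the main obstacle.

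To get a bounded multiplicity, we use (W3) more cleverly. We restrict attention to words that have at most one maximal run of $\alpha+1$ or more consecutive $\rho_2$'s among the odd letters, namely the prescribed initial one possibly extended. A cyclic shift that preserves (W1) and (W2) must then map this run to itself. That forces the shift to be trivial, or to differ by the small amount by which the run exceeds $\alpha+1$, and the latter can be controlled by requiring the next free letter to be $\rho_3$.

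The restricted count remains of order $2^{k/3}$. Long runs of $\rho_2$ among roughly $k/3$ free binary slots are exponentially rare once the run length is about $k/6$, since a union bound gives a proportion of order $k\,2^{-k/6}$. We must check that this restriction is compatible with the majority condition (W3). It is, since the majority threshold only needs about half the slots to be $\rho_2$, while forbidding runs of length $k/6$ removes an exponentially small fraction.

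Combining these steps gives $|\Wcal_k|\ge c\,2^{k/3}$ for large $k$. We then shrink $c$ to cover the finitely many small $k\ge5$, after checking that $\Wcal_k$ is nonempty for each of them.
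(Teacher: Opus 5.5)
Your argument follows the paper's. You reduce (W1)--(W3) to binary strings on the odd slots after the prescribed block, and you note that the majority condition in (W3) keeps a fixed fraction of them. For (W5) you restrict to words whose odd letters begin with exactly $\rho_2^{\alpha(k)+1}\rho_3$ and contain no second run $\rho_2^{\alpha(k)+1}$, which discards only $O(k2^{k/6})$ words.

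You are more careful than the paper in two places. First, you enforce the cyclic adjacency of the last and first letters and the presence of $\rho_3$. Second, for (W4) you count symmetric words directly, each being determined by roughly half of the free slots. The paper instead reduces (W4) to the run condition. For even $k$, that reduction misses a reflection whose axis passes through the middle edge of the initial block $(\rho_2\rho_1)^{\alpha(k)}\rho_2$. Such a reflection maps the block to itself, and a word invariant under it (with a palindromic remainder) need not have a longer initial run or a second run. So your direct count is the sounder way to handle (W4).

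The step that cannot be carried out is the last one, because $\Wcal_k$ is not nonempty for every $k\ge 5$. For $k=5$, (W1), (W2) and admissibility force $w=\rho_2\rho_1\rho_2\rho_1\rho_3$. This word contains $\rho_2$ only twice, while (W3) demands at least $\frac12(3+1+1)=\frac52$ occurrences. The same failure occurs for $k=6,7,8$; for instance, $k=8$ forces $\rho_2\rho_1\rho_2\rho_1\rho_2\rho_1\rho_3\rho_1$, with three $\rho_2$'s against a threshold of $\frac72$. So the inequality as literally stated is false for these $k$, and no choice of $c>0$ repairs it.

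What your argument actually proves is $|\Wcal_k|\ge c\,2^{k/3}$ for all sufficiently large $k$. The paper's proof establishes the same and no more: its count $\frac12 2^{\lceil k/2\rceil-\lceil k/6\rceil-1}$ silently includes the non-admissible string with no $\rho_3$. You should state the lemma in that form rather than claim a small-$k$ nonemptiness check that would fail.
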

\begin{proof}
    We first count the number of words satisfying~(W1)-(W3). Since every other letter is $\rho_1$, the number of such words is the same as the number of words in letters $\rho_2,\rho_3$ of length $\lceil k/2\rceil$ starting with $\alpha(k)+1=\lceil k/6\rceil+1$ letters $\rho_2$ and at least one half of the remaining letters being $\rho_2$. Since the first $\lceil k/6\rceil+1$ letters are fixed, this in turn is the same as the number of words in $\rho_2,\rho_3$ of length $\lceil k/2\rceil - \lceil k/6\rceil-1$ in which at least half of the letters are $\rho_2$, which is easily seen to be $\geq \frac{1}{2}2^{\lceil k/2\rceil - \lceil k/6\rceil-1}$; hence, there are at least $c_0\cdot 2^{k/3}$ words satisfying ~(W1)-(W3), where we can take $c_0=1/8$.
    
   We now count how many of those words of length $\lceil k/2\rceil$ in $\rho_2$ and $\rho_3$ which could be cyclic permutations of one another. Note that if a word starting with $\alpha(k)+1$ letters $\rho_2$ is cyclic permutation of another such word, then either it starts with  $>\alpha(k)+1$ letters $\rho_2$, or it contains two disjoint segments of $\rho_2^{\alpha(k)+1}$. Note that the total number of words starting with $\rho_2^{\alpha(k)+1}\rho_3$ and having the majority of the remaining letters $\rho_2$ is again $\geq c_1 2^{k/3}$ for a suitable constant $c_1>0$. Among them, the number containing another segment  $\rho_2^{\alpha(k)+1}$ is $\leq C k2^{k/6}$, leaving $c_2 2^{k/3}$ of them that are not cyclic permutations of one another.

    Finally, let us count the number of words of this type, such that the corresponding $k$-gon has an order-$2$ symmetry. Any such order-$2$ symmetry is either a reflection or antipodal map. In either case, we again arrive at a situation that a symmetric word either starts with $>\alpha(k)+1$ letters $\rho_2$ or it contains two disjoint segments of $\rho_2^{\alpha(k)+1}$. Thus, the arguments of the previous paragraph still apply.
\end{proof}

We now show that if $w, w' \in \Wcal_k$ and $\Gamma_w, \Gamma_{w'}$ generate a finite subgroup of $\Diff(M)$, then the letters $w, w'$ are equal.  Our main tool is the following result, essentially contained in~\cite{Natanzon} and proved in Appendix \ref{SNat}.
\begin{theorem}
    \label{thm:Nat2}
    Let $\alpha$, $\beta$ be two non-commuting reflections on the orientable surface of genus $\gamma>1$ which generate a finite group. Then the total number of ovals of $\alpha$ and $\beta$ is at most $\gamma+4$.
\end{theorem}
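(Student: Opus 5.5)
The plan is to pass to the quotient orbifold and compare Riemann--Hurwitz with a count of reflector edges. Since $\alpha,\beta$ are non-commuting involutions generating a finite group, $G=\langle\alpha,\beta\rangle\cong D_n$ with $n\ge 3$ the order of $\alpha\beta$. Averaging a metric over $G$ and uniformizing (possible since $\gamma>1$), we may assume $G$ acts by isometries of a hyperbolic metric on the orientable surface $N$. Then $X=N/G$ is a compact hyperbolic $2$-orbifold. Its underlying surface $|X|$ has genus $h$ and $b$ boundary components. Its singular data are cone points of orders $m_j$, which come from rotations in $\langle\alpha\beta\rangle$, and corner reflectors of orders $\ell_i$ on the boundary, which come from intersections of mirrors of two reflections in $G$. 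Every oval of a reflection $\sigma\in G$ projects onto a union of mirror edges of $\partial|X|$. Riemann--Hurwitz gives
\[
2-2\gamma=2n\,\chi^{orb}(X),\qquad \chi^{orb}(X)=2-2h-b-\sum_j\Bigl(1-\tfrac1{m_j}\Bigr)-\tfrac12\sum_i\Bigl(1-\tfrac1{\ell_i}\Bigr),
\]
so $\gamma=1-n\,\chi^{orb}(X)$. The goal becomes: \emph{the total number of ovals of $\alpha$ and $\beta$ is at most $5-n\,\chi^{orb}(X)$.}

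Next we count ovals in terms of the boundary data of $X$. The reflections in $D_n$ form one conjugacy class of size $n$ if $n$ is odd, and two classes of size $n/2$ if $n$ is even. Conjugate reflections have the same number of ovals, since conjugation carries ovals bijectively to ovals. Thus $\#\mathrm{ov}(\alpha)+\#\mathrm{ov}(\beta)$ is $\tfrac{2}{n}$ times the total number of ovals of all reflections in $G$, up to the harmless factor coming from the two classes when $n$ is even. That total is obtained by lifting the mirror edges of $X$. Consider first a boundary component of $|X|$ with no corners. It is a closed mirror circle, and its preimage consists of ovals permuted by $G$, numbering $2n/|\mathrm{Stab}|$. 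The stabilizer of an oval of $\sigma$ lies in the centralizer $C(\sigma)$, which has order $2$ or $4$, so each such component contributes $n$ or $n/2$ ovals to the total. Now consider a boundary component with $r\ge 1$ corners. Its mirror edges lift to arcs that concatenate into ovals, and each oval passes through several corners. Each corner of order $\ell$ lifts to $2n/(2\ell)$ points, at each of which exactly $\ell$ mirrors meet. Tracking the arcs around each preimage point bounds the number of ovals lying over this component by roughly $n\cdot(\text{its corner contribution})$ plus a bounded error.

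Finally we compare the two counts. Each boundary component contributes $-1$ to $\chi^{orb}$, hence $+n$ to $\gamma$, while contributing at most about $n$ ovals to the total, i.e. at most about $2$ ovals of $\alpha$ and $\beta$ combined after the $\tfrac{2}{n}$ rescaling. Corners contribute $-\tfrac12(1-1/\ell_i)$ to $\chi^{orb}$, which likewise dominates their oval contribution. What remains is a finite list of small cases to check: $h=0$, few boundary components, and small $n$ where $1/\ell_i$ terms are large (e.g. $\ell_i=2$ from $n$ even and central elements). These cases account for the additive constant $4$, and I would verify them by hand against the hyperbolicity constraint $\chi^{orb}(X)<0$.

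The main obstacle is the bookkeeping for boundary components with corners. There one must determine how many distinct ovals of $\alpha$ (as opposed to its conjugates) pass through lifts of a corner, and handle the $n$ even case, where the central involution $(\alpha\beta)^{n/2}$ can enlarge oval stabilizers and identify arcs. I would first try to settle this by showing that the preimage of a cornered boundary component is a single $G$-orbit of ``polygonal'' configurations. Each such configuration is a $2\ell$-star at every corner lift, so the oval count per component is at most $2$ per conjugacy class, which gives the needed bound at once. The residual cases I would dispatch by direct comparison with Natanzon's classification \cite{Natanzon}.
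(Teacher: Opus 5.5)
Your setup is sound: $G\cong D_n$ with $n\ge 3$, $X=N/G$ is a hyperbolic orbifold, and $\gamma-1=-n\chi^{\mathrm{orb}}(X)$. One small correction: $|X|$ need not be orientable, so use $\chi(|X|)\le 2-b$ rather than $2-2h-b$.

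The gap is in the step that carries the whole argument, namely counting the ovals that lie over boundary circles of $|X|$ containing corners. You leave this at ``roughly'' and ``a bounded error''. Worse, the mechanism you propose for it is false. A cornered boundary circle can carry many more than two ovals of a single reflection, and corners do not dominate their oval contribution.

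Here is a counterexample. Take $G=D_4=\langle s,r\rangle$ and a reflection polygon whose $2k+1$ edges are labelled cyclically $sr,s,sr^2,s,sr^2,\dots,s,sr^2$, with $k\ge 2$.
\begin{itemize}
\item Its corners have orders $4,4$ and $2$ (the last $2k-1$ times), so $\chi^{\mathrm{orb}}(X)=(1-k)/2$.
\item Vinberg's construction then gives an orientable surface of genus $\gamma=2k-1$ with a $D_4$-action.
\item Each of the $2k$ edges labelled $s$ or $sr^2$ lies under exactly one oval of $s$, so $\|s\|=2k$, while $\|sr\|=1$.
\end{itemize}
Thus the single boundary circle of $|X|$ carries $\gamma+2$ ovals of the non-commuting pair $s,sr$. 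Moreover, each order-$2$ corner adds exactly $n/4=1$ to $\gamma$ and also creates exactly one oval, so no per-corner domination argument can work. Appealing to ``a finite list of small cases'' or to Natanzon's classification does not replace this count.

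What is missing is the following observation. At a lift of a corner of order $\ell$, an oval leaves along the ray opposite to the one it arrived on. That ray projects to the same edge of $X$ when $\ell$ is even, and to the adjacent edge when $\ell$ is odd. So ovals pass through odd corners and bounce at even ones.

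Consequently, each boundary circle splits into segments between consecutive even corners (a single segment if it has none). The ovals over a segment form one $G$-orbit of size $2n/|H|$, where $H\le C(\sigma)$ is the stabilizer of one of them. Such a segment contributes at most $2$ to $\|\alpha\|+\|\beta\|$. It contributes exactly $1$ if it ends at an even corner, because the commuting reflection whose mirror is perpendicular there stabilizes the oval.

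This gives $\|\alpha\|+\|\beta\|\le 2b_0+E$, where $E$ is the number of even corners and $b_0$ the number of boundary circles containing none. On the other side, $\gamma-1\ge n(b-2)+\tfrac n4 E$. Since $E>0$ forces $n$ to be even, hence $n\ge 4$, and since $\gamma\ge 2$, a short check of the cases $b=1$ and $b\ge 2$ yields the bound $\gamma+4$.

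The lifts of even corners are exactly the set $B$ in the paper's proof. The paper quotients only by $G_+$ and applies Harnack's theorem to the single reflection $\rho$ induced on $N/G_+$. It then treats the ovals through $B$ separately, via $|B|\le 2\gamma+2$ and a vertex-degree count. Your orbifold route is a legitimate alternative, but only once this segment analysis is supplied.
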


Before proving the desired result, we discuss the geometric meaning of conditions~(W1)-(W3), which relates to the number of ovals of the lifts of the reflections $\rho_i\in \Gamma_w$ to the orientable double cover. Namely, let $\pi\colon \widetilde M\to M$ be the orientation cover, which can be identified with the orientation bundle of $M$. The elements $\rho_i\in\Gamma_w$ are lifted to $\widetilde M$ as $\widetilde\rho_i$ acting as $d\rho_i$ on the orientation bundle. Let $\tau$ be the involution on $\widetilde M$ switching the orientation of the fibers, so that $\pi$ is the factor by $\tau$. Let $O\subset M$ be an oval of $\rho_i$; then for $p\in O$ the differential $d_p\rho$ exchanges the orientation on $T_pM$, hence, on $\pi^{-1} (O)$ the involution $\widetilde\rho_i$ coincides with $\tau$. Finally, observe that $\pi^{-1}(O)$ has two connected components whenever $O$ is an untwisted oval, and one connected component whenever $O$ is a twisted oval.  As a result, if $(M, \rho_i)$ has species $[k-3,-\colon F,C_+,C_-]$, then $\tau\widetilde \rho_i$ has $2C_+ + C_-$ ovals. Thus, conditions~(W1)-(W3) imply that

\begin{itemize}
    \item $\rho_1$ has $\lfloor k/2\rfloor$ ovals;
    \item at least $k/6$ of those ovals are untwisted, so that $\tau\tilde\rho_1$ has at least $\lfloor k/2\rfloor +k/6 \geq \frac{4k-3}{6}>\frac{2(k-3)+4}{3}$ ovals;
    \item $\rho_2$ has at least $k/3$ untwisted ovals, so that $\tau\tilde\rho_2$ has at least \\$\frac{2k}{3}>\frac{2k-2}{3} = \frac{2(k-3)+4}{3}$ ovals.
\end{itemize}

\begin{proposition}
Suppose $k \geq 5$ and $w, w'$ are length-$k$ words with corresponding actions $T_w, T_{w'}$ on $M_{k-3}$ such that the group of diffeomorphisms generated by $\Gamma_w = T_w(D_2)$ and $\Gamma_{w'} = T_w(D_2)$ is finite.  Then
\begin{enumerate}[label=\emph{(\roman*)}]
\item If $w, w'$ satisfy (W1)-(W3),  any two elements of $\Gamma_w \cup \Gamma_{w'}$ commute.
\item If $w, w'$ satisfy (W1)-(W4), then $\rho_i = \rho'_i$ for $i=1,2,3$, so $\Gamma_w = \Gamma_{w'}$.
\item If $w, w'$ satisfy (W1)-(W5), then $w = w'$. 
\end{enumerate}
\end{proposition}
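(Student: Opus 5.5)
We argue on the orientable double cover and apply Theorem~\ref{thm:Nat2} there. Let $\pi\colon \widetilde M\to M$ be the orientation cover with deck involution $\tau$, as in the discussion preceding the proposition. Since $M$ has nonorientable genus $k-3$, $\widetilde M$ is orientable of genus $\gamma=k-4$.

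Every diffeomorphism of $M$ lifts canonically to $\widetilde M$ by acting via its differential on the orientation bundle. Hence the finite group $G=\langle \Gamma_w,\Gamma_{w'}\rangle$ lifts isomorphically to a finite group $\widetilde G$ commuting with $\tau$. For each reflection $\rho\in\Gamma_w\cup\Gamma_{w'}$, the lift $\tau\widetilde\rho$ is an orientation-reversing involution of $\widetilde M$. Its fixed set consists of $2C_++C_-$ ovals, and by (W1)--(W3) this number exceeds $\frac{2(k-3)+4}{3}=\frac{2\gamma+6}{3}$ whenever $\rho\in\{\rho_1,\rho_2,\rho_1',\rho_2'\}$.

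\emph{Step (i).} Take $\alpha\in\{\rho_1,\rho_2\}$ and $\beta\in\{\rho_1',\rho_2'\}$, and suppose $\alpha\beta\neq\beta\alpha$. Because lifting is injective and $\tau$ is central, $\tau\widetilde\alpha$ and $\tau\widetilde\beta$ are then non-commuting reflections generating a finite group. Their total number of ovals exceeds $\frac{4\gamma+12}{3}\geq\gamma+4$, which contradicts Theorem~\ref{thm:Nat2}. This requires $\gamma>1$, i.e.\ $k\geq 6$; the case $k=5$ ($\gamma=1$) I expect to dispatch directly, since there (W1)--(W2) essentially force $w$. Once $\rho_1,\rho_2$ commute with $\rho_1',\rho_2'$, so do the products $\rho_3=\rho_1\rho_2$ and $\rho_3'=\rho_1'\rho_2'$. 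Commutation inside $\Gamma_w$ and inside $\Gamma_{w'}$ is automatic.

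\emph{Step (ii).} Now $H:=G$ is an elementary abelian $2$-group containing $\Gamma_w$ as a normal subgroup, so $H$ acts on the quotient $M/\Gamma_w$. This quotient is the chamber $\Omega$, the $k$-gon with labels $w$, and the action preserves the panel structure because $H$ commutes with each $\rho_i$. Suppose some $h\in H\setminus\Gamma_w$ acted trivially on $\Omega$. Then on the dense set of nonsingular points $h$ would agree locally with an element of $\Gamma_w$, and by continuity and connectedness $h\in\Gamma_w$, a contradiction. So any such $h$ induces a nontrivial self-homeomorphism of $\Omega$ of order $2$ that preserves edges and labels.

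It must act nontrivially on the vertex set. Indeed, a finite-order homeomorphism of a disk fixing every vertex preserves each edge setwise, and being an involution with fixed endpoints it must fix each edge pointwise; by Kerékjártó it is then the identity. This gives a combinatorial order-$2$ symmetry of the labeled polygon, contradicting (W4). Hence $H=\Gamma_w$, and by symmetry $\Gamma_w=\Gamma_{w'}$.

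To match the generators, observe that within $\Gamma_w$ the three reflections are distinguished by their numbers of ovals (which are diffeomorphism invariants). By (W1)--(W3), $\rho_1$ has $\lfloor k/2\rfloor$ ovals, $\rho_2$ has more than $k/3$, and $\rho_3$ has fewer than $k/6+1$; a short count confirms these three numbers are distinct. The same numbers single out $\rho_i'$, so $\rho_i=\rho_i'$.

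\emph{Step (iii).} With the same group and the same reflections, $M/\Gamma_w$ is a single labeled polygon, and $w,w'$ are both readings of its boundary labels. They can therefore differ only by a cyclic shift or a reversal. Cyclic shifts are excluded by (W5). For reversal, I would use the leading block $(\rho_2\rho_1)^{\alpha(k)}\rho_2$ of (W2), together with the argument in the counting lemma that admissible words have a unique maximal $\rho_2$-run of length $\alpha(k)+1$ in the $\rho_2\rho_3$ subword. This should pin down both the starting point and the reading direction, so $w=w'$.

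The main obstacle I anticipate is step (ii): making rigorous that an extra commuting element descends to a genuine nontrivial label-preserving symmetry of the polygon. Handling the reversal ambiguity in (iii) also needs some care.
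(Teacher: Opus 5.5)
\textbf{Steps (i) and (ii): same route as the paper.} You lift to the orientation cover and play the oval counts of the reflections $\tau\widetilde\rho$ against Theorem~\ref{thm:Nat2}. You then note that an extra commuting involution descends to a nontrivial label-preserving involution of the chamber, which (W4) forbids. Your version of (ii) is more careful than the paper's single sentence. One fix: the nonsingular set is four disjoint open disks, so ``connectedness'' should be replaced by commutativity with $\Gamma_w$, which immediately gives the same element of $\Gamma_w$ on every chamber.

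Two numerical corrections.
\begin{enumerate}
\item Since $\chi(M)=4-k$, $M$ has nonorientable genus $k-2$ (the ``$k-3$'' in the text is a slip). So $\widetilde M$ has genus $\gamma=k-3$, not $k-4$. The crude bounds then give a total exceeding $\frac{4\gamma+8}{3}$, which beats $\gamma+4$ only when $\gamma\geq 4$, exactly as in the paper. This is harmless: (W1)--(W3) have no solutions for $k\leq 8$, so your separate $k=5$ case is vacuous.
\item The oval counts do not always separate $\rho_1$ from $\rho_2$. For odd $k\geq 9$ the word $(\rho_2\rho_1)^{(k-1)/2}\rho_3$ satisfies (W1)--(W4), and its $\rho_1,\rho_2$ have identical species $C_+=\frac{k-3}{2}$, $C_-=F=1$. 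The reflection of that polygon fixing the midpoint of its $\rho_3$-edge swaps the labels $\rho_1,\rho_2$. It yields a realization of the same word with $\rho_1'=\rho_2$, so the conclusion $\rho_i=\rho_i'$ genuinely fails there. The paper's ``different species'' claim has the same exception; discarding this one word costs nothing.
\end{enumerate}

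\textbf{The genuine gap is in (iii).} You are right that $w,w'$ are a priori determined only up to cyclic shift \emph{and reversal}, a point the paper's one-line argument passes over. But the leading block cannot fix the reading direction when $k$ is even.

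Write the odd-position subword of $w\in\Wcal_k$ as $u=\rho_2^{\alpha(k)+1}y$. Let $w^*$ be the rotation of the reversal of $w$ whose odd-position subword is $\rho_2^{\alpha(k)+1}\bar y$, with $\bar y$ being $y$ read backwards; every even letter of $w^*$ is still $\rho_1$. Then:
\begin{enumerate}
\item $w^*$ satisfies (W1)--(W4), begins with the same block $(\rho_2\rho_1)^{\alpha(k)}\rho_2$, and has a unique maximal $\rho_2$-run at its start whenever $w$ does.
\item $w^*$ is not a cyclic permutation of $w$, since otherwise the labeled polygon of $w$ would have a reflection symmetry, contradicting (W4).
\item The mirror image of the labeled $k$-gon induces a $D_2$-equivariant homeomorphism $M(D_2,\Omega_w)\to M(D_2,\Omega_{w^*})$ fixing every label, so one may take $T_{w^*}=T_w$.
\end{enumerate}
Hence, with (W5) as literally stated (cyclic permutations only), (iii) is false for the pair $(w,w^*)$, and no argument can close this step.

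The repair is to require in (W5) that no two words differ by a cyclic permutation or a reversal. This costs at most a factor $2$ in $|\Wcal_k|$, and the paper's deduction of (iii) tacitly uses it. With that reading, your reduction to ``two readings of the same labeled polygon'' finishes the proof. For odd $k$, reversal is excluded automatically: (W1) pins down the rotation, and the reversed word would begin with the forced last letter $\rho_3$.
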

\begin{proof}
For (i), let $S=\{\rho_1, \rho_2, \rho'_1, \rho'_2\}$. Properties (W2) and (W3) imply the lift of each $\rho \in S$ has more than $(2\gamma+4)/3$ ovals, where $\gamma = k-3$ is the genus of $M$, hence also of its orientation cover $\widetilde{M}$.
Consequently, any two distinct elements of $S$ must commute,  for otherwise their lifts to $\widetilde{M}$ would together have more than $2 \frac{2\gamma+4}{3} > \gamma + 4$ ovals, contradicting Theorem \ref{thm:Nat2}.  Since $\rho_1, \rho_2$ generate $\Gamma_w$ and $\rho'_1, \rho_2'$ generate $\Gamma_{w'}$, item (i) follows. 

For (ii), let $\rho \in \{\rho_1, \rho_2\}$, and note by (i) that $\rho$ commutes with $\Gamma_{w'}$, so $\rho$ can also be viewed as an involution acting on the $k$-gon chamber $\Omega$ corresponding to $T'$; then (W4) implies this action must be trivial, which means that $\rho \in \Gamma_{w'}$, and hence $\Gamma_{w} \subset \Gamma_{w'}$.  Reversing the roles of $\Gamma_w$ and $\Gamma_{w'}$ in the above argument shows $\Gamma_{w'} \subset \Gamma_w$, so $\Gamma_w=\Gamma_{w'}$.  Finally, note that (W1)-(W3) imply that the basic reflections $\rho_i$ and $\rho_j$ have different species for $i \neq j$, so $\rho_i = \rho'_i$ for $i=1,2,3$, completing the proof of (ii).

Finally,  (iii) follows from (ii) since cyclic permutations of the same word yield the same action. 
\end{proof}

\section{The family $\Mtetk$ in $\Sph^4$}
\label{Stet}

\subsection{Elementary properties}
Let $k\geq 2$, and let $\Gamma_k = D_2 \rtimes D_{3k}$.  Recall that as a set, $\Gamma_k = D_2 \times D_{3k}$, and the group multiplication is given by
\begin{align}
\label{Esmult}
(n_1, h_1) \cdot (n_2, h_2) = (n_1 \phi_{h_1}(n_2), h_1h_2).
\end{align}

We now make explicit the action $T$ of $\Gamma_k$ on the nonorientable surface $M=M_k$ of genus $3k-2$ in the terminology of Lemma \ref{LD2quo}.  As in Lemma \ref{LD2quo}, $M$ is a quotient of $D_2 \times \Omega$, where $\Omega$ is a $3k$-gon, which may further be endowed with a standard $D_{3k}$-action as in \ref{ssD3}.  The group $\Gamma_k$ then acts on $D_2\times \Omega$ by
\begin{align}
\label{EGD3}
(\rho, \gamma) \cdot (\tau, p) = (\rho \phi_\gamma(\tau), \gamma \cdot p),
\end{align}
as is immediately verified using \eqref{Esmult} and the fact that $\phi$ is a homomorphism.  This action descends to the quotient $M$ of $D_2\times \Omega$ by the relation in Lemma \ref{LD2quo}, and we define this action to be $T$. 

\begin{lemma}
\label{LpermT}
The action $T$ of $\Gamma_k$ on $M$ acts on the set $P \subset M$ of four centers of the chambers for the $D_2$-action; moreover, each permutation of the elements of $P$ is realized by some element of $\Gamma_k$.
\end{lemma}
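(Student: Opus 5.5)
The set $P$ is the image in $M$ of the four points $(\tau,c)$ with $\tau\in D_2$, where $c\in\Omega$ is the center of the regular $3k$-gon, i.e.\ the fixed point of the standard $D_{3k}$-action. Since $c$ lies in the interior of $\Omega$, the equivalence relation of Lemma~\ref{LD2quo} does not identify these points. Thus $P$ has exactly four elements, labelled by $D_2$ via $\tau\mapsto[(\tau,c)]$.

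First I check that $P$ is invariant. By \eqref{EGD3}, $(\rho,\gamma)\cdot(\tau,c)=(\rho\,\phi_\gamma(\tau),\gamma\cdot c)=(\rho\,\phi_\gamma(\tau),c)$, because $\gamma$ fixes the center. So $\Gamma_k$ preserves $P$. Under the labelling, $(\rho,\gamma)$ acts on $D_2$ by the affine map $\tau\mapsto \rho\,\phi_\gamma(\tau)$.

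Next I show that every permutation arises. The translations $\tau\mapsto\rho\tau$ form a transitive subgroup $V\cong D_2$ of $\mathrm{Sym}(D_2)\cong S_4$. The stabilizer of the identity element $e\in D_2$ within the image consists of the maps $\phi_\gamma$, and their image is all of $\mathrm{Aut}(D_2)\cong S_3$, because $\phi$ surjects onto $D_3\cong S_3$ (\ref{ssD3}). This surjectivity is the only point needing care. I will verify it from the explicit description: the generators $\gamma_1,\gamma_2$ map to the transpositions exchanging $\rho_3\leftrightarrow\rho_1$ and $\rho_3\leftrightarrow\rho_2$, and these generate $S_3$.

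An orbit–stabilizer count then finishes the argument. The image subgroup is transitive on four points, and its point stabilizer has order $6$, so the image has order $4\cdot 6=24=|S_4|$. Hence every permutation of $P$ is realized. Equivalently, the image is the affine group $D_2\rtimes\mathrm{Aut}(D_2)\cong S_4$.

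The argument has no real obstacle. The one thing to confirm is that the $D_{3k}$-action on $\Omega$ fixes an interior point: the center of the regular polygon is fixed by every element of $D_{3k}$.
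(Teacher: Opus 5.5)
Your proof is correct and follows essentially the same route as the paper: label $P$ by $D_2$, use \eqref{EGD3} to see that $(\rho,\gamma)$ acts by $\tau\mapsto\rho\,\phi_\gamma(\tau)$, and check that the resulting permutations fill out $S_4$. The only difference is in the final step. The paper lists the explicit images $\Phi(\rho_1)=(14)(23)$, $\Phi(\rho_2)=(24)(13)$, $\Phi(\rho_3)=(34)(12)$, $\Phi(\gamma_1)=(13)$, $\Phi(\gamma_2)=(23)$ and simply asserts that they generate $S_4$, whereas your orbit--stabilizer count (transitive translation subgroup, point stabilizer $\mathrm{Aut}(D_2)\cong S_3$) actually proves that generation claim.
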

\begin{proof}
The first statement is clear.  For the second, let $P = \{p_1,p_2,p_3, p_4\}$, with $p_i$ identified with the center of the chamber corresponding to $\{\rho_i\} \times \Omega$ for $i=1,2,3$, and $p_4$ the center of the chmaber corresponding to $\{1\}\times\Omega$.  With this convention, the action $T$ leads to a homomorphism $\Phi : \Gamma_k \rightarrow S_4$, with $\Phi(g)$ the permutation induced by $g$ on the indices of $p_1,\ldots,p_4$.  From these definitions, it is clear that
\begin{equation}
\begin{gathered}
\label{Epermrho}
\Phi(\rho_1) = (14)(23),
\quad
\Phi(\rho_2) = (24)(13),
\quad
\Phi(\rho_3) = (34)(12),
\\
\Phi(\gamma_1) = (13),
\quad
\Phi(\gamma_2) = (23).
\end{gathered}
\end{equation}
The permutations in \eqref{Epermrho} generate $S_4$, completing the proof.
\end{proof}

\subsection{Maximizing metrics}

\begin{theorem}
\label{Ttetexist}
Let $k \geq 2$, $M$ be the surface with nonorientable genus $3k-2$, and $T$ be the action of $D_2 \rtimes D_{3k}$ on $M$ as in \ref{ssD3}.  Then the following hold.
\begin{enumerate}[label=\emph{(\roman*)}]
\item There is a $T$-invariant metric $g$ on $M$ realizing $\Lambda^T_1(M)$. 
\item The metric $g$ is smooth and is induced by a $T$-invariant minimal embedding $\iota: M \rightarrow \Sph^4$ by $\lambda_1(M, g)$-eigenfunctions. 
\item The nonidentity elements $\rho_1,\rho_2, \rho_3 \in D_2$ are basic reflections on $M$.
\item The conclusions of Lemma \ref{Lgraphsph} hold, including $\area(M,g) < 8\pi$.
\end{enumerate}
\end{theorem}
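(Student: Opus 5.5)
We will follow the structure of the proof of Theorem \ref{TD2}, first reducing everything to item (i). Suppose a $T$-invariant maximizer $g$ exists. Theorem \ref{thm:symm_crit_L} then gives a $\Gamma_k$-equivariant full branched minimal immersion $\iota: M\to\Sph^n$ by $\lambda_1$-eigenfunctions. Since $D_2\le\Gamma_k$ and $T|_{D_2}$ is one of the actions of Lemma \ref{LD2quo}, we can apply Proposition \ref{mult.prop} (used in Theorem \ref{TD2}) to the restricted $D_2$-action, which gives $n=4$. By Lemma \ref{LD2quo}(iv), each $\Omega^{\rho_i}$ is nonempty, because every label appears in the word $(\rho_1\rho_2\rho_3)^k$; this proves (iii). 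Lemma \ref{Lgraphsph} applied to $(M,\rho_i)$ and $\iota$ then gives (iv) together with the fact that $\iota$ is an unbranched embedding, which is (ii).

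To prove (i) we use Theorem \ref{thm:existence}. Two things must be checked.
\begin{enumerate}
\item $\Lambda_1^T(M)$ is strictly larger than $\Lambda_1^{T'}(M')$ for every elementary $\Gamma_k$-degeneration.
\item It is strictly larger than $8\pi\delta_{or}$ and $12\pi\delta_{nor}$.
\end{enumerate}

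For (2), we first show that $\Lambda_1^T(M)>8\pi$. A $\Gamma_k$-invariant curve would project to a $D_{3k}$-invariant configuration in the chamber, and Lemma \ref{LpermT} shows that $\Gamma_k$ permutes the four chamber centers transitively. So instead of ruling out $\delta_{or},\delta_{nor}$ directly, we will bound $\Lambda_1^T$ from below by comparison with degenerations, as below, and get $\Lambda_1^T(M)\ge 12\pi$. Specifically, we show that $\mathbb{RP}^2$ (or $\Sph^2$ with an induced action) appears at the bottom of a degeneration chain with value at least $12\pi$, and that every link of the chain is strict.

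For (1), we follow the argument of Lemma \ref{deg.char}. Any $\Gamma_k$-invariant union $\fC$ of curves is in particular $D_2$-invariant. Hence, up to passing to an intermediate degeneration, it is the $\Gamma_k$-orbit of edges of the $3k$-gon. The $D_{3k}$-symmetry forces this orbit to consist of all the $\rho_3$-edges, or of all edges of one $D_3$-orbit type. Because $\phi$ permutes the three labels transitively, the $\Gamma_k$-orbit of one edge is in fact every edge. Collapsing it therefore destroys the chamber entirely, and the only degenerations are obtained by collapsing all $3k$ ovals, or pieces that disconnect.

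Concretely, we expect the elementary degenerations to be the following. The $\Gamma_k$-orbit of all twisted ovals, which is the orbit of every edge, collapses to four spheres meeting, and the invariant component is a sphere with an action of order $24$ factoring through $S_4$. The resulting surface is $\Sph^2$ with tetrahedral symmetry, and its $\Lambda_1$ is $8\pi$. The proof then needs a strict inequality $\Lambda_1^T(M)>8\pi$, together with an estimate excluding the $12\pi$ alternative.

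For $8\pi$ we adapt Lemma \ref{twist.bd}. The round tetrahedrally symmetric $\Sph^2$ is smooth and invariant, and gluing in $\Gamma_k$-invariant M\"obius bands or necks at the orbit points, as in the construction of \cite[Section 3]{KSP} used equivariantly via \cite[Lemma 8.10]{KKMS}, strictly increases $\bar\lambda_1$ unless those points are conical singularities. For $12\pi$, I would try to show that no $\Gamma_k$-invariant one-sided curve exists, so that $\delta_{nor}=0$: a one-sided curve invariant under the full group would have to be preserved by every $\rho_i$, and Lemma \ref{LpermT} makes this incompatible with the $S_4$ action on the chamber centers.

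The main obstacle is the classification of elementary $\Gamma_k$-degenerations. Unlike the $D_2$ case, invariant curves need not be unions of edges of $\Omega$; they may also run through the $D_{3k}$ mirror lines. Each such possibility has to be shown either to be contractible or to produce a degeneration whose $\Lambda_1$ is strictly smaller by the gluing argument.
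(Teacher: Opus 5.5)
Your reduction of (ii)--(iv) to (i) is the same as the paper's and is fine. The proof of (i), however, rests on a wrong picture of the degenerations. You correctly note two facts: the $\Gamma_k$-orbit of any edge of the $3k$-gon is every edge, and $\Gamma_k$ permutes the four chamber centers transitively (Lemma \ref{LpermT}). But you then conclude that collapsing produces an invariant sphere with tetrahedral symmetry and $\Lambda_1=8\pi$. That is not what happens. Contracting the boundary of each $D_2$-chamber $\{g\}\times\Omega$ gives four \emph{disjoint} spheres, and by Lemma \ref{LpermT} they are permuted transitively by $\Gamma_k$. So there is no $\Gamma_k$-invariant component, and this collapse contributes nothing. (Strictly, the union of all ovals is not even a disjoint union of simple closed curves, since ovals of different $\rho_i$ cross at the vertices of $\Omega$.) The same holds for the other two sides of the $\Gamma_k$-chamber, namely the $D_{3k}$ mirror segments from the center of $\Omega$ to a vertex and to an edge midpoint. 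This is how the paper argues: collapsing along any of the three sides disconnects $M$, so $\Lambda_1^{T'}(M')=0$.

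The missing idea is therefore that the right-hand side of \eqref{ineq:MM'} is simply $0$. Every elementary degeneration has $\Lambda_1^{T'}(M')=0$. Moreover, there is no $\Gamma_k$-invariant simple closed curve, one-sided \emph{or} two-sided, so $\delta_{or}=\delta_{nor}=0$. Strictness in Theorem \ref{thm:existence} is then automatic. Your instinct to rule out invariant one-sided curves was right; it just needs to cover two-sided curves too, which removes the $8\pi$ term as well.

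Your worry about curves crossing the $D_{3k}$ mirrors is resolved by working in the quotient triangle $M/\Gamma_k$. An invariant family of disjoint curves projects to arcs that cut off either one corner or a piece of one side. The orbit of such an arc bounds a disk, so it is contractible.

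Consequently, the machinery you propose is unnecessary, and parts of it cannot work as stated. There is no $\Gamma_k$-equivariant degeneration to $\mathbb{RP}^2$ or to a tetrahedrally symmetric $\Sph^2$ on which to run the gluing of \cite[Section 3]{KSP}. Also, a bound $\Lambda_1^T(M)\geq 12\pi$ would not give the required strict inequality if $\delta_{nor}$ were $1$.
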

\begin{proof}
Exactly as in the proof of Theorem \ref{TD2}, it suffices to prove (i). 

Because $\Gamma_k$ contains the subgroup $D_2$ whose corresponding action is as in Lemma \ref{LD2quo}, the reasoning in the proof of Lemma \ref{deg.char} combined with the $\Gamma_k$-invariance shows that every elementary degeneration $(M',T')\prec (M,T)$ is realized by collapsing the $\Gamma_k$-orbit of one of the edges in the fundamental chamber depicted in Figure \ref{fig:chamber_Dk_nor}. 

In this case, however, we see that collapsing \emph{any} of the three edges leads to a degeneration $(M',T')\prec (M,T)$ for which $M'$ is disconnected, and therefore $\Lambda^{T'}_1(M')=0$. Moreover, these surfaces contain no $\Gamma_k$-invariant simple closed curves, so the strict form of the inequality \eqref{ineq:MM'} holds trivially, and (i) follows from Theorem \ref{thm:existence}.
\end{proof}

We now discuss the symmetries of the surfaces $\Mtetk \subset \Sph^4$ in more detail.  Before proceeding, we note that $\Mtetk$ is $D_2$-invariant in the sense of Section \ref{SD2main}, so as in Theorem \ref{TD2}(v), there is an orthogonal decomposition 
\[
\label{ER5split}
\R^5 = E_1 \oplus E_2 \oplus E_3 \oplus F
\]
such that $E_i$ is $1$-dimensional for $i=1,2,3$, $F$ is $2$-dimensional, and the nonidentity elements $\rho_1,\rho_2,\rho_3$ of $D_2$ satisfy
\[  \rho_i= I
\text{ on  } E_i \oplus F
\quad
\text{and}
\quad
\rho_i = - I
\text{ on } 
(E_i \oplus F)^\perp.
\]
We further choose cartesian coordinates for $\R^5$ so that \eqref{ED2act} holds.

\begin{lemma}
\label{Ltetsym}
Up to an ambient isometry of $\R^5$, the surface $\Mtetk$ and the action of the group $\Gamma_k = D_2 \rtimes D_{3k}$ satisfy the following. 
\begin{enumerate}[label=\emph{(\roman*)}]
\item The fixed-point set for the action of $D_2$ on $M_k$ is the set $F\cap M_k$, which consists of the $3k$-th roots of unity in the circle $F \cap \Sph^4$.
\item $D_{3k}$ acts on $F \subset \R^5$ via a standard dihedral action, satisfying
	\begin{align*}
	\gamma_1(z) = \bar{z},
	\quad
	\gamma_2(z) = e^{i \frac{2\pi}{3k}} \bar{z} . 
	\end{align*}
\item The set $P \subset M_k$ of centers of the chambers for the $D_2$-action is
	\[
	P = \textstyle{\frac{1}{\sqrt{3}}}\{(1,1,1), (1, -1, -1), (-1,1, -1), (-1, -1, 1)\} \subset E.
	\]

\item $D_{3k}$ acts on $E \subset \R^5$ via the action given on generators $\gamma_1, \gamma_2$ by
	\begin{align*}
    \gamma_1(x_1,x_2,x_3) = (x_3,x_2,x_1),
        \quad
	\gamma_2(x_1,x_2,x_3) = (x_1, x_3, x_2).
	\end{align*}	
\end{enumerate}
\end{lemma}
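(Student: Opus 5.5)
The plan is to use the equivariance of $\iota$ from Theorem \ref{thm:symm_crit_L}. There is an orthogonal representation of $\Gamma_k$ on $\R^5$ for which $\iota$ is equivariant, and its restriction to $D_2$ is the one fixed by \eqref{ED2act}. Write $E = E_1\oplus E_2\oplus E_3$. Since $D_2$ is normal in $\Gamma_k$, every $h\in D_{3k}$ satisfies $h\rho_i h^{-1} = \phi_h(\rho_i)$. So $h$ permutes the common eigenspaces of $D_2$, that is, the isotypic components of the $D_2$-representation.

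\emph{Step 1: $F$ and $E$ are invariant, and the $E_i$ are permuted according to $\phi$.} The space $F$ is the trivial isotypic component of $D_2$, so $F$ is $\Gamma_k$-invariant. Each line $E_i$ carries a distinct nontrivial character of $D_2$, so $h(E_i)=E_{\sigma_h(i)}$, where $\sigma_h$ is the permutation of indices induced by $\phi_h$. In particular $E$ is invariant.

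\emph{Step 2: item (i).} The $D_2$-fixed set in $\Sph^4$ is $F\cap\Sph^4$, a great circle, so the $D_2$-fixed points of $M_k$ are $M_k\cap F$. From the chamber picture in \ref{ssD3}, the $D_2$-fixed points of $M_k$ are the vertices of $\Omega$ where a $\rho_i$-edge meets a $\rho_j$-edge with $i\neq j$; $\Omega$ is a $3k$-gon with no repeated adjacent labels, so there are $3k$ such vertices. The subgroup $\Z_k\subset D_{3k}$ acts trivially on $D_2$, so $\Z_k$ acts on $F$ commuting with $D_2$, and it acts freely on these vertices.

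\emph{Step 3: item (ii), which I expect to be the main obstacle.} I must show that $D_{3k}$ acts on $F$ as the standard dihedral group of order $6k$, which amounts to showing the action of $D_{3k}$ on $F$ is faithful. The points of $M_k\cap F$ are distinct points of a circle, and $D_{3k}$ acts on them simply transitively on vertices. The plan is to use that each $\gamma_j$ fixes a vertex or edge-midpoint of $\Omega$ and reverses the cyclic order of the vertices, and that an orthogonal map of the plane $F$ preserving $3k$ points in that cyclic pattern must be the standard dihedral element. Embeddedness from Lemma \ref{Lgraphsph} is needed here so that distinct vertices have distinct images. After rotating coordinates in $F$, the vertices become the $3k$-th roots of unity, $\gamma_1(z)=\bar z$, and $\gamma_2(z)=e^{2\pi i/(3k)}\bar z$. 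One must still verify that the vertices are equally spaced rather than only dihedrally invariant. This follows from simple transitivity on the $3k$ vertices: the orbit of a point under the rotation subgroup $\Z_{3k}$ of a faithful planar dihedral action is equally spaced.

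\emph{Step 4: items (iii) and (iv).} On $E$, each $h$ maps $E_i$ to $E_{\sigma_h(i)}$ up to sign, so with respect to the basis $e_1,e_2,e_3$ it acts by a signed permutation matrix. Take $p_4\in P$, the center of the chamber $\{1\}\times\Omega$. By the doubling structure of Lemma \ref{Lgraphsph} and the fact that $p_4$ lies on no fixed-point set, I expect $p_4$ to lie in $E$; I would deduce this from the fact that its stabilizer in $\Gamma_k$, which projects onto $D_{3k}$, acts on $F$ without nonzero fixed vectors. By \eqref{ED2act}, $p_4\in E$ with $\rho_i p_4=p_{\sigma(i)}$ gives $P=\{p_4,\rho_1p_4,\rho_2p_4,\rho_3p_4\}$, the orbit of $p_4=(a_1,a_2,a_3)$ under sign changes of even parity. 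Lemma \ref{LpermT} says $\Gamma_k$ realizes $S_4$ on $P$, so the $\gamma_j$ stabilizing $p_4$ permute its coordinates. This forces $|a_1|=|a_2|=|a_3|=1/\sqrt3$. Changing the signs of the basis vectors then gives $p_4=\frac{1}{\sqrt3}(1,1,1)$, and $P$ is as stated in (iii). Finally, comparing with \eqref{Epermrho}, $\gamma_1$ fixes $p_4$ and swaps $E_1,E_3$, and $\gamma_2$ fixes $p_4$ and swaps $E_2,E_3$. The signs are then determined by fixing $(1,1,1)$, which yields the formulas in (iv).
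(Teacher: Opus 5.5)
\textbf{The gap is in Step 3.} You reduce (ii) to faithfulness of $D_{3k}$ on $F$. Embeddedness does give faithfulness, but faithfulness only shows that $\gamma_1,\gamma_2$ act on $F$ by reflections whose product $\gamma_2\gamma_1$ is a rotation by $2\pi j/(3k)$ for some $j$ prime to $3k$.

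For $j\not\equiv\pm1 \pmod{3k}$ (e.g.\ $3k=9$, $j=2$), this is not conjugate in $O(2)$ to the action claimed in (ii), since conjugation can only change a rotation angle $\theta$ to $\pm\theta$. Yet every such $j$ is consistent with everything you use:
\begin{itemize}
\item the vertex images are still the $3k$-th roots of unity, so your equal-spacing argument proves (i) and nothing more;
\item each $\gamma_j$ still fixes the image of the appropriate vertex or edge midpoint;
\item the combinatorial cyclic order is still reversed.
\end{itemize}
In this case the vertex $v_m$ simply sits at $e^{2\pi i jm/(3k)}$, so $\partial\Omega$ is traced as a star polygon. Your phrase ``preserving $3k$ points in that cyclic pattern'' assumes that the order of the vertices along the circle $F\cap\Sph^4$ agrees with their order around $\partial\Omega$. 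That is exactly the content of (ii). (A minor point: $D_{3k}$ has order $6k$, so it cannot act simply transitively on $3k$ vertices; only its rotation subgroup does.)

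\textbf{The missing idea is geometric input beyond embeddedness,} namely Lemma \ref{Lgraphsph}(iv)--(v), which is what the paper uses.
\begin{enumerate}
\item Suppose $j\not\equiv\pm1$. Take combinatorially adjacent vertices $p_1,p_2$ joined by an edge of $\Omega$ lying in $M_k^{\rho_i}$. Then each of the two arcs of $F\cap\Sph^4$ between $p_1$ and $p_2$ contains another point $p'\in M_k\cap F$.
\item The $D_2$-orbit of that edge is a fixed-point oval $O$ of $\rho_i$ in the $2$-sphere $\Sph^4\cap(E_i\oplus F)$. By (iv) it is strictly convex, so the great-circle arc from $p_1$ to $p_2$ inside the disk $D$ bounded by $O$ has its interior in the interior of $D$.
\item But $p'$ lies on $M_k$ and in $\Sph^4\cap(E_i\oplus F)$, so it is its own nearest-point projection. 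The doubling structure in (v) keeps that projection out of the interior of $D$, a contradiction.
\end{enumerate}
Hence $j\equiv\pm1$, and after possibly conjugating by $z\mapsto\bar z$ you obtain (ii).

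With (ii) in hand, your Steps 1, 2 and 4 go through. Your signed-permutation argument for (iii)--(iv) is a tidy alternative to the paper's route via Lemma \ref{LpermT} and the regular tetrahedron. It only needs $\gamma_1,\gamma_2$ to act on $F$ as distinct reflections, so it is unaffected by the gap.
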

\begin{proof}
From the definitions in \ref{ssD3} and Lemma \ref{LD2quo}, the subset of $M_k$ fixed pointwise by the $D_2$-action is the set of $3k$ points corresponding to the vertices of any fundamental chamber $\Omega$ for the $D_2$-action; as in Theorem \ref{TD2}, these points lie in the great-circle $\Sph^1_F: = \Sph^4 \cap F$.  By the discussion in \ref{ssD3} and since the action $T$ is orthogonal, the generators $\gamma_1, \gamma_2$ of $D_{3k}$ act on $F$ by reflections, whose composition $\gamma_2 \gamma_1$ is a rotation by some angle $\theta$ such that $3k\theta$ is an integer multiple of $2\pi$.  To complete the proof of (i) and (ii), it suffices to prove $\theta =2\pi/(3k)$. 

If this were not the case, then there would exist vertices $p_1, p_2, p_k$ of the $3k$-gon $\Omega$ such that $p_1, p_2$ are adjacent in $\Omega$, while $p_k$ lies in the arc joining $p_1$ to $p_2$ in $\Sph^1_F$.  Then $p_1$ and $p_2$ lie on a fixed-point oval $O$ for one of the basic reflections $\rho_i$; furthermore, by Lemma \ref{Lgraphsph}(iv), the oval $O$ is a strictly convex curve in the $2$-sphere $\Sph^4 \cap (E_i \oplus F)$.  But then $p_k$ lies on the interior of the disk $D$ in $\Sph^4 \cap (E_i \oplus F)$ bounded by $O$, which is inconsistent with Lemma \ref{Lgraphsph}(v), which asserts the interior of $D$ is disjoint from image of $M$ under the nearest-point projection $\pi: M \rightarrow \Sph^4 \cap (E_i \oplus F)$.

For (iii), it is clear from the definitions in \ref{ssD3} that $\Gamma_k$ acts on $P$, and from the action of $\gamma_1, \gamma_2$ and their conjugates that $P$ lies in the $3$-dimensional subspace $F^\perp =E = E_1\oplus E_2 \oplus E_3$ of $\R^5$.  

Furthermore, by Lemma \ref{LpermT}, the distances between pairs of points in $P$ are independent of the choice of pair.  Since $P \subset E$, it follows by elementary geometry that $P$ consists of the vertices of a regular tetrahedron.  
Also, the action of the $\rho_i$ given by \eqref{Epermrho} implies 
\begin{align*}
\langle p_1, e_1\rangle = \langle p_4, e_1\rangle,
\quad
\langle p_2, e_1\rangle = \langle p_3, e_1\rangle,
\\
\langle p_2, e_2\rangle = \langle p_4, e_2\rangle,
\quad
\langle p_1, e_2\rangle = \langle p_3, e_2\rangle,
\\
\langle p_3, e_3\rangle = \langle p_4, e_3\rangle,
\quad
\langle p_1, e_3\rangle = \langle p_2, e_3\rangle,
\end{align*}
and combining this with the preceding and elementary geometry establishes that, without loss of generality, we may take 
\begin{equation*}
\begin{gathered}
p_4 = \frac{1}{\sqrt{3}}(1,1,1), 
\quad
p_1 = \frac{1}{\sqrt{3}}(1, -1, -1),
\\
p_2 = \frac{1}{\sqrt{3}}(-1,1,-1),
\quad
p_3 = \frac{1}{\sqrt{3}}(-1,-1,1),
\end{gathered}
\end{equation*}
proving (iii).

Finally, since $P$ lies fully in $E$, the action of $\gamma_1, \gamma_2$ on $E$ is determined by its action on $P$, which was determined above.  Thus (iv) follows from the preceding. 
\end{proof}

For later use in the study of blow-up limits, we record the following local $L^2$ estimate for the second fundamental forms of the minimal surfaces $M_k$.

\begin{lemma}
\label{Lsff}
There is a constant $C>0$ such that, for every $0<R<k$, 
\begin{align}
\label{Esff0}
\int_{B_{R/k}(q) \cap M_k} |\sff_{M_k} |^2 \leq CR 
\end{align}
for each $q \in M_k \cap F$, where $\sff_{M_k}$ is the second fundamental form of $M_k\subset \mathbb{R}^5$. 
\end{lemma}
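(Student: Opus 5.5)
The plan is to combine a global Gauss–Bonnet bound on the total curvature of $M_k$ with the rotational symmetry along $F$, and then localize to balls $B_{R/k}(q)$ centered on the singular circle.

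\textbf{Step 1: global bound.} Since $M_k\subset \Sph^4$ is minimal, the second fundamental form in $\R^5$ splits into the part normal to $\Sph^4$, with $|\cdot|^2=2$, and the part $\sff^{\Sph^4}$ tangent to $\Sph^4$. The Gauss equation gives $K=1-\tfrac12|\sff^{\Sph^4}|^2$. Integrating and using Gauss–Bonnet with $\chi(M_k)=4-3k$ and $\area(M_k)<8\pi$ from Theorem~\ref{Ttetexist}(iv), I get
\[
\int_{M_k}|\sff_{M_k}|^2 \;=\; 4\area(M_k)-4\pi\chi(M_k)\;\le\; 32\pi+4\pi(3k-4)\;\le\; Ck .
\]

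\textbf{Step 2: distribute over wedges.} By Lemma~\ref{Ltetsym}(ii), the rotation $\gamma_2\gamma_1\in\Gamma_k$ acts on $F$ as rotation by $2\pi/(3k)$, and it acts by an ambient isometry of $\R^5$. Write $x=(x_E,x_F)$ and use polar coordinates on $F$. The wedges $W_j=\{x\in M_k: \arg x_F\in[2\pi j/(3k),2\pi(j+1)/(3k))\}$, $j=0,\dots,3k-1$, together with the null set $\{x_F=0\}$, are permuted transitively by powers of $\gamma_2\gamma_1$. Hence every wedge satisfies $\int_{W_j}|\sff|^2\le Ck/(3k)=C'$.

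\textbf{Step 3: localize.} Let $q\in M_k\cap F$, so $|q|=1$ and $q\in F$.
\begin{itemize}
\item If $R\ge k/2$, the global bound gives $\int_{B_{R/k}(q)\cap M_k}|\sff|^2\le Ck\le 2CR$.
\item If $R<k/2$, every $x\in B_{R/k}(q)$ has $|x_F|\ge 1-R/k\ge 1/2$. The angle between $x_F$ and $q$ is then at most $C_0R/k$, so the ball meets at most $C_1R+2$ wedges. Its $\sff$-energy is therefore at most $C'(C_1R+2)$, which is $\le CR$ once $R\ge 1$.
\end{itemize}

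\textbf{Step 4: small $R$, the main obstacle.} For $R<1$ the wedge count only yields a constant, not $CR$. Here I would prove a pointwise estimate $|\sff_{M_k}|\le Ck$ on $B_{1/k}(q)$. Combined with the monotonicity area bound $\area(B_{R/k}(q)\cap M_k)\le C(R/k)^2$, which follows from area $<8\pi$, this gives $\int_{B_{R/k}(q)\cap M_k}|\sff|^2\le CR^2\le CR$.

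To get the curvature bound I would argue by contradiction and blow up.
\begin{itemize}
\item Suppose points near $F$ have $|\sff|\,k\to\infty$, and rescale by $|\sff|$ at a point-picking maximum.
\item The wedge bound from Step 2 keeps the rescaled energy bounded on bounded sets, and the area ratios stay below $2$.
\item The limit is therefore a complete embedded minimal surface in $\R^4$ with finite total curvature, density $<2$ at infinity and $|\sff|(0)=1$, together with the inherited $D_2$-symmetry fixing a plane pointwise.
\end{itemize}
I expect the hard part to be ruling out this limit. Density below $2$ should force it to be a plane, contradicting $|\sff|(0)=1$. A plane arising as such a limit would come with multiplicity one, and Allard regularity then gives smooth convergence. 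This is where the $D_2$ structure and the doubling property in Lemma~\ref{Lgraphsph} would need to be used carefully.
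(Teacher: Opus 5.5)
\textbf{Steps 1--3 (the range $R\ge 1$).} These are essentially the paper's proof. Gauss--Bonnet with $\chi(M_k)=4-3k$ and $\area(M_k)<8\pi$ gives $\int_{M_k}|\sff_{M_k}|^2\le 28\pi k$, and $\Gamma_k$-invariance spreads this evenly along $F$. The paper averages over $\ell\gtrsim k/R$ pairwise disjoint balls $B_{R/k}(q_j)$ centered at points of $F\cap M_k$. Your rotation-invariant wedges tile $M_k$ and make the localization more transparent. This proves the estimate for $R\ge 1$.

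You are right that averaging yields only $O(1)$ when $R<1$. The paper's own argument has the same limitation: $F\cap M_k$ has only $3k$ points, so the asserted $\ell\ge ck/R$ is impossible once $R<c/3$. Only $R\ge 1$ is actually used later. In Proposition~\ref{ent.lim}, \eqref{2ff.bd} is applied on a fixed ball, where all that matters is a $k$-independent bound.

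\textbf{Step 4 (the range $R<1$) does not close, for two reasons.}

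First, the curvature-scale limit has density only $\le 2$ at infinity, not $<2$. Monotonicity bounds area ratios by $\area(M_k)/4\pi$, but $\area(M_k)\to 8\pi$ by Proposition~\ref{Pvarifold}, so no strict inequality survives. There are non-flat, complete, embedded minimal surfaces in $\R^4$ of finite total curvature and density exactly $2$: the catenoid, the graph $\{w=z^2\}$, $\{zw=1\}$, and the embedded M\"obius band from the introduction. That M\"obius band is even invariant under $(z,w)\mapsto(-z,w)$, $(\bar z,\bar w)$, $(-\bar z,\bar w)$. This is a $D_2$ of exactly the limiting type: three fixed $2$-planes through a common line, with $-I$ on each complement. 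So neither ``density below $2$'' nor the $D_2$ structure rules out the bubble.

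Second, at scale $\lambda_k\gg k$ even that symmetry may be lost. The rescaled fixed set of $\rho_i$ lies at distance $\lambda_k\,\mathrm{dist}(x_k,E_i\oplus F)$ from the origin, which can tend to infinity. Your remark about multiplicity and Allard is beside the point: a point-picked limit already converges smoothly with $|\sff|(0)=1$, so the whole issue is flatness.

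\textbf{How to repair it.} Blow up at scale $1/k$ about $q$ instead, where all of $\Gamma_\infty$ survives, including the translation along $S$. A density-$2$ point of the varifold limit then forces, via the spine argument and $\Gamma_\infty$-invariance, singularity along the whole line $S$. The $R\ge 1$ energy bound together with $\epsilon$-regularity forbids this. This is exactly the argument of Proposition~\ref{ent.lim}, which needs only the $R\ge1$ case. It shows every subsequence of $\frac{3k}{2\pi}(M_k-q)$ has a smoothly convergent further subsequence. Hence $|\sff_{M_k}|\le Ck$ on $B_{1/k}(q)$ for all $k$, and your $CR^2$ bound for $R<1$ follows without circularity.
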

\begin{proof}
Fix $R\in (0,\infty)$, and note that for any $k>R$, there is a constant $c>0$ independent of $k$ and $R$, and a collection of $\ell\geq \frac{ck}{R}$ points
$$q_1,\ldots,q_{\ell}\in F\cap M_k$$
such that the Euclidean balls $\{B_{R/k}(q_i)\}_{i=1}^{\ell}$ are mutually disjoint. By the $\Gamma_k$-invariance of $M_k$, it then follows that
\begin{eqnarray*}
    \int_{B_{R/k}(q)\cap M_k}|\sff_{M_k}|^2&=&\frac{1}{\ell}\sum_{j=1}^{\ell}\int_{B_{R/k}(q_j)\cap M_k}|\sff_{M_k}|^2\\
    &\leq &\frac{1}{\ell}\int_{M_k}|\sff_{M_k}|^2\\
    &\leq &\frac{R}{ck}\int_{M_k}|\sff_{M_k}|^2.
\end{eqnarray*}

On the other hand, it follows from the Gauss equation and the minimality of $M_k\subset \mathbb{S}^4$ that the second fundamental form $\sff_{M_k\subset\Sph^4}$ of $M_k$ in $\Sph^4$ satisfies $|\sff_{M_k\subset \Sph^4}|^2=2-2K_{M_k}$ where $K_{M_k}$ is the Gauss curvature of $M_k$, and therefore the full second fundamental form satisfies
$$|\sff_{M_k}|^2=2+|\sff_{M_k\subset \Sph^4}|^2=4-2K_{M_k}.$$
Since $M_k$ has Euler characteristic $4-3k$, this and the Gauss-Bonnet formula imply
\begin{align*}
\int_{M_k} |\sff_{M_k}|^2 = 4|M_k| -4\pi(4-3k)
\leq 28\pi k,
\end{align*}
where the last inequality follows by the area bound $|M_k| \leq 8\pi$.
Combining these estimates implies \eqref{Esff0}.
\end{proof}

\subsection{Varifold limits}\label{mtet.var}
We now describe the limiting behavior of the minimal surfaces $\Mtetk \subset \mathbb{S}^4$ as $k\to\infty$, first describing varifold limits in $\mathbb{S}^4$, then blowing up at suitable scales to obtain a new entire nonorientable minimal surface in $\mathbb{R}^4$.

We begin with the following general lemma concerning the structure of varifolds arising as large-topology limits of minimal surfaces in spheres with area below $8\pi$.

\begin{prop}
\label{Pvarifold}
If $M_k$ is a sequence of closed minimal surfaces immersed in the round sphere $\Sph^n$ with
\begin{align*}
\area(M_k) < 8\pi
\quad
\text{and}
\quad
\lim_{k \rightarrow \infty} \mathrm{dim}\, H_1(M_k) = \infty
\end{align*}
then some subsequence of the $M_k$ converges in the sense of varifolds to a limit varifold $V$ with mass $8\pi$, which must be one of the following:
\begin{enumerate}[label=\emph{(\roman*)}]
\item an equatorial two-sphere with multiplicity two;
\item a sum of four halves of equatorial two-spheres meeting along a common great-circle; or
\item a sum of two equatorial two-spheres meeting at a pair of antipodal points.
\end{enumerate}
\end{prop}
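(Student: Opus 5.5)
Since $\area(M_k)<8\pi$ is uniformly bounded, Allard's compactness theorem gives a subsequence converging as varifolds to a stationary integral varifold $V$ in $\Sph^n$ with $\|V\|(\Sph^n)\le 8\pi$; integrality of the limit follows from the standard compactness theory for stationary integral varifolds with bounded mass. The plan is in three steps: first show the limit is not smooth with multiplicity one anywhere it would force bounded topology, and deduce that the mass is exactly $8\pi$; then use monotonicity to constrain the singular set; finally classify the remaining configurations.

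The first step is to show that convergence cannot be smooth everywhere. Suppose the limit were supported on a union of smooth embedded minimal surfaces and the convergence were graphical (possibly multi-sheeted) away from finitely many points. By Allard regularity this holds wherever the density is below $1+\epsilon$. In that case the topology of $M_k$ would be controlled away from small balls around the bad points. The key input is the lower density bound: at any point $x$ of the support, the monotonicity formula in $\Sph^n$ (via the cone over $V$ in $\R^{n+1}$, as in the Li--Yau argument) gives $\|V\|(\Sph^n)\ge 4\pi\,\Theta(V,x)$. Hence $\Theta(V,x)\le 2$ everywhere, with equality only if $V$ is a cone with vertex at $x$ and at $-x$ (compare the cone over $V$ with its tangent cone at $x$). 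So if all densities are $<2$, every point has density in $[1,2)$. I will need an extra argument here: a point of density strictly between $1$ and $2$ in an area-$<8\pi$ limit would still force bounded topology nearby. To get this I would use the local Gauss--Bonnet bound (as in Lemma \ref{Lsff}: $|\sff|^2=4-2K$, so total curvature is controlled by the Euler characteristic) together with the standard fact that curvature concentrates only at finitely many points and each neck costs a definite amount of density. The upshot is that unbounded $\dim H_1(M_k)$ forces some point $x$ with $\Theta(V,x)\ge 2$. The equality case then gives $\|V\|=8\pi$, and $V$ is invariant under dilations about both $x$ and $-x$, i.e.\ the cone $C(V)\subset\R^{n+1}$ splits off the line through $\pm x$.

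Next, write $V$ as the join of $\pm x$ with a stationary one-dimensional integral varifold $\sigma$ in the equatorial $\Sph^{n-1}\perp x$, of total length $4\pi$ since $\|V\|=8\pi$ and each great-circle arc through $\pm x$ contributes proportionally. Stationary integral $1$-varifolds in a sphere are geodesic networks with balanced junctions. The density of $V$ at points of the open half-great-circles equals the density of $\sigma$ at the corresponding vertices, and is $\le 2$ by the bound above. The classification reduces to geodesic networks of total length $4\pi$ all of whose junction densities are at most $2$. Since a closed geodesic has length $2\pi$, the options are a single great circle with multiplicity two, two distinct great circles, or a network with vertices. For case (i) we get the equatorial sphere with multiplicity two. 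Two great circles meeting at two antipodal points yield the great sphere with multiplicity two if they coincide; if they are distinct, they are two spheres meeting along $\pm x$, which is case (iii) when the spheres intersect only at $\pm x$. A balanced network with density-$2$ vertices consisting of four half-circle arcs between an antipodal pair gives case (ii); that is, two antipodal vertices of valence four each, and by repeating the monotonicity-equality argument at those vertices, $V$ is also a cone from there, so the four half-spheres meet along a great circle.

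The main obstacle is the first step: ruling out limits with all densities $<2$ while topology diverges. An alternative route I would try is to show directly, via Li--Yau-type reasoning, that at any point where genus accumulates the density must be at least $2$. The other possible difficulty is excluding more exotic length-$4\pi$ networks, such as theta graphs with triple junctions of density $3/2$. For these I would again use the accumulation-of-topology argument at a junction point, together with the fact that a triple junction forces density $3/2<2$ and hence a multiplicity-one smooth structure incompatible with the required cone splitting.
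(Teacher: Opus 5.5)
There is a genuine gap, and it is the one you flagged yourself: nothing in your argument excludes singular points of $V$ with density strictly between $1$ and $2$. Examples are points where three half-planes meet (density $3/2$), or equivalently odd-valence junctions in your link $\sigma$.

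Your proposed remedy does not work. From $|\sff|^2=4-2K$ and Gauss--Bonnet, $\int_{M_k}|\sff|^2=4\area(M_k)-4\pi\chi(M_k)$, and this tends to infinity precisely because the topology diverges. So no ``curvature concentrates at finitely many points'' statement is available; for $\Mtetk$ the curvature concentrates along an entire great circle. ``Each neck costs a definite amount of density'' is not an argument either.

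Your treatment of theta-type links is also mistaken. A triple junction of $\sigma$ produces a point of $V$ where three half-planes meet, which is singular with density $3/2$, not a multiplicity-one smooth point. Concretely, nothing you use rules out three half-spheres meeting at angles $2\pi/3$ along a great circle as the limit. This is a stationary integral varifold of mass $6\pi$ and is none of (i)--(iii).

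The missing idea is that the limit is a mod $2$ cycle. Each $M_k$ defines a mod $2$ flat cycle $[V_k]$. By White's theorem \cite{WhiteChain}, varifold convergence of these surfaces implies flat convergence $[V_k]\to[V]$, so $\partial[V]=0$. The same holds for every tangent cone $C_p$, since it is a limit of rescalings. A cone over a stationary geodesic network in $\Sph^{n-1}$ that is a mod $2$ cycle has no odd-valence vertices. So if $C_p$ is not a multiplicity-one plane, there are two possibilities:
\begin{itemize}
\item its link has a vertex, necessarily of valence at least $4$;
\item its link is a union of at least two great circles counted with multiplicity.
\end{itemize}
In both cases $\Theta_V(p)=\Theta_{C_p}(0)\ge 2$. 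Combine this with Allard's theorem: if $V$ had density $1$ everywhere, the $M_k$ would converge smoothly and have bounded topology. This supplies the point of density $2$ that your first step needs, and it also kills odd junctions in $\sigma$.

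With that input, your suspension argument becomes a legitimate variant of the paper's. The paper instead notes that every singular point then has density exactly $2=\Theta_C(0)$ for the cone $C$ over $V$. Hence $\sing V$ lies in the spine $S(C)=\{\Theta_C=2\}$, a linear subspace along which $C$ splits, and the paper classifies by $\dim S(C)\in\{1,2,3\}$. In your version the classification goes as follows:
\begin{itemize}
\item a valence-$4$ vertex of $\sigma$ has density $2$ and forces a second splitting, leading to (ii); this also covers two intersecting great circles;
\item vertex-free links of length $4\pi$ are either a doubled great circle, giving (i), or two disjoint great circles, giving (iii).
\end{itemize}
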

\begin{proof}
Denote by $V_k$ the integral varifold  associated to the surface $M_k$.  The compactness theorem for integral varifolds \cite{Allard} implies that, after passing to a subsequence, we have a  limit $V : = \lim V_k$ which is a stationary integral varifold, with mass  $\mass(V) \leq \limsup \mass(V_k)\leq 8\pi$.  

Similarly, denoting by $[V_k]$ the mod 2 flat chain associated to $V_k$ (see \cite{WhiteChain}), it follows from \cite[Theorem 1.1]{WhiteChain} that the chains $[V_k]$ converge to $[V]$ in the flat topology.  In particular, $\partial [V] = 0$ because $\partial [V_k] =0$ for each $k$.

If every point in $\spt V$ had density one, Allard's regularity theorem \cite{Allard} would imply that $M_k$ converges smoothly to $M$, contradicting the assumption that $\dim H_1(M_k) \rightarrow \infty$.  Thus, the singular set $\sing V$ is nonempty.

Next, we claim that each $p\in \sing V$ has density $\Theta_V(p)\geq 2$. Indeed, if $p\in \sing V$, note that any tangent cone $C_p$ to $V$ in $T_p\mathbb{S}^n$ can be realized as a limit of suitable rescalings of the varifolds $V_k$ about $p$, and another application of \cite[Theorem 1.1]{WhiteChain} implies that the associated mod 2 flat chain $[C_p]$ must satisfy $\partial [C_p]=0$. Since $C_p$ is a stationary 2-dimensional cone in $T_p\mathbb{S}^n\cong \mathbb{R}^n$, it must be the cone over a stationary geodesic network $W$ in $\mathbb{S}^{n-1}$, whose density $\Theta_W(x)$ at each point $x\in C_p\cap \mathbb{S}^{n-1}$ is given by $\frac{k}{2}$, where $k$ is the number of half-rays meeting at $x$, counted with multiplicity. Since $p\in \sing V$, $C_p$ is not a multiplicity-one plane, so there is at least one point $x$ with $\Theta_W(x)=\frac{k}{2}>1$. Moreover, since $C_p=C(W)$ defines a mod $2$ cycle, we see that $k\neq 3$. Thus, there is at least one point in the support of $W$ with $\Theta_W(x)\geq 2$, and a simple application of the monotonicity formula (see \cite[p. 591]{Simon}) then forces $\Theta_V(p)\geq\Theta_{C_p}(x)\geq 2$.

Now, consider the cone $C = C(V)$ over $V$ in $\R^{n+1}$.  For any $p \in \spt V$, it follows from \cite[Proposition 2.6(iv)]{Aiex}, the upper-semicontinuity of the density $\Theta_C$, and the mass bound $\mass(V) \leq 8\pi$ that
\begin{align*}
\Theta_V(p) = \Theta_C(p) \leq \Theta_C(0) = \mass(V)/4\pi \leq 2; 
\end{align*}
hence $\mass(V) = 8\pi$ and $\Theta_V(p) = 2$ for each $p \in \sing V$. Moreover, by a standard consequence of the monotonicity formula, we see that the \emph{spine} 
$$S(C)  = \{p \in \R^{n+1} : \Theta_C(p) = \Theta_C(0)=2\}$$
is a linear subspace of $\R^{n+1}$, and that $C$ is invariant under translations by elements of $S(C)$.  The above argument shows that $S(C)\cap \mathbb{S}^n= \sing V\neq \varnothing$.

We will  show that one of (i)-(iii) holds by showing that one of an equivalent list of properties holds for the cone $C$; these are that $C$ is 
\begin{enumerate}
\item a $3$-plane with multiplicity two; 
\item four half $3$-planes meeting along a $2$-dimensional subspace $P$
\item two $3$-planes meeting along a $1$-dimensional subspace.
\end{enumerate}
The preceding shows that $d:=  \dim S(C)\geq 1$.  This leaves three cases.

\emph{Case 1: $d = 3$}.  Then $S(C)$ is a $3$-plane, and $\Theta_C(0) = 2$ implies $C$ is supported on $S(C)$, so (1) holds.

\emph{Case 2: $d = 2$}.  Then $C$ splits as a product of $\R^2$ with a stationary one-dimensional cone $C'$ in $\R^{n-1}$ whose link with the unit sphere $\Sph^{n-2}$ 
must consist, since $\Theta_C(0) = 2$, of four distinct points $p_1, \dots, p_4$ with multiplicity one.  Then $C$ is a union of four half $3$-planes meeting along the $2$-dimensional subspace $S(C)$.

\emph{Case 3: $d = 1$}.  Then $C$ splits as a product of $\R$ with a stationary cone $C'$ in $\R^n$ whose link with the unit sphere $\Sph^{n-1}$ is a one-dimensional integral stationary varifold with empty singular set.  Since $\Theta_C(0) = 2$, it follows that this link is a sum of two disjoint great circles.  Then $C'$ is a union of two $2$-dimensional subspaces meeting at the origin, so $C$ is as in (3).
\end{proof}

In the case of the $\Gamma_k$-invariant minimal surfaces $M_k\subset \mathbb{S}^4$ arising from the $\bar{\lambda}_1$-extremal metrics on $\Mtetk$, we have the following. 

\begin{prop}
\label{Ptetlim}
Any subsequential varifold limit of the minimal surfaces $\Mtetk\subset \mathbb{S}^4$ consists of four half equatorial two-spheres meeting along a common great-circle, whose associated poles are the vertices of the regular tetrahedron.  

Equivalently, the cone $C(V)$ over any such limit varifold $V$ is of the form
\begin{align*}
C(V) = T \times \R^2,
\end{align*}
for $T\subset \R^3$ a cone meeting $\Sph^2$ in the set of vertices of a regular tetrahedron.
\end{prop}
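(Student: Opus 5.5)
Since $\chi(\Mtetk)=4-3k\to-\infty$ and $\area(\Mtetk)<8\pi$ by Theorem \ref{Ttetexist}(iv), Proposition \ref{Pvarifold} applies. Any subsequential limit $V$ is a stationary integral varifold of mass $8\pi$ of one of the three types (i)--(iii). Varifold convergence preserves the symmetries, so $V$ is invariant under the orthogonal action of $\Gamma_k$ for every $k$ in the subsequence. Concretely, $V$ is invariant under the tetrahedral action on $E$ described in Lemma \ref{Ltetsym}(iii),(iv), which together with the $\rho_i$ of Remark \ref{RD2act} generates the symmetry group of the tetrahedron $P$. It is also invariant under the dihedral groups $D_{3k}$ acting on $F$ by Lemma \ref{Ltetsym}(ii), and hence under the closure of their union, namely the full group $O(2)$ acting on $F$. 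The plan is to use these symmetries and the location of the singular set to exclude cases (i) and (iii) and pin down case (ii).

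\textbf{Locating the singular set.} Each $q\in \Mtetk\cap F$ is one of the $3k$-th roots of unity by Lemma \ref{Ltetsym}(i). These points become dense in the circle $\Sph^1_F=F\cap\Sph^4$. Near each such point the topology concentrates, with $\chi(\Mtetk)$ distributed evenly among the $3k$ points by symmetry. I would argue that every point of $\Sph^1_F$ lies in $\sing V$ by contradiction. If some $x\in\Sph^1_F$ were a regular point of density one, Allard's theorem would give smooth graphical convergence of $\Mtetk$ near $x$ with uniformly bounded curvature. But a ball of radius $r$ about $x$ contains roughly $kr$ of the points $q$, each of which is a vertex of a chamber where Gauss curvature concentrates. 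The Gauss--Bonnet computation in the proof of Lemma \ref{Lsff} gives $\int|\sff|^2\approx 12\pi k$, and by $\Gamma_k$-invariance a fixed fraction of this, of order $r\cdot k$, lies in that ball. This is unbounded as $k\to\infty$, contradicting smooth convergence. So $\Sph^1_F\subset\sing V$, and the proof of Proposition \ref{Pvarifold} shows $\sing V=S(C)\cap\Sph^4$.

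\textbf{Excluding cases (i) and (iii).} In case (iii), $\sing V$ consists of two antipodal points, which cannot contain a circle, so this case is excluded. In case (i), $V$ is a multiplicity-two equatorial sphere $\Sph^4\cap Q$ containing $\Sph^1_F$, so $Q=F\oplus \ell$ for some line $\ell\subset E$. Invariance of $Q$ under the tetrahedral group acting irreducibly on $E$ is impossible, since that group fixes no line in $E$, so case (i) is excluded.

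\textbf{Identifying case (ii).} In case (ii), $S(C)$ is a $2$-plane containing $F$, hence $S(C)=F$. The cone $C$ is then $F\times C'$, with $C'\subset E$ a $1$-dimensional stationary cone made of four rays of multiplicity one. The set of four ray directions is invariant under the tetrahedral group acting on $E$. I would show that the only invariant $4$-point set is $P$ or $-P$, using the orbit sizes on $\Sph^2$ under $S_4\cong T_d$, which are $4$, $6$, $12$ and $24$. The orbits of size $4$ are exactly $\pm P$. Stationarity of $C'$ requires the rays to sum to zero, and both $\pm P$ satisfy this, so either gives a regular tetrahedron. Finally I would rule out $-P$ by noting that the chamber centers $P\subset\Mtetk$ are fixed for all $k$ and must lie in $\spt V$. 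This gives $C(V)=T\times\R^2$ as claimed.

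I expect the main obstacle to be the second step, showing that the whole circle $\Sph^1_F$ is singular. A softer alternative is to use the $O(2)$-invariance. If $\sing V$ missed $\Sph^1_F$, then by $O(2)$-invariance it would miss all of it, and case (iii) with two antipodal points off $F$ would also contradict the $O(2)$- and tetrahedral invariance. In case (i) the tetrahedral argument above already applies, regardless of where the singular set lies.
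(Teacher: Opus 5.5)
\textbf{The gap is in Step 2, and your main route depends on it.} Step 2 is what gives $S(C)\supset F$ in case (ii), but $\Gamma_k$-invariance cannot give a lower bound for $\int_{B_r(x)\cap M_k}|\sff|^2$. The $\Gamma_k$-orbit of $B_r(x)$ consists of balls centred on $\Sph^1_F$. Averaging over that orbit therefore only redistributes the part of $\int_{M_k}|\sff|^2$ lying within distance $r$ of $\Sph^1_F$. This is why Lemma \ref{Lsff} produces only an upper bound. Nothing you cite prevents the roughly $12\pi k$ of total curvature from sitting away from $F$, for instance near $P$. Gauss--Bonnet on a $\Gamma_k$-chamber does not localize it either. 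Each of the $24k$ chambers is a geodesic triangle with angles $\pi/4,\pi/2,\pi/(3k)$, so it has total curvature $\pi/(3k)-\pi/4$. But its vertices lie on $\Sph^1_F$, at an edge midpoint, and at a point of $P$, so this says nothing about where inside the chamber the curvature lives. Your assertion that curvature ``concentrates'' at the points $q$ is essentially the conclusion.

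A correct local argument does exist, if you want to keep Step 2. At $q\in F\cap M_k$, the $\rho_a$-oval through $q$ lies in $E_a\oplus F$, so its tangent lies in $E_a\oplus T_q\Sph^1_F$. It is reversed by $\rho_b$, which fixes $T_q\Sph^1_F$, so its tangent is $E_a$. Hence $T_qM_k=E_a\oplus E_b$, and along $\Sph^1_F$ these planes cycle through $E_1\oplus E_2$, $E_2\oplus E_3$, $E_3\oplus E_1$ at spacing $2\pi/(3k)$. This is incompatible with the uniform $C^{1,\alpha}$ control Allard's theorem gives near a density-one point.

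\textbf{The fallback.} It can be made into a complete proof that bypasses Step 2, but as written it has two problems. It misstates the symmetry, and it omits case (ii) with $\sing V\cap\Sph^1_F=\varnothing$, that is, a spine $2$-plane different from $F$. Elements of $D_{3k}$ act on $E$ and $F$ simultaneously (Lemma \ref{Ltetsym}(ii),(iv)), so you do not get $O(2)$ acting on $F$ alone. What you do get in the limit is the following.
\begin{itemize}
\item All rotations of $F$ fixing $E$ pointwise. These come from $(\gamma_2\gamma_1)^3$, which generates $\ker\phi\cong\Z_k$ and rotates $F$ by $2\pi/k$.
\item The group $A_4$ acting irreducibly on $E$ and trivially on $F$. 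This comes from the $\rho_i$ together with the limit of $\gamma_2\gamma_1$, which cyclically permutes the coordinates of $E$ and rotates $F$ by $2\pi/(3k)\to0$.
\end{itemize}
Averaging over the rotations splits any invariant subspace $W$ as $(W\cap E)\oplus(W\cap F)$. So the only invariant subspaces of $\R^5$ are $0,E,F,\R^5$.

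The spine $S(C)$ is invariant with $1\le\dim S(C)\le 3$, so $S(C)\in\{E,F\}$. The choice $S(C)=E$ is impossible, because $\Sph^1_F\subset\spt V$ by Hausdorff convergence of supports. This rules out (i), (iii) and the missing subcase of (ii) at once. Your case (ii) analysis, with $C=F\times C'$ and $P\subset\spt V$, then finishes the proof.

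For comparison, the paper never locates $\sing V$. It uses only $\Sph^1_F\cup P\subset\spt V$ and the fact that $C(P)\times F$ already has density $2$ at infinity. The symmetry argument above gives an explicit reason why the cone splits off $F$.
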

\begin{proof}

The family $M_k$ in $\Sph^4$ satisfies the hypotheses of Proposition \ref{Pvarifold}, so any subsequential varifold limit $V$ of the varifolds $V_k$ associated to $M_k$ must satisfy one of the conclusions (i)-(iii) of \ref{Pvarifold}.

By Lemma \ref{Ltetsym}(i), the Hausdorff limit of the sets $M_k$ contains the great circle $F\cap \mathbb{S}^4$. Since varifold convergence of $V_k$ to $V$ implies that the supports $\spt V_k$ converge to $\spt V$ in the Hausdorff sense, it follows that the cone $C(V)$ over $V$ has the form 
$$C(V)=C'\times F \subset \R^5$$
where $C'\subset E= \R^3$ is a stationary $1$-dimensional cone. 

By Lemma \ref{Ltetsym}(iii), the surface $M_k$ contains the set
\[
P = \textstyle{\frac{1}{\sqrt{3}}}\{(1,1,1), (1, -1, -1), (-1,1, -1), (-1, -1, 1)\} \subset E
\]
for each $k$; by again using the Hausdorff convergence of the supports $\spt V_k $ to $\spt V$, it follows that $P\subset \spt V$.  Consequently, the cone $C(V)$ contains the cone $C(P) \times F$.  On the other hand, this cone has density $2$ at infinity; since $C(V)$ also has density two at infinity, it must be that $C(V) = C(P) \times F$.  This completes the proof. 
\end{proof}

\section{The surface $\MtetR$ in $\R^4$}
\label{SR4}
Here, we obtain a new highly-symmetric, singly-periodic nonorientable minimal surface embedded in $\R^4$, as a blow-up limit of the surfaces $\Mtetk\subset \Sph^4$.

Consider $\R^4$ with cartesian coordinates $x_1, x_2, x_3, x_4$, and write
\begin{align}
\label{ER4split}
\R^4 = E_1 \oplus E_2 \oplus E_3 \oplus S
\end{align}
with $E_i = \mathrm{span}(e_i)$ for $i=1,2,3$ and $S = \mathrm{span}(e_4)$.  Let $\Gamma$ be the subgroup of the affine isometries of $\mathbb{R}^4$ generated by the involutions
\begin{align*}
\gamma_1(x_1, x_2, x_3, x_4) &= (x_3, x_2, x_1, -x_4),
\\
\gamma_2 (x_1, x_2, x_3, x_4) &= (x_1, x_3, x_2, 1-x_4)
\\
\gamma_3(x_1, x_2, x_3, x_4) &= (x_1, -x_2, -x_3, 1-x_4).
\end{align*}
To describe further properties of $\Gamma$, we introduce some notation.  For $i=1,2,3$, let $P_i = E_i \oplus S$.  Also, define $T = \cup_{i=1}^4 R_i$ where each $R_i$ is a half-line in $E$, and the $R_i$ are defined by
\begin{align*}
\begin{gathered}
R_1 = \{ (t, -t, t) : t \geq 0\}
\quad
R_2 = \{(-t, t, -t): t \geq 0\}
\\
R_3 = \{(-t, -t, t) : t \geq 0\},
\quad
R_4 = \{(t,t,t): t \geq 0\}.
\end{gathered}
\end{align*}

The following properties of $\Gamma$ are easily verified:
\begin{itemize}
\item $\Gamma$ contains the translation $(\gamma_2\gamma_1)^3$ given by $x_4 \mapsto x_4 + 3$.
\item $\Gamma$ contains a subgroup isomorphic to $D_2$ generated by the reflections $\rho_i$, $i=1,2,3$ satisfying $\rho_i = I$ on $P_i$ and $\rho_i = - I$ on $P_i^\perp$.
\item $\Gamma$ acts on $T\times \mathbb{R}$.
\end{itemize}

\begin{theorem}
\label{TR42}
There is a complete, properly embedded nonorientable minimal surface $\MtetR$ in $\R^4$ with the following properties.
\begin{enumerate}[label=\emph{(\roman*)}]
\item $\MtetR$ is invariant under the group $\Gamma$.
\item $\MtetR$ has density two at infinity.
\item $\MtetR$ has a unique tangent cone at infinity, equal to $T \times \R$. 
\end{enumerate}
\end{theorem}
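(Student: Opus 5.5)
Our plan is to obtain $\MtetR$ as a blow-up limit of the surfaces $M_k=\Mtetk\subset\Sph^4$ about a point $q_k\in M_k\cap F$, at scale comparable to the spacing $2\pi/(3k)$ of the $D_2$-fixed points on the circle $F\cap\Sph^4$ (Lemma \ref{Ltetsym}(i)). Concretely, fix $q_k\in M_k\cap F$, identify $T_{q_k}\Sph^4$ with $E\oplus S$ where $S$ is the tangent line to the great circle at $q_k$, and set $\tilde M_k=\lambda_k(M_k-q_k)$ with $\lambda_k=3k/(2\pi)$. With this normalization adjacent fixed points sit at distance $1$ along $S$ and one full period $3$ of the $D_{3k}$ rotation pattern becomes the translation $x_4\mapsto x_4+3$. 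Under this rescaling the $D_2$ reflections $\rho_i$ converge to the reflections fixing $P_i$, while the generators of $D_{3k}$ (Lemma \ref{Ltetsym}(ii),(iv)), suitably chosen so that $\gamma_1$ fixes $q_k$ and $\gamma_2$ fixes the midpoint to the next fixed point, converge to the affine maps $\gamma_1,\gamma_2$ in the definition of $\Gamma$. An element acting as $\gamma_3$ arises similarly by composing $\rho_1$ with the reflection of $D_{3k}$ through the midpoint and a conjugation. Thus every limit will be $\Gamma$-invariant, giving (i).

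For compactness we argue as follows. Monotonicity in $\Sph^4$ together with $\area(M_k)<8\pi$ gives uniform area ratio bounds $|\tilde M_k\cap B_R|\le CR^2$. Lemma \ref{Lsff}, rescaled (the $L^2$ norm of $\sff$ is scale invariant in dimension two), gives $\int_{\tilde M_k\cap B_R}|\sff|^2\le CR$, uniformly in $k$. Moreover, Lemma \ref{Lsff} applied at every fixed point, combined with $\Gamma_k$-periodicity, gives the same bound for balls centered anywhere near $S$. By the standard $\varepsilon$-regularity for minimal surfaces with small total curvature (Choi--Schoen type), the $\tilde M_k$ converge smoothly away from a locally finite set of points where curvature concentrates, and by the removable singularity theorem the limit is a smooth minimal surface $\MtetR$ in $\R^4$, possibly with multiplicity.

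The main obstacle is ruling out curvature concentration and higher multiplicity, and showing the limit is embedded, nonorientable and nontrivial. For embeddedness, we would use Lemma \ref{Lgraphsph}: the projection onto each $\Sph^4\cap(E_i\oplus F)$ is a two-sheeted graph outside ovals bounding convex disks. These graphical and convexity properties pass to the limit and give a two-sheeted graph description over each plane $P_i$ away from the limiting ovals, which should yield embeddedness and multiplicity one. Point bubbles would be excluded because a bubble is a complete minimal surface with total curvature quantized at least $4\pi$. By $\Gamma$-periodicity such a bubble would replicate at each of the infinitely many images within bounded distance of $S$, contradicting the linear bound $\int_{B_R}|\sff|^2\le CR$ only if there are too many bubbles per period. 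Here we instead argue via the graph structure, which forbids small-scale necks. Nontriviality holds because the limit contains the fixed points of $D_2$ on $S$ together with twisted ovals through them. Nonorientability follows since a twisted oval (a Möbius neighborhood) of $\rho_i$ persists in the smooth limit.

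For (ii) and (iii), the area ratio bound gives density at most $2$ at infinity. To see this, note that $\Theta$ at infinity is at most $\limsup$ of $|M_k\cap B_r(q_k)|/\pi r^2$, which by monotonicity in the sphere is at most $\area(M_k)/4\pi<2$ at scale comparable to $1$. Blowing down $\MtetR$ corresponds to a diagonal sequence of rescalings of $M_k$ at scales $\lambda\ll k$, which by Proposition \ref{Ptetlim} converge to the tangent cone of $T\times\R^2$ along $F$, namely $T\times\R$. Every tangent cone at infinity is therefore $T\times\R$, so it is unique. Since $T\times\R$ has density $2$, we get (ii), and (iii) follows.
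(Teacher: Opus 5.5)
Your blow-up setup, the limiting symmetries, and the compactness step match the paper. That step combines area ratios with the rescaled bound $\int_{\tilde M_k\cap B_R(0)}|\sff|^2\le CR$ from Lemma~\ref{Lsff}, and gives smooth convergence away from a locally finite set, possibly with multiplicity.

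The gap is exactly the step you flag as the main obstacle. Nothing in the proposal rules out multiplicity $\ge 2$, curvature concentration, or transverse double points in the limit. The doubling structure of Lemma~\ref{Lgraphsph} has no uniform slope control, so it neither passes to the limit nor forbids small-scale structure. For instance, the rescalings $\epsilon\{w=z^2\}$ of the local model from the introduction are symmetric doublings of the $w$-plane. As $\epsilon\to0$ they converge to that plane with multiplicity two, with curvature concentrating at the origin. And, as you note yourself, the linear $L^2$ bound alone allows one bubble per period.

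The missing idea is density pinching plus cone rigidity. Let $V$ be the varifold limit.
\begin{itemize}
\item Since the $\tilde M_k$ are embedded, $V$ and its tangent cones are mod~$2$ cycles. So tangent cones have no odd junctions, and every point with $\Theta_V>1$ has $\Theta_V\ge 2$. This includes every concentration point, because Allard's theorem gives curvature bounds wherever $\Theta_V=1$.
\item The area bound gives $\Theta_V(\infty)\le 2$. So any such point $x$ has $\Theta_V(x)=\Theta_V(\infty)=2$, and the equality case of monotonicity makes $V$ a cone about $x$.
\item Hence $\sing V$ is a subspace along which $V$ splits, and $\Gamma_\infty$-invariance forces $\sing V=S$. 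Then $V=C\times\R$, where by symmetry $C$ is a four-ray cone containing no line.
\item Curvature must then blow up near \emph{every} point of $S$. This contradicts the finiteness of concentration points in each ball.
\end{itemize}
So periodic bubbling is excluded not by counting, but because a single singular point makes the whole line singular. With $\sing V=\varnothing$, Allard gives smooth multiplicity-one convergence.

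Two later steps are also asserted rather than proved.

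\emph{Nonorientability and nontriviality.} You assume the twisted $\rho_i$-ovals through adjacent points of $F\cap M_k$ stay in a bounded region after rescaling. But such an oval is only known to be a convex curve, symmetric across $F\cap\Sph^4$, lying in the lune of width $2\pi/(3k)$ between the two fixed points. It may have length of order one in $\Sph^4$, hence order $k$ after rescaling. It would then converge to noncompact arcs and carry no M{\"o}bius band into the limit. The paper instead does the following (Claim~\ref{loop.bd}):
\begin{itemize}
\item It uses the smooth multiplicity-one limit and the density bound to show adjacent fixed points are at intrinsic distance $O(1/k)$.
\item It folds a short connecting path into a single chamber using the reflections.
\item It takes the $D_2$-orbit of that path, giving a loop of length $O(1/k)$ homotopic to the twisted oval.
\end{itemize}

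\emph{Item (iii).} Proposition~\ref{Ptetlim} describes $M_k$ at unit scale and says nothing about the intermediate scales $1\ll\lambda\ll k$ of your diagonal sequence, so it cannot identify the blow-down. Argue directly instead. Any blow-down of $\MtetR$ is a cone, invariant under all $x_4$-translations because the rescaled period tends to $0$. Hence it equals $W\times\R$, with $W$ a stationary four-ray cone in $\R^3$ invariant under coordinate permutations and even sign changes. This forces $W$ to be a regular tetrahedral cone, which gives (iii) and then (ii).
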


\begin{remark}
    Naturally, more could be said about the structure of the surface $\MtetR$; for instance, by adapting techniques from \cite{KKMS} and the present paper, it is straightforward to show that $\MtetR$ is a  symmetric minimal doubling of the two-plane $P_i$ for $i=1,2,3$, and each fixed-point oval $O \subset P_i$ is a convex curve.
\end{remark}

The proof of this theorem occupies the rest of the section; the surface $\MtetR$ arises as a blow-up limit of an appropriate family of rescalings of the $\Mtetk=M_k\subset \mathbb{S}^4$.

Recall by Lemma \ref{Ltetsym} that 
\begin{align*}
F\cap M_k = \{ (0, 0, 0, e^{i \frac{2\pi j}{3k}})\}_{j \in \Z},
\end{align*} 
and consider the dilated surfaces
\[
\widetilde{M}_k:=\frac{3k}{2\pi} (M_k-q)
\quad
\text{where}
\quad
q:=(0, 0, 0, 1, 0) \in F\cap M_k.
\]
Note that under the sequence of transformations $x\mapsto \frac{3k}{2\pi} (x-q)$, the spheres  $\frac{3k}{2\pi}(\mathbb{S}^4-q)$ converge smoothly on compact subsets to the tangent space
$$T_{q}\mathbb{S}^4=\mathrm{Span}\{e_1,e_2,e_3,e_5\},$$
as $k \rightarrow \infty$,
and the action of the elements $\rho_1,\rho_2,\rho_3,\gamma_1,\gamma_2$ of $\Gamma_k$ on $\mathbb{S}^4$ pass to the limit, to elements with the same names, satisfying
\begin{equation}
\begin{aligned}
\rho_1(x_1, x_2,x_3, x_5) &= (x_1, -x_2, -x_3, x_5), 
\\
\rho_2(x_1, x_2,x_3, x_5) &= (-x_1, x_2, -x_3, x_5), 
\\
\rho_3(x_1, x_2,x_3, x_5) &= (-x_1, -x_2, x_3, x_5),
\\
\gamma_1(x_1, x_2,x_3, x_5) &= (x_3,x_2,x_1,-x_5),
\\
\gamma_2(x_1, x_2,x_3, x_5) &= (x_1,x_3,x_2, 1 - x_5).
\end{aligned}
\end{equation}
We denote $\Gamma_\infty: = D_2 \rtimes D_\infty$ this limiting group, and note that up to relabeling, $\Gamma_\infty$ may be identified with $\Gamma$ as defined above.  For clarity, we now identify $T_q \Sph^4$ with $\R^4 = E_1 \oplus E_2 \oplus E_3 \oplus S$ as in \eqref{ER4split} by using the decomposition $\R^5 = E \oplus F$ from \eqref{ER5split}.

As a first step towards Theorem \ref{TR42}, we have the following.
\begin{prop}\label{ent.lim}
    After passing to a subsequence, the rescaled surfaces 
    \[
    \tilde{M}_k=\frac{3k}{2\pi} (M_k-q)
    \] converge smoothly on compact subsets of $\mathbb{R}^5$ to a complete, properly embedded minimal surface $\tilde{M}\subset T_{q}\mathbb{S}^4\cong \mathbb{R}^4$, invariant under the given action of $\Gamma_{\infty}$.
\end{prop}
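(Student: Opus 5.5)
The plan is to combine the uniform local curvature bound of Lemma \ref{Lsff} with the periodicity coming from the dihedral symmetry, and then extract a limit by a standard compactness theorem for minimal surfaces with locally bounded total curvature.

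First, the rescaled surfaces need uniform local area and total-curvature bounds. For local area, monotonicity for $M_k\subset\Sph^4$ together with $\area(M_k)<8\pi$ bounds the density ratios of $M_k$ at all scales. After rescaling, this gives $\area(\tilde M_k\cap B_r(x))\le C r^2$ for all $x$ and all $r\lesssim k$. For curvature, scale invariance of $\int|\sff|^2$ on surfaces and Lemma \ref{Lsff} give
\[
\int_{B_R(0)\cap\tilde M_k}|\sff|^2\le CR
\]
for $R<k$. The same bound holds on balls centered at the rescaled points $\frac{3k}{2\pi}(F\cap M_k-q)$, which are spaced one unit apart along the $x_5$-axis.

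To control balls centered anywhere, I would use the convergence of Proposition \ref{Ptetlim}. Away from the great circle $F\cap\Sph^4$, the surfaces $M_k$ converge to multiplicity-one half-spheres, so Allard regularity gives smooth convergence there. Hence every ball $B_R(x)$ in the rescaled picture has curvature bounded by $C(R)$, uniformly in $k$.

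With these bounds in hand, I would apply the compactness theorem for embedded minimal surfaces with locally bounded area and $L^2$ curvature (Choi--Schoen/Anderson/White type $\epsilon$-regularity). A subsequence converges smoothly on compact sets away from a locally finite set of concentration points where $\int|\sff|^2\ge\epsilon_0$ accumulates.

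The key step, and the main obstacle, is excluding these concentration points. I would argue by blowing up at a concentration point. Such a blow-up yields a complete non-flat minimal surface in $\R^4$ with finite total curvature and density at most $2$. It also inherits the symmetry and, via the $D_2$ structure, the doubling/graphical description from Lemma \ref{Lgraphsph}: the nearest-point projection to each plane $P_i$ is a two-sheeted cover away from ovals, and the ovals are convex. I would try to show that this structure forces such a bubble to be either a catenoid-like piece or a double plane in a way incompatible with the embeddedness and the mod $2$ cycle condition. A cheaper alternative is to use the gap between the points of $F\cap M_k$: after rescaling these are unit-spaced, so curvature cannot concentrate at scale $\to 0$ without producing a fixed-point set of the $D_2$ action that is denser than $F\cap M_k$, contradicting Lemma \ref{Ltetsym}(i).

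Finally, I would check the properties of the limit $\tilde M$. It is nonempty, since it contains $0$ and the rescaled points of $F\cap M_k$. It is minimal in $T_q\Sph^4$ because the rescaled spheres converge smoothly to that tangent space. Embeddedness follows from smooth convergence, with density bounded by $2$ and no multiplicity collapse. Properness and completeness follow from smooth convergence on every compact set. Invariance under $\Gamma_\infty$ follows because the generators of $\Gamma_k$ converge to those of $\Gamma_\infty$.
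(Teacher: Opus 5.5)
Your setup is fine: local area bounds from monotonicity, the bound $\int_{B_R(0)\cap\tilde M_k}|\sff|^2\le CR$ from Lemma \ref{Lsff}, and $\epsilon$-regularity giving smooth convergence away from a locally finite concentration set. You do not need Proposition \ref{Ptetlim} for arbitrary balls, and it would not help. It controls regions at fixed distance from $F\cap\Sph^4$, and these escape to infinity after rescaling. Instead, $B_R(x)\subset B_{R+|x|}(0)$ already gives the local $L^2$ bound.

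The genuine gap is the step you flag as the main obstacle. Neither argument you offer excludes concentration points, and your ``no multiplicity collapse'' is also unproved.
\begin{itemize}
\item A blow-up at a concentration point can be a catenoidal neck, either in $\R^3\subset\R^4$ or a holomorphic neck like $\{zw=1\}$. Such a neck is embedded, has density $2$, and is a mod $2$ cycle, so no local contradiction is available.
\item A concentration point off the fixed-point sets does not inherit the $D_2$ symmetry. The doubling structure of Lemma \ref{Lgraphsph} is also compatible with necks: a catenoid is itself a symmetric doubling of a plane, and it is the paper's model near untwisted ovals.
\item Your ``cheaper alternative'' has no mechanism. Curvature concentrating somewhere does not create new $D_2$-fixed points.
\item The $L^2$ bound alone cannot help either. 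It grows linearly in $R$, so it is compatible with a periodic array of necks along the $x_5$-direction, which the translation symmetry would in fact produce from any single neck.
\end{itemize}

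The missing idea is global, not local. Let $V$ be the varifold limit.
\begin{enumerate}
\item Monotonicity and $\area(M_k)<8\pi$ give $\Theta_V(\infty)\le 2$.
\item At any point with $\Theta_V(x)>1$, the mod $2$ cycle property of tangent cones gives $\Theta_V(x)\ge 2$, as in Proposition \ref{Pvarifold}. This covers both concentration points and higher-multiplicity sheets; density-one points are regular by Allard.
\item Equality in monotonicity then makes $V$ a cone about every singular point. So the singular set is a subspace along which $V$ is translation-invariant.
\item $\Gamma_\infty$-invariance forces this subspace to be the whole line $S=T_q\Sph^4\cap F$. The relevant symmetries are the $D_2$-action, whose fixed subspace is $S$; the translation $(\gamma_2\gamma_1)^3$ by $3$ along $S$; and the permutations of $E$.
\item Hence $V=C\times\R$ with $C$ a non-flat cone. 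Curvature must then blow up at every point of the segment $S\cap B_R(0)$, that is, at infinitely many points of a compact set.
\item This contradicts the finiteness of the concentration set that follows from $\int_{B_R}|\sff|^2\le CR$.
\end{enumerate}
This is exactly how the paper concludes. Once the singular set is empty, Allard gives smooth multiplicity-one convergence, and the remaining properties follow as you describe.
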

\begin{proof}
An application of the monotonicity formula together with the area bound $|M_k|<8\pi$ yields a uniform density estimate
$$\limsup_{k\to\infty}\Theta(\tilde{M}_k,0,R)\leq 2;$$
that is,
\begin{equation}\label{ar.bd}
\limsup_{k\to\infty}|B_R(0)\cap \tilde{M}_k|\leq 2\pi R^2.
\end{equation}
By \eqref{ar.bd}, applying the compactness theorem for integral varifolds \cite{Allard} and passing to a subsequence if needed, we see that the surfaces $\tilde{M}_k$ converge in the varifold sense on compact subsets of $\mathbb{R}^5$ to a stationary varifold $V$ in $T_{q}\mathbb{S}^4\subset \R^5$ such that the following hold:
\begin{itemize}
    \item $V$ is invariant under the action of $\Gamma_{\infty}$.
    \item $V$ has density at infinity $\Theta_V(\infty):=\lim_{R\to\infty}\Theta_V(0,R)\leq 2.$
\end{itemize}
We will show now that the set
\[
\sing V=\{x\in \spt V\mid \Theta_V(x)>1\}
\]
where $V$ fails to be embedded with multiplicity one is empty; an application of Allard's regularity theorem \cite{Allard} then implies that $V$ is an embedded minimal surface with multiplicity one, and the convergence $\tilde{M}_k\to V$ is smooth on compact subsets.

Since $V$ arises as a limit of embedded minimal surfaces, it follows from the monotonicity formula and the mod-2 cycle property $\partial [V]=0$, just as in the proof of Proposition \ref{Pvarifold}, that 
\begin{itemize}
\item $\sing V=\{x\in \spt V \mid \Theta_V(x)=\Theta_V(\infty)=2\}$; 
\item $V$ is invariant under dilations about elements of $\sing V$; 
\item $\sing V$ is a linear subspace of $\R^4$; and
\item $V$ is invariant under translations by elements of $\sing V$.
\end{itemize}
Since $V$ is furthermore $\Gamma_\infty$-invariant, so is the subspace $\sing V$.  If, for the sake of a contradiction,  $\sing V$ is assumed nonempty, then it is easy to see that  $\sing V$ must be the $1$-dimensional subspace $S: = T_q \Sph^4 \cap F$.

Thus, $V$ has the form $V=C\times \mathbb{R}$, where $C$ is a nonflat stationary cone in $E =\mathbb{R}^3$, in which case we must have
\begin{align}
\label{Esffsing}
\lim_{k\to\infty}\sup_{y\in \tilde{M}_k\cap B_{\delta}(x)}|\sff_{\tilde{M}_k}|(y)=\infty
\end{align}
for every $x\in S$ and $\delta>0$ .

On the other hand, Lemma \ref{Lsff} and the fact that the integral of the second fundamental form is a scale-invariant quantity, it follows that
\begin{align}
\label{2ff.bd}
\int_{\tilde{M}_k\cap B_R(0)} |\sff_{\tilde{M}_k}|^2 \leq CR.
\end{align}
for each $R> 0$ and some $C$ independent of $R$ and $k$.

Next, note that \eqref{2ff.bd} and quantization results for the $L^2$-norm of the second fundamental form of minimal surfaces (for example \cite[Theorems 8.12 and 8.13]{White-notes}), imply that for each fixed $R>0$, there is a finite set $\mathcal{S} \subset  B_R(0)$ such that $|\sff_{\tilde{M}_k}|$ satisfies a uniform estimate of the form
\begin{equation}\label{pt.ii.bd}
|\sff_{\tilde{M}_k}|(x)\leq C(R,\delta)\text{ for }x\in (\tilde{M}_k\cap B_R(0))\setminus \bigcup_{p \in \mathcal{S}} B_\delta(p) 
\end{equation}
for each $k$ and each $\delta>0$.
But now \eqref{Esffsing} and \eqref{pt.ii.bd} are inconsistent as $k \rightarrow \infty$, since $S\cap B_R(0)$ is infinite, and this contradiction completes the proof.
\end{proof}

We next check that the limit $\tilde{M}$ is nonorientable.   Given that Lemma \ref{ent.lim} yields smooth convergence $\tilde{M}_k\to \tilde{M}$ on compact subsets of $\mathbb{R}^5$, it suffices to show that there is some $R>0$ independent of $k$ such that $\tilde{M}_k\cap B_R(0)$, or equivalently $M_k\cap B_{R/k}(q)$, is nonorientable for all $k\in \mathbb{N}$. 

To this end, let $p, q$ be two neighboring points in $F\cap M_k$, so that $p$ and $q$ form two vertices of a geodesic triangle $W_k$ formed by a pair of neighboring chambers $W_k(p), W_k(q)$ for the action of $\Gamma_k$ on $M_k$.  By definition of the $\Gamma_k$-action on $\Mtetk$, the edge $\sigma_k$ of $\partial W_k$ joining $p$ to $q$ lies in the fixed-set $M^{\rho_i}_k$ for some $i=1,2,3$, and its $D_2$-orbit $\beta_k$ is a twisted, geodesic fixed-point oval for $\rho_i$ on $M_k$.  Our goal now is to show that $\beta_k$ is homotopic to a curve contained in $B_{R/k}(q)$ for some fixed $R< \infty$ independent of $k$, so that $M_k \cap B_{R/k}(q)$ is nonorientable. 

\begin{claim}\label{loop.bd}
    There exists $R<\infty$ such that the twisted oval $\beta_k$ is homotopic to a closed curve $\tilde{\beta}_k$ contained in $B_{R/k}(q)$ for each $k \in \N$. 
\end{claim}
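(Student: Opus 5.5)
The idea is to deform $\beta_k$ inside $M_k$ onto a loop built from short edges near the $D_2$-fixed points, using the symmetry of the chamber structure. Recall that $\beta_k$ is the $D_2$-orbit of the edge $\sigma_k$ joining the neighboring fixed points $p,q\in F\cap M_k$. Since $p$ and $q$ are fixed by all of $D_2$, this orbit is the union of $\sigma_k$ and $\rho_j(\sigma_k)$, where $\rho_j\neq\rho_i$; the remaining two elements of $D_2$ map $\sigma_k$ to itself or to $\rho_j(\sigma_k)$. So $\beta_k$ is a closed loop through $p$ and $q$ made of two arcs, and $|p-q|=2\sin(\pi/3k)\le 2\pi/(3k)$. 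It therefore suffices to show that each arc $\sigma_k$ is homotopic rel endpoints to an arc of Euclidean length $\le C/k$. The difficulty is that $\sigma_k$ itself may be long, since it bulges out toward a tetrahedral pole.

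\textbf{Step 1: the homotopy.} Consider the chamber $\Omega$ of the $D_2$-action, a disk whose boundary consists of the $3k$ edges between consecutive fixed points. The edge $\sigma_k$ is homotopic rel endpoints, inside $\Omega$, to the complementary boundary path $\partial\Omega\setminus\sigma_k$. This path is no better, so instead I would use the tetrahedral geometry. The $\Gamma_k$-chamber $W_k$ is a triangle with vertices $p$, $q$ and the chamber center $p_\ell\in P$. Its edges $[p,p_\ell]$ and $[q,p_\ell]$ are fixed arcs of the reflections $\gamma$, since $\Phi(\gamma_1),\Phi(\gamma_2)$ fix points of $P$. Concatenating $\sigma_k$ with these spokes gives a contractible loop bounding $W_k$. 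Hence $\sigma_k\simeq[p,p_\ell]\cup[p_\ell,q]$, which is again long, so the spokes must be cut short.

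\textbf{Step 2: replace long spokes with short transversals.} The main tool is the smooth convergence from Proposition~\ref{ent.lim}, together with the curvature bound \eqref{2ff.bd} on $B_R$. Since convergence holds only on compacts, I would argue instead inside $\tilde M_k$ using the $\Gamma_\infty$-invariance of the limit $\tilde M$. In $\tilde M$, the points $0$ and $1\in S$ are joined by the limit of the rescaled $\rho_i$-fixed curves through them. Those limit curves are properly embedded and, by the convexity stated in Lemma~\ref{Lgraphsph}(iv), arcs of a convex oval $O\subset P_i$. If the limit oval through $0$ and $1$ is compact, then for large $k$ the whole rescaled oval $\frac{3k}{2\pi}(\beta_k-q)$ lies in a fixed ball $B_R$ by smooth convergence. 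So the crux is to show the limiting fixed curve from $0$ to $1$ in $P_i\cap\tilde M$ is bounded.

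\textbf{Step 3 (main obstacle): ruling out an unbounded limit oval.} Suppose the rescaled $\sigma_k$ escapes every ball. The nearest-point projection onto $P_i$ together with Lemma~\ref{Lgraphsph}(v) says $\tilde M_k$ avoids the interior of the region bounded by the rescaled oval. In the limit this gives a convex region in $P_i\cong\R^2$ that contains a half-strip over the segment $[0,1]\subset S$ and is disjoint from the projection of $\tilde M$. Using the translation $x_4\mapsto x_4+3$ and the $\gamma$-symmetries, the same holds over every unit segment of $S$. This forces the projection of $\tilde M$ to $P_i$ to miss an entire open half-plane of $P_i$ bounded by $S$, except near $S$. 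I would contradict this using the varifold limit from Proposition~\ref{Ptetlim}: at large scale the blow-down of $\tilde M$ must be $T\times\R$, whose projection onto $P_i=E_i\oplus S$ covers both sides of $S$. The $R_\ell$ have both signs in every coordinate $x_i$, with two rays on each side. This would yield a uniform $R$ and complete the claim. If the convexity route fails, the fallback is a direct argument: a bounded fraction of the $3k$ congruent arcs $\sigma$ must have length $O(1/k)$ by the area bound $<8\pi$ combined with a monotonicity lower bound for area near long fixed curves, and by symmetry all arcs are congruent.
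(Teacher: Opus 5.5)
\textbf{There is a genuine gap, in Step 3.} Your reduction to finding an arc from $q$ to $p$ of length $O(1/k)$, homotopic to $\sigma_k$ rel endpoints, is correct. You then try to prove something much stronger: that the rescaled oval $\frac{3k}{2\pi}(\beta_k-q)$ stays in a fixed ball. The argument for this fails.

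\emph{The half-plane inference is false.} The edges of the $3k$-gon, and hence the unit segments of $S$, are labeled $\rho_1\rho_2\rho_3\rho_1\rho_2\rho_3\cdots$. The symmetries carrying $[0,1]$ to $[1,2]$ or $[-1,0]$ conjugate $\rho_i$ to a different $\rho_{i'}$ and carry $P_i$ to $P_{i'}$. So in a fixed $P_i$ you only obtain avoided regions over every third segment of $S$, not over every segment.

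\emph{The avoided region is not one-sided.} The oval is $\rho_j$-symmetric, and $\rho_j$ acts on $P_i$ as reflection across $S$. By strict convexity the oval meets $S$ only at $q,p$. Hence, under your contradiction hypothesis, the limiting convex region is exactly the strip $[0,1]\times\R e_i$.

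\emph{No contradiction follows from the blow-down.} In $r\tilde M$, width-one strips of period three become strips of width $r$ and spacing $3r$. These are invisible in the Hausdorff or varifold limit, so the fact that $T\times\R$ projects onto all of $P_i$ contradicts nothing. Some extra input is needed. For example, the boundary lines of the limiting strip would then be straight lines contained in $\tilde M$; their rescalings fill out $P_i$, whereas $\mathrm{spt}(T\times\R)\cap P_i=S$. Nothing like this is in your proposal.

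\emph{Minor citation issue.} The cone at infinity of $\tilde M$ comes from the symmetry argument of Theorem~\ref{TR42}(iii), not from Proposition~\ref{Ptetlim}, which concerns limits at scale one in $\Sph^4$.

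\emph{The fallback is not an argument.} Disjoint convex ovals in $\Sph^2$ (for instance thin lunes) can have total length of order $k$. You give no mechanism that turns the area bound into a bound on the total length of the fixed curves.

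\textbf{The missing idea: you never need to control $\beta_k$ itself.} The paper bounds the intrinsic distance, $\mathrm{dist}_{M_k}(p,q)\le C/k$. Otherwise $0$ and the limit of the rescaled $p$ would lie in different components of $\tilde M$. By symmetry $\tilde M$ would then have infinitely many components, each of density at least one at infinity, contradicting $\Theta(\tilde M,\infty)\le 2$.

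A path of length $\le C/k$ from $q$ to $p$ is then folded into the chamber $W_k$: on each subinterval, apply the product of the reflections whose walls the path has crossed. This preserves length. Since $W_k$ is a disk, the folded path is homotopic rel endpoints to $\sigma_k$; this is the disk observation from your Step 1, used on a short path rather than on the long spokes. Its $D_2$-orbit is a loop through $q$ of length $O(1/k)$ homotopic to $\beta_k$, hence contained in $B_{R/k}(q)$.
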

\begin{proof}
We first claim that the intrinsic distance $\mathrm{dist}_{M_k}(q, p)$ is bounded above by $C/k$ for some $C< \infty$.  Indeed, if this were not the case, then the corresponding limiting points  $0, \tilde{p} \in \tilde{M}$ would belong to distinct connected components of $\Mtilde$, forcing $\Mtilde$ to have infinitely many components by symmetry; since each connected component has area density $\geq 1$ at infinity, this would clearly violate the bound $\Theta(\Mtilde, \infty) \leq 2$.

It follows that there is a piecewise smooth path $\sigma$ joining $q$ to $p$ in $M_k$ with length $L(\sigma) \leq C/k$.
We next claim that $\sigma$ can be assumed to lie in $W_k$ without loss of generality.  To see this, first note that we can make an arbitrarily small perturbation such that $\sigma$ meets the boundary of any chamber for $\Gamma_k$ at most finitely times, and passes through no vertices except at the endpoints $q$ and $p$.  There is then a finite set of times $t_0 = 0< t_1< \cdots < t_n <t_{n+1} = L(\sigma)$ and reflections $\tau_1, \cdots \tau_n \in \Gamma_k$ such that $\sigma(t) \in \tau_j \cdots \tau_1 W_k$ for $t \in [t_j, t_{j+1}]$, while $\sigma(t_i) \in M^{\tau_i}_k$ and $\tau_n \cdots \tau_1 = 1$.  Defining $\tilde{\sigma}(t) : = \tau_1 \cdots \tau_j \sigma(t)$ on $[t_j, t_{j+1}]$ for $j=0, \dots, n$ then gives a piecewise smooth path joining $q$ to $p$ with image in $W_k$ such that $L(\tilde{\sigma})\leq L(\sigma)\leq C/k$, and we can simply set $\sigma=\tilde{\sigma}$.  

Since $\sigma$ is a path in the topological disk $W_k$ with the same endpoints as $\sigma_k$, it is clearly homotopic to $\sigma_k$ through maps fixing the endpoints. In particular, taking the $D_2$ orbit of $\sigma$ then gives a closed loop $\tilde{\beta}_k$ passing through $q$ homotopic to $\beta_k$, with length
$$L(\tilde{\beta}_k)=4L(\tilde{\sigma})\leq C'/k.$$
For any $y\in \tilde{\beta}_k$, it's clear that
$$|q-y|\leq dist_{M_k}(q,y)\leq L(\tilde{\beta}_k)\leq R/k$$
for some $R<\infty$, as claimed. Moreover, by mollification, we can take $\tilde{\beta}_k$ to be smooth.
\end{proof}

Putting these ingredients together, we can now complete the proof of Theorem \ref{TR42}.

\begin{proof}[Proof of Theorem \ref{TR42}]

By Proposition \ref{ent.lim}, we have already seen that, up to a possible subsequence, $\Mtilde_k$ converges smoothly on compact subsets of $\R^5$ to a complete, properly embedded minimal surface $\Mtilde$ invariant under the action of $\Gamma = \Gamma_\infty$.  We set $\MtetR = \Mtilde$. 

By Claim \ref{loop.bd}, there is some $R<\infty$ independent of $k$ such that $B_{R/k}(q)$ contains a closed loop homotopic to the twisted oval $\beta_k$, and therefore contains an embedded M\"obius strip. In particular, it follows that there is $R'<\infty$ such that $\tilde{M}_k\cap B_{R'}(0)$ is nonorientable for all $k\in\mathbb{N}$, and combining this with the smooth convergence $\tilde{M}_k\to \tilde{M}$ on compact subsets, it follows that $\tilde{M}$ is nonorientable as well.

To prove item (ii), note that we have already seen that
$$\lim_{R\to\infty}\Theta(\tilde{M},0,R)\leq 2,$$
and arguing as in the proof of Proposition \ref{Pvarifold}, we see that for any sequence $r_k\to 0$, any singular point $x\in spt(\tilde{C})$ in the varifold blow-down
$$\tilde{C}:=\lim_{r_k\to 0}r_k\tilde{M}$$
must have density $\Theta_{\tilde{C}}(x)\geq 2$, since $\tilde{C}$ is again a varifold limit of smooth minimal surfaces. Since $\tilde{C}$ is nontrivial, it follows in particular that 
$$\lim_{R\to\infty}\Theta(\tilde{M},0,R)=\Theta_{\tilde{C}}(0)=2,$$
proving (ii).

For (iii), note that the invariance of $\tilde{M}$ under the affine isometries $\gamma_1,\gamma_2,\gamma_3$ in $\mathbb{R}^4$ implies the invariance of $r_k\tilde{M}$ under their rescaled counterparts,
$$\gamma_1^{r_k}(x_1,x_2,x_3,x_4)=(x_3,x_2,x_1,-x_4),$$
$$\gamma_2^{r_k}(x_1,x_3,x_2,x_4)=(x_1,x_3,x_2,r_k-x_4),$$
$$\gamma_3^{r_k}(x_1,x_2,x_3,x_4)=(x_1,-x_2,-x_3,r_k-x_4).$$
Passing this invariance to the limit $r_k\to 0$, we deduce that any tangent cone at infinity $\tilde{C}=\lim_{r_k\to 0}r_k\tilde{M}$ must be invariant under translation along the $x_4$-axis, and have the form
$$\tilde{C}=W\times \mathbb{R},$$
where $W$ is a $1$-dimensional stationary integral cone in $\mathbb{R}^3$, with density two at infinity, invariant under the subgroup $G\leq O(3)$ generated by $(x_1,x_2,x_3)\mapsto (x_3,x_2,x_1)$, $(x_1,x_2,x_3)\mapsto (x_1,x_3,x_2)$, and $(x_1,x_2,x_3)\mapsto (x_1,-x_2,-x_3)$. In particular, the link of $W$ in $\Sph^2\subset \mathbb{R}^3$ is a set of four unit vectors invariant under this group, which sum to zero.  

It remains to show that this invariance forces 
$$W=P=\left\{\frac{1}{\sqrt{3}}(1,1,1),\frac{1}{\sqrt{3}}(1,-1,-1),\frac{1}{\sqrt{3}}(-1,1,-1),\frac{1}{\sqrt{3}}(-1,-1,1)\right\},$$
which is straightforward. Namely, it is easy to see that the reflections $(x_1,x_2,x_3)\mapsto (x_3,x_2,x_1)$ and $(x_1,x_2,x_3)\mapsto (x_1,x_3,x_2)$ generate the action of $S_3$ on $\mathbb{R}^3$ by permutation of the standard basis vectors, and any four-element subset $W\subset\mathbb{S}^2$ invariant under this action must include the fixed point $\frac{1}{\sqrt{3}}(1,1,1)$, whose $G$-orbit is indeed $P$. 

Thus, any tangent cone $\tilde{C}$ to $\tilde{M}$ at infinity must indeed have the form $\tilde{C}=T\times \mathbb{R}$, where $T$ is the cone over $P$, completing the proof of (iii).
\end{proof}

\section{Other Families}
\label{SOther}
\subsection{Existence for $\eta^v_k$, $\eta^c_k$, and $\eta^e_k$}

Next, for each $k\in \mathbb{N}$, we consider $(M_k,T)$ corresponding to one of the families $\eta^v_k$, $\eta^c_k$, or $\eta^e_k$ of nonorientable surfaces equipped with the action of the group $\Gamma_k=(D_k\times D_2)\rtimes \mathbb{Z}_2$ described in Section \ref{ssDkD2}. Recall that fundamental chambers $V_k$ for $\eta_k^v$ and $\eta_k^c$ can be found in Figure \ref{fig:chamber_Dk_nor}, where they can be identified with the quadrilaterals with edges labeled by $\gamma$, $\gamma_1$, $\rho_3$, and $\rho_2$, and the same is true for $\eta_k^e$.

For all three of these families, the elementary degenerations are relatively easy to describe. If $(M,T)$ is one of $\eta^{v}_k$, $\eta^{c}_k$, or $\eta_k^e$, and $\mathfrak{C}$ is a minimal $\Gamma_k$-invariant union of noncontractible simple closed curves in $M$, then $\mathfrak{C}$ can be written as the $\Gamma_k$ orbit of some arc $\sigma$ in the quadrilateral fundamental chamber $V$. Arguing as in the proof of Lemma \ref{deg.char}, it is easy to see that this $\sigma$ can be taken without loss of generality to be an edge in $\partial V$, fixed by one of $\rho_3$, $\rho_2$, $\gamma_1$, or $\gamma$. 

In the case $(M,T)=\eta^{v}_k$, collapsing the $\Gamma_k$-orbit of the $\gamma_1$-labeled edge results in a degeneration $(M',T')\prec (M,T)$ where $M'$ is disconnected, so that $\Lambda^{T'}_1(M')=0$, and the same is true for the $\gamma$-labeled and $\rho_2$-labeled edges. Thus, the only elementary degeneration $(M',T')\prec (M,T)$ for which $\Lambda^{T'}_1(M')>0$ is given by collapsing the collection of twisted fixed point ovals for $\tau$ corresponding to the $\tau$-labeled edge, for which $M'\approx N_{k-1}$. By another application of Theorem \ref{thm:existence}, we can then prove the following.

\begin{theorem}
\label{Tetatimes}
Let $M = \eta^{v}_k$ and $T$ be the action of $(D_k \times D_2) \rtimes \Z_2$ on $M$.  Then the following hold.
\begin{enumerate}[label=\emph{(\roman*)}]
\item There is a $T$-invariant metric $g$ on $M$ realizing $\Lambda^T_1(M)$. 
\item The metric $g$ is smooth and is induced by a $T$-invariant minimal embedding $\iota: M \rightarrow \Sph^4$ by $\lambda_1(M, g)$-eigenfunctions.
\item Conclusions (iii)-(v) of Theorem \ref{TD2} hold.
\item $D_k$ acts trivially on $E$, and as a standard $D_k$ action on $F$, given by
	\begin{align*}
	\gamma_1(z) = \bar{z},
	\quad
	\gamma_2(z) = e^{i \frac{2\pi}{k}} \bar{z},
	\end{align*}
	up to an ambient isometry.  Here we use the notation from \ref{TD2}.
\end{enumerate}
\end{theorem}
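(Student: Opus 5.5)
The plan follows the proofs of Theorems \ref{TD2} and \ref{Ttetexist}. Once (i) is established, (ii) and (iii) follow as before. Theorem \ref{thm:symm_crit_L} gives a $T$-invariant branched minimal immersion by $\lambda_1$-eigenfunctions. The subgroup $D_2\le\Gamma_k$ acts as in Lemma \ref{LD2quo}, with all of $\rho_1,\rho_2,\rho_3$ basic for $\eta^v_k$. So Proposition \ref{mult.prop} forces $n=4$ and the splitting $\R^5=E_1\oplus E_2\oplus E_3\oplus F$, and Lemma \ref{Lgraphsph} yields embeddedness, the absence of branch points, and area $<8\pi$.

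For (i), I would apply Theorem \ref{thm:existence}. By the discussion preceding the statement, the only elementary degeneration with $\Lambda^{T'}_1(M')>0$ collapses the $\Gamma_k$-orbit of the $\rho_3$-labelled edge. This gives the orientable surface $N_{k-1}$ with its $(D_k\times D_2)\rtimes\Z_2$ action, i.e.\ the Lawson-type action. By \cite[Theorem 8.7]{KKMS}, $\Lambda^{T'}_1(N_{k-1})$ is realized by a smooth metric, namely that of $\xi_{k-1,1}$. The degeneration reverses a surgery attaching Möbius bands at the $2k$ isolated fixed points of $\rho_3$. The equivariant version of \cite[Proposition 3.2]{KSP}, run as in Lemma \ref{twist.bd} but with the larger group acting on the flat Möbius band, then gives $\Lambda^T_1(M)>\Lambda^{T'}_1(N_{k-1})$.

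I would also check the terms $8\pi\delta_{or}$ and $12\pi\delta_{nor}$. Any $\Gamma_k$-invariant simple closed curve would have to be a union of edge-orbits, and for $k\ge2$ these orbits are disconnected or not simple. So $\delta_{or}=\delta_{nor}=0$. Alternatively, $\Lambda^{T'}_1(N_{k-1})>8\pi$ already dominates, provided one also beats $12\pi$; here one uses that $\area(\xi_{k-1,1})>8\pi-\epsilon$. The strict inequality then yields a maximizer, possibly with conical points. The conical points are removed by the same regularity argument as in Theorem \ref{TD2}: Theorem \ref{thm:symm_crit_L} together with smoothness of the embedding.

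For (iv), first note that $D_k$ commutes with $D_2$, so it preserves each eigenspace $E_i$ and $F$. On each one-dimensional $E_i$, $\gamma_1$ and $\gamma_2$ act by $\pm1$. I would argue they act by $+1$ because the points of $P$ (centers of chambers, which lie in $E$ and are fixed by $\gamma_1,\gamma_2$ up to $D_2$-identification) span $E$. On $F$, the vertices of the chamber fixed by all of $D_2$ lie on the circle $F\cap\Sph^4$. Then $\gamma_1,\gamma_2$ act as reflections with rotation angle $2\pi j/k$. The convexity argument of Lemma \ref{Ltetsym}, via Lemma \ref{Lgraphsph}(iv),(v), rules out $j\ne1$.

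The main obstacle is the strict inequality in (i). It requires verifying that the Möbius-band gluing of \cite{KSP} can be made invariant under the full $(D_k\times D_2)\rtimes\Z_2$ stabilizer of the attachment points, and that the maximizing metric on $N_{k-1}$ is smooth, hence has no cone point there.
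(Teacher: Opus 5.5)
Your treatment of (i)--(iii) matches the paper's. You reduce to (i), note that the only elementary degeneration with $\Lambda^{T'}_1(M')>0$ collapses the twisted $\rho_3$-ovals to the Lawson-type $N_{k-1}$, use smoothness of its maximizer from \cite{KKMS}, and run Lemma~\ref{twist.bd}. The obstacle you flag is harmless. The stabilizer of a $\rho_3$-fixed point of $N_{k-1}$ is the $D_4$ generated by $\rho_2$ and $\gamma$. It acts linearly near the point and commutes with the antipodal map, so the cross-cap gluing and the flat M\"obius band are equivariant exactly as in Lemma~\ref{twist.bd}. Your aside about $\area(\xi_{k-1,1})>8\pi-\epsilon$ is unnecessary, and only valid for large $k$, since there are no $\Gamma_k$-invariant simple closed curves.

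For (iv) your route differs from the paper's. The paper first shows each $\gamma_i$ is a hyperplane reflection of $\R^5$: $M^{\gamma_i}$ separates $M$, so Courant gives $\dim\Ecal^-\le 1$ for the $\gamma_i$-odd part, and fullness forces equality. It then places the reflecting direction in $F$ and uses convexity of the $\rho_3$-ovals (Lemma~\ref{Lgraphsph}(iv)) to get the angle $\pi/k$. You instead get trivial action on $E$ from the $D_k$-fixed $D_2$-chamber centers, then work on $F$ using faithfulness and the convexity argument of Lemma~\ref{Ltetsym}. Your version makes explicit why the $-1$-direction of $\gamma_i$ cannot lie in some $E_j$, which the paper leaves implicit in ``hence $\gamma_i$ fixes $E$''. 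The paper's version gives for free that $\gamma_i|_F$ is a reflection.

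To close your argument you need three things.
\begin{enumerate}
\item $D_k$ must fix each center outright. This holds because $D_k$ acts on $D_2\times\Omega$ only through $\Omega$. Fixing the centers only ``up to $D_2$-identification'' would allow sign flips on two of the $E_j$.
\item You do not yet know the centers lie in $E$, and you do not need it. What is needed is $x_j(\mathrm{center})\neq 0$ for $j=1,2,3$. This holds because $x_j|_\Omega$ is a first mixed Dirichlet--Neumann eigenfunction (Dirichlet on the two panels where $x_j$ is odd), hence of one sign in the interior by Courant and the strong maximum principle.
\item You must prove that $\gamma_1,\gamma_2$ act as reflections on $F$. For $k\ge 3$ this follows from faithfulness: if either were $\pm I$ on $F$, then $\gamma_2\gamma_1$ would have order at most $2$. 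For $k=2$ you must exclude, e.g., $\gamma_1|_F=-I$. In that case the midpoint of a $\rho_2$-edge would be $\pm e_2$, and the $\rho_2$-oval through it would contain an antipodal pair of vertices, contradicting strict convexity. Alternatively, simply borrow the paper's Courant argument.
\end{enumerate}
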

\begin{proof}
As in the proof of Theorem \ref{TD2}, we may reduce proving items (i)-(iii) to proving (i). 

In this case, we see that $(M,T)$ contains no $\Gamma_k$-invariant simple closed curves, so by Theorem \ref{thm:existence}, it suffices to show that $\Lambda^T_1(M)>\Lambda^{T'}_1(M')$ for any elementary degeneration $(M',T')\prec (M,T)$. 
    
    By the discussion above, the only elementary degeneration for which $\Lambda^{T'}_1(M')>0$ is given by collapsing a collection of symmetric cross-caps, with $M'\approx N_{k-1}$. In this case, it follows from the analysis of \cite[Section 8]{KKMS} that $\Lambda^{T'}_1(M')$ is realized by a $\bar{\lambda}_1$-maximizing metric, induced by a minimal embedding of $N_{k-1}$ into $\mathbb{S}^3$.

    Since the degeneration $(M',T')\prec (M,T)$ is achieved by collapsing cross-caps, and $\Lambda^{T'}_1(M')$ is realized by a smooth maximal metric, we can argue exactly as in the proof of Lemma \ref{twist.bd} to deduce that 
    $$\Lambda^T_1(M)>\Lambda^{T'}_1(M'),$$
    completing the proof of (i).
    
    Finally, we prove (iv).  Fix $i \in \{1,2\}$ and consider the fixed-point set $M^{\gamma_i}$.  From the discussion in \ref{ssD2}, $M$ is realized by gluing four copies of a fundamental chamber by identifying the labels on the edges, and $M^{\gamma_i}$ bisects any such chamber, separating it into two pieces.  Furthermore, it is easy to see from the identifications of the chambers via the $\rho_1, \rho_2, \rho_3$ labels that $M^{\gamma_i}$ separates $M$ into two pieces.  

Since $\gamma_i$ is an involution, the first eigenspace $\Ecal$ on $M$ admits a direct sum decomposition $\Ecal = \Ecal^+ \oplus \Ecal^-$ into $\gamma_i$-even and $\gamma_i$-odd parts; since any $\gamma_i$-odd function vanishes on $M^{\gamma_i}$, the Courant nodal domain theorem and the fact that $M^{\gamma_i}$ separates $M$ implies $\dim \Ecal_- \leq 1$.  Because $M$ admits a full immersion into $\R^5$ by first eigenfunctions, it follows that $\dim \Ecal_- = 1$ and $\dim \Ecal_+ = 4$, so $\gamma_i$ acts by reflection through a hyperplane of $\R^5$.

By construction $D_k$ commutes with $D_2$, hence $\gamma_i$ fixes $E$, and acts on $F$.  Under the projection $\pi_3: M \rightarrow (E_3 \oplus F) \cap \Sph^4$, the chamber for $M$ corresponding to $\rho_3$ can be identified with a subset of the upper hemisphere of $\Sph^2$.  From the preceding, we can now assume, up to rotation, that the action of $\gamma_1$ on $\Sph^2$ is by $z \mapsto \bar{z}$.  Since $M$ is embedded and the fixed-point ovals for $\rho_3$ are convex curves by \ref{Lgraphsph}(iv), it follows easily that the fixed-point set for $\gamma_2$ on $(E_3\oplus F) \cap \Sph^4$ is a great circle meeting $\{z= 0\} \cap \Sph^2$ at angle $\pi/k$.  Without loss of generality, we may assume $\gamma_2$ acts on $E_3 \oplus F$ by $(x_3, z) \mapsto (x_3, e^{i \frac{2\pi}{k}})$, and (iv) follows. 
\end{proof}

We now turn to the case $M = \eta^{c}_k$ and its action $T$ of $(D_k \times D_2)\rtimes \Z_2$.  Regarding elementary degenerations, collapsing the $\Gamma_k$-orbit of the edge labeled by $\gamma$, $\rho_2$, or $\gamma_1$ results in a degeneration $(M',T')\prec (M,T)$ with $M'$ disconnected, so that $\Lambda^{T'}_1(M')=0$. In this case, the only elementary degeneration for which $\Lambda^{T'}_1(M')>0$ is given by collapsing the collection of untwisted fixed ovals for $\rho_3$ corresponding to the $\rho_3$-labeled edge, again with $M'\approx N_{k-1}$.

\begin{proposition}\label{etabull.ex}
    If $(M,T)=\eta^{c}_k$ with the given action of $\Gamma_k=(D_k\times D_2)\rtimes \mathbb{Z}_2$, the supremum $\Lambda^T_1(M)$ is realized by a $\Gamma_k$-invariant extremal metric $g_{max}$.
\end{proposition}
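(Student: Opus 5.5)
The plan is to apply Theorem \ref{thm:existence}, as in the proofs of Theorems \ref{Ttetexist} and \ref{Tetatimes}. That reduces the statement to a strict inequality in \eqref{ineq:MM'}.

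\emph{Step 1: the constants $\delta_{or}$ and $\delta_{nor}$.} First I will check that $(\eta^c_k,T)$ has no $\Gamma_k$-invariant simple closed curve. The arguments for $\eta^v_k$ and $\Mtetk$ should carry over: an invariant curve would have to be the $\Gamma_k$-orbit of a single arc in the quadrilateral chamber. For $k\geq 2$ that orbit consists of several components. Hence $\delta_{or}=\delta_{nor}=0$.

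\emph{Step 2: the degenerations.} By the discussion preceding the statement, the only elementary degeneration with $\Lambda^{T'}_1(M')>0$ collapses the $\Gamma_k$-orbit of the two untwisted $\rho_3$-ovals coming from the $\rho_3$-edge. The result is $M'\approx N_{k-1}$ with its $(D_k\times D_2)\rtimes\Z_2$-action, and the two ovals become two points of $P_\fC$. These are the fixed points of the $D_k$-rotations, i.e. the images of the $\gamma_1\gamma_2$ vertex. By \cite[Section 8]{KKMS}, $\Lambda^{T'}_1(M')$ is realized by a smooth metric induced by the Lawson-type embedding $N_{k-1}\to\Sph^3$. Every other degeneration lies below this one, so it suffices to prove $\Lambda^T_1(M)>\Lambda^{T'}_1(M')$.

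\emph{Step 3: strict inequality, the main obstacle.} Lemma \ref{twist.bd} does not apply directly. There, $M$ was recovered from $M'$ by gluing a M\"obius band at one fixed point. Here $M$ is recovered from $M'$ by removing $\Gamma_k$-invariant disks around the two points $p,p'\in P_\fC$ and gluing in a flat cylinder (a neck) with a compatible action; $\rho_3$ identifies the two boundary circles. I would therefore use the orientable handle-attaching version of the \cite[Section 3]{KSP} construction. It gives a family of $T$-invariant conformal classes $[g_{p,p',\epsilon,L}]$ on $M$. The equivariant test-function estimate \cite[Lemma 8.10]{KKMS} replaces \cite[Proposition 2.2]{KSP}. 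Running the argument of \cite[Proposition 3.2]{KSP}, equality $\Lambda^T_1(M)=\Lambda^{T'}_1(M')$ should force a degeneracy of $g_{max}$ at $p$ and $p'$: either a conical singularity, or a vanishing condition on the first eigenfunctions there. I expect the delicate point to be checking this condition for two points joined by a handle rather than one point capped by a cross-cap. I would try first to show that equality forces every first eigenfunction to satisfy a first-order condition at $p$ incompatible with the smooth embedding $N_{k-1}\to\Sph^3$. Here I would use that the embedding is immersed at $p$ with $D_k$ acting by rotation.

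If that route is blocked, the fallback is a direct comparison. I would show $\Lambda^{T'}_1(M')<8\pi$ strictly, using the area bound of Lemma \ref{Lgraphsph} for $N_{k-1}$. Then I would exhibit $T$-invariant metrics on $M$ exceeding it, via gluing a small catenoidal neck at $p,p'$, which costs only $o(1)$ in $\bar\lambda_1$ but yields a gain once the KSP expansion is checked. Once the strict inequality holds, Theorem \ref{thm:existence} produces $g_{max}$, possibly with conical singularities. Their removal would then follow as in Theorem \ref{TD2} from Theorem \ref{thm:symm_crit_L}.
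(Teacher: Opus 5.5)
Steps 1 and 2 match the paper's reduction, up to a miscount. Collapsing the two untwisted $\rho_3$-ovals leaves four points, $p,\rho_1p,\rho_2p,\rho_3p$, where $p$ is a $D_k$-fixed centre of a $D_2$-chamber. $M$ is recovered by attaching two necks, one joining $p$ to $\rho_3p$ and one joining $\rho_1p$ to $\rho_2p$. A single neck between two points would give $\chi=2-2k$ rather than $-2k$.

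The genuine gap is Step 3. Attaching a handle does not behave like attaching a cross-cap. The obstruction to a strict gain is not a local degeneracy of $g_{max}$ at $p$, such as a conical point or a first-order vanishing of eigenfunctions. So smoothness of $g_{max}$ at $p$, or immersedness of the Lawson-type embedding there, cannot close the argument. The equivariant handle-attachment result \cite[Theorem 8.1]{KKMS} instead gives a zeroth-order, two-point criterion: if $\Lambda^T_1(M)=\Lambda^{T'}_1(M')$, then some map $\Psi\colon (N_{k-1},g_{max})\to\Sph^n$ by first eigenfunctions satisfies $\Psi(p)=\Psi(\rho_3 p)$.

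The proof must then exclude such a $\Psi$. This does not follow from the embedding $N_{k-1}\to\Sph^3$, because $\Psi$ need not be that embedding. The paper uses \cite[Proposition 8.5]{KKMS} to show that such an identification forces $k=2$, with $g_{max}$ the square flat torus, $\rho_3$ acting by $-\mathrm{Id}$, and $p=(1/2,1/2)$. On that torus the first eigenspace carries sphere-valued maps $(a\phi_1,b\phi_2)$, $(a,b)\in\Sph^1$, built from the complex exponentials $\phi_1,\phi_2$ in $x$ and $y$. These include non-injective ones (those with $ab=0$), and one checks directly that none of them identifies $p$ with $\rho_3p$. Your sketch neither isolates this criterion nor handles the exceptional case $k=2$.

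The fallback does not repair this. Since $g_{max}$ is induced by a minimal surface via first eigenfunctions, $\Lambda^{T'}_1(M')=2\,\area(N_{k-1})$. Lemma \ref{Lgraphsph} therefore bounds it by $16\pi$, not $8\pi$. The promised gain from gluing a small neck ``once the KSP expansion is checked'' is precisely the missing content of Step 3.
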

\begin{proof}
    Again, we see that $(M,T)$ contains no $\Gamma_k$-invariant simple closed curves, so it suffices to prove that $\Lambda^T_1(M)>\Lambda^{T'}_1(M')$ for the elementary degenerations described above.

    In particular, it is enough to consider the degeneration $(M',T')\prec (M,T)$ given by collapsing the pair of untwisted ovals for $\rho_3$, so that $M'=N_{k-1}$. In the language of \cite[Theorem 8.1]{KKMS}, this degeneration leaves a collapsed set $P=\{p,\rho_3 p\}\cup\{\rho_1p,\rho_2p\}$ in $N_{k-1}$, where $p$ is a fixed point for the subgroup $D_k\leq \Gamma_k$. If the desired strict inequality failed, it then follows from \cite[Theorem 8.1]{KKMS} that there is some map $\Psi: (N_{k-1},g_{max})\to \mathbb{S}^n$ by first eigenfunctions for the metric $g_{max}$ realizing $\Lambda^{T'}_1(M')$ such that $\Psi(p)=\Psi(\tau p)$. 

    In that case, it follows from an application of \cite[Proposition 8.5]{KKMS} that we must have $k=2$, with $(N_1,g_{max})$ homothetic to the square torus $[-1,1]^2/2\mathbb{Z}^2$, where $\Gamma_1$ acts such that $\rho_3(x,y)=-(x,y)$ and, without loss of generality, $p=(1/2,1/2)$. In this case, we see that any sphere-valued map $\Psi$ by first eigenfunctions must have the form $\Psi(x,y)=(a e^{2\pi i x}, b e^{2\pi i y})$ for some $(a,b)\in \mathbb{S}^1$, and for each of these, $\Psi(\tau p)\neq \Psi(p)$. Thus, we must have $\Lambda^T_1(M)>\Lambda^{T'}_1(M')$ in this case as well, completing the proof. 
\end{proof}

For $M=\eta_k^e$ with the given action of $\Gamma_k=(D_k \times D_2)\rtimes \Z_2$, collapsing the $\Gamma_k$-orbit of the edge labeled by $\gamma$, $\gamma_1$, or $\rho_2$ again results in a disconnected degeneration $(M',T')\prec (M,T)$, with $\Lambda_1^{T'}(M')=0$. In this case, the elementary degeneration for which $\Lambda_1^{T'}(M')>0$ is given by collapsing either the collection of untwisted ovals for $\rho_3$ corresponding to the $\rho_3$-labeled edge, so that $M'\approx N_{k-1}$.

\begin{proposition}\label{etae.ex}
    If $(M,T)=\eta^{e}_k$ with the given action of $\Gamma_k=(D_k\times D_2)\rtimes \mathbb{Z}_2$, the supremum $\Lambda^T_1(M)$ is realized by a $\Gamma_k$-invariant extremal metric $g_{max}$.
\end{proposition}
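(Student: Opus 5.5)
The plan follows the proof of Proposition \ref{etabull.ex}. First, $(M,T)=\eta^e_k$ contains no $\Gamma_k$-invariant simple closed curves: any such curve would descend to a $\Gamma_k$-invariant loop in a fundamental chamber, but the quadrilateral chamber is a disk whose edges are permuted by the reflections. Hence $\delta_{or}=\delta_{nor}=0$ in Theorem \ref{thm:existence}, and it suffices to prove $\Lambda^T_1(M)>\Lambda^{T'}_1(M')$ for every elementary degeneration. By the discussion preceding the statement, the degenerations obtained by collapsing the $\gamma$-, $\gamma_1$- or $\rho_2$-labeled edges give disconnected $M'$ with $\Lambda^{T'}_1(M')=0$. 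So only one degeneration remains: collapsing the $\Gamma_k$-orbit of the untwisted $\rho_3$-ovals, which yields $M'\approx N_{k-1}$ with its $(D_k\times D_2)\rtimes\Z_2$ action. By \cite[Section 8]{KKMS}, $\Lambda^{T'}_1(N_{k-1})$ is realized by a smooth metric $g_{max}$ induced by a Lawson-type minimal embedding into $\Sph^3$.

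To obtain the strict inequality, I would apply \cite[Theorem 8.1]{KKMS}. Collapsing the $2k$ untwisted ovals (the $\rho_3$-edges placed in the middle of each $\rho_1$- and $\rho_2$-edge) leaves a collapsed set $P\subset N_{k-1}$. This set consists of $\Gamma_k$-orbits of pairs $\{p,\rho_3 p\}$, where $p$ is the midpoint of a $\rho_1$-edge of the chamber for $N_{k-1}$, equivalently the $\gamma_2\rho_1$ vertex of the triangular chamber. If equality held, \cite[Theorem 8.1]{KKMS} would provide a sphere-valued map $\Psi$ by first eigenfunctions of $g_{max}$ gluing paired points, i.e.\ $\Psi(p)=\Psi(\rho_3 p)$ for each such pair.

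I would then rule this out. By \cite[Proposition 8.5]{KKMS}, such coincidences can only occur in the exceptional case $k=2$, where $N_1$ is the square torus; for $k\ge 3$ we get an immediate contradiction. For $k=2$, I would check explicitly, as in Proposition \ref{etabull.ex}, that with $\rho_3(x,y)=-(x,y)$ and $p$ fixed by $\rho_1$, for instance $p=(1/2,0)$ or a point on the axis, every map $\Psi(x,y)=(ae^{2\pi i x},be^{2\pi i y})$ has $\Psi(p)\neq\Psi(-p)$.

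The main obstacle is this exceptional torus check. If $p$ lands at a half-period such as $(1/2,0)$, then $-p\equiv p$ modulo the lattice and the pair degenerates. So I must correctly identify the location of the $\gamma_2\rho_1$ vertex on the torus. On $N_1$ the chamber is a square with alternating $\rho_1,\rho_2$ edges, and the edge midpoints should sit at quarter-periods, e.g.\ $(1/4,0)$. For such a point $e^{2\pi i x}$ distinguishes $p$ from $-p$, because $e^{i\pi/2}\neq e^{-i\pi/2}$; and $a\neq 0$ can be arranged up to swapping coordinates by symmetry, because invariance under the exchange of $\rho_1,\rho_2$ (via $\gamma$) forces $|a|=|b|$. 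With $p$ placed correctly, the strict inequality follows, and Theorem \ref{thm:existence} gives the maximizer.
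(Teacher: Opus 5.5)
Your outline is the paper's proof. There are no $\Gamma_k$-invariant curves, and the only elementary degeneration with $\Lambda^{T'}_1(M')>0$ is the collapse of the untwisted $\rho_3$-ovals to $N_{k-1}$. Then \cite[Theorem 8.1, Proposition 8.5]{KKMS} reduce everything to an explicit check on the square torus when $k=2$. Your care in placing the collapsed points at quarter-periods, e.g.\ $p=(1/4,0)$ in $\R^2/\Z^2$, is also correct.

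The one step that does not hold as written is the last one: the claim that invariance under $\gamma$ forces $|a|=|b|$. Theorem 8.1, as the paper uses it, only produces \emph{some} sphere-valued map $\Psi$ by first eigenfunctions of $g_{max}$. Nothing you cite makes $\Psi$ conformal or $\gamma$-equivariant. On the square torus the general such map is $(ae^{2\pi i x}, be^{2\pi i y})$ with arbitrary $|a|^2+|b|^2=1$, and the paper's own treatment of $\eta^c_k$ allows all such $(a,b)$. For example, $\Psi=(0,e^{2\pi i y})$ satisfies $\Psi(p)=\Psi(-p)$ for $p=(1/4,0)$, so testing a single pair gives no contradiction.

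The fix uses the fact that the collapsed set is $\Gamma_k$-invariant. It therefore also contains the pair $\{q,\rho_3 q\}$ with $q=\gamma p=(0,1/4)$, which is fixed by $\rho_2$, and $\Psi$ must glue that pair as well. Gluing $p$ to $-p$ forces $a=0$, and gluing $q$ to $-q$ forces $b=0$, which is impossible. This is exactly the paper's argument, which works with two points $p,q$ fixed by $\rho_2$ and $\rho_1$ respectively. Your phrase ``up to swapping coordinates by symmetry'' points in this direction, but the justification you attach to it ($|a|=|b|$) is not valid.
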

\begin{proof}
Again, it suffices to prove that $\Lambda^T_1(M)>\Lambda^{T'}_1(M')$ for the elementary degeneration to $N_{k-1}$ given by collapsing the $\rho_3$-labeled edge. In this case, the collapsed set in $N_{k-1}$ is given by the $D_k$-orbit of $P=\{p,\rho_3 p\}$ for some point $p$ in $N_{k-1}$ fixed by $\rho_2$.

As in the proof of Proposition \ref{etabull.ex}, an application of \cite[Proposition 8.5]{KKMS} then shows that we must have $k=2$, with $(N_1,g_{max})$ homothetic to the square torus $[-1,1]^2/2\mathbb{Z}^2$, equipped with some map $\Psi: N_1\to \mathbb{S}^n$ by first eigenfunctions such that $\Psi(\rho_3p)=\Psi(p)$ and $\Psi(\rho_3 q)=\Psi(q)$ for a suitable pair of points $p,q$ fixed by $\rho_2$ and $\rho_1$, respectively. Once again, it is easy to see directly that no such map exists, so we must have $\Lambda_1^T(M)>\Lambda_1^{T'}(M')$, completing the proof.
\end{proof}

\section{Main Results with $\Z_2$-symmetry}
\label{SZ2main}

In this section we study non-orientable BRS with a single basic reflection. In this case, the analysis of topological degenerations is more cumbersome due to the fact the basic reflection does not separate the surface, so there is no convenient fundamental domain for the action.

\subsection{Neighborhoods of equivariant circles in reflection surfaces}

Following \cite{Dugger}, we introduce notation for describing circles equipped with involutions, and their tubular neighborhoods in  reflection surfaces.  Let
\begin{align*}
S^{1,0},
\quad
S^{1,1}, 
\quad
\text{and}
\quad
S^1_a
\end{align*}
denote circles equipped with following involutions, respectively: the trivial involution, a reflection across a diameter, and the antipodal map.  

If such a circle $S$ is embedded in a reflection surface $(M, \tau)$, then as in \cite[p. 936-937]{Dugger}, a small $\tau$-invariant neighborhood $U$ of $S$ has one of seven possible equivariant homeomorphism types, enumerated as follows.
\begin{itemize}
\item  $S^{1,0}$-antitube:   $S$  is an untwisted $S^{1, 0}$ and $\tau$ exchanges the boundary circles of the cylinder $U$.
\item $S^{1,1}$-antitube: $S$ is an untwisted $S^{1,1}$, and $\tau$ exchanges the boundary components of the cylinder $U$.
\item $S^{1,1}$-tube: $S$ is an untwisted $S^{1,1}$, and $\tau$ preserves the boundary components of the cylinder $U$.
\item $S^1(M)$: $S$ is a twisted $S^{1,0}$ and $U$ is a M{\"o}bius band with an $S^{1}_a$ as boundary. 
\item $S^{1,1}(M)$: $S$ is a twisted $S^{1,1}$ and $U$ is a M{\"o}bius band with an $S^{1,1}$ as boundary.  The fixed-point set $U^\tau$ consists of a segment joining the fixed-points on the boundary $S^{1,1}$, and an isolated fixed-point. 
\item $S^1_a$-antitube: $S$ is an untwisted $S^1_a$, and $\tau$ exchanges the boundary components of the cylinder $U$.
\item $S^1_a$-tube: $S$ is an untwisted $S^1_a$, and $\tau$ preserves the boundary components of the cylinder $U$.
\end{itemize}

The prefix ``anti" indicates that $\tau$ exchanges the components of $\partial U$.  See Figures \ref{Ftubes} and \ref{Fmob} for schematics for some of these actions. 
 
\begin{figure}[ht] 
\begin{tikzpicture}[scale =.25]
\draw (0, 0) ellipse (4 and 1);
\draw (0, 8) ellipse (4 and 1);
\draw[very thick] (0, 4) ellipse (4 and 1);
\draw (-4, 0)--(-4, 8);
\draw (4, 0)--(4, 8);
\end{tikzpicture}
\hspace{.3in}
\begin{tikzpicture}[scale =.25]
\draw (0, 0) ellipse (4 and 1);
\draw (0, 8) ellipse (4 and 1);
\draw (0, 4) ellipse (4 and 1);
\draw (-4, 0)--(-4, 8);
\draw (4, 0)--(4, 8);
\node at (-4, 4)[circle,fill,inner sep=1.5pt]{};
\node at (4, 4)[circle,fill,inner sep=1.5pt]{};
\end{tikzpicture}
\hspace{.3in}
\begin{tikzpicture}[scale =.25]
\draw (0, 0) ellipse (4 and 1);
\draw (0, 8) ellipse (4 and 1);
\draw (0, 4) ellipse (4 and 1);
\draw (-4, 0)--(-4, 8);
\draw (4, 0)--(4, 8);
\draw[very thick](-4, 0)--(-4, 8);
\draw[very thick](4, 0)--(4, 8);
\end{tikzpicture}
\caption{Depictions of an $S^{1,0}$-antitube (left), an $S^{1, 1}$-antitube (middle), and an $S^{1,1}$-tube (right).  Fixed-point sets of the corresponding involutions are depicted in bold.}
\label{Ftubes}
\end{figure}

\begin{figure}[ht]
\begin{tikzpicture}[scale =.6]
	\filldraw[fill=lightgray!75,semitransparent] (-4, -1) rectangle (4, 1);
	\draw[>=Stealth, >->] (-4, -.5)--(-4, .5); 
	\draw[>=Stealth, >->] (4, .5)--(4, -.5); 
	\draw[thick] (-4, 0)--(4, 0);
\end{tikzpicture}
\hspace{.4in}
\begin{tikzpicture}[scale =.6]
	\filldraw[fill=lightgray!75,semitransparent] (-4, -1) rectangle (4, 1);
	\draw[>=Stealth, >->] (-4, -.5)--(-4, .5); 
	\draw[>=Stealth, >->] (4, .5)--(4, -.5); 
	\draw[very thick] (-4, -1)--(-4, 1); 
	\draw[very thick] (4, -1)--(4, 1); 
	  \node at (0, 0)[circle,fill,inner sep=1.5pt]{};
\end{tikzpicture}
\caption{The M{\"o}bius bands $S^1(M)$ (left) and $S^{1,1}(M)$ (right).  In each case, $M$ is obtained by identifying the sides of the rectangle as indicated.  For $S^1(M)$, the involution is induced by the reflection fixing the bold segment, and for $S^{1,1}(M)$, the involution is induced by the reflection through the center point, which also fixes the bold segment where the identification occurs.}
\label{Fmob}
\end{figure}

\subsection{Equivariant surgeries}
\label{sssurg}
There are several kinds of equivariant surgery operations we will need to apply on surfaces equipped with involutions.  In practice, we will apply these surgeries on basic reflection surfaces, or the surfaces  $T^{\mathrm{sk}}_g$, where for $g\geq 0$, the space $T^{\mathrm{sk}}_g$ consists of a genus-$g$ orientable surface $M$ with an involution $\tau$ fixing $2g+2$ isolated points.  For visualization purposes, it is convenient to imagine $M$ as being embedded in $\R^3$ and $\tau$ as being induced by rotation by 180 degrees about a line (the ``skewer'') meeting $M$ in $2g+2$ points.  See Figure \ref{Fhandlebody}.

We first enumerate the surgeries which involve attaching a M{\"o}bius band:
\begin{itemize}
\item $[FMO_-]$: remove a small neighborhood of an isolated fixed-point, leaving an $S^1_a$ on the boundary, and attach an $S^{1}(M)$. 
\item $[O_+ MO_-F]$: remove an $S^{1,1}$-cap along a segment of an untwisted oval, leaving a $S^{1,1}$ along the boundary, and attach an $S^{1,1}(M)$.
\item $[O_- M O_+F]$: remove a small disk along a portion of a twisted oval, leaving a $S^{1,1}$ along the boundary, and attach an $S^{1,1}(M)$.  
\end{itemize}

If $(M', \tau)$ is a surface equipped with an involution and $[*]$ is one of these three surgery operations which is defined on $M$, we write
\begin{align*}
M = M' + [*] 
\end{align*}
to denote the reflection surface $(M,\tau)$ resulting from the surgery.  

The reasons for the above names are as follows: the surgery $[FMO_-]$ replaces an isolated fixed-point with a twisted oval $O_-$; while $[O_+ MO_-F]$ replaces an untwisted oval $O_+$ with a twisted oval $O_-$ and an isolated fixed-point, and $[O_- MO_+F]$ replaces a twisted oval $O_-$ with an untwisted oval $O_+$ and an isolated fixed-point. In the latter two cases, that the resulting oval is actually twisted (respectively untwisted) follows from a simple and direct verification. 
 In each case, the $M$ signifies the attaching of a M{\"o}bius band. 

\begin{lemma}
\label{Lsurgbrs}
Let $[X]$ be one of the surgery operations
\begin{align*}
    [FMO_-], 
    \quad
    [O_+MO_-F],
    \quad
    [O_-MO_+F].
\end{align*}
If $M$ is a basic reflection surface, then so is $M+[X]$.
\end{lemma}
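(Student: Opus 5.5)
We need to show that $M+[X]$ is a reflection surface whose quotient $(M+[X])/\langle\tau\rangle$ is an orientable genus-zero surface. The plan is to work entirely at the level of the quotient. Each surgery replaces a small $\tau$-invariant disk-type neighborhood $U$ in $M$ by a $\tau$-invariant M\"obius band $U'$ with the same equivariant boundary circle. So the quotient of the new surface is obtained from $\pi(M)$ by removing the disk $\pi(U)$ and gluing in $\pi(U')$ along the common boundary $\pi(\partial U)=\pi(\partial U')$. Concretely:
\begin{itemize}
\item for $[FMO_-]$, $\partial U$ is an $S^1_a$, and $\pi(\partial U)$ is a circle;
\item for the other two surgeries, $\partial U$ is an $S^{1,1}$, and $\pi(\partial U)$ is an arc whose endpoints are fixed points.
\end{itemize}
It therefore suffices to show two things: $\pi(U)$ and $\pi(U')$ are both closed disks, and the gluing is along a circle in the first case and along a boundary arc in the other two. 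Gluing a disk in place of a disk does not change the topological type of the quotient, so it remains an orientable genus-zero surface.

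\emph{Case $[FMO_-]$.} Near an isolated fixed point, $\tau$ acts as a rotation by $\pi$, so $\pi(U)$ is a cone, hence a disk, and $\pi(\partial U)$ is its boundary circle. For $S^1(M)$, described as in Figure \ref{Fmob} by the rectangle $[-4,4]\times[-1,1]$ with ends identified by a flip, $\tau$ is the reflection $(s,t)\mapsto(s,-t)$. A fundamental domain is $[-4,4]\times[0,1]$, with the ends glued via $(\pm4,0)$. This is a disk whose boundary is the image of $\partial U'$; the center line becomes a boundary arc in the interior of this disk region, namely the image of the new oval. So the quotient is a disk glued along its full boundary circle to the complement of $\pi(U)$.

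\emph{Cases $[O_\pm MO_\mp F]$.} Near an oval, $\tau$ is a reflection, so $\pi(U)$ is a half-disk: a disk whose boundary consists of an arc lying on $\partial\pi(M)$ and the arc $\pi(\partial U)$. For $S^{1,1}(M)$, $\tau$ is the point reflection $(s,t)\mapsto(-s,-t)$ of the rectangle. Take the fundamental domain $[0,4]\times[-1,1]$. On the segment $\{0\}\times[-1,1]$, $\tau$ acts by $t\mapsto -t$, folding it, so the center point becomes a cone point. On the glued end $\{4\}\times[-1,1]$, the identification composed with $\tau$ fixes it pointwise, so it gives a boundary arc (the oval). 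The result is again a closed disk, with boundary made of the arc $\pi(\partial U')$ and the fixed-oval arc. Gluing two disks along a common boundary arc to $\pi(M)\setminus\pi(U)$ produces something homeomorphic to $\pi(M)$.

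The main obstacle is the careful bookkeeping of the quotient of $S^{1,1}(M)$, in particular checking that the local pictures are consistent and that the result is a manifold with boundary rather than something singular. I would verify this explicitly with the rectangle model above. Finally, $\tau$ still fixes a curve, namely the new or retained oval, so it remains a reflection on the new surface. This completes the argument.
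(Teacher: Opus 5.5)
Your strategy is the same as the paper's: pass to the quotient, compare the quotient of the removed neighborhood $U$ with that of the attached M\"obius band $U'$, and glue. Your treatment of $[O_+MO_-F]$ and $[O_-MO_+F]$ is correct. There the quotient of $S^{1,1}(M)$ is a disk bounded by the arc $\pi(\partial U')$ and the image of the fixed segment, and it replaces the half-disk $\pi(U)$ along that arc.

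The $[FMO_-]$ case, however, is computed incorrectly. In your fundamental domain $[-4,4]\times[0,1]$ for $S^1(M)$, the two vertical sides are identified along their whole length, not just at $(\pm 4,0)$. The gluing $(-4,t)\sim(4,-t)$ followed by $\tau$ identifies $(-4,t)$ with $(4,t)$ for every $t\in[0,1]$. So $S^1(M)/\langle\tau\rangle$ is an annulus $S^1\times[0,1]$. One boundary circle is $\pi(\partial U')$ and the other is the image of the new twisted oval. The Euler characteristic confirms this: $0=\chi(S^1(M))=2\chi(S^1(M)/\langle\tau\rangle)-\chi(\text{core circle})$, so the quotient has $\chi=0$, not $1$.

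Your picture also fails for a structural reason. The image of an oval of a reflection always lies in the boundary of the quotient, so the new oval cannot become ``a boundary arc in the interior of this disk region.'' For the same reason, ``gluing a disk in place of a disk does not change the topological type'' is false here: the quotient genuinely changes, since a cone point is replaced by a new boundary circle.

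The repair is immediate. The complement in $\pi(M)$ of the open cone-disk $\pi(U)$ is an orientable genus-zero surface with one extra boundary circle. Gluing the annulus $\pi(U')$ to it along that circle just adds a collar, so $\pi(M+[FMO_-])$ is still orientable of genus zero. With this correction your argument is complete.

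Note that the paper's one-line proof makes the same blanket claim, that the quotient of the M\"obius band is a disk ``in each case.'' That is accurate only for $S^{1,1}(M)$, but, as above, the conclusion of the lemma is unaffected.
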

\begin{proof}
In each case, the fixed-point set of $N=M+[X]$ has at least one fixed-point oval, hence $N$ is a reflection surface.  It remains to prove the quotient $\pi(N)$ of $N$ by the involution has genus zero.  

Since $M$ is a basic reflection surface, the quotient $\pi(M)$ has genus zero, and $N$ is obtained from $M$ by removing an equivariant disk and replacing it with an equivariant M{\"o}bius band.  In each case, the quotient of the M{\"o}bius band by the involution is a disk.  It follows that $\pi(N)$ is obtained by attaching a disk to a genus-zero surface, so $\pi(N)$ has genus zero as well. 
\end{proof}

\subsection{Main results with $\Z_2$-symmetry}
\label{ssZ2main}
In this subsection, let $(M, \tau)$ be a basic reflection surface and $T$ be the corresponding action of $\Z_2$ as in \ref{ssZ2}.  We consider the spherical, elliptic, and hyperelliptic cases in turn.
\begin{prop}
\label{Pelem0}
Let $(M, T)$ be a nonorientable spherical basic reflection surface and $(M', T')\prec (M, T)$ be an elementary degeneration.  Then $(M',T')$ is a basic reflection surface (possibly, orientable), or is the space $T^{\mathrm{sk}}_0$, and one of the following must hold:
\begin{equation*}
\begin{aligned}
M &=M' +[FMO_-], \\
M &=M' + [O_+MO_-F],\\
M &=M'+ [O_- M O_+ F].
\end{aligned}
\end{equation*}
In particular, $M$ is obtained from $M'$ by a surgery which attaches a M{\"o}bius band. 
\end{prop}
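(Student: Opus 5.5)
We classify the possible $\tau$-invariant collapsing curves directly, using the seven equivariant neighborhood types listed above. By Definition \ref{def:degeneration}, $M'$ is a $\Z_2$-invariant component of $M'_{\fC}$, where $\fC$ is a nonempty $\tau$-invariant union of disjoint noncontractible simple closed curves. The first step is to show that for an \emph{elementary} degeneration we may take $\fC$ to be a single $\tau$-invariant curve $S$. The $\tau$-orbit of a non-invariant curve is a pair $\{c,\tau c\}$, and collapsing it is not elementary. To see this, note that $\pi(c)$ is a simple closed curve in the genus-zero surface $M/\langle\tau\rangle$, so it separates. From this one can either exhibit an intermediate degeneration or show that $M'$ is disconnected or non-elementary. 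We then argue as in Lemma \ref{deg.char}: among the $\tau$-invariant curves in the region cut off by $\pi(c)$, we pick one that collapses first.

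So assume $S$ is $\tau$-invariant, and take a $\tau$-invariant neighborhood $U$ of one of the seven types. We go through them.

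\emph{Untwisted cases.} For the $S^{1,0}$-antitube, the curve $S$ is an untwisted oval. For the $S^{1,1}$-tube and the $S^1_a$-tube, a tube neighborhood means the two boundary circles are each invariant. For the antitube cases, collapsing glues two disks exchanged by $\tau$. In each of these cases we check that collapsing either disconnects $M$ or produces an orientable surface or an $M'$ admitting a further intermediate collapse. The point is that $M\setminus S$ still contains a one-sided curve, and collapsing the crosscap first gives an intermediate step. Hence none of these is elementary unless $M'$ is orientable, and in that case we need to recheck the picture. Here the spherical hypothesis ($F+C_-=2$) is what makes the count work: an elementary degeneration must reduce $F+C_-$ or the number of ovals in a controlled way. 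Concretely, I would compute the species of $M'$ via Theorem \ref{Tclass} and the Euler characteristic change ($+1$ for a Möbius collapse, $+2$ for a cylinder collapse). I would then show that a cylinder collapse can always be factored through a Möbius collapse, by first collapsing a twisted oval or a crosscap attached at a fixed point. This factorization step is the main obstacle. To handle it, I would use the surgery description in Remark \ref{Rbas}(2): since $g=0$, $M$ is $S^2$ with the two poles either kept as fixed points or replaced by crosscaps, together with $C_+$ pairs of identified disks. Every invariant two-sided curve is then visibly isotopic to a boundary of this decomposition.

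\emph{Twisted cases.} This leaves $U$ a Möbius band, of type $S^1(M)$ or $S^{1,1}(M)$. Collapsing $S$ replaces $U$ by an invariant disk. For $S^1(M)$, the disk's center is an isolated fixed point replacing the twisted oval, so $M=M'+[FMO_-]$. For $S^{1,1}(M)$, the boundary $S^{1,1}$ has two fixed points, and the fixed segment through $U$ together with the isolated fixed point is replaced by a fixed arc in the disk. Depending on whether the fixed segment of $U$ belonged to a twisted or an untwisted oval of $M$, the reverse operation is $[O_+MO_-F]$ or $[O_-MO_+F]$. (The ``simple direct verification'' of twistedness mentioned in \ref{sssurg} decides which.)

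Finally, $M'$ is a reflection surface as long as it still has an oval; its quotient is $\pi(M)$ with a disk replaced by a disk, so it has genus zero and $M'$ is basic. If $M'$ has no oval, which happens only when the last twisted oval became a fixed point with $g=0$, then $M'$ is a sphere with an involution fixing two points, i.e.\ $T^{\mathrm{sk}}_0$.
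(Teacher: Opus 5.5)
Your treatment of the one-sided curves is correct and matches the paper. Collapsing the core of an $S^1(M)$ gives $M=M'+[FMO_-]$, and collapsing the core of an $S^{1,1}(M)$ gives $[O_+MO_-F]$ or $[O_-MO_+F]$; these are the paper's Cases 2(2) and 3. The gap is in the two-sided invariant curves and the swapped pairs, which make up most of the proof, and the arguments you sketch there fail.

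(a) A disconnecting collapse cannot be set aside, because $M'$ is allowed to be an invariant component of $M'_{\fC}$. The boundary of a M{\"o}bius neighbourhood of a twisted oval is an $S^1_a$-tube, and the boundary of an $S^{1,1}(M)$ is an $S^{1,1}$-tube. For $\gamma\ge 2$ both are non-contractible, and collapsing either splits off an $\RP^2$ and leaves $\chi(M')=\chi(M)+1$. Since every degeneration raises $\chi$ by at least one, these degenerations are elementary. For the $S^1_a$-tube, $M'$ is moreover nonorientable. So your assertion that no tube or antitube collapse is elementary unless $M'$ is orientable is false. These curves have to be identified with the corresponding M{\"o}bius collapses, as in the paper's Cases 5(2) and 6.

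(b) Collapsing a one-sided curve of $M\setminus S$ first does not produce $M'\prec M''\prec M$. If that curve survives into $M'$, collapsing it and then $S$ gives a proper degeneration of $M'$, not $M'$ itself.

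(c) Your factorization for orientable $M'$ fails when $F=2$ and $C_-=0$. In the model of Remark \ref{Rbas}(2), take a meridian through both poles; it is the lift of an arc joining the two branch points of $M/\langle\tau\rangle$. It is a non-separating $S^{1,1}$-antitube, and collapsing it gives the orientable basic reflection surface with $C_+$ ovals. Here $M$ has no twisted oval and no crosscap to collapse first. The meridian is also not isotopic to any boundary in the surgery decomposition, so your claim that every invariant two-sided curve is visibly isotopic to such a boundary is false. The intermediate step instead comes from collapsing an $S^{1,1}(M)$ through a pole and an untwisted oval, giving $M=M'+[O_+MO_-F]+[O_-MO_+F]$.

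The missing idea is to classify degenerations through the genus-zero quotient $N=M/\langle\tau\rangle$ and then use the uniqueness part of Theorem \ref{Tclass}. By elementarity $c=\fC/\tau$ is connected, so it is one of the following:
\begin{enumerate}
\item a boundary component of $N$;
\item an interior closed curve avoiding the branch points;
\item an arc joining two branch points;
\item an arc joining a branch point to $\partial N$;
\item an arc joining two distinct boundary components;
\item an arc joining one boundary component to itself.
\end{enumerate}
Each position fixes the neighbourhood type of $\fC$. It also fixes the species of $M'$ and of any discarded component. This uses Riemann--Hurwitz, the parity of $F+C_-$ on each piece together with $F+C_-=2$ on $M$, and non-contractibility of $\fC$. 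Uniqueness then gives $M=M'+[X_1]+\dots+[X_m]$, with each $[X_i]$ one of the three M{\"o}bius surgeries. Each such surgery is undone by collapsing the core of the attached M{\"o}bius band, so $m\ge 2$ exhibits an intermediate degeneration, and an elementary degeneration must have $m=1$. This species bookkeeping, rather than a direct geometric factorization of the collapse of $S$, is what disposes of the two-sided cases. It also shows that swapped pairs $\{c,\tau c\}$ always give $m\ge 2$, which is what actually proves your reduction to a single invariant curve.
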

\begin{proof}

Let us first recall some facts about closed surfaces $P$ with involutions $\rho$~\cite{Natanzon}. The factor $P/\rho$ can be given a structure of a surface with boundary, where the ovals in $\rho$ project to the boundary and fixed points are the branch points of the projection. Suppose that $P/\rho$ has genus $0$ and let $F, C=C_-+C_+$ be as in Section~\ref{S:BRS}, so that $P/\rho$ has exactly $C$ boundary components. Then by the Riemann-Hurwitz formula one has that $F+C = 0$ implies that $P$ is a disjoint union of two spheres with $\rho$ exchanging the copies. Otherwise, $F+C_- = 0$ implies that $(P,\rho)$ is an orientable BRS. Finally $F+C_- = 2$ implies that either $(P,\rho)$ is a non-orientable BRS (if $C>0$) or is a $T_0^\mathrm{sk}$ (if $C=0$), where the latter follows from the Riemann-Hurwitz formula.

Recall that $(M',T')$ is a $T'$-invariant connected component of $(M'_\fC, T'_\fC)$, where $(M'_\fC, T'_\fC)$ is obtained from $(M, T)$ by collapsing a $T$-invariant collection $\fC$ of pairwise disjoint, non-contractible simple closed curves in $M$. Let $\tau$ be the basic reflection on $M$ and let $N:=M/\tau$, $c:=\fC/\tau$.
Since the degeneration is elementary we may assume that $c$ is connected. Let $N_c$ be the surface obtained by collapsing $c\subset N$ and let $M'_\fC\to N_c$ be the corresponding projection. 

We consider several cases depending on the geometry of $c$. 
Note that in all cases $N_c$ is a either a surface of genus $0$ or a disjoint union of two genus $0$ surfaces. We start with the cases where $N_c$ remains connected: it turns that in those cases $M'_\fC = M'$, and we denote by $F',C'=C'_++C'_-$ the number of fixed points and (un)twisted ovals of $M'$.

{\em Case 1.} The curve $c$ connects two branch points, so that $N_c$ is a connected surface of genus $0$. By inspection, $\fC$ is connected and its neighbourhood has to be an $S^{1,1}$-antitube, so that the points of the collapsed set $P_\fC$ are not fixed points for $T'_\fC$. We observe that $M'_\fC$ is connected and therefore coincides with $M'$, is a BRS, and has two fewer fixed points than $M$. Since $F+C_- = 2$, we see that $F' +C_-' =0$, so that $(M',T')$ is an orientable BRS. Since $C>0$, $C_+'=C_+$ we can write
\[
M=M' + [O_+MO_-F] +[O_-MO_+F]
\]
and the theorem is proved in this case.

{\em Case 2.} The curve $c$ is contained in $\partial N$, so that $N_c$ is a connected surface of genus $0$. By inspection, $\fC$ must be one of the following:
\begin{enumerate}
    \item An $S^{1,0}$-antitube. In this case, the points of $P_\fC$ are not fixed points for $T'_\fC$. Once again $M'=M'_\fC$ is a surface with one fewer untwisted oval and $F'+C_-'=2$, so that it is either a non-orientable BRS or a $T_0^\mathrm{sk}$. Finally, since $F'+C_-'=2$, we can write
    \begin{align*}
    M &= M' + [FMO_-] + [O_-MO_+F] 
    \quad \text{or}  \\
    M &= M' + [O_-M O_+F] + [FMO_-],
    \end{align*}
    depending on whether $F'>0$ or $C'_->0$.
    \item An $S^1(M)$. Then the single point in $P_\fC$ is fixed, $C_-' = C_--1$, $F'=F+1$, so that $F'+C_-' = 2$, and
    \[
    M= M'+[FMO-]
    \]
\end{enumerate}

{\em Case 3.} The curve $c$ connects a branch point to $\partial N$. Then $\fC$ is $S^{1,1}(M)$ and collapsing it removes a fixed point and changes the twistedness of an oval, so that $F'=F-1$, $C'=C$ and $(M',T')$ is a BRS, either orientable or not. We can write
\[
M=M'+[O_+MO_-F]\quad\text{ or }\quad M=M'+[O_+MO_-F]
\]
depending on whether $C_-'>0$ or $C_+'>0$

{\em Case 4.} The curve $c$ connects two points on different boundary components of $\bd N$, so that $N_c$ is connected. By inspection, $\fC$ is an $S^{1,1}$-tube, and the points in $P_\fC$ belong to an oval, so that no new fixed points added. Collapsing $\fC$ leads to the merger of two ovals with their twistedness added up: that is, either

\begin{enumerate}
    \item two untwisted ovals merge to an untwisted oval, so that $M'=M'_\fC$ and  $0<C'_+ = C_+-1$, $F=F'$, $C_-'=C_-$. This is analogous to the case 2.(1).
    \item one twisted and one untwisted oval merge to a twisted oval, and again $C'_+ = C_+-1$, $F=F'$, $0<C_-'=C_-$.
    \item two twisted ovals merge to an untwisted oval. Then $C'_-=C_--2$, so that $F'+C_-' = 0$, $C_+'>0$ and $M'$ is an orientable BRS. We can then write
    \[
    M = M' + [O_+M O_-F] + [FMO_-]
    \]
\end{enumerate}

We now turn to cases the where $N_c$ is is disconnected, in which case we still denote by $F',C'=C'_++C'_-$  the invariants of $M'$, but we also use $F'',C''=C''_++C''_-$ for the invariants of the remaining connected components of $M'_\fC$

{\em Case 5.} The curve $c$ is closed and avoids $\bd N$ and branch points. Then $\fC$ is either
\begin{enumerate}
    \item a collection of two curves permuted by $\tau$ or an $S^1_a$-antitube, then $P_\fC$ contains no fixed points of $T'_\fC$, so that $F=F'+F''$, $C_-= C'_-+C_-''$, $C_+ = C_+'+C_+''$. Note that $F''+C''>0$ as otherwise $\fC$ is contractible. If $F'+C_-' = 2$, then $M'$ is a non-orientable BRS or $T_0^{\mathrm{sk}}$, $F''+C_-'' = 0$, hence, $C_+''>0$, which is again analogous to Case 2.(1). If $F'+C_-' = 0$, then $C_+'>0$, $M'$ is an orientable BRS and $C_+''=0$ (as otherwise the degeneration is not elementary), so that we can write
    \[
    M=M'+[O_+MO_-F]+[O_-MO_+F] + C_-[FMO_-].
    \]
    \item or an $S^1_a$-tube.
\end{enumerate}

In the second case both points $P_{\fC}$ are fixed by $T'_\fC$ so that $F+2= F'+F''$, $F',F''>0$ and still $C_-= C'_-+C_-''$, $C_+ = C_+'+C_+''$. Therefore, both components of $M'_\fC$ are non-orientable and $F'+C_-'=F''+C_-'' = 2$. Since the degeneration is elementary, $C''_+=0$. If $C_-'' = 1$, then $F''= 1$, $F' = F+1$, $C_-' = C_--1$ so that 
\[
M = M' +[FMO_-]. 
\]
If $C_-''=0$, then the curve $c$ encloses a disk in $N$ that contains a single branch point. By Riemann-Hurwitz formula, the preimage of this disk in $M$ is a still a disk, so that $\fC$ is contractible. 

{\em Case 6.} The curve $c$ connects two points on the same boundary component. Then $\fC$ is an $S^{1,1}$-tube and it splits an oval of $M$ into two ovals preserving the total twistedness: i.e. a twisted oval splits into twisted and untwisted ovals, or an untwisted oval splits into two twisted or two untwisted ovals. In this case, $C+1=C'+C''$, $C',C''>0$ and $F=F'+F''$. In particular, $F''+C_-''\leq 3$, so $F''+C_-''$ must be $2$ or $0$. Suppose first that $F''+C_-'' =2$, then since the degeneration is elementary, $C_+'' = 0$, so that $C_-''>0$. Then either a twisted oval splits into a twisted and untwisted: i.e. $C_++1 = C_+'$, $C_- = C_-'+C_-''$, so that
\[
M = M'+[O_+MO_-F] + (C_-''-1)[FMO_-];
\]
or an untwisted oval split two twisted ones: i.e. $C_+ = C_+'+1$, $C_-+2 = C_-'+C_-''$, so that
\[
M = M'+[O_-MO_+F] + (C_-''-1)[FMO_-].
\]

Suppose now that $F''+C_-'' =0$. Then $C_+''\geq 1$, and if $C_+''=1$, then $c$ encloses a half-disk whose preimage under the projection is a disk, so that $\fC$ is contractible. Therefore, $C_+''>1$. Then $C_-=C_-'$, $C_+'=C_+-1$, $F'=F$, so this is analogous to Case 2.(1).  

This concludes the proof of the proposition.
\end{proof}

\begin{remark}
    \label{rmk:RP2_initial}
    Note that if the genus of $M$ is at least $2$, then either $C_->0$ or $C_+>0, F>0$ so that one can find a non-orientable BRS $(M',T')$ such that
    \[
    M = M' + [FMO_-]\quad\text{ or }\quad M = M' + [O_-MFO_+].
    \]
\end{remark}

\begin{prop}
\label{PZ2Sph}
Let $(M, \tau)$ be a spherical basic reflection surface and $T$ the corresponding $\Z_2$-action.  Then there is a smooth $T$-invariant metric on $M$ realizing $\Lambda_1^T(M)$, induced by a minimal embedding $M \rightarrow \Sph^4$ by $\lambda_1$-eigenfunctions which satisfies the conclusions of Lemma \ref{Lgraphsph}.
\end{prop}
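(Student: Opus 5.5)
We argue by induction on the nonorientable genus $\gamma$ of $M$, following the scheme of Theorem \ref{TD2}. First, existence of a maximizer is the only real issue: once a $T$-invariant metric realizing $\Lambda^T_1(M)$ exists, Theorem \ref{thm:symm_crit_L} gives an equivariant branched minimal immersion $\iota\colon M\to\Sph^n$ by $\lambda_1$-eigenfunctions. Since $\tau$ is a basic reflection of a spherical BRS, the multiplicity bound (the $\Z_2$-analogue of Proposition \ref{mult.prop}, using that the $\tau$-odd and $\tau$-even parts of the first eigenspace are controlled by the nodal domain argument on $M/\langle\tau\rangle$, a genus-zero surface, together with $g=0$ in Definition \ref{Nelliptic}) should force $n=4$. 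Lemma \ref{Lgraphsph} then applies to $(M,\tau)$ and gives embeddedness, absence of branch points, and area $<8\pi$.

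For existence we use Theorem \ref{thm:existence}. The base case is $\gamma=1$: $M=\RP^2$ with the unique spherical species, for which the round metric is a global maximizer by \cite{LiYau} and is $\tau$-invariant, giving $\Lambda^T_1=12\pi$. For the induction step, we need the strict inequality $\Lambda^T_1(M)>\max\{\Lambda^{T'}_1(M'),8\pi\delta_{or},12\pi\delta_{nor}\}$. By Proposition \ref{Pelem0}, every elementary degeneration $(M',T')$ is either a nonorientable spherical BRS of smaller genus, an orientable BRS, or $T^{\mathrm{sk}}_0$ (the sphere with a rotation, for which $\Lambda_1=8\pi$ is attained by the round metric). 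In each case $\Lambda^{T'}_1(M')$ is realized by a smooth metric: by the induction hypothesis, by \cite[Theorem 8.7]{KKMS} for orientable BRS, or by the round sphere. Moreover, $M$ is obtained from $M'$ by attaching a M\"obius band with one of the surgeries $[FMO_-]$, $[O_+MO_-F]$, $[O_-MO_+F]$. The argument of Lemma \ref{twist.bd} then applies verbatim: each of these involutions on a M\"obius band is realized by isometries of a flat M\"obius band, so the equivariant construction of \cite[Section 3]{KSP} (with \cite[Lemma 8.10]{KKMS}) gives $\Lambda^T_1(M)>\Lambda^{T'}_1(M')$ unless the attaching point is a conical singularity, which is impossible for a smooth maximizer. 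For the $\delta$ terms, note that $\Lambda^{T'}_1(M')\ge 8\pi$ always holds here. Also, by Remark \ref{rmk:RP2_initial}, iterating down to a nonorientable $M'$ gives $\Lambda^T_1(M)>12\pi$ once $\gamma\ge2$. So strictness holds, and Theorem \ref{thm:existence} yields a maximizer that is smooth away from finitely many conical points. Those conical points are removed a posteriori because the metric is induced by the minimal immersion.

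The main obstacle is the surgery $[O_+MO_-F]$ and $[O_-MO_+F]$ step. There the M\"obius band is attached along a segment of an oval rather than at an isolated fixed point, so the collapsed point $p\in M'$ lies on a fixed curve of $\tau$ instead of being an isolated fixed point. We must check that the gluing construction of \cite{KSP} can be made $\tau$-equivariant with the $S^{1,1}(M)$ model and that the conclusion of \cite[Proposition 3.2]{KSP} is still a conical singularity at $p$. I would handle this by working in conformal coordinates in which $\tau$ is $z\mapsto\bar z$ near $p$ and verifying that the test-function estimates are compatible with this symmetry. A secondary point is the multiplicity bound $n=4$, which I would establish by adapting the proof of Proposition \ref{mult.prop} to a single reflection with spherical quotient data.
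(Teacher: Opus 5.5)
Your proposal is correct and follows essentially the same route as the paper: induction on the nonorientable genus from the base case $\RP^2$, Proposition \ref{Pelem0} to reduce elementary degenerations to single M\"obius-band surgeries on surfaces whose maximizers are smooth (by induction or \cite[Theorem 8.7]{KKMS}), the equivariant adaptation of \cite[Section 3]{KSP} for the strict inequality, Remark \ref{rmk:RP2_initial} for $\Lambda_1^T(M)>12\pi$, and Lemma \ref{Lgraphsph} for smoothness and embeddedness. One simplification: $n=4$ needs no new multiplicity argument, since it is exactly Lemma \ref{Lgraphsph}(ii) for a nonorientable BRS with $M\setminus M^\tau$ of genus zero; and the oval-attachment surgeries you flag are treated in the paper as a routine equivariant adaptation, as you anticipate.
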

\begin{proof}
Let $\gamma = 1$, in which case $M = \RP^2$. By work of Li-Yau \cite{LiYau}, the round metric on $\RP^2$ is the unique metric maximizing $\bar\lambda_1$ and it admits a basic reflection with $C_-=F=1$.  By Theorem \ref{Tclass}, this is the unique basic reflection surface structure on $M$, and as shown in \cite{LiYau}, in this case $\Lambda_1^T(M) = 12\pi$.

Let $(M, \tau)$ be a basic reflection surface with $F+C_- = 2$ and nonorientable genus $\gamma\geq 2$.  In this case, \cite[Theorem 7.7]{KKMS} asserts that 
\begin{equation}
\label{ineq:MM'2}
\Lambda_1^T(M)\geq\max_{(M',T')\prec (M,T)} \left\{\Lambda_1^{T'}(M'), 12 \pi \right\},
\end{equation}
and furthermore that strictness in the inequality ~\eqref{ineq:MM'2} implies the existence of a maximal metric achieving $\Lambda_1^T(M)$. As soon as the strict inequality holds, Lemma~\ref{Lgraphsph} implies that the maximal metric is in fact smooth everywhere. We will show by induction that inequality~\eqref{ineq:MM'2} is strict for $\gamma\geq 2$.

Let $(M', \tau') \prec (M, \tau)$ be an elementary degeneration.  By Proposition \ref{Pelem0}, $(M',T')$ is a basic reflection surface, and $(M, T)$ can be obtained from $(M', T')$ by an equivariant surgery attaching a single M{\"o}bius band.
Combining the step of induction (for non-orientable BRS) with~\cite[Theorem 8.7]{KKMS} (for orientable BRS) there exists a {\em smooth} metric on $M'$ realizing $\Lambda_1^{T'}(M')$. In particular, a straightforward adaptation of the arguments in \cite[Section 3]{KSP} to the equivariant case (as is explained in the proof of Lemma~\ref{twist.bd}) shows that $\Lambda^T_1(M) > \Lambda_1^{T'}( M ')$. It remains to show $\Lambda_1^T(M)>12\pi$. It follows from Remark~\ref{rmk:RP2_initial} that it is possible to find a chain of degenerations starting at $(M,T)$ and terminating at $\mathbb{RP}^2$. In particular, $\mathbb{RP}^2\prec(M,T)$, which together with the base of induction and previous arguments implies that $\Lambda_1^T(M)>\Lambda_1(\mathbb{RP}^2) = 12\pi$. 
\end{proof}

We now turn to the elliptic basic reflection surfaces.

\begin{theorem}
\label{Telliptic}
Let $(M, T)$ be an elliptic basic reflection surface.  Then 
\begin{align*}
8\pi^2/\sqrt{3} \leq  \Lambda_1^T(M) < 16\pi
\end{align*}
and there is a smooth $T$-invariant metric on $M$ realizing $\lambda^T_1(M)$, induced by a minimal embedding $M \rightarrow \Sph^6$ by $\lambda_1$-eigenfunctions which satisfies the conclusions of Lemma \ref{Lgraphsph}.
\end{theorem}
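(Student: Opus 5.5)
Elliptic means $F+C_-=4$, so $M$ arises from $T^{\mathrm{sk}}_1$ (a torus with an involution fixing four points) by the surgeries of \ref{sssurg}. I would argue by induction along elementary degenerations, exactly as in Proposition \ref{PZ2Sph}, but with the base of the induction now being the torus.

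\emph{Classification of elementary degenerations.} The first step is an analogue of Proposition \ref{Pelem0} for elliptic $(M,T)$. I would run the same case analysis on the arc or loop $c=\fC/\tau$ in $N=M/\tau$. The count $F+C_-$ changes by $0$ or $\pm2$, so an elementary degeneration $(M',T')$ should be one of the following. It may be an elliptic BRS with $M=M'+[X]$, a single Möbius band attachment. It may be a spherical BRS or an orientable BRS (Cases 1, 4(3), 5(1)). Or it may be the skewered torus $T^{\mathrm{sk}}_1$ itself, when all $C$ ovals collapse in the final step, or something like it. Disconnected splittings (Cases 5(2) and 6) produce one spherical and one spherical or orientable component. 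In every case the degenerate surface is one where the inductive or known theory applies.

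\emph{Lower bound and strict inequality.} The chain of degenerations terminates at $T^{\mathrm{sk}}_1$. Any $\tau$-invariant flat metric on the torus is allowed, since $\tau=-\Id$ is an isometry. The equilateral torus has $\bar\lambda_1=8\pi^2/\sqrt3$ and is the global maximizer on $\mathbb{T}^2$ \cite{Nadirashvili}. Hence $\Lambda_1^T(M)\ge\Lambda_1^{T'}(T^{\mathrm{sk}}_1)=8\pi^2/\sqrt3$, which is the claimed lower bound.

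For strictness in \eqref{ineq:MM'}, the competitors are $12\pi$, $8\pi$ (since $8\pi^2/\sqrt3>12\pi$, these are dominated), and the values $\Lambda^{T'}_1(M')$. Each such value is realized by a smooth metric: for spherical $M'$ by Proposition \ref{PZ2Sph}, for orientable $M'$ by \cite[Theorem 8.7]{KKMS}, for elliptic $M'$ by induction, and for the torus directly. Then the Möbius band attachment argument of Lemma \ref{twist.bd}, via \cite[Section 3]{KSP}, gives $\Lambda^T_1(M)>\Lambda^{T'}_1(M')$. The delicate case is a degeneration that is not a single Möbius band attachment, for instance collapsing an $S^{1,0}$-antitube. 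Here I would use a chain through an intermediate surface, as in the proof of Lemma \ref{deg.char}, or apply \cite[Theorem 8.1]{KKMS} with a check that first-eigenfunction maps cannot identify the relevant points. Theorem \ref{thm:existence} then yields a maximizer, and Lemma \ref{Lgraphsph} gives smoothness, embeddedness, and doubling structure.

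\emph{Upper bound and the target $\Sph^6$.} For the ambient dimension, I would bound the multiplicity of $\lambda_1$. Split the eigenspace into $\tau$-even and $\tau$-odd parts. The odd functions vanish on the ovals and at the fixed points; a nodal/Courant argument on $M/\tau$, which now has a genus-one orientable double with four branch points, should bound the odd part by $2$. The even functions descend to the genus-zero quotient, and I would bound them by $5$ using a Cheng-type bound on the double. This gives $\dim\le 7$, hence $n\le6$. Fullness should come from the nondegenerate torus factor. The bound $\Lambda^T_1<16\pi$ should follow from the area $<8\pi$ in Lemma \ref{Lgraphsph} combined with $\lambda_1=2$. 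I expect the main obstacle to be this multiplicity count and showing $n=6$ exactly, together with the strictness in the non-Möbius degeneration cases.
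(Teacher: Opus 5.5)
Your existence argument is essentially the paper's. The lower bound comes from collapsing all ovals to reach $T^{\mathrm{sk}}_1$ and using the equilateral torus, on which $\tau=-\Id$ is an isometry. Strictness comes from an induction as in Proposition \ref{PZ2Sph}, using the equivariant cross-cap argument of \cite[Section 3]{KSP}; degenerations that are not a single M\"obius attachment factor through intermediate surfaces and so are not elementary. The bound $\Lambda^T_1(M)<16\pi$ follows from $\lambda_1=2$ and area $<8\pi$. Note also that only $n\le 6$ is needed, so showing fullness in $\Sph^6$ is unnecessary.

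The genuine gap is the multiplicity bound, which is the one new input here (Proposition \ref{PmultBRS}). You have the roles of the even and odd parts reversed, and the bound $\dim\Ecal^-\le 2$ is not available.
\begin{itemize}
\item Odd eigenfunctions do not descend to functions on $M/\tau$, so no nodal/Courant argument there controls them.
\item Their nodal sets live on $M\setminus M^\tau$, which in the elliptic case has genus one, and genus-one topology does not give $2$. In the model $T^{\mathrm{sk}}_1$ with the equilateral metric and $\tau=-\Id$, the odd part of the first eigenspace is already the three-dimensional span of the sine eigenfunctions.
\item Your count is internally inconsistent. By Lemma \ref{Lgraphsph}(iii), $\tau=\iota^*\Rcapunder_P$ with $\dim P=3$, so the odd coordinate functions span $n-2$ dimensions. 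Then $\dim\Ecal^-\le 2$ would force $n\le 4$, not $n=6$.
\item It is the even part that is small. Even eigenfunctions descend to the genus-zero quotient, giving $\dim\Ecal^+\le 3$ by \cite[Proposition 5.15]{KKMS}.
\item A ``Cheng-type bound on the double'' would not give a uniform $5$ for the even part. The double of $M/\tau$ along its $C$ boundary circles has genus $C-1$, which is unbounded.
\end{itemize}

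What is needed is the paper's argument. Compactify each end of $M\setminus M^\tau$ by a point to get a torus $M^\circ$. For odd $u$, the nodal set becomes a nodal graph in $M^\circ$ with even degree at the added points. Cheng's argument on a torus then bounds the vanishing order at any point, and the number of nodal lines meeting there, by $3$.
\begin{itemize}
\item If $F\ge 1$, take an isolated fixed point $p$. Oddness forces $u(p)=0$ and $\nabla^2u(p)=0$. Imposing $\nabla u(p)=0$ and $\nabla^3u(p)=0$ costs only four linear conditions, since the traces of $\nabla^3u(p)$ vanish automatically by differentiating $\Delta u=\lambda_1u$. So $\dim\Ecal^-\ge 5$ would give a nonzero $u$ vanishing to order $4$, a contradiction.
\item If $F=0$, then $C_-=4$. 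Fix a twisted oval $O$ and four points $p_i\in O$. The map $u\mapsto (du_{p_i})_i$ has image of dimension at most $4$, because $u|_O\equiv 0$. A nonzero kernel element would force at least four nodal lines through the single end of $M^\circ$ corresponding to $O$, again contradicting Cheng.
\end{itemize}
This yields $\dim\Ecal\le 3+4=7$, hence $n\le 6$.
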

\begin{proof}
 A straightforward modification of the proof of Proposition \ref{Pelem0} shows that if 
\begin{align*}
(M', T') \prec (M, T)
\end{align*}
is an elementary degeneration, then $(M', T')$ is either a basic reflection surface or the space $T^{\mathrm{sk}}_1$  defined in \ref{sssurg}, and $M$ is obtained from $M'$ by a surgery which attaches a M{\"o}bius band.

Now recall from \cite{Nadirashvili} that $\Lambda_1(\mathbb{T}^2) = 8\pi^2/\sqrt{3}$, where $\mathbb{T}^2$ is the torus, and that equality is achieved by the flat metric on the equilateral torus.  It is easy to see there is a hyperelliptic involution of $T^2$ which is an isometry for this metric, so that $\Lambda_1(\mathbb{T}^2) = \Lambda_1^T(T^{\mathrm{sk}}_1)$.

Since there is a topological degeneration of $(M, T)$ to $T^{\mathrm{sk}}_1$ obtained by collapsing all the ovals on $M$, the inequality $\Lambda^T_1(M) \geq 8\pi^2/\sqrt{3}$ follows from the preceding and \eqref{ineq:MM'}.

The existence of a smooth, $T$-invariant maximal metric $g$ on $(M, T)$ now follows from an inductive argument similar to the one in the proof of Proposition \ref{PZ2Sph}, and we omit the details.  By Corollary \ref{Prealcl}, there is then an isometric minimal embedding $\iota: (M, g) \rightarrow \Sph^n$ by first eigenfunctions satisfying the conclusions of Lemma \ref{Lgraphsph}.  Finally, the bound $n \leq 6$ follows from the multiplicity bound from Proposition \ref{PmultBRS}.
\end{proof}

We now consider the hyperelliptic case and show, as remarked earlier, that hyperelliptic basic reflection surfaces \emph{do not} have $\Lambda^T_1(M)$-maximizing metrics, where $T$ is the $\Z_2$-action. 
 
\begin{theorem}
\label{Tmainhigh}
Let $(M, T)$ be a hyperelliptic nonorientable basic reflection surface.  Then
\begin{align}
    \label{Emax}
\Lambda_1^T(M) = 16\pi
\end{align} 
and there is no $T$-invariant metric on $M$ realizing $\Lambda^T_1(M)$.
\end{theorem}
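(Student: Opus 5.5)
The plan is to prove the upper bound $\Lambda_1^T(M)\le 16\pi$ by a Hersch-type argument adapted to the reflection, and the lower bound by a chain of degenerations ending at a surface where $16\pi$ is attained. Non-attainment will then come from the equality case of the Hersch argument.

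\emph{Upper bound.} Let $g$ be any $T$-invariant metric on $M$, and let $N=M/\langle\tau\rangle$ with its induced conformal structure. Since $\tau$ is basic, $N$ is a genus-zero orientable surface whose boundary is formed by the images of the ovals; there are $C\ge1$ of these. Its Schottky double $\widehat N$ is a Riemann sphere. Composing the projection $\pi\colon M\to N$ with the inclusion $N\hookrightarrow \widehat N\cong \Sph^2$ gives a continuous, piecewise conformal map $f\colon M\to\Sph^2$. Each point of $\Sph^2$ has at most two preimages, so $f$ has energy at most $2\cdot 4\pi$. By the Hersch balancing lemma, after composing with a conformal automorphism of $\Sph^2$ the coordinates of $f$ have mean zero. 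Using them as test functions in the Rayleigh quotient gives
\begin{align*}
\lambda_1(M,g)\,\area(M,g)\le \int_M|\nabla f|^2\le 16\pi .
\end{align*}
The map $f$ is only Lipschitz across the ovals, but this is harmless for the test-function argument.

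\emph{Lower bound.} Write $2\gamma'+2=F+C_-$ with $\gamma'\ge 2$ (the hyperelliptic case). Collapsing all ovals, and equivalently undoing the surgeries of Remark~\ref{Rbas}(2), gives a degeneration $T^{\mathrm{sk}}_{\gamma'}\prec(M,T)$. This uses the case analysis of Proposition~\ref{Pelem0}, extended as in the proof of Theorem~\ref{Telliptic}. Next we would degenerate $T^{\mathrm{sk}}_{\gamma'}$ further by collapsing $\tau$-invariant handle curves, each of which passes through two fixed points, down to $T^{\mathrm{sk}}_2$. The genus-two $\bar\lambda_1$-maximizer of \cite{NaySh} is a branched double cover of the round sphere that is invariant under the hyperelliptic involution. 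Hence $\Lambda_1^{T}(T^{\mathrm{sk}}_2)=16\pi$, and by iterating \eqref{ineq:MM'} we get $\Lambda_1^T(M)\ge16\pi$. Together with the upper bound this proves \eqref{Emax}.

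\emph{Non-existence.} Suppose a $T$-invariant metric $g$ realizes $16\pi$; by Theorem~\ref{thm:existence} and the regularity theory we may allow conical singularities. Then equality holds in the Hersch argument for the balanced map, so its three coordinates are $\lambda_1$-eigenfunctions and its energy equals $2\lambda_1\area$. These functions are therefore smooth, and $f$ is a harmonic, weakly conformal map $M\to\Sph^2$, i.e.\ a branched minimal immersion of degree-$2$ type. This yields a contradiction in two ways:
\begin{itemize}
\item A nonconstant branched conformal map from a nonorientable surface to $\Sph^2$ would pull back an orientation off the branch points, contradicting nonorientability of $M$.
\item Independently, $f$ folds along each oval (there is at least one, since $C\ge1$), so it cannot be smooth and harmonic across the ovals.
\end{itemize}
The main obstacle is making the equality case rigorous when the maximizer may have conical singularities and $f$ is only Lipschitz. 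The plan is to show that equality forces each coordinate of $f$ to lie in the first eigenspace, using the variational characterization together with unique continuation. The second delicate point is the lower-bound degeneration chain: we must verify that collapsing handle curves of $T^{\mathrm{sk}}_{\gamma'}$ preserves the skewer structure, and that the genus-two maximizer is invariant under its hyperelliptic involution.
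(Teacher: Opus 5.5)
Your lower bound is the paper's argument. You collapse all ovals to get $T^{\mathrm{sk}}_{\gamma'}\prec(M,T)$; no case analysis from Proposition~\ref{Pelem0} is needed, since one simply collapses every oval at once. You then collapse $\gamma'-2$ invariant handles down to $T^{\mathrm{sk}}_2$. The Nayatani--Shoda maximizer \cite{NaySh} is pulled back by the hyperelliptic double cover, so $\Lambda_1^{T}(T^{\mathrm{sk}}_2)=16\pi$, and \eqref{ineq:MM'} finishes. For the upper bound and non-existence, the paper uses one imported fact: the strict bound $\bar\lambda_1(M,g)<16\pi$ for every $T$-invariant metric, from \cite[Lemma 5.19(ii)]{KKMS}. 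Your Hersch argument is the natural way to prove such a bound. As written, though, it contains a false step and misses why the bound is strict.

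\textbf{The false step.} The Schottky double of a genus-zero surface with $C$ boundary circles has genus $C-1$, so $\widehat N\cong\Sph^2$ only when $C=1$. In the hyperelliptic range $C$ is often larger (e.g.\ $F=0$ forces $C_-\geq 6$). In that case $N\hookrightarrow\widehat N$ gives no map to $\Sph^2$ at all.

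\textbf{The fix.} $N$ is a compact genus-zero bordered Riemann surface, so by Koebe it is conformally the closure of a circle domain $\Omega\subset\Sph^2$. Set $f=\phi\circ\iota\circ\pi$, where $\iota\colon N\to\overline{\Omega}$ is this identification and $\phi$ is a balancing M\"obius map. Off $M^\tau$, $f$ is conformal and two-to-one onto $\phi(\Omega)$. Hence
\[
\int_M|\nabla f|^2=4\,\area(\phi(\Omega))<16\pi ,
\]
because $\Sph^2\setminus\phi(\Omega)$ consists of $C\geq 1$ closed disks of positive area. This gives $\bar\lambda_1(M,g)<16\pi$ for every $T$-invariant metric, conical singularities included, since only conformal invariance of the energy and the Rayleigh quotient are used. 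Both \eqref{Emax} and non-existence then follow at once.

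In particular, the equality-case analysis you single out as the main obstacle never arises: equality already fails in your second inequality $\int_M|\nabla f|^2\leq 16\pi$. Your two bullet arguments are sound: there is no nonconstant weakly conformal harmonic map from a nonorientable surface to $\Sph^2$, and such a map would have to branch along the ovals. They are just treating a case that cannot occur.
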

\begin{proof}
We first prove \eqref{Emax}.
Let $(M', T')$ be the space $T^{\mathrm{sk}}_g$ equipped with the $\Z_2$-action fixing $2g+2$ points as described in Section \ref{sssurg}.  By collapsing all ovals  we have that
\begin{align*}
(M', T') \prec (M, T).
\end{align*}
Next, denote by $(M'', T'')$ the $\Z_2$-space $T^{\mathrm{sk}}_2$.  Since $g \geq 2$,
there is a further topological degeneration 
\[(M'', T'') \preccurlyeq (M', T')\]
obtained by equivariantly collapsing $g-2$ handles on $M' = T^{\mathrm{sk}}_g$.  (If $g = 2$, we of course have that $(M'', T'') = (M', T')$).

Denoting by $M_2$ the genus-$2$ orientable surface, we now recall from \cite{NaySh} that $\Lambda_1(M_2) = 16\pi$, and that $\Lambda_1(M_2)$ is realized by a singular metric induced from a branched cover $M_2 \rightarrow S^2$ with $6$ ramification points.  In particular, this implies
\begin{align*}
\Lambda_1^{T''}(M'') = \Lambda_1(M_2) = 16\pi.
\end{align*}
Putting together the preceding and combining with \eqref{ineq:MM'} implies \eqref{Emax}.

Next, recall from \cite[Lemma 5.19(ii)]{KKMS} that $\bar{\lambda}_1(M, g) < 16\pi$ for any $T'$-invariant smooth metric on $M$.  Combined with \eqref{Emax}, this proves that $\Lambda_1^{T}(M)$ cannot be achieved by a smooth metric.
\end{proof}

\begin{remark}
    One can say even more: there does not exist a $T$-invariant conformal class $\mathcal C$ on $M$ such that $\Lambda_1^T(M,\mathcal C) = 16\pi$. This follows from the existence of maximizers in conformal classes combined with the fact that the proof of~\cite[Lemma 5.19(ii)]{KKMS} follows through for metrics with conical singularities.
\end{remark}

\begin{remark}
    \label{rmk:no_max}
    Let us explain where the approach of~\cite{KSP} breaks down for hyperelliptic surfaces. The argument is~\cite[Section 3]{KSP} proves that it is possible to attach a cross-cap at a {\em smooth} point of a $\bar\lambda_1(M',T')$-maximal metric to strictly increase the first normalized eigenvalue. If $(M',T') = T_2^\mathrm{sk}$, then, first, any $\bar\lambda_1(T_2^\mathrm{sk})$ has conical singularities at the fixed-points of the skewer involution and, second, there are topological degenerations that specifically require attaching a cross cap at a fixed point. The combination of these two facts means that the construction of~\cite[Section 3]{KSP} cannot be applied. This example shows that the geometric conditions for the existence of maximizers such as~\cite[Section 3]{KSP},~\cite[Theorem 8.1]{KKMS} or~\cite{PetridesMax} are likely unavoidable and are not artifacts of the methods. 
\end{remark}

\subsection{The families $\zeta_k^i$}
\label{ssSph2main}

In this section we state the existence result for the families $\zeta_k^i$. Its proof is analogous to the proof of the Proposition~\ref{PZ2Sph}. 
\begin{theorem}
\label{TsphDk}
Let $M=\zeta_k^i$ and $T$ be one of the actions described in Example~\ref{ex:zetaki}. Then 
\[
\Lambda_1^T(M)<16\pi
\]
and there exists a smooth $T$-invariant metric on $M$ realizing $\lambda_1^T(M)$, induced by a minimal embedding $M\to \Sph^4$ by $\lambda_1$-eigenfunctions which satisfies the conclusions of Lemma~\ref{Lgraphsph}
\end{theorem}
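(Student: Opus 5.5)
Following Proposition~\ref{PZ2Sph}, I will verify the strict form of \eqref{ineq:MM'} in Theorem~\ref{thm:existence} for $(M,T)=(\zeta_k^i,T)$ by induction on $i$, with the orientable case $i=0$ as base.

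\emph{Base case.} For $i=0$, $\zeta_k^0$ is a spherical BRS for $\tau$ with $C_+=k$, $F=2$, $C_-=0$. Every elementary degeneration collapses a $\Gamma$-orbit of curves. Since $D_{2k}$ permutes the $k$ untwisted ovals transitively, collapsing ovals either collapses all of them at once, or produces a disconnected surface, or yields $S^2$ with the standard action. All of these have $\Lambda_1^{T'}\le 8\pi$. A $D_{2k}$-invariant two-sided curve, such as the equator through the glued disks, also degenerates to genus-zero pieces.

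\emph{Lower bound.} For $\Lambda_1^T(M)>8\pi$ I want a chain of degenerations ending at a smooth maximizer. The cleanest candidate is the orientable BRS (the double of a genus-zero surface with $k$ boundary circles) obtained by collapsing the two isolated fixed points' neighbourhoods. That degeneration is not available when $i=0$, though. In that case I would instead use a test-metric construction. Alternatively, I would reduce $\zeta^0_k$ to the $D_k\times D_2$ setting of \S\ref{ssDkD2}: the extra reflection together with $\tau$ generates a $D_2$ acting by basic reflections, so Theorem~\ref{TD2}-type arguments and \cite[Theorem 8.7]{KKMS} apply.

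\emph{Inductive step, $i=1,2$.} By the classification in Proposition~\ref{Pelem0}, now carried out $\Gamma$-equivariantly, the only degenerations with positive $\Lambda$ collapse the twisted ovals created by the cross-caps at the poles. This gives $M=M'+[FMO_-]$ with $M'=\zeta_k^{i-1}$, or, for $i=2$, possibly $\zeta_k^0$ when both poles are collapsed together. Lemma~\ref{twist.bd}, applied with the smooth maximizer on $M'$ supplied by the previous step, gives $\Lambda_1^T(M)>\Lambda_1^{T'}(M')\geq 12\pi$. The $12\pi$ bound comes from the chain of degenerations to $\RP^2$, exactly as in Remark~\ref{rmk:RP2_initial}.

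\emph{Remaining conclusions.} Existence of a maximizer follows from Theorem~\ref{thm:existence}. Lemma~\ref{Lgraphsph} then yields an embedding with area $<8\pi$, which gives $\Lambda_1^T<16\pi$. The target is $\Sph^4$ by the spherical multiplicity bound used for Proposition~\ref{PZ2Sph}.

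\emph{Main obstacle.} The hardest step is the equivariant degeneration analysis. One must check that $D_{2k}$-invariance rules out any intermediate degeneration to a hyperelliptic configuration or to $T^{\mathrm{sk}}_g$ with $g\ge 2$. It must also exclude degenerations in which the collapsed points are not smooth points of the limiting maximizer.
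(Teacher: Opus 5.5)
Your base case has a genuine gap. You correctly observe that collapsing the $k$ untwisted ovals of $\zeta_k^0$ leaves a connected, invariant $(S^2,T')$ with $\Lambda_1^{T'}=8\pi$. Theorem~\ref{thm:existence} therefore needs $\Lambda_1^T(\zeta_k^0)>8\pi$, and none of your proposed routes proves it.

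\begin{itemize}
\item $\zeta_k^0$ is not orientable. Each handle is glued by the orientation-preserving rotation $\tau$, so the surface has nonorientable genus $2k$.
\item The pole-swapping reflection $h$ does not form, together with $\tau$, a $D_2$ of basic reflections. On $S^2$, $\tau h$ is the antipodal map, so on $\zeta_k^0$ its fixed set consists only of the $2k$ points where the ovals cross the equator; it is not a reflection. Moreover $h$ is not basic, since $\zeta_k^0/h$ is $\RP^2$ minus $k$ disks.
\item For the same reason the equator is not a single invariant curve. After the $\tau$-gluing it splits into $k$ loops, and collapsing them leaves two copies of $\RP^2$ interchanged by $h$, not genus-zero pieces.
\item An unspecified ``test-metric construction'' does not close the gap.
\end{itemize}

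The missing step is a handle-attachment strictness argument via \cite[Theorem 8.1]{KKMS}, used as in Proposition~\ref{etabull.ex}. If equality held, some map $\Psi$ of the round sphere by first eigenfunctions would satisfy $\Psi(p)=\Psi(\tau p)$ for a neck center $p$ on the equator. But every such $\Psi$ is a linear isometric embedding, and $\tau p=-p$, so this is impossible. The paper's sketch instead asserts, from the quadrilateral quotient $M/T$, that no elementary degeneration leaves an invariant component. Your oval-collapse observation shows that this assertion misses $(S^2,T')$, so this strictness step has to be supplied on either route.

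Your inductive step has the same omission, plus a bookkeeping problem.
\begin{itemize}
\item It is not true that only the crosscap ovals give degenerations with $\Lambda_1^{T'}>0$. Collapsing the $k$ untwisted ovals of $\zeta_k^1$ or $\zeta_k^2$ leaves an invariant $\RP^2$ or Klein bottle, respectively. Beating these again requires a handle-attachment argument, which Lemma~\ref{twist.bd} does not provide, since it only handles attaching M\"obius bands.
\item Collapsing the crosscap of $\zeta_k^1$ produces $\zeta_k^0$ with the smaller action lacking $h$. That is not the action your base case treats, since you relied on the extra reflection there. For the smaller action the north and south halves are no longer swapped, so collapsing the equatorial loops leaves a $T'$-invariant $\RP^2$.
\end{itemize}
So the base case must also be established for that action, including a strictness argument against this $\RP^2$ degeneration, before your induction can run.
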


\begin{proof}[Sketch of the proof]
    The proof follows the proof of Proposition~\ref{PZ2Sph}. Assume that $i=0$, then the factor $M/T$ is a quadrilateral with sides corresponding to fixed-point sets of generating reflections. For elementary degenerations, the factor $c$ of the set $\fC$ is a segment in this quadrilateral. Considering different cases for $c$, we observe that there are no $T_\fC'$-invariant connected components of $(M_\fC',T_\fC')$, which clearly leads to the existence.

    We leave details and the cases $i=1,2$ to the interested reader.
\end{proof}

\appendix
\section{Facts about basic reflection surfaces}
\label{ssdoub}

\subsection{Minimal immersions to spheres}
In \cite[Section 5]{KKMS}, we studied minimal immersions of basic reflection surfaces $(M, \tau)$ in the $n$-sphere $\Sph^n$ by first eigenfunctions.  Such immersions are necessarily embeddings and are in fact minimal \emph{doublings} of a $2$-sphere $\Sph^2 \subset \Sph^n$ in the sense of the following definition (see \cite[Definition 5.34]{KKMS} and \cite{LDG}).

\begin{definition}
\label{ddoub}
Given a Riemannian manifold $N$ and a surface $\Sigma \subset N$, a (surface) \emph{doubling} of $\Sigma$ in $N$ is a smooth surface $M$ in $N$ satisfying
\begin{enumerate}[label={(\roman*)}]
\item The nearest-point projection $\pi : M \rightarrow \Sigma$ is well-defined;
\item $\pi(M)$ is a disjoint union  $\Sigma_1 \cup \Sigma_2$, with $\Sigma_1$ a disjoint union of curves and isolated points, $\Sigma_2\subset \Sigma$  a domain, and moreover
\item $\pi|_{\pi^{-1}(\Sigma_1)} : \pi^{-1}(\Sigma_1) \rightarrow \Sigma_1$ is a diffeomorphism; and
\item $\pi|_{\pi^{-1}(\Sigma_2)} : \pi^{-1}(\Sigma_2) \rightarrow \Sigma_2$ is a smooth $2$-sheeted covering map.
\end{enumerate}
The doubling is called \emph{minimal} if $M$ is minimal, and called \emph{symmetric} if $\Sigma$ is fixed pointwise and $M$ is fixed setwise by an involutive isometry of $N$.
\end{definition}

We record the main conclusions from \cite{KKMS} as follows.

\begin{lemma}
\label{Lgraphsph}
If $\iota: M \rightarrow \Sph^n$ is a full branched minimal immersion with
\begin{enumerate}[label=\emph{(\alph*)}]
\item $(M, \tau)$ is a closed basic reflection surface with genus $>0$, and
\item $\iota$ is a (branched) isometric immersion by first eigenfunctions,
\end{enumerate}
then the following hold:
\begin{enumerate}[label=\emph{(\roman*)}]
\item $n \geq 3$, with $n = 3$ if and only if $M$ is orientable; 
\item $n = 4$, if $M$ is nonorientable and $M \setminus M^\tau$ has genus zero.
\item $\tau = \iota^* \Rcapunder_P$ for some $3$-dimensional subspace $P \subset \R^{n+1}$, where $\Rcapunder_P$ is the isometry of $\R^{n+1}$ with $\Rcapunder_P|_P = \Id_P$ and $\Rcapunder_P|_{P^\perp} = - \Id_{P^\perp}$; 
\item For each oval $O \subset M^\tau$, $\iota(O) \subset \Sph(P)$ is embedded and strictly convex, where $\Sph(P) = \Sph^n \cap P$.
\item $M$ is a symmetric minimal doubling of $\Sph(P)$ in  $\Sph^n$ as in \ref{ddoub};
\item $M$ has area strictly less than $8\pi$.
\end{enumerate}
In particular, $\iota$ is an embedding. 
\end{lemma}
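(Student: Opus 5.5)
The plan is to assemble the statement from the arguments of \cite[Section 5]{KKMS}, using the $\tau$-parity splitting of the first eigenspace as the organizing tool. Let $\Ecal$ denote the $\lambda_1$-eigenspace spanned by the coordinates of $\iota$; by (b) it has dimension $n+1$. Since $\tau$ is an isometry, it acts on $\Ecal$, and we write $\Ecal=\Ecal^+\oplus\Ecal^-$ for the $\tau$-even and $\tau$-odd parts. Item (iii) is then equivalent to the two statements $\dim\Ecal^+=3$ and that $\iota$ intertwines $\tau$ with $\Rcapunder_P$. Here $P$ is the span of the even coordinates, and equivariance is automatic once the coordinates are chosen adapted to the splitting.

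\textbf{Even part.} I would bound $\dim \Ecal^+\le 3$ by passing to the quotient $\pi(M)=M/\langle\tau\rangle$. A $\tau$-even first eigenfunction descends to this genus-zero surface, where it satisfies a Neumann-type condition along the image of the ovals, with the isolated fixed points as cone points. If $\dim\Ecal^+\ge 4$, one could prescribe a critical zero at an interior point. Its nodal set would then contain at least two crossing arcs, and because the quotient has genus zero (Jordan curve theorem), this produces at least three nodal domains upstairs, contradicting Courant's theorem. Conversely, $\dim\Ecal^+\ge 3$ should follow from the fullness of $\iota$ together with the fact that $\iota(M)$ is not contained in a hemisphere. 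This gives $\dim\Ecal^+=3$, and hence (iii).

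\textbf{Odd part, doubling structure, and convexity.} Every odd function vanishes on $M^\tau$, and any nonzero odd first eigenfunction has exactly two nodal domains, exchanged by $\tau$. Composing the even coordinates gives the nearest-point projection $\pi: M\setminus M^\tau\to\Sph(P)$. I would show that this map is a local diffeomorphism, with the nodal-domain count for odd functions ruling out degenerate directions, and that it is proper onto its image. It is then a covering, and the image of each $\tau$-pair of sheets identifies it as a $2$-sheeted covering, which is (v). For (iv), a linear function on $P$ restricted to $M$ is an even eigenfunction. Its nodal count bounds the intersection of a great circle with $\iota(O)$ by two points, which gives strict convexity; embeddedness of $\iota$ then follows from the doubling picture.

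\textbf{Dimension and area.} For (i) and (ii), I would argue as follows. When $M$ is orientable, $M^\tau$ separates $M$, so Courant's theorem gives $\dim\Ecal^-=1$ and $n=3$. If $\dim\Ecal^-=1$ in general, then $M$ would be embedded in $\Sph^3$, which is impossible for nonorientable $M$ by Alexander duality; hence $n\ge4$ in that case. When $M\setminus M^\tau$ has genus zero, I would bound $\dim\Ecal^-\le 2$ by the same vanishing-order trick applied on $M\setminus M^\tau$. Finally, for (vi), I would use the $2$-sheeted covering to write $\area(M)$ as an integral over $\Sph(P)$ of at most two Jacobian factors, each at most $1$. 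This gives $\area(M)\le 8\pi$, with equality only if $M$ were a doubly covered totally geodesic sphere, which is excluded since $M$ has genus $>0$.

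The main obstacle I expect is the multiplicity bound on $\Ecal^-$ in (ii), because in the nonorientable case $M^\tau$ need not separate $M$. My first attempt there would be to lift to the orientation double cover and count nodal domains of odd functions on $M\setminus M^\tau$ using its genus-zero topology.
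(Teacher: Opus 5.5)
The paper does not reprove this lemma. It cites \cite[Lemma 5.35]{KKMS} for everything except (ii), and for (ii) it uses \cite[Propositions 5.12, 5.15]{KKMS} to get $\dim\Ecal\le 5$, with $n\ge 4$ coming from (i). Your treatment of (ii) has the same structure, and your vanishing-order idea is the right one; no orientation cover is needed. Compactify the ends of $M\setminus M^\tau$ to a sphere as in Proposition~\ref{PmultBRS}. At an isolated fixed point an odd $u$ automatically has $u=\nabla^2u=0$, so imposing $du_p=0$ forces vanishing order $\ge 3$. On a twisted oval, imposing two transverse crossings gives degree $\ge 4$ at the compactified end. Either way this costs two linear conditions, and on a sphere it produces three nodal domains, so $\dim\Ecal^-\le 2$.

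\textbf{Gap in (vi).} Your reconstruction of the remaining items has a genuine gap at the area bound. The nearest-point projection $\pi$ does not have Jacobian $\ge 1$ on $M$. Along each oval, the conormal of $O$ in $M$ lies in $P^\perp$ and is killed by $d\pi$. So $J(\pi|_M)\to 0$ there, and the integrand in $\area(M)=\int_{\Sigma_2}\sum_{\mathrm{sheets}}J(\pi|_M)^{-1}$ is unbounded; no pointwise comparison yields $8\pi$. The bound has to be spectral. Since $\lambda_1=2$, you have $\area(M)=\frac12\bar\lambda_1(M,g)$. Compose the quotient map $M\to N:=M/\langle\tau\rangle$ with a Hersch-balanced conformal embedding $\phi$ of the genus-zero bordered surface $N$ into $\Sph^2$. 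These test functions give $\lambda_1(M)\area(M)\le 4\area(\phi(N))<16\pi$ (cf.\ \cite[Lemma 5.19(ii)]{KKMS}). Monotonicity (Li--Yau) then shows directly that $\iota$ is an unbranched embedding.

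\textbf{Gap in (iii).} Your lower bound $\dim\Ecal^+\ge 3$ is unsupported. Neither fullness nor the hemisphere property says anything about how the coordinate span splits under $\tau$. So $\dim\Ecal^+=2$, with the ovals mapped onto a great circle, is not excluded, and (iii) and everything built on the $2$-sphere $\Sph(P)$ is unjustified. Once embeddedness is known the bound is immediate. If $\dim\Ecal^+\le 2$, then $\iota$ maps $M^\tau$ injectively into a great circle (or two points), forcing $C=1$ and $F=0$. Theorem~\ref{Tclass} excludes this in both cases: for orientable $M$ of positive genus $C=\gamma+1\ge 2$, and for nonorientable $M$, $F+C_-$ is even and positive. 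So the logic must run area $\Rightarrow$ embedding $\Rightarrow$ $\dim P=3$ $\Rightarrow$ doubling and convexity. Your plan runs the other way, deriving both embeddedness and the area bound from the doubling.

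\textbf{Smaller points.} The span of the coordinates is $\tau$-invariant only if it is the whole eigenspace or $\iota$ is equivariant (as in Theorem~\ref{thm:symm_crit_L}). You should assume or prove this rather than infer it from $\tau$ being an isometry. Also, nondegeneracy of $\pi$ off $M^\tau$ comes from the \emph{even} functions, not the odd ones. A degenerate direction gives $v\in P$ such that $\langle\iota,v\rangle$ has a critical zero off $M^\tau$, which your genus-zero nodal argument for $\Ecal^+$ excludes.
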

\begin{proof}
Except for item (ii), this is \cite[Lemma 5.35]{KKMS}.  Item (ii) also follows directly from statements in \cite{KKMS}: if $M$ is nonorientable and $M\setminus M^\tau$ has genus zero, then Propositions 5.12 and 5.15 from \cite{KKMS} together imply the dimension of the first eigenspace is at most five, implying $n\leq 4$, since $\iota$ is a full immersion. On the other hand, $n \geq 4$ follows from item (i).
\end{proof}

\begin{cor}
\label{Prealcl}
Let $(M, \tau)$ be a closed basic reflection surface.  If there exists a $\tau$-invariant $\bar{\lambda}_1^\tau$-maximal metric $g$ on $M$, then there exists an isometric minimal embedding $\iota: (M,g) \rightarrow \Sph^n$ by first eigenfunctions satisfying the conclusions of Lemma \ref{Lgraphsph}.
\end{cor}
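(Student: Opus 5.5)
The corollary should follow by combining the criticality theorem with Lemma \ref{Lgraphsph}. I would first apply Theorem \ref{thm:symm_crit_L} with $\Gamma=\Z_2=\langle\tau\rangle$ acting by $T$ and $k=1$. A $\tau$-invariant $\bar{\lambda}_1^\tau$-maximal metric is in particular critical for $\bar\lambda_1$ among $T$-invariant metrics. The theorem then gives an orthogonal representation $\rho\colon\Z_2\to O(n+1)$ and an equivariant branched minimal immersion $\Phi\colon(M,g)\to\Sph^n$ whose coordinates are $\lambda_1(M,g)$-eigenfunctions, with $\Phi^*g_{\Sph^n}=\alpha g$. After replacing $\R^{n+1}$ by the span of the image of $\Phi$, we may assume $\Phi$ is full; the equivariance is preserved because the span is $\rho$-invariant.

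Next I would fix the normalization. Since $\Phi$ is a minimal immersion into the unit sphere by $\lambda_1$-eigenfunctions, Takahashi's relation $\Delta_{\Phi^*g_{\Sph^n}}\Phi=2\Phi$ together with $\Delta_g\Phi=\lambda_1(g)\Phi$ forces $\alpha=\lambda_1(M,g)/2$. Rescaling $g$ by this constant does not change $\bar\lambda_1$ or maximality, so we may take $\iota:=\Phi$ to be a branched isometric immersion by first eigenfunctions. Also $\iota\circ\tau=\rho(\tau)\circ\iota$, so $\tau$ is realized by an ambient isometry, although identifying that isometry as $\Rcapunder_P$ is part of Lemma \ref{Lgraphsph}(iii) rather than needed here.

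Finally I would verify hypotheses (a) and (b) of Lemma \ref{Lgraphsph}. Hypothesis (b) holds by construction. For (a), the only potential gap is genus zero: the orientable genus-zero BRS is $\Sph^2$, where the round metric is the unique maximizer (Hersch) and is embedded as an equatorial $\Sph^2$; that case can be excluded or treated directly. All other cases satisfy (a). Lemma \ref{Lgraphsph} then yields conclusions (i)--(vi), including that $\iota$ is an unbranched embedding. Since $\Phi^*g_{\Sph^n}=g$, the map $\iota$ is an isometric minimal embedding.

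The main obstacle is regularity, since Theorem \ref{thm:symm_crit_L} presupposes a smooth critical metric, whereas a maximal metric a priori may have conical singularities. I would handle this as in \cite{KKMS}. The critical-point characterization extends to metrics with finitely many conical singularities, which then appear only as branch points of $\Phi$. Lemma \ref{Lgraphsph}, which applies to branched immersions, rules out branch points, so $g$ is in fact smooth.
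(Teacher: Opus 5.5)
Your proposal is correct and follows the paper's argument: the paper's proof is exactly the one-line combination of \cite[Theorem 2.12]{KKMS} (restated here as Theorem \ref{thm:symm_crit_L}) with Lemma \ref{Lgraphsph}. You also normalize $\alpha=\lambda_1/2$, treat possible conical singularities as branch points ruled out by Lemma \ref{Lgraphsph}, and exclude the genus-zero case $\Sph^2$, where hypothesis (a) of Lemma \ref{Lgraphsph} fails; the paper leaves these details implicit, and your handling of them is sound.
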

\begin{proof}
Theorem 2.12 in \cite{KKMS} ensures the existence of an isometric branched minimal immersion $\iota: M \rightarrow \Sph^n$ by first eigenfunctions, and the remaining conclusions follow from Lemma \ref{Lgraphsph}.
\end{proof}

\subsection{Multiplicity bounds for the first eigenvalue}
Let $(M, \tau)$ be a basic reflection surface, and let $\Ecal$ denote the first eigenspace of the Laplacian on $M$.  Then $\tau$ induces a linear involutive isometry $\tau^* : \Ecal \rightarrow \Ecal$ given by $\tau^* u = u \circ \tau$, which induces an orthogonal direct sum decomposition $\Ecal = \Ecal^+\oplus \Ecal^-$ into subspaces of even ($\Ecal^+$) and odd ($\Ecal^-$) functions with respect to $\tau$.  

In \cite{KKMS} we obtained bounds on $\dim \Ecal$ on basic reflection surfaces $(M, \tau)$ where $M \setminus M^\tau$ had genus zero.  The next proposition provides an analogous bound when $M \setminus M^\tau$ has genus one.

\begin{prop}
\label{PmultBRS}
Let $(M, \tau)$ be a basic reflection surface. Then
\begin{enumerate}[label=\emph{(\roman*)}]
\item $\dim \Ecal^+ \leq 3$. 
\item $\dim \Ecal^- \leq 4$, if $M \setminus M^\tau$ has genus one.
\end{enumerate}
In particular, $\dim \Ecal \leq 7$ if $M \setminus M^\tau$ has genus one. 
\end{prop}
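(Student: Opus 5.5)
Both bounds will come from nodal domain counts: Courant's theorem says a first eigenfunction has exactly two nodal domains, and we combine this with the topology of $M\setminus M^\tau$ and of the quotient $M/\langle\tau\rangle$, which is a genus-zero surface $N$ with $C$ boundary circles (the images of the ovals) and $F$ branch points.

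\emph{Part (i).} Even functions descend to $N$. If $\dim \Ecal^+\ge 4$, take $q\in N$ a point in the interior away from branch points. The conditions $u(q)=0$ and $du(q)=0$ are three linear conditions, so some nonzero $u\in\Ecal^+$ vanishes to second order at $q$. Near $q$, $u$ has at least four nodal arcs meeting. Since $N$ is planar, separation arguments on $N$ show these arcs cut $N$ into at least three regions. Lifting to $M$ gives at least three nodal domains, contradicting Courant. This is exactly the argument of \cite[Prop.~5.12]{KKMS}, so we simply quote it. Alternatively, we can note that part (i) holds for every BRS.

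\emph{Part (ii).} Odd functions vanish on $M^\tau$. Let $M_0=M\setminus M^\tau$, which is assumed to have genus one. We argue in three steps.
\begin{enumerate}
\item Since $u\circ\tau=-u$, every nodal domain $\Omega$ of $u$ is paired with $\tau(\Omega)$, which has the opposite sign. Courant therefore forces exactly two nodal domains, interchanged by $\tau$. The nodal set $Z=u^{-1}(0)$ is $\tau$-invariant and contains $M^\tau$.
\item Suppose $\dim\Ecal^-\ge5$. Pick a point $p\in M_0$. Imposing vanishing of the $2$-jet at $p$ costs six conditions, which is too many. Instead, impose the vanishing of $u$ and $du$ at $p$ (three conditions) together with vanishing of $u$ at one or two auxiliary points. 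This produces a nonzero $u\in\Ecal^-$ whose nodal set has a crossing at $p$ and passes through the prescribed points.
\item The quotient $M_0/\tau$ is an annulus-type planar region $N_0$ with punctures, and $M_0\to N_0$ is a double cover. Because $u$ is odd, $|u|$ descends to $N_0$, and the image of $Z$ is a graph $\bar Z$ in $N_0$. The condition of exactly two nodal domains exchanged by $\tau$ means $N_0\setminus\bar Z$ is connected, with the double cover restricted to it nontrivial. The genus-one hypothesis gives $\dim H_1$ room for only one extra independent cycle beyond those forced by the boundary.
\end{enumerate}
An Euler characteristic count then shows that a vertex of degree $\ge4$ at $p$, plus the forced structure, creates too many independent cycles in $\bar Z$ for $N_0\setminus\bar Z$ to stay connected. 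Hence at least three nodal domains appear, which is a contradiction. This mirrors \cite[Prop.~5.15]{KKMS}, where genus zero gives the bound $2$. Each unit of genus allows two more dimensions, giving $\dim\Ecal^-\le 4$.

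The main obstacle is the precise topological count in step (ii). One must track how ovals, which are nodal but where $u$ changes sign across them, and isolated fixed points contribute to the graph. The cleanest route is to lift to the orientable double cover of $M$, or to work with the cut-open surface $M\setminus M^\tau$ with boundary. On that surface the odd functions become Dirichlet eigenfunctions on a genus-one surface with boundary, and one applies the Cheng/Besson-type multiplicity bound $2\cdot1+\dots$ for the lowest Dirichlet-type level (at most $2g+2=4$). I would try this reduction first.

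The final claim follows by adding the two bounds: $\dim\Ecal=\dim\Ecal^++\dim\Ecal^-\le 3+4=7$.
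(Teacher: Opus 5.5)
Your part (i) is fine (the paper simply quotes \cite[Proposition 5.15]{KKMS} for it). Part (ii), however, has a genuine gap: the conditions you impose cannot produce a contradiction.

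\textbf{Why your conditions do not suffice.} Compactify each end of $M\setminus M^\tau$ by a point to get a torus $M^\circ$. The involution $\tau$ extends to $M^\circ$, and the $F+C_-=4$ ends coming from isolated fixed points and twisted ovals are its fixed points. For $u\in\Ecal^-$, the nodal domains of $u$ are exactly the two faces of the nodal graph $\Ncal_u^\circ\subset M^\circ$. Euler's inequality $V-E+2\ge \chi(M^\circ)=0$ then gives $\sum_v (m_v-1)\le 2$, where $m_v$ is the number of nodal lines through $v$. This is the constraint your Step 3 is implicitly appealing to.

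A crossing at a generic $p\in M\setminus M^\tau$ forces, by oddness, a crossing at $\tau p$. These two crossings use the budget exactly without exceeding it. Requiring $u$ to vanish at auxiliary points adds nothing to the count, and two auxiliary points would already be five conditions, needing $\dim\Ecal^-\ge 6$.

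The configuration genuinely occurs. On $\R^2/\Z^2$ with $\tau(x)=-x$, the odd function $\sin 2\pi y-\sin 2\pi x=2\cos(\pi(x+y))\sin(\pi(y-x))$ has the following properties:
\begin{itemize}
\item its nodal set crosses itself only at $(1/4,1/4)$ and $(3/4,3/4)$;
\item its nodal set passes through all four fixed points of $\tau$;
\item it has exactly two nodal domains, swapped by $\tau$.
\end{itemize}
So no ``too many independent cycles'' count can exist.

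Two further problems:
\begin{itemize}
\item The double cover over $N_0\setminus\bar Z$ is \emph{trivial}, since its two sheets are the two nodal domains. What is true is that $\bar Z$ meets every loop with nontrivial monodromy.
\item Your fallback does not help. $\Ecal^-$ is not the bottom of the Dirichlet spectrum of the cut surface; that eigenvalue is simple with a positive, hence non-odd, eigenfunction. $\Ecal^-$ is the bottom of the $\tau$-odd sector, and a bound of $2g+2$ there is precisely what must be proved.
\end{itemize}

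\textbf{What is missing.} You need to spend all four conditions at a single $\tau$-fixed point of $M^\circ$. This forces one vertex with at least four nodal lines, contradicting Cheng's bound $m_v\le 3$ on a torus \cite{Cheng}. This is where the involution pays off, and the paper splits into two cases.
\begin{itemize}
\item If $F\ge 1$, take an isolated fixed point $p$. Oddness gives $u(p)=0$ and $\nabla^2 u(p)=0$ for free. Once $\nabla u(p)=0$, differentiating $\Delta u=\lambda_1 u$ kills the traces of $\nabla^3u(p)$. So $\nabla u(p)=0$, $\nabla^3 u(p)=0$ amounts to only four linear conditions, and $\dim\Ecal^-\ge 5$ yields a nonzero $u$ vanishing to order at least $4$ at $p$.
\item If $F=0$, then $C_-=4$. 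Choose four points $p_i$ on a twisted oval $O$. Since $u|_O=0$, each $du_{p_i}=0$ is a single condition. The resulting four transverse nodal lines all enter the \emph{single} end of $M\setminus M^\tau$ at $O$, because $O$ is twisted. This gives a point of $M^\circ$ where at least four nodal lines meet.
\end{itemize}
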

\begin{proof} 
Item (i) was proved in \cite[Proposition 5.15]{KKMS}, so it suffices to prove (ii).
Let $(M, \tau)$ be as above.  By Remark \ref{Rbas}, the complement $N \setminus N^\tau$ of the fixed-point set $N^\tau$ in each orientable basic reflection surface $(N, \tau)$ has genus zero.  Since $M \setminus M^\tau$ has positive genus, then $M$ must be nonorientable.

Let $E$ be the set of ends of $M \setminus M^\tau$, and note that $M$ has one end corresponding to each twisted oval, one end corresponding to each isolated fixed-point, and two ends corresponding to each untwisted oval. 
Let $M^\circ : = (M \setminus M^\tau) \cup E$ be the space obtained from $M \setminus M^\tau$ by compactifying each end of $M \setminus M^\tau$ by adding a point, and let $\iota: M\setminus M^\tau \rightarrow M^\circ$ be the inclusion map.  Since $M \setminus M^\tau$ has genus 1, then $M^\circ$ is a torus. 

Next, let $u \in \Ecal^-$, and let $\Ncal_u$ be its nodal set.  Then  $\Ncal_u$ consists (see \cite[Theorem 2.5]{Cheng}) of a finite number of immersed loops, which meet at finitely many points; at each such point, an even number of nodal arcs emanate.  Furthermore, each fixed-point oval $O\subset M^\tau$ is contained in $\Ncal_u$, and since $u$ changes sign when crossing each nodal arc, it follows that an even number of nodal arcs emanate from each such oval $O$.

Now let $\Ncal^\circ_u : = E \cup \iota (\Ncal_u \setminus M^\tau)$.  From the preceding remarks, it is easy to see that $\Ncal^\circ_u$ has the structure of a nodal graph in $M^\circ$; in particular, an even number of arcs in $\Ncal^\circ_u$ emanate from each point in $E\subset \Ncal^\circ_u \subset M^\circ$.

The rest of the proof is split into two cases, based on the number $F$ of isolated fixed-points for $\tau$.

\emph{Case 1:} $F \geq 1$.  Part of the proof in this case is based on ideas in the proof of \cite[Theorem 2.1]{Besson}, which proves that the multiplicity of the first eigenvalue $\lambda_1$ on a genus-$g$ Riemannian surface is bounded by $4g+3$. 

Fix an isolated fixed-point $p \in M^\tau$, and let $u \in \Ecal^-$ be an odd first eigenfunction.  Since $\Ncal^\circ_u \subset M^\circ$ has the structure of a nodal graph and $M^\circ$ is a torus, it follows from the topological argument in \cite[Theorem 3.2]{Cheng} that  the order of vanishing of $u$ at $p$ is at most $3$. 

Since $p \in M^\tau$ and $u$ is odd, it follows that $u(p) = 0$ and $\nabla^2 u(p) = 0$.  If also $\nabla u(p) = 0$, differentiating the linear equation $\mathrm{tr} \nabla^2 u = \Delta u = \lambda_1 u$ and evaluating at $p$ shows each trace of $\nabla^3 u$ vanishes at $p$.  Thus, the system
\begin{align*}
u(p) = 0, 
\quad
\nabla u(p) = 0, 
\quad
\nabla^2 u(p) = 0, 
\quad
\nabla^3 u(p)=0
\end{align*}
amounts to a homogeneous system of four linear equations.  It follows that $\dim \Ecal^- \leq 4$, for if not, some nonzero $u \in \Ecal^-$ would vanish to fourth order.

\emph{Case 2:} $F = 0$.  By Theorem \ref{Tclass} and the assumption that $M \setminus M^\tau$ has genus one, it follows that $F+C_- = 4$, so $M$ has four twisted ovals.  Let $O$ be one of them, fix distinct points $p_1, p_2, p_3, p_4 \in O$ and consider the linear map
\begin{align*}
&T: \Ecal^- \rightarrow T^*_{p_1}M \times T^*_{p_2} M \times T^*_{p_3} M \times T^*_{p_4} M, 
\\
& Tu = (du_{p_1}, du_{p_2}, du_{p_3}, du_{p_4}).
\end{align*}
For any $u \in \Ecal^-$, $u$ vanishes on $O$, so the image $\mathrm{Im}\, T$ lies in a $4$-dimensional subspace of $T^*_{p_1}M \times T^*_{p_2} M \times T^*_{p_3} M \times T^*_{p_4} M$. 

If it were the case that $\dim \Ecal^- \geq 5$, then there would be a nonzero $u \in \Ecal^-$ in the kernel of $T$, and then $u(p_i)= 0$ and $du_{p_i} = 0$.  Consequently, there is a nodal line in $\Ncal_u$ meeting $O$ transversely at $p_i$.

Since $O$ is twisted, then $M \setminus O$ has one end. Let $e \in E$ be the end of $M \setminus M^\tau$ corresponding to $O$.  By the earlier discussion and the paragraph above, it follows that at least four lines in $\Ncal^\circ_u$ cross at $e \in M^\circ$.  Since $M^\circ$ is a torus and $\Ncal^\circ_u$ has the structure of a nodal graph, this contradicts the topological argument in the proof of \cite[Theorem 3.2]{Cheng}, which asserts that at most three lines in $\Ncal^\circ_u$ may meet at $e$.  
\end{proof}

The next proposition establishes bounds on $\dim \Ecal$ on $D_2$-invariant basic reflection surfaces satisfying the conditions in \ref{ssD2}. 

\begin{proposition}\label{mult.prop}
Let $T: \Gamma \times M \rightarrow M$ be an action of the group $\Gamma = D_2$ on a nonorientable closed surface $M$ satisfying the conditions in \ref{ssD2}.  Then $\mult\lambda_1(M,T)\leq 5$.
\end{proposition}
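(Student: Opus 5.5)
Write $\Ecal$ for the first eigenspace. The plan is to decompose $\Ecal$ under $D_2$ and bound each isotypic piece by a nodal-domain argument. Since $D_2=\{1,\rho_1,\rho_2,\rho_3\}$ is abelian with all characters real, we have
\[
\Ecal=\Ecal_0\oplus\Ecal_1\oplus\Ecal_2\oplus\Ecal_3 .
\]
Here $\Ecal_0$ consists of the $D_2$-invariant functions. For $i=1,2,3$, $\Ecal_i$ consists of the functions that are even under $\rho_i$ and odd under the other two reflections. This list is exhaustive, because the character taking value $-1$ on all three reflections is not multiplicative ($\rho_1\rho_2=\rho_3$). The goal is to show
\[
\dim\Ecal_0\le 2 \quad\text{and}\quad \dim\Ecal_i\le 1 \text{ for } i=1,2,3,
\]
which gives $\dim\Ecal\le 5$.

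\emph{Bound for $\Ecal_i$, $i\ge1$.} Take $u\in\Ecal_i$. It is odd under $\rho_j$ for both $j\ne i$, so it vanishes on $M^{\rho_j}\cup M^{\rho_k}$, where $\{j,k\}=\{1,2,3\}\setminus\{i\}$. In the chamber description of Lemma \ref{LD2quo}, this set contains all edges of $\partial\Omega$ labeled $\rho_j$ or $\rho_k$. Both labels occur, by nonorientability and Lemma \ref{LD2quo}(iii). The key topological step is to show that $M\setminus(M^{\rho_j}\cup M^{\rho_k})$ has at least two components. The quotient by $\rho_i$ is the genus zero surface $M''$ from the proof of Lemma \ref{LD2quo}, namely two copies of $\Omega$ glued along the $\rho_i$-panels. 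I expect the union of the other two panels to separate $\{1\}\times\Omega$ from $\{\rho_j\}\times\Omega$ there, and hence to disconnect $M$ as well.

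Granting this, suppose $\dim\Ecal_i\ge2$. Then some nonzero $u\in\Ecal_i$ has an extra vanishing condition, for instance $u(x_0)=0$ at an interior point $x_0$ of $\Omega$, chosen so that this forces a nodal line inside one component. That produces at least three nodal domains, contradicting Courant's theorem (a first eigenfunction has exactly two). I would rather phrase this the way the proof of Theorem \ref{Tetatimes}(iv) does: any $u\in\Ecal_i$ has sign pattern across the separating set determined by the symmetry, and two linearly independent such $u$ admit a combination vanishing on an open component, which contradicts unique continuation.

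\emph{Bound for $\Ecal_0$.} Functions in $\Ecal_0$ are even under every $\rho_i$, and in particular under the basic reflection $\rho_3$. So $\Ecal_0\subset\Ecal^+$ for $\rho_3$, which gives $\dim\Ecal_0\le 3$ by Proposition \ref{PmultBRS}(i). To get down to $2$, I would repeat the argument behind Proposition \ref{PmultBRS}(i) (from \cite{KKMS}) using all three symmetries. An invariant function is determined by its restriction to $\Omega$, a disk with Neumann-type behaviour on every edge. Evaluation at a vertex of $\Omega$ where two different labels meet is a $D_2$-fixed point, where $du=0$ automatically because $d\rho_i$ fixes no nonzero covector at such a point. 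So if $\dim\Ecal_0\ge3$, some nonzero $u$ vanishes there to second order. Its nodal set would then have at least two crossing arcs at a point of the genus-zero quotient, producing at least three nodal domains and again contradicting Courant. Equivalently, $\dim\Ecal_0\le 1+0+1$ from the value and the second-order symmetric trace constraints.

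\emph{Main obstacle.} The main difficulty is making the nodal-domain counting rigorous on the quotient orbifold, especially for $\Ecal_0$: I need to verify that high-order vanishing at a $D_2$-fixed point really yields three nodal domains given the reflection constraints. If that route fails, a fallback is to use the fullness of the embedding. The Remark \ref{RD2act} pattern suggests that the coordinates $x_4,x_5$ span $\Ecal_0$, and convexity from Lemma \ref{Lgraphsph} could then be used to rule out a third invariant eigenfunction.
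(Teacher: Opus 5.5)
Your proposal is correct in substance. The decomposition and target bounds are the paper's (your $\Ecal_0,\Ecal_1,\Ecal_2,\Ecal_3$ are its $\Ecal^+_+,\Ecal^+_-,\Ecal^-_+,\Ecal^-_-$), but your arguments for the pieces differ.

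For $\Ecal_i$, the paper restricts to the chamber and invokes simplicity of the first mixed Dirichlet--Neumann eigenvalue. Your version is the same fact in different clothing: the zero set $M^{\rho_j}\cup M^{\rho_k}$ cuts $M$ into $\mathrm{int}(\Omega\cup\rho_i\Omega)$ and its $\rho_j$-image, so by Courant $u$ has one sign on each piece and $u\mapsto u(x_0)$ is injective. There are two slips here. First, $M/\langle\rho_i\rangle$ is $\Omega\cup\rho_j\Omega$ glued along the $\rho_j$- and $\rho_k$-panels, with the $\rho_i$-panels becoming boundary; with your description nothing would be separated. Second, the ``unique continuation'' rephrasing fails as stated, since two positive functions on a component need not have a combination vanishing there. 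Keep the point-evaluation version.

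For $\Ecal_0$, the paper works at an interior point of an edge. There the normal derivative vanishes automatically, so the two conditions (value and tangential derivative) give order $\geq 2$ and two nodal arcs entering the disk chamber. The paper then needs a global nodal-graph argument on the disk, tracking which labels bound an enclosed nodal domain and how its $D_2$-orbit splits. You work at a vertex, where the symmetry kills $du$ for free and leaves a one-dimensional space of trace-free invariant Hessians. This costs a little local analysis but reduces the topology to a one-line Jordan argument. The Hessian condition is essential, though. Vanishing to order exactly $2$ at a vertex costs only one condition and is compatible with two nodal domains; the invariant coordinates $x_4,x_5$ in Remark \ref{RD2act} show $\dim\Ecal_0=2$ does occur. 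So ``second-order vanishing gives a crossing'' would prove too much.

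The step you flag closes as follows.

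At a vertex $p$ between a $\rho_a$- and a $\rho_b$-edge, $d\rho_a$ and $d\rho_b$ are reflections whose product is an involution, so $d\rho_c=-I$ for the third index $c$. Invariance under $\rho_c$ makes the vanishing order even. Killing value, gradient and Hessian therefore forces order $\geq 4$, i.e.\ at least $8$ nodal rays at $p$ with alternating signs.

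Since $p$ is an isolated fixed point of $\rho_c$, which is basic by Lemma \ref{LD2quo}(iv), its image is an interior point of the genus-zero surface $M/\langle\rho_c\rangle$. There these rays descend to at least $4$ arcs with alternating signs.

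Join two distinct positive sectors through the positive nodal domain to get a simple closed curve through this point. It separates the planar quotient and has negative sectors on both sides, so the descended function has at least three nodal domains. Nodal domains of $u$ map onto those of the descended function, so $u$ also has at least three, contradicting Courant. The fallback via fullness and convexity is not needed.
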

\begin{proof}
Let $\rho_1, \rho_2$ be generators for $D_2 = \Z_2 \times \Z_2$ as in \ref{ssD2}.  
Let us decompose the first eigenspace $\Ecal$ according to the representations of $D_2$, so
\begin{align*}
\Ecal = \Ecal^{+}_+ \oplus \Ecal^{+}_- \oplus \Ecal^{-}_+ \oplus \Ecal^-_-, 
\end{align*}
where the upper and lower $\pm$ indicate that each function in $\Ecal^\pm_\pm$ is even ($+$) or odd ($-$) with respect to $\rho_1$, and $\rho_2$, in that order.  Any nonzero element $u$ of an $\Ecal^\pm_\pm$ then satisfies the following mixed Dirichlet-Neumann conditions on the boundary $\partial C$ of the chamber $C$, where $\tau:= \rho_1\rho_2$: 
    \begin{itemize}
        \item $u \in \Ecal^+_+$ has Neumann boundary conditions on $\partial C$; 
        \item $u \in \Ecal^+_-$ has Dirichlet conditions on the edges labeled $\rho_2$ and $\tau$, and Neumann conditions elsewhere;
        \item $u \in \Ecal^-_+$ has Dirichlet conditions on the edges labeled $\rho_1$ and $\tau$, and Neumann conditions elsewhere;
        \item $u \in \Ecal^-_-$ has Dirichlet conditions on the edges labeled $\rho_1$ and $\rho_2$, and Neumann conditions elsewhere.
    \end{itemize}
    
In order to prove the proposition, it suffices to prove that
\begin{align}
\label{EineqD2}
\dim \Ecal^+_+ \leq 2, 
\quad
\dim \Ecal^+_- \leq 1, 
\quad
\dim \Ecal^-_+ \leq 1, 
\quad
\dim \Ecal^-_- \leq 1. 
\end{align}

The last three inequalities in \eqref{EineqD2} are clear, since the first eigenvalues $\lambda^{ND}_1(C), \lambda^{DN}_1(C), \lambda^{DD}_1(C)$ of the corresponding mixed problem are simple.  For $\Ecal^+_+$ the situation is more complicated as the constant functions are first eigenfunctions. 

    
If it is not the case that $\dim \Ecal^+_+\leq 2$, there exists an eigenfunction $u$ with vanishing order at least $2$ at an outer boundary point $p \in \partial C$. Hence, there exist two nodal lines that start at $p$. If at least one of those lines ends on the outer boundary component, then the $\Gamma$-translates of the two regions between that line and the outer boundary component form a nodal domain, so these two regions are the nodal domains of $u$, contradicting the fact that there is another nodal line of $u$. 
    
    Let us now ``collapse'' all the interior loops to form a reduced nodal graph on the disk, as for example in~\cite[Section 3]{Kokarev}. In the reduced nodal graph, all interior loops become new vertices, which have even degree as $u$ always changes sign across a nodal line. It is then easy to see (or prove using the Euler inequality~\cite{Kokarev}) that the only way for the reduced graph to have two nodal domains is to form a loop starting and ending at $p$. Consider the domain enclosed by this loop. 
     Its preimage is a nodal domain in $C$ whose boundary contains only the label $\tau$; therefore, the $\Gamma$-translates of that nodal domain is a collection of two nodal domains, so that the corresponding $\Gamma$-invariant function on $M$ has at least $3$ nodal domains.  This contradicts the Courant nodal domain theorem. 
\end{proof}

\section{A result of Natanzon}
\label{SNat}

Here we prove Theorem \ref{thm:Nat2}, which we restate here as Theorem \ref{thm:Nat2app} below.  Though the proof of this result is essentially contained in work of Natanzon \cite{Natanzon}, we present it here for completeness and convenience of the reader.   

\begin{theorem}
    \label{thm:Nat2app}
    Let $\alpha$, $\beta$ be two non-commuting reflections on the orientable surface of genus $\gamma>1$ which generate a finite group. Then the total number of ovals of $\alpha$ and $\beta$ is at most $\gamma+4$.
\end{theorem}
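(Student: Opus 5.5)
We prove Theorem \ref{thm:Nat2app} by applying Riemann--Hurwitz to the finite group generated by $\alpha$ and $\beta$, and by counting fixed ovals through the quotient orbifold.

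\textbf{Setup.} Since $\alpha,\beta$ generate a finite group, there is a metric invariant under that group. Uniformization ($\gamma>1$) then lets us regard $\alpha,\beta$ as hyperbolic reflections. Their product $\alpha\beta$ is a rotation of some finite order $m$, and since $\alpha,\beta$ do not commute, $m\ge 3$. Let $G=\langle\alpha,\beta\rangle\cong D_m$, of order $2m$. The quotient $P/G$ is a hyperbolic orbifold, possibly with boundary.

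\textbf{Separation bound.} First suppose $\alpha$ has $k\ge 1$ ovals. Harnack's bound gives $k\le \gamma+1$, and the complement of the ovals has at most two components. In the separating case, each oval is a boundary curve of $P/\langle\alpha\rangle$, which has genus $\ge0$. So the Euler characteristic gives
\[
\chi(P/\langle\alpha\rangle)=1-\gamma,
\]
a surface with $k$ boundary components. Now $\beta$ descends to an anticonformal involution of $P/\langle\alpha\rangle$, or more cleanly, $G$ acts with quotient $Q=P/G$ with
\[
\chi^{orb}(Q)=\frac{2-2\gamma}{2m}.
\]

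\textbf{Counting ovals through $Q$.} Each oval of $\alpha$ maps to $\partial Q$. The boundary of $Q$ consists of arcs and circles, labelled by conjugates of $\alpha$ or of $\beta$, meeting at corner reflectors. Corners where an $\alpha$-conjugate meets a $\beta$-conjugate have angle $\pi/d$ with $d\mid m$.

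An oval of $\alpha$ has stabilizer containing $\alpha$ together with its centralizer elements that fix it. In $D_m$ the centralizer of $\alpha$ is $\{1,\alpha\}$ if $m$ is odd, and $\{1,\alpha,(\alpha\beta)^{m/2},\ldots\}$ if $m$ is even. Hence the $G$-orbit of an oval of $\alpha$ has size $m$ or $m/2$. The number of ovals of $\alpha$ plus the number of ovals of $\beta$ is therefore determined by the boundary structure of $Q$: each boundary component or arc of $Q$ lifts to roughly $m/|\text{orbit}|$ ovals of a given conjugate. Only the ones lying on $\alpha$ or $\beta$ themselves count, and each conjugacy class contains $m$ or $m/2$ reflections. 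The result is
\[
\#\text{ovals}(\alpha)+\#\text{ovals}(\beta)\lesssim \frac{2}{m}\cdot(\text{number of boundary pieces of }Q)\cdot(\text{stabilizer factor}).
\]

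\textbf{Combining with the orbifold Euler characteristic.} We write
\[
\chi^{orb}(Q)=2-2h-b-c-\sum\Big(1-\frac1{n_i}\Big)-\frac12\sum\Big(1-\frac1{d_j}\Big),
\]
where $h$ is the genus, $b$ is the number of boundary components, $c$ is the number of crosscaps, the $n_i$ are cone orders and the $d_j$ are corner orders. We then maximise the oval count subject to $\chi^{orb}(Q)=(1-\gamma)/m$. Each boundary piece carrying an $\alpha$- or $\beta$-conjugate costs at least a fixed amount of $-\chi^{orb}$. The exception is when there are corners, which contribute $\tfrac12(1-1/d)$ each.

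\textbf{Main obstacle and expected extremal case.} The main obstacle is this optimisation: there are several cases (parity of $m$, whether the corners have $d=m$, and so on), and each case needs a careful count of how many lifts land on $\alpha$ itself rather than on its conjugates. I expect the extremal configuration to be $Q$ a disk whose boundary has $2$ corners and $2$ arcs ($m$ even, $d=2$). This gives about $(\gamma-1)+5$ after simplification, i.e. $\gamma+4$. I would verify this case-by-case, taking $m\ge3$ so that the factor $1/m$ keeps the count below $\gamma+4$, and I would use the separation bound $k\le\gamma+1$ to handle degenerate cases.
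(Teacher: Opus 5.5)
There is a genuine gap: the argument stops exactly where the content is. Your setup is sound: $G=\langle\alpha,\beta\rangle\cong D_m$ with $m\ge 3$, the orbifold $Q=P/G$ with $\chi^{\mathrm{orb}}(Q)=(1-\gamma)/m$, and orbits of ovals of size $m$ or $m/2$. It is in fact the paper's setup viewed one level further down. The paper passes to $M/G_+$ with the single induced reflection $\rho$, and Harnack for $\rho$ plus Riemann--Hurwitz for $G_+$ carries the same information as $\chi(|Q|)\le 2-b$ plus your formula for $\chi^{\mathrm{orb}}(Q)$.

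However, the inequality between $\|\alpha\|+\|\beta\|$ and the data of $Q$ is never proved; it is replaced by ``roughly'', ``$\lesssim$'' and a promised case check. The missing idea is how ovals sit over a boundary component of $|Q|$ that contains corners.

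If a boundary component has no corners of even order, each oval over it is a covering of degree $1$ or $2$. This is because the generic fibre is an orbit of $\mathrm{Stab}(a)\subseteq C_G(\sigma)$, which has order at most $4$. So at most $m$ ovals lie over such a component.

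At a corner of even order $d$ the picture changes. Such a corner is a fixed point of the central half-turn $(\alpha\beta)^{m/2}$ lying on an oval, i.e.\ a point of the paper's set $B$. There the two rays of the oval land on the same side of the corner, so the oval folds back. Over a component with $e_j\ge 1$ even corners, every oval double-covers a segment between consecutive even corners, giving $e_jm/2$ ovals, which is not bounded by $m$. These must be paid for by the corner terms $\tfrac12(1-1/d)\ge\tfrac14$ in $-\chi^{\mathrm{orb}}(Q)$.

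You also need the identity $\|G\|=\tfrac m2(\|\alpha\|+\|\beta\|)$, which comes from the conjugacy classes of reflections in $D_m$. With it one gets
\[
\|\alpha\|+\|\beta\|\le\sum_j\max(2,e_j),\qquad \gamma-1\ge m\Big(b-2+\frac{E}{4}\Big),\quad E=\sum_j e_j .
\]
A short check on $b$ then gives the bound, using $m\ge 3$ when $m$ is odd (where $E=0$) and $m\ge 4$ when $m$ is even. The paper does this same bookkeeping via $B$: ovals avoiding $B$ cover ovals of $\rho$ at most $|G_+|$ times, and Lemma~\ref{lem:GGB} (an oval through $B$ meets $B$ at least twice) handles the rest.

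Your heuristics also point the wrong way, so the case analysis you sketch would not close. A disk with two order-$2$ corners has $\chi^{\mathrm{orb}}=\tfrac12>0$, so it needs extra cone points to occur at all when $\gamma>1$. Even then it carries only $m$ ovals, i.e.\ $\|\alpha\|+\|\beta\|=2$, so it is nowhere near extremal. Large oval counts come from $|Q|$ having many boundary circles, where the bound is about $\frac2m(\gamma-1)+4$, well below $\gamma+4$.

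The Harnack bound $k\le\gamma+1$ only yields $\|\alpha\|+\|\beta\|\le 2\gamma+2$ and cannot settle any case by itself. Finally, the number of lifts of a boundary piece is the orbit size, not $m/|\mathrm{orbit}|$, and arcs between corners lift to pieces of ovals, not to whole ovals.
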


\begin{remark}
    In fact, from the arguments in~\cite{Natanzon} one can extract a sharper bound of $\gamma+3$.
\end{remark}

Suppose $\alpha, \beta, \gamma, M$ are as in Theorem \ref{thm:Nat2app}, 
let $G = \la \alpha,\beta \ra$, and let $G^+\leq G$ be the subgroup of orientation preserving transformations. Let us first record some elementary facts. It is well-known that the factor $M/G_+$ can be endowed with the structure of a Riemann surface, such that the projection $\pi\colon M\to M/G_+$ is a branched cover. Applying the Riemann-Hurwitz formula yields
\begin{equation}
    \label{eq:RH}
    \gamma-1 = |G_+|(\gamma'-1) + \frac{1}{2}\sum_{p\in M}\left( |(G_+)_p|-1\right),
\end{equation}
where $\gamma'$ is the genus of $M/G_+$.

Recall also Harnack's theorem (for example \cite[Theorem 5.13]{Dugger}) which states that a reflection on a surface of genus $\gamma$ has at most $\gamma+1$ ovals.

We define a set $B\subset M$ as follows: $p\in B$ iff $p$ is a fixed-point of an involution in $G_+$ that lies on an oval of a reflection in $G$. The next lemma states that $B$ is a branching set for the restriction of $\pi$ to the ovals of $G$.

\begin{lemma}[Lemma 2.3 in~\cite{Natanzon}]
    For any oval $a$ of a reflection in $G$, the restriction $\pi|_{a\setminus B}$ is locally bijective.
\end{lemma}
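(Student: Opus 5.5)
The statement to prove is local. Fix an oval $a$ of a reflection $\sigma\in G$ and a point $p\in a\setminus B$. I will show that $\pi|_a$ is injective on a neighbourhood of $p$ in $a$; equivalently, no nontrivial $h\in G_+$ maps points of $a$ near $p$ to points of $a$ near $p$. The approach is to analyze the stabilizer $(G_+)_p$ and the elements of $G_+$ that nearly fix $p$. Since $G$ is finite, we may fix a $G$-invariant metric. Then $G$ acts by isometries, and distinct elements of the finite group $G_+$ displace $p$ by a uniform positive amount unless they fix $p$.

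\textbf{Step 1 (reduction to the stabilizer).} Choose $\epsilon>0$ smaller than half the minimum of $d(p,hp)$ over all $h\in G_+$ with $hp\neq p$. Suppose $x,y\in a\cap B_\epsilon(p)$ satisfy $y=hx$ with $h\in G_+\setminus\{1\}$. Then $d(hp,p)\le d(hp,hx)+d(hx,p)<2\epsilon$, so $h$ fixes $p$. It therefore suffices to show that no nontrivial $h\in(G_+)_p$ maps $a\cap B_\epsilon(p)$ to itself in a non-identity way.

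\textbf{Step 2 (structure of the stabilizer).} Since $G$ acts by isometries, the stabilizer $G_p$ acts faithfully on $T_pM$ as a finite subgroup of $O(2)$, and $(G_+)_p$ is its rotation subgroup, a cyclic group $\Z_m$. The reflection $\sigma$ fixes $p$, so $\sigma\in G_p$ and $G_p$ is the dihedral group $D_m$. If $m$ were even, the rotation by $\pi$ would be an involution in $G_+$ fixing $p\in a$, which would put $p$ in $B$. Hence $m$ is odd.

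\textbf{Step 3 (odd rotations cannot preserve $a$ locally).} Near $p$, the oval $a$ is the geodesic through $p$ tangent to the line $L=\ker(d\sigma_p-\Id)$. A nontrivial rotation $h\in(G_+)_p$ has angle $2\pi j/m$ with $m$ odd, which is never $0$ or $\pi$, so $dh_p(L)\neq L$. Consequently $h(a)$ is a geodesic through $p$ transverse to $a$, and the intersection $h(a)\cap a\cap B_\epsilon(p)=\{p\}$ after shrinking $\epsilon$. Thus any $x\in a\cap B_\epsilon(p)$ with $hx\in a\cap B_\epsilon(p)$ must equal $p$. Combined with Step 1, this shows $\pi|_{a\cap B_\epsilon(p)}$ is injective. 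Since $\pi$ is continuous and open (a branched cover), it is a local homeomorphism onto its image along $a\setminus B$, which gives local bijectivity.

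\textbf{Main obstacle.} I expect the hardest part to be making Steps 1 and 3 rigorous simultaneously, that is, choosing one $\epsilon$ that works for both. One also has to check the geodesic and transversality claim, which uses that fixed sets of isometries are totally geodesic, as in \cite{Kobayashi}. The parity argument in Step 2 is the point where the definition of $B$ enters, and it is straightforward.
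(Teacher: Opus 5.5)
Your proposal is correct and follows essentially the paper's argument. The paper also reduces to an element $g\in G_+$ fixing $p$, by a subsequence/limit argument where you use an explicit displacement $\epsilon$. It then notes that a nontrivial finite-order rotation of $T_pM$ preserving the tangent line of $a$ must be $-\mathrm{id}$, hence an involution, forcing $p\in B$; your odd-order-stabilizer parity argument is a rephrasing of that same step.
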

\begin{proof}
    Suppose that $p\in a$ and there exists a sequence $a\ni q_i,q_i'\to p$ such that $q_i = g_iq_i'$ for some $g_i\in G_+$. Then, up to a choice of a subsequence, $q_i = gq_i'$ for the same $g\ne e$. Passing to the limit, we obtain that $gp=p$ and that the tangent vector to $a$ at $p$ is an eigenvector of $d_pg$. The only non-trivial orientation-preserving symmetry of finite order on $2$-dimensional space is $-id$, which yields that $g$ is an involution, so that $p\in B$ by definition.
\end{proof}

Let us introduce the following notation. We write $\|\alpha\|$ for the number of ovals of the reflection $\alpha$ and $\|G\|$ for the total number of ovals of reflections in $G$. Furthermore, we write $\|G_B\|$ for the total number of ovals of reflections in $G$ that do not intersect $B$.

Recall the projection $\pi\colon M\to M/G_+$. Since $G_+\triangleleft G $, all reflections in $G$ descend to a single reflection $\rho$ such that $\pi\alpha = \rho \pi$. Therefore, if $a$ is an oval of a reflection in $G$, then $\pi(a)$ is fixed by $\rho$, hence, it is contained in an oval of $\rho$. Furthermore, if $a\cap B=\varnothing$, then $\pi|_a$ is a cover, so that $\pi(a)$ is an oval of $\rho$.  

For later use, we note that combining \eqref{eq:RH} with Harnack's theorem applied to the reflection $\rho$ on $M/G_+$ implies
\begin{align}
\label{ERH2}
\gamma-1 \geq |G_+|(\| \rho\|-2) + \frac{1}{2} \sum_{p \in M} ( |(G_+)_p|-1).
\end{align}

\begin{lemma}
    \label{lem:Bempty}
    One  has  $\|G_B\|\leq |G_+|\|\rho\|$. In particular, if 
     $B=\varnothing$, then $\|G\|\leq |G_+|\|\rho\|$.
\end{lemma}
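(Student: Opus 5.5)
We prove the bound by a counting argument over the ovals of $\rho$ on $M/G_+$. Let $\mathcal A$ be the set of ovals of reflections in $G$ that do not meet $B$. By the remarks preceding the lemma, for each $a\in\mathcal A$ the restriction $\pi|_a$ is a covering map onto its image, and $\pi(a)$ is an entire oval of $\rho$. This gives a map $\Pi\colon \mathcal A\to\{\text{ovals of }\rho\}$, $a\mapsto \pi(a)$. Hence it suffices to show that every fiber of $\Pi$ has at most $|G_+|$ elements.

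Fix an oval $c$ of $\rho$, and let $a_1,\dots,a_m\in\mathcal A$ be the ovals with $\pi(a_i)=c$. Two distinct ovals of reflections in $G$ are either disjoint or meet transversally in isolated points. Meeting points of ovals of two distinct reflections $\alpha',\beta'$ are fixed by the orientation-preserving element $\alpha'\beta'\in G_+$. That element acts on the tangent plane with two distinct eigenlines, as the composition of reflections in two different lines. So at such a point the ovals are not transversally generic as elements of $B$, and a more careful statement is needed. Since the $a_i$ avoid $B$, we instead argue via the fiber over a point.

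Pick $x\in c$ that is not a branch value, meaning its preimage avoids the finitely many points with nontrivial $G_+$-stabilizer. Then $\pi^{-1}(x)$ is a single free $G_+$-orbit and has exactly $|G_+|$ points. Each $a_i$ surjects onto $c$, so $a_i$ contains at least one point of $\pi^{-1}(x)$.

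The main point is that the $a_i$ are pairwise disjoint, so distinct $a_i$ use distinct points of this fiber. For two ovals of the same reflection this is automatic. For ovals of distinct reflections $\alpha',\beta'$, an intersection point $p$ is fixed by $g=\alpha'\beta'\in G_+$, so $\pi(p)$ has nontrivial stabilizer. We simply choose $x$ to avoid the images of the finitely many such intersection points, which costs nothing since $\pi^{-1}(x)$ was already required to be free. Then each point of $\pi^{-1}(x)$ lies on at most one $a_i$: if it lay on two ovals of distinct reflections, it would be such an intersection point. Injectivity of the assignment $a_i\mapsto$ (a chosen point of $a_i\cap\pi^{-1}(x)$) gives $m\le|G_+|$.

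Summing over the $\|\rho\|$ ovals of $\rho$ gives $\|G_B\|\le|G_+|\,\|\rho\|$. If $B=\varnothing$, every oval lies in $\mathcal A$, so $\|G\|=\|G_B\|$. The only delicate step is the genericity choice of $x$, which needs only the finiteness of the points with nontrivial stabilizer and of the pairwise oval intersections. Both are finite because $G$ is finite and distinct ovals are distinct geodesics for a $G$-invariant metric.
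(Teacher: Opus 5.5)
Your argument is correct and is essentially the paper's proof: the paper only says that each oval in $G_B$ maps onto an oval of $\rho$ and that, because $\pi$ has degree $|G_+|$, at most $|G_+|$ such ovals can lie over a given oval of $\rho$. Your count over a free fiber $\pi^{-1}(x)$, using that a point with trivial $G_+$-stabilizer lies on at most one oval, is exactly the justification of that step. (Your discarded aside claiming that $\alpha'\beta'$ has two distinct eigenlines on $T_pM$ is false, since it is a nontrivial rotation, but the final argument does not use it.)
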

\begin{proof}
    Each oval in $G_B$ descends to an oval of $\rho$; since the degree of $\pi$ is $|G_+|$, at most $|G_+|$ ovals descend to the same oval of $\rho$.

    If $B=\varnothing$, then $\|G_B\| = \|G\|$.
\end{proof}

\begin{lemma}
    \label{lem:dihed}
    Let $\Gamma$ be a finite group generated by two involutions $\gamma_1\ne\gamma_2$. Then $\Gamma\cong D_l$ for some $l>2$. If $\gamma_1$ is not conjugate to $\gamma_2$, then $l=2m$ for some $m\in \mathbb{N}$.
\end{lemma}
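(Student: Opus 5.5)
The plan is to identify $\Gamma$ with a quotient of the abstract dihedral group and then show that this quotient map is injective. Set $r:=\gamma_1\gamma_2$. Since $\Gamma$ is finite, $r$ has some finite order $l$. Since $\gamma_1\neq\gamma_2$, we have $r\neq e$, so $l\geq 2$.

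Two relations drive everything. First,
\[
\gamma_1 r\gamma_1=\gamma_2\gamma_1=r^{-1}.
\]
Second, $\gamma_2=\gamma_1 r$, so $\Gamma=\langle r,\gamma_1\rangle$. The abstract group $D_l=\langle s,t\mid s^2=t^l=e,\ sts=t^{-1}\rangle$ therefore surjects onto $\Gamma$ via $s\mapsto\gamma_1$, $t\mapsto r$. Consequently every element of $\Gamma$ has the form $r^j$ or $\gamma_1 r^j$.

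Next I would show that this surjection is injective. Because $r$ has order exactly $l$, the kernel $K$ meets the rotation subgroup $\langle t\rangle$ trivially. Suppose $K$ contained a reflection $st^j$. Since $K$ is normal, it would also contain the conjugate $t^{-1}(st^j)t=st^{j+2}$. Multiplying these two elements gives $(st^j)(st^{j+2})=t^{2}\in K$, so $t^2=e$ and $l\le 2$. In that case $r^2=e$, which says exactly that $\gamma_1$ and $\gamma_2$ commute, and then $\Gamma\cong\Z_2\times\Z_2=D_2$ directly, since $\gamma_1\neq\gamma_2$. So in all cases $\Gamma\cong D_l$.

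Moreover, $l>2$ holds precisely when $\gamma_1$ and $\gamma_2$ do not commute. This is the situation in which the lemma is applied (Theorem~\ref{thm:Nat2app}). In the commuting case the statement should be read with $l=2$.

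For the parity claim, I would use the identity
\[
r^{-k}\gamma_1 r^{k}=\gamma_1 r^{2k}.
\]
If $l$ is odd, choose $k$ with $2k\equiv 1 \pmod l$. Then $r^{-k}\gamma_1r^k=\gamma_1 r=\gamma_2$, so $\gamma_1$ and $\gamma_2$ are conjugate. By contraposition, if $\gamma_1$ is not conjugate to $\gamma_2$, then $l=2m$ is even.

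The only point that needs any care is the injectivity step, that is, ruling out that $\Gamma$ is a proper quotient of $D_l$. The normal-subgroup argument above handles it. It is also exactly where the degenerate commuting case $l=2$ has to be separated out.
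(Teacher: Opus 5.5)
Your proof is correct and follows essentially the same route as the paper: realize $\Gamma$ as a quotient of $D_l$, show the kernel is trivial, and exhibit the conjugacy when $l$ is odd. Two points where you are more careful than the paper: taking $l$ to be the exact order of $\gamma_1\gamma_2$ and showing directly that a normal subgroup containing a reflection contains $t^2$ is cleaner than the paper's enumeration of normal subgroups, which omits the subgroups $\langle t^d\rangle$ of the rotation group and so tacitly needs that choice of $l$; and you are right that the conclusion $l>2$ fails when $\gamma_1,\gamma_2$ commute (then $\Gamma\cong D_2$), so it holds only under the non-commuting hypothesis of Theorem~\ref{thm:Nat2app}.
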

\begin{proof}
    Since $\Gamma$ is finite, there is some $l\in \mathbb{N}$ such that $(\gamma_1\gamma_2)^l = e$, so that $\Gamma$ is a quotient of the dihedral group $D_l$ by a normal subgroup $N$. Thus, there are only the following possibilities for $N$:
    \begin{enumerate}
        \item $N = \Z_l$, the group of rotations. Then $\Gamma = \Z_2$ and $\gamma_1=\gamma_2$, a contradiction;
        \item\label{cas:only_pos} $N$ is trivial, then $\gamma_1$ is conjugate to $\gamma_2$ unless $l$ is even;
        \item If $l=2m$ is even, then there are two normal subgroups $N_1,N_2$ isomorphic to $D_m$ (symmetries of two $m$-gons inscribed in the $2m$-gon). But $N_1$ contains one of the reflection generators of $D_l$ while $N_2$ contains the other, so one of $\gamma_i = e$ in $\Gamma$.
    \end{enumerate}
    Thus, item~\eqref{cas:only_pos} is the only possibility.
\end{proof}

\begin{proposition}
    If $B=\varnothing$, then Theorem~\ref{thm:Nat2} holds.
\end{proposition}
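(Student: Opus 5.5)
Under the assumption $B=\varnothing$, I would combine Lemma~\ref{lem:Bempty} with the Riemann--Hurwitz inequality \eqref{ERH2}. By Lemma~\ref{lem:Bempty},
\[
\|\alpha\|+\|\beta\|\leq \|G\|\leq |G_+|\,\|\rho\| .
\]
So the task is to bound $|G_+|\,\|\rho\|$ in terms of $\gamma$.

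By Lemma~\ref{lem:dihed}, $G\cong D_l$ with $l>2$, because $\alpha$ and $\beta$ do not commute; this gives $|G_+|=l\geq 3$. The plan splits according to $\|\rho\|$.

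\emph{Case $\|\rho\|\geq 2$.} Rewriting \eqref{ERH2} gives
\[
|G_+|\,\|\rho\|\leq \gamma-1+2|G_+|-\tfrac12\sum_p(|(G_+)_p|-1).
\]
The term $2|G_+|$ has to be absorbed. For $\|\rho\|\geq 3$, \eqref{ERH2} yields $|G_+|(\|\rho\|-2)\leq\gamma-1$, so $|G_+|\leq \gamma-1$. Then $|G_+|\|\rho\|=|G_+|(\|\rho\|-2)+2|G_+|\leq 3(\gamma-1)$. This is too weak, so the estimate has to be done more carefully.

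The sharper route is to count directly how many ovals lie over a given oval of $\rho$. Over each oval $c$ of $\rho$, the preimage $\pi^{-1}(c)$ is a union of ovals of reflections in $G$, permuted transitively by $G_+$. The stabilizer in $G_+$ of one such oval contains the rotations that commute with that reflection. Since $B=\varnothing$, no involution in $G_+$ fixes a point of an oval, so the stabilizer acts freely on the oval.

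This bounds the number of ovals over $c$ by $|G_+|$, with equality only if the covering $\pi|_a$ has degree one. The key step is therefore to show that the ovals lying over a single oval of $\rho$ belong to reflections in a single conjugacy class. For $l$ odd, all reflections are conjugate. For $l$ even, the reflections split into two classes, and $\alpha,\beta$ may lie in different ones. Each reflection class has $l/2$ or $l$ members, so the ovals of $\alpha$ alone over $c$ number at most $|G_+|/(\text{number of conjugates of }\alpha)$. This gives
\[
\|\alpha\|+\|\beta\|\leq \frac{2|G_+|}{l}\cdot\|\rho\|\cdot(\text{const}),
\]
with the constant at most $2$ in all cases. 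In short, counting ovals of $\alpha$ and $\beta$ individually, rather than all reflections in $G$, gives $\|\alpha\|+\|\beta\|\leq 4\|\rho\|$ or better.

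Feeding this into \eqref{ERH2} with $|G_+|=l\geq3$ gives
\[
\|\rho\|\leq 2+(\gamma-1)/l,
\]
hence
\[
\|\alpha\|+\|\beta\|\leq 8+\tfrac{4(\gamma-1)}{3}.
\]
The constants here need care. The intended finish is to sharpen the per-oval count so that
\[
\|\alpha\|+\|\beta\|\leq \frac{2\|G\|}{l}\leq 2\|\rho\|.
\]
This follows because $\alpha$ and its $\le l$ conjugates have equally many ovals, so $\|\alpha\|\leq \|G\|/(\#\text{conjugates})$. Then $2\|\rho\|\leq 4+2(\gamma-1)/l\leq \gamma+4$ whenever $l\geq 2$ and $\gamma\geq 1$.

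\emph{Case $\|\rho\|\leq 1$.} The same per-conjugate count gives at most $2$ ovals from each of $\alpha$ and $\beta$ (or $4$ when $l$ is even with $l/2$ conjugates), which is trivially $\leq\gamma+4$.

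The main obstacle is making the conjugacy-class count precise when $l$ is even, since there the classes have size $l/2$. One has to check that the bound $\|\alpha\|\leq 2\|\rho\|$ still holds, using the fact that the centralizer rotation of order $2$ cannot act on an oval without fixed points on it, by $B=\varnothing$.
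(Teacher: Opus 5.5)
Your final chain
\[
\|\alpha\|+\|\beta\|\le \frac{2\|G\|}{l}\le 2\|\rho\|\le 4+\frac{2(\gamma-1)}{l}
\]
is the paper's argument in a slightly different order. The paper writes $\gamma-1\ge |G_+|(\|\rho\|-2)\ge \|G\|-2l$ and then substitutes $\|G\|=\frac{l}{2}(\|\alpha\|+\|\beta\|)$. However, the one step that carries content, $\|\alpha\|+\|\beta\|\le 2\|G\|/l$, is not established by what you wrote. The bound $\|\alpha\|\le \|G\|/\#\{\text{conjugates of }\alpha\}$, applied to $\alpha$ and $\beta$ separately, gives this only when $l$ is odd. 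When $l$ is even, each conjugacy class has $l/2$ elements, so you only get $\|\alpha\|+\|\beta\|\le 4\|G\|/l\le 4\|\rho\|$. For $l=4$ this yields $\gamma+7$, not $\gamma+4$. Your suggested remedy for the even case is to establish $\|\alpha\|\le 2\|\rho\|$ using the central rotation of order two and $B=\varnothing$. This does not close the gap: that individual bound already holds, and it is exactly what produces the factor $4$.

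The missing observation is an exact count, not an estimate. The orientation-reversing elements of $G\cong D_l$ are its $l$ reflections. These are precisely the conjugates of $\alpha$ together with the conjugates of $\beta$. When $l$ is even they form two disjoint classes of size $l/2$; when $l$ is odd they form a single class of size $l$, which forces $\|\alpha\|=\|\beta\|$. Conjugate reflections have the same number of ovals, and two distinct reflections cannot share an oval. Hence in every case
\[
\|G\|=\frac{l}{2}\bigl(\|\alpha\|+\|\beta\|\bigr).
\]
Combined with Lemma~\ref{lem:Bempty} and \eqref{ERH2} with $|G_+|=l$, this gives $\|\alpha\|+\|\beta\|\le 4+\frac{2(\gamma-1)}{l}\le \gamma+3$ at once. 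None of the following is needed: your case split on $\|\rho\|$, the analysis of which ovals lie over a given oval of $\rho$, or the freeness of the stabilizer action on an oval.
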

\begin{proof}
    Using \eqref{ERH2} and Lemma \ref{lem:Bempty}, we obtain
    \[
    \gamma-1 \geq |G_+|(\|\rho\|-2) \geq \|G\| - 2|G_+| \geq \|G\|-|G|,
    \]
    where we used that $|G| \geq 2 |G_+|$ in the last step.
    By Lemma~\ref{lem:dihed}, $|G|=2l$ and $\|G\| = \frac{l}{2}(\|\alpha\| + \|\beta\|)$. As a result,
    \[
    \|\alpha\| + \|\beta\|\leq \frac{2}{l}(\gamma-1) + 4\leq \gamma+3.
    \]
    Note that in this case we have not used that $\alpha,\beta$ do not commute.
\end{proof}

\begin{lemma}
    \label{lem:GGB}
    One has
    \[
    \|G\| - \|G_B\| \leq \frac{1}{2}\sum_{p\in B} |(G_+)_p|. 
    \]
\end{lemma}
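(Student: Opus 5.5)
Every oval of a reflection in $G$ is either disjoint from $B$, and then counted in $\|G_B\|$, or it meets $B$. So the plan is to bound the number $\|G\|-\|G_B\|$ of ovals meeting $B$ by a count over points of $B$, where each point $p\in B$ is weighted by the number of ovals through it. Writing $n(p)$ for the number of ovals (of all reflections in $G$) containing $p$, every oval meeting $B$ contains at least one point of $B$. Hence
\[
\|G\|-\|G_B\| \leq \sum_{p\in B} n(p),
\]
and it suffices to show $n(p)\leq \tfrac12 |(G_+)_p|$ for each $p\in B$. The argument is purely local at $p$. Ovals of distinct reflections are distinct curves, while two ovals of the same reflection are disjoint. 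Therefore $n(p)$ equals the number of reflections in $G$ fixing $p$.

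The local step is as follows. The stabilizer $G_p$ is finite, so after averaging a metric we may regard it as a finite subgroup of $O(2)$ acting on $T_pM$. Because $p$ lies on an oval of some reflection, $G_p$ contains a reflection; hence $G_p$ is dihedral of order $2m$, where $(G_+)_p=G_p\cap G_+$ is its rotation subgroup, cyclic of order $m$. Because $p\in B$, the group $(G_+)_p$ contains an involution, namely $-\mathrm{id}$ on $T_pM$, so $m$ is even. A dihedral group of order $2m$ has exactly $m$ reflections, and an element of $G_p$ acts as a reflection of $T_pM$ exactly when it is an orientation-reversing involution of $M$ fixing a curve through $p$. Each such element is a reflection of $M$ whose fixed set contains an oval through $p$. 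Thus $n(p)=m=|(G_+)_p|$.

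This count is too large by a factor of $2$, and removing that factor is the main point. The extra factor should come from using the half-ovals through $p$ rather than the ovals. Each oval through $p$ leaves $p$ in two directions, and a single oval may pass through several points of $B$. My first attempt is a more careful double count. I would parametrize each oval meeting $B$ and note that the points of $B$ on it come in pairs: a fixed point $p$ of an involution $h\in G_+$ lying on an oval $a$ of $\alpha$ is mapped by $h$ to itself, but $h$ preserves $a$ and reverses its direction. In that case $h$ restricted to $a$ is a reflection of the circle, which has exactly two fixed points on $a$. So every oval meeting $B$ contains at least two points of $B$. With this, the double count becomes
\[
2\bigl(\|G\|-\|G_B\|\bigr)\leq \sum_{p\in B} n(p)=\sum_{p\in B}|(G_+)_p|,
\]
which is the claimed bound.

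The delicate step is checking that the involution $h$ fixing $p$ actually preserves the oval $a$. At $p$, $h$ acts as $-\mathrm{id}$, so it commutes there with the reflection fixing $a$. If $h$ fails to preserve $a$, I would instead pair $p$ with another point, using the fact that $\pi|_a$ is a cover away from $B$ to force a second branch point. If that fails too, I would fall back to Natanzon's original Lemma~2.4.
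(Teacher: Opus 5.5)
Your proof is correct and matches the paper's. The paper treats the union of ovals meeting $B$ as a graph with vertex set $B$: each $p$ has degree $2|(G_+)_p|$ (twice your $n(p)=|(G_+)_p|$) and no oval carries a single vertex, so counting edges is exactly your incidence count $2(\|G\|-\|G_B\|)\le\sum_{p\in B}|(G_+)_p|$. Your ``delicate step'' closes directly, with no fallback needed: since $d_ph=-\mathrm{id}$, the isometries $h\alpha h^{-1}$ and $\alpha$ (for a $G$-invariant metric) agree at $p$ together with their differentials, hence coincide; so $h$ preserves $\mathrm{Fix}(\alpha)$ and therefore the component $a$, on which it acts as a reflection of the circle whose two fixed points both lie in $B$.
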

\begin{proof}
    Let $A$ be the union of all ovals in $G$ containing a point from $B$. Let us view $A$ as a graph on $M$ with $B$ being the vertices. Then the degree of each vertex is exactly $2|(G_+)_p|$ and observe that there are no loops in this graph: by the collar lemma, if an involution fixes an oval, then it fixes at least two points on it. Thus, for this graph, $E\geq 2(\|G\| - \|G_B\|)$ and $2E=\sum_{p\in B}2|(G_+)_p|$. Combining these inequalities completes the proof.
\end{proof}

\begin{lemma}
    \label{lem:Gdiv4}
    If $B\ne\varnothing$, then $|G|$ is divisible by $4$ and
    \[
    \|G\| \leq 2\gamma+ |G|.
    \]
\end{lemma}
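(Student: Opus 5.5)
The plan has two parts: first the divisibility, then the oval count.

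\emph{Divisibility.} Since $B\neq\varnothing$, there is a point $p$ lying on an oval $a$ of some reflection $\sigma\in G$ and fixed by an involution $h\in G_+$. By Lemma~\ref{lem:dihed}, $G\cong D_l$ with $|G|=2l$, and $G_+$ is the cyclic rotation subgroup of order $l$. An involution exists in $G_+$ only if $l$ is even, which gives $4\mid |G|$. Concretely, $h$ is the unique central involution $(\alpha\beta)^{l/2}$.

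\emph{Oval bound.} I would combine \eqref{ERH2} with Lemmas~\ref{lem:Bempty} and~\ref{lem:GGB}. Lemma~\ref{lem:GGB} gives
\[
\|G\|\leq \|G_B\| + \tfrac12\sum_{p\in B}|(G_+)_p|.
\]
Lemma~\ref{lem:Bempty} gives $\|G_B\|\leq |G_+|\,\|\rho\|$. From \eqref{ERH2},
\[
|G_+|\,\|\rho\| \leq \gamma-1+2|G_+| - \tfrac12\sum_{p\in M}\bigl(|(G_+)_p|-1\bigr).
\]
Hence
\[
\|G\| \leq \gamma-1+2|G_+| + \tfrac12\sum_{p\in B}|(G_+)_p| - \tfrac12\sum_{p\in M}\bigl(|(G_+)_p|-1\bigr).
\]
Restrict the last sum to $p\in B$. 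Each $p\in B$ has $|(G_+)_p|\geq 2$, so $|(G_+)_p|-1\geq \tfrac12|(G_+)_p|$. The two $B$-sums therefore combine to at most $\tfrac14\sum_{p\in B}|(G_+)_p|$, and it remains to bound this error term.

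\emph{Main obstacle: bounding $\sum_{p\in B}|(G_+)_p|$.} The cleanest route is to bound it by $2(\gamma+1)+\dots$. A point $p\in B$ is fixed by the central involution $h$. By Riemann--Hurwitz for $\langle h\rangle$, $h$ has at most $2\gamma+2$ fixed points, so $|B|\leq 2\gamma+2$. The stabilizer $(G_+)_p$ is cyclic and contains $h$.

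I would try to show $(G_+)_p=\langle h\rangle$ for $p\in B$. If the stabilizer were larger, its rotation at $p$ would map the oval's tangent line to another line, producing extra reflection axes through $p$. This is still compatible, so instead I would simply use the cruder route. Use $\sum_{p\in M}(|(G_+)_p|-1)\geq \sum_{p\in B}(|(G_+)_p|-1)$ and Lemma~\ref{lem:GGB} in the sharper form $E\geq\dots$. Then bound $\tfrac14\sum_{p\in B}|(G_+)_p|$ using the full Riemann--Hurwitz \eqref{eq:RH}, which gives
\[
\tfrac12\sum_{p\in M}\bigl(|(G_+)_p|-1\bigr)\leq \gamma-1+|G_+|.
\]
Combining, and using $2|G_+|=|G|$ together with the leftover $\gamma$ terms, should produce $\|G\|\leq 2\gamma+|G|$. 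I expect the bookkeeping of these constants, and not losing a factor when $\gamma'=0$, to be the delicate step. If it fails, I would refine by counting the orbits of $B$ under $G$ separately.
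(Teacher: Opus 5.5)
Your divisibility argument is the paper's. So is your chain of inequalities from \eqref{ERH2} and Lemmas~\ref{lem:Bempty} and~\ref{lem:GGB}, and so is the bound $|B|\le|\Fix(h)|\le 2\gamma+2$ via Riemann--Hurwitz for the central involution $h$. The gap is the step where you use $|(G_+)_p|-1\ge\tfrac12|(G_+)_p|$. Once you restrict the last sum to $B$, the stabilizer terms cancel \emph{exactly}:
\[
\tfrac12\sum_{p\in B}|(G_+)_p|-\tfrac12\sum_{p\in B}\bigl(|(G_+)_p|-1\bigr)=\tfrac12|B|.
\]
This gives $\|G\|\le\gamma-1+|G|+\tfrac12|B|\le\gamma-1+|G|+(\gamma+1)=2\gamma+|G|$, which is precisely how the paper concludes. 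No information about the sizes of the stabilizers is needed. Your lossy replacement throws this cancellation away. It leaves you with $\|G\|\le\gamma-1+|G|+\tfrac14\sum_{p\in B}|(G_+)_p|$, and that bound is genuinely too weak to give the lemma.

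The remaining term cannot be controlled by $\gamma+1$. Your hoped-for $(G_+)_p=\langle h\rangle$ fails whenever a point of $B$ lies on ovals of both $\alpha$ and $\beta$: then $\alpha\beta\in(G_+)_p$, so $(G_+)_p=G_+$. Concretely, take the curve $y^l=x(x-1)^{l/2}$ with $4\mid l$, and set $\alpha(x,y)=(\bar x,\bar y)$ and $\beta(x,y)=(\bar x,e^{2\pi i/l}\bar y)$. Then $\gamma=l/4$, $B=\Fix(h)$, and $\tfrac14\sum_{p\in B}|(G_+)_p|=\tfrac{3}{4}l=3\gamma$.

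Your ``crude route'' fails for the same reason. Using the full formula \eqref{eq:RH}, i.e.\ $\tfrac12\sum_{p\in M}(|(G_+)_p|-1)\le\gamma-1+|G_+|$, even combined with $|B|\le 2\gamma+2$, yields only $\|G\|\le 2\gamma-1+\tfrac54|G|$. That is weaker than the claim as soon as $|G|>4$. This is exactly the non-commutative case $|G|=4m$, $m\ge2$, in which the lemma is applied. No bookkeeping along that path recovers the constant; the missing idea is simply to keep the exact cancellation above.
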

\begin{proof}
    By Lemma~\ref{lem:dihed}, $G\cong D_l$. If $l$ is odd, then $G_+$ does not contain any involutions, so $B$ is empty; therefore, $l$ is even and $|G| = 2l$ is divisible by $4$. 
    
    Now let $\beta\in G_+$ be an involution. Then by applying the Riemann-Hurwitz formula to the projection $M\to M/\la \beta\ra$, one obtains
    \[
    \gamma-1\geq \frac{1}{2}|\Fix(\beta)| - 2. 
    \]
    Since $|B|\leq |\Fix(\beta)|$, we obtain $|B|\leq 2\gamma+2$.

    By estimating the right hand side of \eqref{ERH2} using Lemmas~\ref{lem:Bempty} and~\ref{lem:GGB} and the preceding, one obtains
    \[
    \gamma-1 \geq \|G_B\| - |G| + \|G\|-\|G_B\| - \frac{1}{2}|B|\geq \|G\| - |G| - (\gamma+1).
    \]
    Rearranging proves the claim.
\end{proof}

\begin{proposition}
    If $B\ne\varnothing$, then Theorem~\ref{thm:Nat2} holds.
\end{proposition}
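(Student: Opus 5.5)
The plan is to combine the bound of Lemma~\ref{lem:Gdiv4} with the dihedral structure of $G$ from Lemma~\ref{lem:dihed}; the only real input beyond these is a count of how ovals are distributed among the reflections of $G$.

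First, identify $G$. By Lemma~\ref{lem:dihed}, $G=\la\alpha,\beta\ra\cong D_l$, where $l$ is the order of $\alpha\beta$, and $\alpha,\beta$ correspond to the two standard reflection generators. Since $B\neq\varnothing$, Lemma~\ref{lem:Gdiv4} shows that $|G|=2l$ is divisible by $4$, so $l$ is even. Since $\alpha$ and $\beta$ do not commute, $l\neq 2$, and therefore $l\ge 4$.

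Next, express $\|G\|$ in terms of $\|\alpha\|$ and $\|\beta\|$. The group $G_+$ consists of the orientation-preserving elements. Because $\alpha$ and $\beta$ reverse orientation, $G_+$ is the rotation subgroup $\la\alpha\beta\ra$ of index two. Hence the reflections of $G$, meaning its orientation-reversing involutions, are exactly the $l$ reflections of $D_l$. For $l$ even these fall into two conjugacy classes of size $l/2$, one containing $\alpha$ and the other containing $\beta$. Conjugate reflections have the same number of ovals, since conjugation by an element of $G$ maps the fixed-point set of one homeomorphically onto the fixed-point set of the other. Thus
\[
\|G\|=\tfrac{l}{2}\bigl(\|\alpha\|+\|\beta\|\bigr).
\]

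Finally, combine this with Lemma~\ref{lem:Gdiv4}:
\[
\tfrac{l}{2}\bigl(\|\alpha\|+\|\beta\|\bigr)\le 2\gamma+2l,
\qquad\text{so}\qquad
\|\alpha\|+\|\beta\|\le \tfrac{4\gamma}{l}+4\le \gamma+4,
\]
where the last step uses $l\ge 4$. This gives Theorem~\ref{thm:Nat2} when $B\neq\varnothing$.

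The main point needing care is the identification of the reflections of $G$ and their conjugacy classes. This is where the non-commuting hypothesis enters, through $l\ge4$, and it also requires checking that $G_+$ is exactly the rotation subgroup, so that no extra orientation-reversing involutions contribute to $\|G\|$. Everything else is a direct substitution into Lemma~\ref{lem:Gdiv4}.
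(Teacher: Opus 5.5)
Your proposal is correct and matches the paper's proof. Writing $l=2m$, the paper likewise uses Lemma~\ref{lem:Gdiv4} to get $|G|=4m$ with $m\ge 2$ (from non-commutativity), the dihedral count $\|G\|=m(\|\alpha\|+\|\beta\|)$, and then $\|\alpha\|+\|\beta\|\le 2\gamma/m+4\le\gamma+4$. Your explicit check that $G_+$ is the rotation subgroup and that the reflections form two conjugacy classes of size $l/2$ simply spells out what the paper attributes to Lemma~\ref{lem:dihed}.
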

\begin{proof}
    By Lemma~\ref{lem:Gdiv4} we have $|G| = 4m$ and since $G$ is not commutative we also have $m>1$. By Lemma~\ref{lem:dihed} one has $\|G\| = m(\|\alpha\|+\|\beta\|)$. Combining with Lemma~\ref{lem:Gdiv4} one concludes
    \[
    \|\alpha\|+\|\beta\|\leq \frac{2}{m}\gamma + 4\leq \gamma+4.
    \]
\end{proof}

\bibliography{bibliography}
\end{document}